\newtheorem{lemma}{Lemma}[section]
\newtheorem{corollary}[lemma]{Corollary}
\newtheorem{claim}[lemma]{Claim}
\newtheorem{theorem}[lemma]{Theorem}
\newtheorem{definition}[lemma]{Definition}
\newtheorem{conjecture}[lemma]{Conjecture}
\theoremstyle{definition}
\newtheorem{defn}[lemma]{Definition}
\global\long\def\E{\mathbb{E}}
\global\long\def\R{\mathbb{R}}
\global\long\def\eps{\epsilon}
\global\long\def\P{\mathbb{P}}
\global\long\def\subset{\subseteq}
\newcommand{\Remark}[1]{}
\newcommand{\LL}{{\ll}}
\newcommand\subsubsectionme[1]{\bigskip \noindent\textbf{#1}
\smallskip

}
\definecolor{darkgreen}{rgb}{0.1,0.7,0.1}
\date{}
\let\oldgg\gg
\renewcommand{\gg}{\stackrel{\scriptscriptstyle{\text{\sc poly}}}{\oldgg}}
\let\oldll\ll
\renewcommand{\ll}{\stackrel{\scriptscriptstyle{\text{\sc poly}}}{\oldll}}
\newcounter{propcounter}
\title{\vspace{-0.7cm} A proof of Ringel's Conjecture}
\author{R. Montgomery\thanks{School of Mathematics,
University of Birmingham,
Birmingham,
B15 2TT,
UK. r.h.montgomery@bham.ac.uk.}
, A. Pokrovskiy\thanks{Department of Economics, Mathematics, and Statistics, Birkbeck College, University of London. dr.alexey.pokrovskiy@gmail.com.}
, and B. Sudakov\thanks{Department of Mathematics, ETH, 8092 Zurich, Switzerland. benjamin.sudakov@math.ethz.ch. Research supported in part by SNSF grant 200021-175573.}
}
\begin{document}
\maketitle

\begin{abstract}
A typical decomposition question asks whether the edges of some graph $G$ can be partitioned into disjoint copies of another graph $H$.
One of the oldest and best known conjectures in this area, posed by Ringel in 1963, concerns the decomposition of complete graphs into edge-disjoint copies of a tree.
It says that any tree with $n$ edges packs $2n+1$ times into the complete graph $K_{2n+1}$.
In this paper, we prove this conjecture for large $n$.
\end{abstract}

\section{Introduction}\label{intro}
The study of decomposition problems for graphs and hypergraphs has a very long history, going back more than two hundred years to the work of Euler on Latin squares. Latin squares are $n \times n$  arrays filled with $n$ symbols such that each symbol appears once in every row and column.
In 1782, Euler asked for which values of $n$ there is a Latin square which can be decomposed into $n$ disjoint transversals, where a transversal is a collection of cells of the Latin square which do not share the same row, column or symbol.
This problem has many equivalent forms. In particular, it is equivalent to a \emph{graph decomposition problem}.

We say that a graph $G$ has a \emph{decomposition into copies of a graph $H$} if the edges of $G$ can be partitioned into edge-disjoint subgraphs isomorphic to $H$. Euler's problem is equivalent to asking for which values of $n$ does the balanced complete $4$-partite graph $K_{n,n,n,n}$ have a decomposition into copies of the complete graph on $4$ vertices, $K_4$.
In 1847, Kirkman studied decompositions of complete graphs $K_n$ and showed that they can be decomposed into copies of a triangle if, and only if, $n\equiv 1 \text{ or } 3 \pmod 6$. Wilson~\cite{wilson1975decompositions} generalized this result by proving necessary and sufficient conditions for a complete graph $K_n$ to be decomposed into copies of \emph{any} graph, for large $n$.
A very old problem in this area, posed in 1853 by Steiner, says that, for every $k$, modulo an obvious divisibility condition every sufficiently large complete $r$-uniform hypergraph can be decomposed into edge-disjoint copies of a complete $r$-uniform hypergraph on $k$ vertices. This problem was the so-called ``existence of designs'' question and has practical relevance to experimental designs. It was resolved only very recently in spectacular
work by Keevash \cite{keevash} (see the subsequent work of \cite{glock-all} for an alternative proof of this result). Over the years graph and hypergraph decomposition problems have been extensively studied and
by now this has become a vast topic with many exciting results and conjectures (see, for example, \cite{gallian2009dynamic,wozniak2004packing,yap1988packing}).

In this paper, we study decompositions of complete graphs into large trees, where a tree is a connected graph with no cycles. By large we mean that the size of the tree is comparable with the size of the complete graph (in contrast with the existence of designs mentioned above, where the decompositions are into small subgraphs).
The earliest such result  was obtained more than a century ago by Walecki. In 1882 he proved that a complete graph $K_n$ on an even number of vertices can be partitioned into edge-disjoint Hamilton paths.
A Hamilton path is a path which visits every vertex of the parent graph exactly once. Since paths are a very special kind of tree it is natural to ask which other large trees can be used to decompose a complete graph. This question was raised by Ringel \cite{ringel1963theory}, who in 1963 made the following
appealing conjecture on the decomposition of complete graphs into edge-disjoint copies of a tree with roughly half the size of the complete graph.

\begin{conjecture} \label{ringelconj}
The complete graph $K_{2n+1}$ can be decomposed into copies of any tree with $n$ edges.
\end{conjecture}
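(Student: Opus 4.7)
The plan is to exploit the cyclic structure of $K_{2n+1}$ by identifying its vertex set with $\mathbb{Z}_{2n+1}$. Each edge $\{x,y\}$ then has a well-defined \emph{length} $\ell(x,y)\in\{1,2,\ldots,n\}$, and for each length $\ell$ the edges of length $\ell$ form a single orbit of the rotation $v\mapsto v+1$, of size $2n+1$. Consequently, if one can embed a tree $T$ into $\mathbb{Z}_{2n+1}$ so that the $n$ embedded edges realise each of the $n$ possible lengths exactly once (a so-called \emph{graceful labelling}), then the $2n+1$ cyclic rotations of this single copy give a decomposition of $K_{2n+1}$. The graceful tree conjecture predicts such a labelling exists for every $T$, but is wide open, so any strategy must work with an \emph{approximately} graceful embedding and correct the small error by absorption.

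My proof outline would proceed in three stages. \emph{Stage 1 (near-decomposition).} Use a randomised greedy procedure to embed $T$ into $\mathbb{Z}_{2n+1}$ so that the multiset of edge-lengths is close to $\{1,2,\ldots,n\}$: all but $o(n)$ lengths appear exactly once, leaving a small ``surplus'' list (lengths used too often) and ``deficit'' list (lengths missed). The $2n+1$ cyclic rotations of this embedding then cover almost every edge of $K_{2n+1}$ exactly once, leaving behind a sparse leftover. \emph{Stage 2 (absorber reservation).} Before running Stage 1, reserve a ``flexible'' substructure inside $T$ that admits many essentially different embeddings, any two of which differ in precisely which lengths they realise. \emph{Stage 3 (absorption).} Use the freedom in the reserved substructure to re-embed it so that the combined family of $2n+1$ rotations now realises \emph{every} length exactly once, picking up the missing deficit lengths and shedding the surplus ones. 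This produces an honest graceful-type embedding whose rotations partition $E(K_{2n+1})$.

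The design of the absorber should split on the coarse structure of $T$: if $T$ has $\Omega(n)$ leaves, these supply the needed flexibility because each leaf can be reassigned to any vertex of $K_{2n+1}$ adjacent to the image of its parent; otherwise $T$ must contain many vertex-disjoint long bare paths (paths whose internal vertices have degree $2$ in $T$), and these can be re-routed through alternative vertices to alter their collective length profile in a controlled way. The main obstacle is the interaction between Stages 2 and 3: one must show that for \emph{every} admissible sparse leftover that can arise in Stage 1 there is a valid absorber move covering exactly the deficit while shedding exactly the surplus, and one must simultaneously guarantee that none of the $2n+1$ rotations of the re-embedded absorber collides with an edge already used by the main embedding or by another rotation. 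Overcoming this requires sharp concentration bounds on the random embedding (via switching arguments together with martingale or Talagrand concentration), together with a delicate structural analysis showing that the family of possible absorber configurations is rich enough to cover the space of possible leftovers. This interplay between a structural case split, a probabilistic near-decomposition, and a bespoke absorber is where the bulk of the technical work will lie.
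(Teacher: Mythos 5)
Your high-level strategy is the same as the paper's: identify $V(K_{2n+1})$ with $\mathbb{Z}_{2n+1}$, seek a single copy of $T$ using each edge-length exactly once (a rainbow copy in the paper's ``near distance colouring''), and obtain the decomposition from the $2n+1$ rotations; build that copy by a randomized near-embedding plus an absorber reserved inside $T$, with the absorber's design split according to whether $T$ has many leaves or many long bare paths. Two points, however, are genuine gaps rather than deferred technicalities.

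First, your dichotomy ``many leaves or many vertex-disjoint long bare paths'' is true but does not suffice, because the leaf-based absorber fails when the leaves are concentrated on a few vertices of enormous degree (e.g.\ a spider whose legs are a handful of stars of size $\Theta(n)$). There the flexibility you invoke --- ``each leaf can be reassigned to any vertex adjacent to the image of its parent'' --- is illusory: the $\Theta(n)$ pendant edges at a single parent must realise $\Theta(n)$ \emph{distinct} lengths, so embedding that vertex's star forces a massive, unavoidable correlation between which vertices and which lengths remain available, and this correlation is exactly what breaks the ``for every admissible leftover there is an absorber move'' step. The paper isolates these trees as a third case (leaves clustered at vertices of degree $\geq \delta^{-4}$ whose removal leaves at most $n/100$ vertices) and handles them by an entirely deterministic interval-based embedding, with no randomness or absorption at all; without some analogue of this your plan has no route for such trees. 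Second, your Stage 1 permits a \emph{surplus} list of lengths used more than once. A repeated length in the main embedding cannot be ``shed'' by re-embedding a disjoint absorber: the offending repeated edges themselves would have to move, and under rotation they double-cover edges of $K_{2n+1}$. The workable formulation (and the paper's) is that the partial embedding of $T$ minus the absorber must be genuinely rainbow, so that only a deficit exists, and the absorber must then use \emph{exactly} the unused lengths --- which is precisely why the finishing step is delicate and requires the distributive-absorption machinery (switchers indexed by a robustly matchable bipartite graph) rather than a generic ``rich family of configurations'' argument. (A minor point in your favour: the worry about rotations of the absorber colliding with other rotations is vacuous --- once the single copy is rainbow, edge-disjointness of the rotations is automatic.)
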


Ringel's conjecture is one of the oldest and best known open conjectures on graph decompositions. It has been established for many very special classes of trees such as caterpillars, trees with $\leq 4$ leaves, firecrackers,  diameter $\leq 5$ trees,   symmetrical trees, trees with $\leq 35$ vertices, and olive trees (see Chapter 2 of \cite{gallian2009dynamic} and the references therein).
There have also been some partial general results in the direction of Ringel's conjecture. Typically, for these results, an extensive technical method is developed which is capable of almost-packing any appropriately-sized collection of certain sparse graphs, see,
e.g., \cite{bottcher2016approximate, messuti2016packing, ferber2017packing, kim2016blow}.   In particular, Joos, Kim, K{\"u}hn and Osthus~\cite{joos2016optimal} have proved Ringel's conjecture for very large bounded-degree trees. Ferber and Samotij~\cite{ferber2016packing} obtained an almost-perfect packing of almost-spanning trees with maximum degree $O(n/\log n)$, thus giving an approximate version of Ringel's conjecture for trees with maximum degree $O(n/\log n)$. A different proof of this was obtained by
Adamaszek, Allen, Grosu, and Hladk{\'y}~\cite{adamaszek2016almost}, using graph labellings. Allen, B\"ottcher, Hladk{\'y} and Piguet~\cite{allen2017packing} almost-perfectly packed arbitrary spanning graphs with maximum degree $O(n/ \log n)$ and constant degeneracy\footnote{A graph is $d$-degenerate if each induced subgraph has a vertex of degree $\leq d$. Trees are exactly the $1$-degenerate, connected graphs.} into large complete graphs. 
Recently Allen, B\"ottcher, Clemens, and Taraz~\cite{allen2019perfectly} found perfect packings of complete graphs into specified graphs with maximum degree $o(n/\log n)$, constant degeneracy, and linearly many leaves.
To tackle Ringel's conjecture, the above mentioned papers developed many powerful techniques based on the application of probabilistic methods and
Szemer\'edi's regularity lemma. Yet, despite the variety of these techniques, they all have the same limitation, requiring that the maximum degree of the tree should be much smaller than $n$.

A lot of the work on Ringel's Conjecture has used the \emph{graceful labelling} approach.
This is an elegant approach proposed by R\'osa~\cite{rosa1966certain}. For an $(n+1)$-vertex tree $T$ a bijective labelling of its vertices $f: V(T) \rightarrow \{0, \dots, n\}$ is called graceful
if  the values $|f(x)-f(y)|$ are distinct over the edges $(x,y)$ of $T$.
In 1967 R\'osa conjectured that every tree has a graceful labelling. This conjecture has attracted a lot of attention in the last 50 years but has only been proved for some special classes of trees, see e.g., \cite{gallian2009dynamic}.
The most general result for this problem  was obtained by Adamaszek, Allen, Grosu, and Hladk{\'y}~\cite{adamaszek2016almost} who proved it asymptotically for trees with maximum degree $O(n/\log n)$.
The main motivation for studying graceful labellings is that one can use them to prove Ringel's conjecture. Indeed,
given a graceful labelling $f: V(T) \rightarrow \{0, \dots, n\}$, think of it as an embedding  of $T$ into $\{0, \dots, 2n\}$. Using addition modulo $2n+1$, consider $2n+1$ cyclic shifts $T_0, \ldots, T_{2n}$ of $T$, where the tree $T_i$ is an isomorphic copy of $T$ whose vertices are
$V(T_i)=\{f(v)+i~|~ v \in V(T)\}$ and whose edges are $E(T_i)=\{(f(x)+i,f(y)+i)~|~(x,y)\in E(T)\}$. It is easy to check that the fact that $f$ is graceful implies that the trees $T_i$ are edge disjoint and therefore
decompose $K_{2n+1}$.

R\'osa also introduced a related proof approach to Ringel's conjecture called ``$\rho$-valuations''. We describe it using the language of ``rainbow subgraphs'', since this is the language which we ultimately use in our proofs.
A \emph{rainbow} copy of a graph $H$ in an edge-coloured graph $G$ is a subgraph of $G$ isomorphic to $H$ whose edges have different colours. Rainbow subgraphs are important because many problems in combinatorics can be rephrased as problems asking for rainbow subgraphs  (for example the problem of Euler on Latin squares mentioned above).
Ringel's conjecture is implied by the existence of a rainbow copy of every $n$-edge tree $T$  in the following edge-colouring of the complete graph $K_{2n+1}$, which we call the \emph{near distance (ND-)colouring}.
Let $\{0,1,\dots,2n\}$ be the vertex set of $K_{2n+1}$. Colour the edge $ij$ by colour $k$, where $k\in [n]$, if either $i=j+k$ or $j=i+k$ with addition modulo $2n+1$.
Kotzig~\cite{rosa1966certain} noticed that if the ND-coloured $K_{2n+1}$ contains a rainbow copy of a tree $T$, then $K_{2n+1}$ can be decomposed into copies of $T$ by taking $2n+1$ cyclic shifts of the original rainbow copy, as explained above (see also Figure~\ref{FigureIntro}). Motivated by this and Ringel's Conjecture, Kotzig conjectured that the ND-coloured $K_{2n+1}$ contains a rainbow copy of every  tree on $n$ edges.
To see the connection with graceful labellings, observe that such a labelling of the tree $T$ is equivalent to a rainbow copy of this tree in the ND-colouring whose vertices are $\{0, \dots, n\}$. Clearly, specifying exactly the vertex set of the tree adds an additional restriction which makes it harder to find such a rainbow copy.

\begin{figure}[b]

\vspace{-0.4cm}

  \centering
\begin{tikzpicture}

\def\vxrad{0.1cm};
\def\circrad{1.5};
\def\shift{-6};
\def\medline{0.04cm};
\def\tline{0.04cm};
\def\tvxrad{0.075cm};
\def\incircrad{1};

\begin{scope}[shift={(\shift,0)}]

\foreach \x in {0,...,8}
{
\draw coordinate (B\x) at  (40*\x:\incircrad);
}

\draw ($0.9*(B4)+0.9*(B5)$) node {\large $T:$};

\draw [line width=\tline] (B0) -- (B5);
\draw [line width=\tline] (B4) -- (B5);
\draw [line width=\tline] (B3) -- (B5);
\draw [line width=\tline] (B0) -- (B6);

\foreach \x in {0,3,4,5,6}
{
\draw [fill=black] (B\x) circle [radius=\tvxrad];
}

\end{scope}

\foreach \x in {0,...,8}
{
\draw coordinate (A\x) at  (40*\x:\circrad);
}

\draw ($0.8*(A4)+0.8*(A5)$) node {\large $K_9:$};

\foreach \x in {0,...,7}
{
\pgfmathtruncatemacro\y{\x+1};
\draw [line width=\medline,darkgreen!50] (A\x) -- (A\y);
}
\foreach \x in {8}
{
\pgfmathtruncatemacro\y{\x-9+1};
\draw [line width=\medline,darkgreen!50] (A\x) -- (A\y);
}

\foreach \x in {0,...,6}
{
\pgfmathtruncatemacro\y{\x+2};
\draw [line width=\medline,blue!50] (A\x) -- (A\y);
}
\foreach \x in {0,...,5}
{
\pgfmathtruncatemacro\y{\x+3};
\draw [line width=\medline,black!50] (A\x) -- (A\y);
}
\foreach \x in {0,...,4}
{
\pgfmathtruncatemacro\y{\x+4};
\draw [line width=\medline,red!50] (A\x) -- (A\y);
}

\foreach \x in {7,...,8}
{
\pgfmathtruncatemacro\y{\x-9+2};
\draw [line width=\medline,blue!50] (A\x) -- (A\y);
}
\foreach \x in {6,...,8}
{
\pgfmathtruncatemacro\y{\x-9+3};
\draw [line width=\medline,black!50] (A\x) -- (A\y);
}
\foreach \x in {5,...,8}
{
\pgfmathtruncatemacro\y{\x-9+4};
\draw [line width=\medline,red!50] (A\x) -- (A\y);
}

\draw [line width=0.1cm,red] (A0) -- (A5);
\draw [line width=0.1cm,darkgreen] (A4) -- (A5);
\draw [line width=0.1cm,blue] (A3) -- (A5);
\draw [line width=0.1cm,black] (A0) -- (A6);

\foreach \x in {0,...,8}
{
\draw [fill=black] (A\x) circle [radius=\vxrad];
\draw ($1.2*(A\x)$) node {\x};
}

\end{tikzpicture}

\vspace{-0.3cm}
  \caption{The ND-colouring of $K_9$ and a rainbow copy of a tree $T$ with four edges. The colour of each edge corresponds to its Euclidean length.  By taking cyclic shifts of this tree around the centre of the picture we obtain $9$ disjoint copies of the tree decomposing $K_9$ (and thus a proof of Ringel's Conjecture for this particular tree). To see that this gives 9 disjoint trees, notice that edges must be shifted to other edges of the same colour (since shifts are isometries).}\label{FigureIntro}
  
  \vspace{-0.1cm}
  
\end{figure}
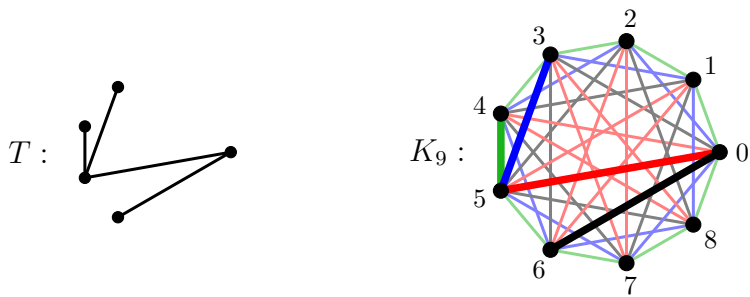

In \cite{MPS} we gave a new approach to embedding large trees (with no degree restrictions) into edge-colourings of complete graphs, and used this to prove Conjecture
\ref{ringelconj} asymptotically. Here, we further develop and refine this approach, combining it with several critical new ideas to prove Ringel's conjecture for large complete graphs.

\begin{theorem}\label{main}
For every sufficiently large $n$ the complete graph $K_{2n+1}$ can be decomposed into copies of any tree with $n$ edges.
\end{theorem}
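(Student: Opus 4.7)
The plan is to reduce Theorem~\ref{main} to finding, for every tree $T$ with $n$ edges, a rainbow copy of $T$ in the ND-coloured $K_{2n+1}$. By Kotzig's observation described in the introduction, the $2n+1$ cyclic shifts of any such rainbow copy partition the edges of $K_{2n+1}$ into copies of $T$, so the decomposition problem becomes a rainbow embedding problem in a host graph whose colour classes are spanning $2$-regular subgraphs. This is the framework of \cite{MPS} which already delivered the asymptotic version, so the heart of the argument is upgrading an almost-perfect rainbow embedding to a perfect one, uniformly over all trees $T$, with no restriction on the maximum degree.

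Step one is a structural decomposition of $T$. Fix a degree threshold $D = n^{1-\delta}$ and let $H$ be the set of vertices of $T$ of degree exceeding $D$; then $|H| = O(n/D)$. Carve out a small \emph{core} subtree $T_0$ containing $H$, so that $T\setminus E(T_0)$ is a forest of pendant \emph{bare} subtrees each of maximum degree $\le D$. Mark a small random subfamily of these bare subtrees (together with a reservoir of leaf edges) as an \emph{absorbing} part of $T$ whose embedding will be finalised only at the end.

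Step two is to build a matching absorber on the host side. Choose, at random, reservoir sets $V^* \subset V(K_{2n+1})$ of vertices and $C^* \subset [n]$ of colours, and pre-plan a rainbow structure on $V^*$ with colours from $C^*$, so that for every admissible leftover subset of $V^*$ and $C^*$ the absorbing part of $T$ can be embedded rainbow using exactly those leftovers. Step three is the bulk embedding. First embed $T_0$ rainbow into $V(K_{2n+1})\setminus V^*$ using colours in $[n]\setminus C^*$, placing each heavy vertex at a host that retains enough room in every remaining colour class to accommodate all its future neighbours. Next, iteratively embed the non-absorbing bare subtrees by a random-greedy procedure, which can be analysed via concentration inequalities since these subtrees have degree $\le D$ and the remaining ND-structure can be kept quasi-uniform throughout. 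Finally, invoke the absorber to finish.

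The main obstacle is the absorber itself. It must simultaneously (i) form a valid rainbow subgraph of the ND-coloured $K_{2n+1}$, (ii) be flexible enough to absorb every possible pattern of leftover vertices and colours, and (iii) be compatible with the heavy-vertex structure of the core embedding. Because each colour class in the ND-colouring is a long cycle rather than a matching (as it would be in a proper edge-colouring), standard random-matching absorbers do not apply directly, and new combinatorial gadgets are needed. A secondary difficulty is arranging for the bulk embedding to terminate with a leftover profile that the absorber was designed to handle: this forces a careful interleaving of the reservoir, the core embedding, and the greedy step, together with a quantitative control of the remaining ND-structure at every stage — which is exactly where the refinements of \cite{MPS} and the new ideas alluded to just before the theorem statement enter.
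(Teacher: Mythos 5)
Your reduction to finding a rainbow copy of $T$ in the ND-coloured $K_{2n+1}$ is exactly the paper's first move, and the broad "core plus bulk embedding plus absorber" architecture is in the right spirit. However, what you have written is a plan with the hardest parts left as acknowledged black boxes, and two of those boxes conceal genuine obstacles rather than routine work. First, the absorber. You correctly observe that standard random-matching absorbers do not apply and that "new combinatorial gadgets are needed," but constructing those gadgets is the core of the proof, not a detail: the paper builds \emph{colour switchers} (small rainbow matchings or pairs of short rainbow paths between fixed endpoints that can use any one colour from a prescribed set of $100$, with all other colours fixed) and chains them together via a robustly matchable bipartite template to convert local one-colour flexibility into the global property of using \emph{exactly} a prescribed leftover colour set. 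Without an explicit construction of this kind, the claim that "for every admissible leftover subset of $V^*$ and $C^*$ the absorbing part of $T$ can be embedded rainbow using exactly those leftovers" is unsupported. A closely related gap is the dependence problem: after any randomized bulk embedding, the set of unused colours and the set of unused vertices are heavily correlated, and it can happen that some vertex awaiting a leaf has no edge of any unused colour into any unused vertex. The paper's fix --- setting aside small \emph{independent} random vertex and colour sets $V_0,C_0$ before the dependence arises, and ensuring a repleteness condition between the image of the leaf-parents and $V_0$ --- is a necessary ingredient your outline does not supply.

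Second, your uniform treatment of all trees breaks down for trees in the paper's Case C, i.e.\ trees that consist almost entirely of leaves clustered at a few vertices of enormous degree. For such a tree, after you embed the core $T_0$ containing the heavy vertices, essentially the entire tree ($\ge 99n/100$ edges) still has to be attached as leaves at \emph{fixed, already-embedded} host vertices using \emph{exactly} the remaining colours. There is no room for a random-greedy bulk phase that leaves only a small absorbable remainder: the "absorbing part" would have to be nearly the whole tree, and your reservoir-based absorber cannot be that large while remaining flexible. This is precisely why the paper abandons randomness entirely for these trees and gives a deterministic embedding that exploits the arithmetic of the ND-colouring (placing the small core in a short interval and attaching the leaves of each heavy vertex $u$ at positions $u \pm c$ for the unused colours $c$). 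Your proposal needs either this case split or a substantively new idea to handle such trees; as written, the argument would fail for, e.g., a spider with $\sqrt{n}$ legs each consisting of a star with $\sqrt{n}$ leaves.
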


The proof of Theorem \ref{main} uses the last of the three approaches mentioned above. Instead of  working directly with tree decompositions, or studying graceful labellings, we instead prove for large $n$ that every ND-coloured $K_{2n+1}$ contains a rainbow copy of every $n$-edge tree (see Theorem~\ref{Theorem_Ringel_proof}).
Then, we obtain a decomposition of the complete graph by considering cyclic shifts of one copy of a given tree (as in Figure~\ref{FigureIntro}). The existence of such a cyclic decomposition was separately conjectured by Kotzig~\cite{rosa1966certain}. Therefore, this also gives a proof of the conjecture by Kotzig for large $n$.

 Our proof approach builds on ideas from the previous research on both graph decompositions and graceful labellings.
From the work on graph decompositions, our approach is inspired by randomized decompositions and the absorption technique. The rough idea of absorption is as follows.  Before the embedding of $T$ we prepare a template which has some useful properties. Next we find a partial embedding of the tree $T$ with some vertices removed such that we did not use the edges of the template.
Finally we use the template to embed the remaining vertices. This idea was introduced as a general method by R\"odl, Ruci\'nski and Szemer\'edi \cite{RRS} and has been used extensively since then. For example, the proof of Ringel's Conjecture for bounded degree trees is based on this technique~\cite{joos2016optimal}.

We are also inspired by graceful labellings. When dealing with  trees with very high degree vertices, we use a  completely deterministic approach  for finding a rainbow copy of the tree. This approach heavily relies on features of the ND-colouring and produces something very close to a graceful labelling of the tree.

Our theorem is the first general result giving a perfect decomposition of a graph into subgraphs with arbitrary degrees. As we mentioned, all previous comparable results placed  a bound on the maximum degree of the subgraphs into which they decomposed the complete graph. Therefore, we hope that further development of our techniques can help overcome this ``bounded degree barrier'' in other problems as well.

\section{Proof outline}\label{Section_proof_outline}

From the discussion in the introduction, to prove Theorem~\ref{main} it is sufficient to prove the following result.

\begin{theorem}\label{Theorem_Ringel_proof}
For sufficiently large $n$, every ND-coloured $K_{2n+1}$ has a rainbow copy of every $n$-edge tree.
\end{theorem}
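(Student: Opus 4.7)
By the discussion preceding the statement, it suffices to produce a rainbow copy of an arbitrary $n$-edge tree $T$ in the ND-coloured $K_{2n+1}$. A structural feature I would exploit repeatedly is that each colour class of the ND-colouring is a Hamilton cycle of length $2n+1$; in particular every vertex is incident to exactly two edges of each colour, and the cyclic shift $i \mapsto i+1 \pmod{2n+1}$ is a colour-preserving automorphism. Since $|E(T)| = n$ equals the number of colours, a rainbow copy must use every colour \emph{exactly once}, so there is no slack in either the vertex budget or the colour budget — this zero-slack constraint is the source of the main difficulty.

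I would split according to the maximum degree $\Delta(T)$. In the \emph{bounded-degree regime}, $\Delta(T) \le n/\log^C n$ for a large constant $C$, I would run an absorption argument with three ingredients. First, randomly set aside a vertex reservoir $A \subseteq \{0, \ldots, 2n\}$ and a colour reservoir $R \subseteq [n]$, each of size roughly $n/\log n$, and build a \emph{template} $F \subseteq K_{2n+1}$ supported on $A$ and using only colours of $R$ with the property that for every admissible leftover fragment $T^{\star}$ (forest of a prescribed shape with a prescribed vertex/colour profile) $F$ contains a rainbow copy of $T^{\star}$ on exactly the unused vertices of $A$ using exactly the unused colours of $R$. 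Second, decompose $T = T_0 \cup T^{\star}$, with $T^{\star}$ a carefully chosen collection of pendant pieces of total size $|A|$, and embed $T_0$ rainbow into $V(K_{2n+1}) \setminus A$ using only colours of $[n] \setminus R$ by a nibble/semi-random greedy procedure, controlling quasirandomness of the remaining pool via martingale concentration. Third, finish by invoking the absorbing property of $F$ on the leftover $T^{\star}$.

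In the \emph{high-degree regime}, $\Delta(T) > n/\log^C n$, the tree contains only a bounded number of very-high-degree ``hubs''. Here I would use a deterministic labelling approach reminiscent of graceful labellings: place each hub at a carefully chosen integer position and distribute its neighbours so that the corresponding edges realize distinct colours, using that the colour of edge $pq$ is $\min(|p-q|,\, 2n+1 - |p-q|)$. The bounded-degree remainder of $T$ rooted at the hubs is then embedded by a variant of the absorption strategy from the first case, with the hubs acting as anchors whose colour/vertex consumption is deducted from the available pool before the randomized step.

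The hardest step, I expect, is constructing the template $F$. Unlike standard vertex-absorbers in extremal graph theory, here $F$ must be simultaneously \emph{vertex-absorbing} and \emph{colour-absorbing} with zero slack, and must accept a whole family of admissible leftovers rather than one fixed instance; at the same time $F$ must be sparse enough that the main semi-random embedding of $T_0$ is not obstructed. A secondary obstacle is bridging the two regimes: trees with several \emph{moderately} high-degree vertices fit neither case cleanly and likely require a hybrid argument that anchors moderate hubs deterministically while randomizing the rest under tight colour budgets.
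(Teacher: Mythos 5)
Your plan has the right overall flavour (randomized embedding of most of the tree plus absorption, with a separate deterministic treatment of high-degree vertices), but as written it contains gaps at exactly the points where the actual argument has to work hardest. The most serious one is the colour budget in the absorption step. You propose a template $F$ supported on a reserved vertex set $A$ and using only colours from a reserved set $R$, and a semi-random embedding of $T_0$ into the complement using only colours of $[n]\setminus R$. But any nibble-type embedding of $T_0$ leaves a small positive fraction of the colours of $[n]\setminus R$ unused, and a template confined to the colour set $R$ can never absorb those stray colours; since a rainbow copy of an $n$-edge tree must use every colour exactly once, this kills the construction. The paper resolves this with a three-stage finishing argument: a nearly-perfect rainbow matching, then a dedicated stage that forcibly uses every leftover colour outside the reservoir (this requires a ``repleteness'' property engineered in advance, namely that every colour appears many times between the image of the attachment vertices and a reserved random vertex set, which in turn requires carving out a pair of small random vertex/colour sets that are \emph{independent} of each other before the dependencies created by embedding stars arise), and only then a distributive absorber that can accept any admissible subset of the reservoir colours. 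Relatedly, the leftover fragment $T^{\star}$ cannot have ``prescribed shape'' in the abstract: the finishing lemmas only work when $T^{\star}$ is a matching of non-adjacent leaves or a collection of bare paths, and one needs a structural lemma guaranteeing every tree admits such a removable piece in quantity.

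The second gap is the case division. Splitting on $\Delta(T)\le n/\log^C n$ versus $\Delta(T)>n/\log^C n$ puts, say, a tree with one vertex of degree $n/\log^C n$ attached to an otherwise bounded-degree spanning structure into your ``high-degree regime'', where a deterministic hub placement must coexist with a nearly-spanning randomized embedding under zero colour slack --- precisely the hybrid you flag as unresolved. The paper avoids this by dividing instead on tree structure: either the tree has $\Omega(n)$ non-neighbouring leaves, or $\Omega(n/k)$ disjoint bare paths, or almost all of its vertices are leaves clustered at high-degree vertices. Only in the last case is the fully deterministic, graceful-labelling-style embedding used (and there the non-leaf part is tiny, at most $n/100$ vertices, so it can be packed into a short interval). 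In the first two cases, high-degree vertices are handled \emph{inside} the randomized embedding by a separate rainbow star-forest lemma, at the cost of creating the vertex/colour dependencies mentioned above. Without something like this trichotomy, and without the independence-preserving and colour-covering mechanisms, the plan does not close.
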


That is, for large $n$, and each $(n+1)$-vertex tree $T$, we seek a rainbow copy of $T$ in the ND-colouring of the complete graph with $2n+1$ vertices, $K_{2n+1}$. Our approach varies according to which of 3 cases the tree $T$ belongs to. For some small $\delta>0$, we show that, for every large $n$, every $(n+1)$-vertex tree falls in one of the following 3 cases (see Lemma~\ref{Lemma_case_division}), where a bare path is one whose internal vertices have degree 2 in the parent tree.

\begin{enumerate}[label = \Alph{enumi}]
\item $T$ has at least $\delta^6 n$ non-neighbouring leaves.
\item $T$ has at least $\delta n/800$ vertex-disjoint bare paths with length $\delta^{-1}$.
\item Removing leaves next to vertices adjacent to at least $\delta^{-4}$ leaves gives a tree with at most
$n/100$ vertices.
\end{enumerate}

As defined above, our cases are not mutually disjoint. In practice, we will only use our embeddings for trees in Case A and B which are not in Case C. In~\cite{MPS}, we developed methods to embed any $(1-\eps)n$-vertex tree in a rainbow fashion into any 2-factorized $K_{2n+1}$, where $n$ is sufficiently large depending on $\eps$. A colouring is a \emph{2-factorization} if every vertex is adjacent to exactly 2 edges of each colour. In this paper, we embed any $(n+1)$-vertex tree $T$ in a rainbow fashion into a specific 2-factorized colouring of $K_{2n+1}$, the ND-colouring, when $n$ is large. To do this, we introduce three key new methods, as follows.

\begin{enumerate}[label = {\bfseries M\arabic{enumi}}]

\item We use our results from \cite{montgomery2018decompositions} to suitably randomize the results of \cite{MPS}. This allows us to randomly embed a $(1-\eps)n$-vertex tree into any 2-factorized $K_{2n+1}$, so that the image is rainbow and has certain random properties. These properties allow us to apply a case-appropriate \emph{finishing lemma} with the uncovered colours and vertices.\label{T2}
\item We use a new implementation of absorption to embed a small part of $T$ while using some vertices in a random subset of $V(K_{2n+1})$ and exactly the colours in a random subset of $C(K_{2n+1})$. This uses different \emph{absorption structures} for trees in Case A and in Case B, and in each case gives the finishing lemma for that case.\label{T1}
\item We use an entirely new, deterministic, embedding for trees in Case C.\label{T3}
\end{enumerate}

 For trees in Cases A and B, we start by finding a random rainbow copy of most of the tree using~\ref{T2}, as outlined in Section~\ref{sec:T2}. Then, we embed the rest of the tree using uncovered vertices and exactly the unused colours using~\ref{T1}, which gives a finishing lemma for each case. These finishing lemmas are discussed in Section~\ref{sec:T1}. We use \ref{T3} to embed trees in Case C, which is essentially independent of our embeddings of trees in Cases A and B. This method is outlined in Section \ref{sec:T3}. In Section~\ref{sec:overhead}, we state our main lemmas and theorems, and prove Theorem~\ref{Theorem_Ringel_proof} subject to these results.

The rest of the paper is structured as follows. Following details of our notation, in Section~\ref{sec:prelim} we recall and prove various preliminary results.  We then prove the finishing lemma for Case A in Section~\ref{sec:finishA} and the finishing lemma for Case B in Section~\ref{sec:finishB} (together giving \ref{T1}). In Section~\ref{sec:almost}, we give our randomized rainbow embedding of most of the tree (\ref{T2}). In Section~\ref{sec:lastC}, we embed the trees in Case C with a deterministic embedding (\ref{T3}). Finally, in Section~\ref{sec:conc}, we make some concluding remarks.

\subsection{\ref{T2}: Embedding almost all of the tree randomly in Cases A and B}\label{sec:T2}
For a tree $T$ in Case A or B, we carefully choose a large subforest, $T'$ say, of $T$, which contains almost all the edges of $T$. We find a rainbow copy $\hat{T}'$ of $T'$ in the ND-colouring of $K_{2n+1}$ (which exists due to~\cite{MPS}), before applying a finishing lemma to extend $\hat{T}'$ to a rainbow copy of $T$. Extending to a rainbow copy of $T$ is a delicate business --- we must use exactly the $n-e(\hat{T})$ unused colours. Not every rainbow copy of $T'$ will be extendable to a rainbow copy of $T$. However, by combining our methods in \cite{montgomery2018decompositions} and \cite{MPS}, we can take a random rainbow copy $\hat{T}'$ of $T'$ and show that it is likely to be extendable to a rainbow copy of $T$. Therefore, some rainbow copy of $T$ must exist in the ND-colouring of $K_{2n+1}$.

As $\hat{T}'$ is random, the sets $\bar{V}:=V(K_{2n+1})\setminus V(\hat{T})$ and $\bar{C}:=C(K_{2n+1})\setminus C(\hat{T})$ will also be random.  The distributions of $\bar{V}$ and $\bar{C}$ will be complicated, but we will not need to know them. It will suffice that there will be large (random) subsets $V\subset \bar{V}$ and $C\subset \bar{C}$ which each do have a nice, known, distribution.
Here, for example, $V\subset V(K_{2n+1})$ has a nice distribution if there is some $q$ so that each element of $V(K_{2n+1})$ appears independently at random in $V$ with probability $q$ --- we say here that $V$ is \emph{$q$-random} if so, and analogously we define a $q$-random subset $C\subset C(K_{2n+1})$ (see Section~\ref{sec:prob}). A natural combination of the techniques in~\cite{montgomery2018decompositions,MPS} gives the following.
\begin{theorem}\label{Sketch_Near_Embedding} For each $\epsilon>0$, the following holds for sufficiently large $n$.
Let $K_{2n+1}$ be $2$-factorized and let $T'$ be a forest on $(1-\epsilon)n$ vertices. Then, there is a randomized subgraph $\hat{T}'$ of $K_{2n+1}$ and random subsets $V\subset V(K_{2n+1})\setminus V(\hat{T}')$ and $C\subset C(K_{2n+1})\setminus C(\hat{T}')$ such that the following hold for some $p:=p(T')$ (defined precisely in Theorem~\ref{nearembedagain}).
\stepcounter{propcounter}
\begin{enumerate}[label = {\bfseries \Alph{propcounter}\arabic{enumi}}]
\item $\hat{T}'$ is a rainbow copy of $T'$ with high probability.\label{woo1}
\item $V$ is $(p+\eps)/6$-random and $C$ is $(1-\epsilon)\eps$-random. ($V$ and $C$ may depend on each other.)\label{woo2}
\end{enumerate}
\end{theorem}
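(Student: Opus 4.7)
The plan is to reduce to a robust variant of the rainbow embedding theorem of \cite{MPS} using the randomization framework of \cite{montgomery2018decompositions}. The clean way to guarantee the product distributions on $V$ and $C$ is to sample them \emph{before} embedding: first, draw $V$ as a $(p+\eps)/6$-random subset of $V(K_{2n+1})$ and $C$ as a $(1-\eps)\eps$-random subset of $C(K_{2n+1})$ (using independent randomness -- the statement allows dependence, but I see no reason to couple them); second, embed $T'$ as a rainbow subgraph inside $V(K_{2n+1})\setminus V$ using only colours from $C(K_{2n+1})\setminus C$. If the second step succeeds with high probability, then the marginals of $V$ and $C$ are automatically correct (they were sampled before $\hat T'$ was defined) and the containments in \ref{woo2} are automatic. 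On the low-probability failure event one simply sets $\hat T'=\emptyset$.

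Step two is where most of the work lies. The vertex sampling breaks each of the $n$ colour classes (each of which is a $2$-factor of $K_{2n+1}$, i.e., a vertex-disjoint union of cycles) into a union of paths on $V(K_{2n+1})\setminus V$, and the colour sampling removes some classes entirely. The induced edge-coloured structure on the remaining vertices and colours is therefore no longer $2$-factorized; the most one can hope for is a quasi-$2$-factorized colouring in which, with high probability, each surviving colour class still spans most vertices, no vertex loses too many incident edges, and the per-vertex and per-colour counts are concentrated around their expectations. The concentration machinery of \cite{montgomery2018decompositions} was designed precisely for this situation. One can then either rerun the proof of the MPS embedding theorem with the $2$-factorization hypothesis relaxed to this quasi-$2$-factorization, or, more cleanly, use the concentration estimates to extract inside $V(K_{2n+1})\setminus V$ an auxiliary $2$-factorized edge-coloured near-complete graph on slightly fewer vertices, into which $T'$ is rainbow-embedded using \cite{MPS} as a black box.

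The principal obstacle is making one of these two routes go through. The proof in \cite{MPS} uses local path-switching arguments that lean on every vertex being incident to exactly two edges of each colour, a condition which is violated after random deletion; tracking these switches in the quasi-$2$-factorized setting, and in particular controlling the small number of ``bad'' vertices at which a switch could fail, is the delicate part. This is exactly where the techniques of \cite{montgomery2018decompositions} need to be brought to bear. The parameter $p=p(T')$ is then forced by the vertex budget: since the vertices of $V$ are withheld from the embedding, we need $(2n+1)-\E|V|\ge (1+\Omega(\eps))|V(T')|$ comfortably, which bounds $p$ from above in terms of the vertex-density of $T'$; the factor $1/6$ in the density $(p+\eps)/6$ comes from the slack the robust embedding needs.

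To finish, conclusion \ref{woo1} follows by combining the high-probability quasi-randomness of the complementary structure with the (high-probability or deterministic) success of the embedding there, via a union bound. Conclusion \ref{woo2} is essentially a bookkeeping matter built into the construction: $V$ and $C$ are sampled with the stated marginal distributions and the subsequent embedding is designed to avoid them, so the containments $V\subset V(K_{2n+1})\setminus V(\hat T')$ and $C\subset C(K_{2n+1})\setminus C(\hat T')$ hold unconditionally, while the marginals of $V$ and $C$ are unaffected by any later conditioning because they were generated first.
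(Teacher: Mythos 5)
Your high-level packaging (sample first, embed avoiding the samples, set $\hat T'=\emptyset$ on failure) is sound as far as it goes, but it inverts the paper's construction in a way that leaves the entire difficulty unproved. The paper does \emph{not} pre-sample $V$ and $C$: it obtains them as the leftover of an ``overshoot and randomly delete'' star-forest embedding (Lemma~\ref{Lemma_randomized_star_forest} inside Lemma~\ref{Lemma_extending_with_large_stars}) --- one embeds a star forest with $(1-\eta)n$ edges, $\eta$ tiny, using almost \emph{all} vertices and colours via the deterministic Lemma~\ref{Lemma_star_forest}, and then deletes each leaf independently; the freed vertex and the freed colour of a deleted leaf are tied together by the deleted edge, which is exactly why the theorem only promises possibly-dependent $V$ and $C$, and why the paper separately carves out the small independent sets $V_0,C_0$ at the very first (small-tree) stage. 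Your plan, if it worked, would prove the strictly stronger statement that $V$ and $C$ can be taken independent --- something the paper explicitly says its methods cannot achieve in general and which motivates the entire Case~C detour.

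The concrete gap is your ``step two''. For an arbitrary $(1-\eps)n$-vertex forest $T'$, almost all of $T'$ may consist of large stars, so the star-forest stage must realise close to $(1-p)n$ rainbow star edges. The only tool available (Lemma~\ref{Corollary_MPS_kdisjstars}) requires minimum degree $(1-\eps')(2n+1)$ together with $\sum d_i\le(1-3\eps')(2n+1)/2$; after forbidding a $(p+\eps)/6$-random vertex set and a $(1-\eps)\eps$-random colour set one is forced to take $\eps'\approx(p+\eps)/6+\eps$, and the resulting bound $\sum d_i\le(1-(p+\eps)/2-3\eps)n$ fails against $\sum d_i\ge(1-(p+\eps)/2-o(\eps))n$, so the lemma does not apply and no replacement is supplied. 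Your alternative of extracting an exactly $2$-factorized auxiliary graph inside $V(K_{2n+1})\setminus V$ is impossible: deleting a positive-density random vertex set turns each colour class into a union of paths whose endpoints differ from colour to colour, so no large common vertex set is $2$-regular in every colour. Moreover, even granting the star stage, the subsequent matching stages (Lemmas~\ref{Lemma_extending_with_matchings}, \ref{Lemma_sat_matching_random_embedding}, \ref{Lemma_nearly_perfect_matching}) need their vertex and colour pools to be random in order to extract the nearly-regular, globally bounded bipartite graph of Lemma~\ref{Lemma_nearly_regular_subgraph}; the leftover of a deterministic star embedding has no such property, so the chain breaks. Finally, your account of $p$ is not right: the vertex budget $(2n+1)-\E|V|\ge\tfrac56(2n+1)$ is never binding; $pn$ is the size of $T'$ after stripping leaves at high-degree vertices, i.e.\ it measures the portion of $T'$ embedded via near-perfect matchings, and the spare $p$-proportional random vertex set comes from embedding those matchings more efficiently using \cite{montgomery2018decompositions}, not from a counting constraint.
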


We will apply a variant of Theorem~\ref{Sketch_Near_Embedding} (see Theorem~\ref{nearembedagain}) to subforests of trees in Cases A and B, and there we will have that $p\gg \eps$. Note that, then, $C$ will likely be much smaller than $V$. This reflects that $\hat{T}'\subset K_{2n+1}$ will contain fewer than $n$ out of $2n+1$ vertices, while $C(K_{2n+1})\setminus C(\hat{T}')$ contains exactly $n-e(\hat{T}')$ out of $n$ colours.

As explained in~\cite{MPS}, in general the sets $V$ and $C$ cannot be independent, and this is in fact why we need to treat trees in Case C separately. In order to finish the embedding in Cases A and B, we need, essentially, to find \emph{some} independence between the sets $V$ and $C$ (as discussed below). The embedding is then as follows for some small $\delta$ governing the case division, with $\eps=\delta^6$ in Case A and $\bar{\eps}\oldgg \delta$ in Case B. Given an $(n+1)$-vertex tree $T$ in Case A or B we delete either $\epsilon n$ non-neighbouring leaves (Case A) or $\bar{\epsilon} n/k$ vertex-disjoint bare paths with length $k=\delta^{-1}$ (Case B) to obtain a forest $T'$. Using (a variant of) Theorem~\ref{Sketch_Near_Embedding}, we find a randomized rainbow copy $\hat{T}'$ of $T'$ along with some random vertex and colour sets and apply a finishing lemma to extend this to a rainbow copy of $T$.

After a quick note on the methods in~\cite{montgomery2018decompositions} and~\cite{MPS}, we will discuss the finishing lemmas, and explain why we need some independence, and how much independence is needed.

\subsubsection*{Randomly embedding nearly-spanning trees}
In~\cite{MPS}, we embedded a $(1-\epsilon)n$-vertex tree $T$ into a 2-factorization of $K_{2n+1}$ by breaking it down mostly into large stars and large matchings. For each of these, we embedded the star or matching using its own random set of vertices and random set of colours (which were not necessarily independent of each other). In doing so, we used almost all of the colours in the random colour set, but only slightly less than one half of the vertices in the random vertex set. (This worked as we had more than twice as many vertices in $K_{2n+1}$ than in $T$.)  For trees not in Case C, a substantial portion of a large subtree was broken down into matchings. By embedding these matchings more efficiently, using results from~\cite{montgomery2018decompositions}, we can use a smaller random vertex set. This reduction allows us to have, disjointly from the embedded tree, a large random vertex subset $V$.

More precisely, where $q,\epsilon\oldgg n^{-1}$, using a random set $V$ of $2qn$ vertices and a random set $C$ of $qn$ colours, in~\cite{MPS} we showed that, with high probability, from any set $X\subset V(K_{2n+1})\setminus V$ with $|X|\leq (1-\epsilon)qn$, there was a $C$-rainbow matching from $X$ into $V$. Dividing $V$ randomly into two sets $V_1$ and $V_2$, each with $qn$ vertices, and using the results in~\cite{montgomery2018decompositions}, we can use $V_1$ to find the $C$-rainbow matching (see Lemma~\ref{Lemma_MPS_nearly_perfect_matching}).
Thus, we gain the random set $V_2$ of $qn$ vertices which we do not use for the embedding of $T$, and we can instead use it to extend this to an $(n+1)$-vertex tree in $K_{2n+1}$.
Roughly speaking, if in total $pn$ vertices of $T$ are embedded using matchings, then we gain altogether a random set of around $pn$ vertices.

If a tree is not in Case C, then the subtree/subforest we embed using these techniques has plenty of vertices embedded using matchings, so that in this case we will be able to take $p\geq 10^{-3}$ when we apply the full version of Theorem~\ref{Sketch_Near_Embedding} (see Theorem~\ref{nearembedagain}). Therefore, we will have many spare vertices when adding the remaining $\eps n$ vertices to the copy of $T$. Our challenges are firstly that we need to use exactly all the colours not used on the copy of $T'$ and secondly that there can be a lot of dependence between the sets $V$ and $C$. We first discuss how we ensure that we use every colour.

\subsection{\ref{T1}: Finishing the embedding in Cases A and B}\label{sec:T1}

To find trees using every colour in an ND-coloured $K_{2n+1}$ we prove two \emph{finishing lemmas} (Lemma~\ref{lem:finishA} and~\ref{lem:finishB}).  These lemmas say that, for a given randomized set of vertices $V$ and a given randomized set of colours $C$, we can find a rainbow matching/path-forest which uses exactly the colours in $C$, while using some of the vertices from $V$.  These lemmas are used to finish the embedding of the trees in Cases A and B, where the last step is to (respectively) embed a matching or path-forest that we removed from the tree $T$ to get the forest $T'$. Applying (a version of) Theorem~\ref{Sketch_Near_Embedding} we get a random rainbow copy $\hat{T'}$ of $T'$ and random sets $\bar{V}=V(K_n)\setminus V(\hat{T}')$ and $\bar{C}=C(K_n)\setminus C(\hat{T}')$.

In order to apply the case-appropriate finishing lemma, we need some independence between $\bar{V}$ and $\bar{C}$, for reasons we now discuss for Case A, and then Case B. Next, we discuss the independence property we use and how we achieve this independence. (Essentially, this property is that $\bar{V}$ and $\bar{C}$ contain two small random subsets which are independent of each other.) Finally, we discuss the absorption ideas for Case A and Case B.

\subsubsection*{Finishing with matchings (for Case A)}
In Case A, we take the $(n+1)$-vertex tree $T$ and remove a large matching of leaves, $M$ say, to get a tree, $T'$ say, that can be embedded using Theorem~\ref{Sketch_Near_Embedding}. This gives a random copy, $\hat{T}'$ say, of $T'$ along with random sets $V\subset V(K_{2n+1})\setminus V(\hat{T}')$ and $C\subset C(K_{2n+1})\setminus C(\hat{T}')$
 which are $(p+\eps)/6$-random and $(1-\eps)\eps$-random respectively. For trees not in Case C we will have $p\oldgg \eps$.
 Let $X\subset V(\hat{T}')$ be the set of vertices we need to add neighbours to as leaves to make $\hat{T}'$ into a copy of $T$.

We would like to find a perfect matching from $X$ to $\bar{V}:=V(K_{2n+1})\setminus V(\hat{T}')$ with exactly the colours in $\bar{C}:=C(K_{2n+1})\setminus C(\hat{T}')$, using that $V\subset \bar{V}$ and $C\subset \bar{C}$. (A perfect matching from $X$ to $\bar{V}$ is such a matching covering every vertex in $X$.)
 Unfortunately, there may be some $x\in X$ with no edges with colour in $\bar{C}$ leading to $\bar{V}$. (If $C$ and $V$ were independent, then this would not happen with high probability.) If this happens, then the desired matching will not exist.

In Case B, a very similar situation to this may occur, as discussed below, but in Case A there is another potential problem. There may be some colour $c\in \bar{C}$ which does not appear between $X$ and $\bar{V}$, again preventing the desired matching existing. This we will avoid by carefully embedding a small part of $T'$ so that every colour appears between $X$ and $\bar{V}$ on plenty of edges.

\subsubsection*{Finishing with paths (for Case B)}
In Case B, we take the $(n+1)$-vertex tree $T$ and remove a set of vertex-disjoint bare paths to get a forest, $T'$ say, that can be embedded using Theorem~\ref{Sketch_Near_Embedding}. This gives a random copy, $\hat{T}'$ say, of $T'$ along with random sets $V\subset V(K_{2n+1})\setminus V(\hat{T}')$ and $C\subset C(K_{2n+1})\setminus C(\hat{T}')$
 which are $(p+\eps)/6$-random and $(1-\eps)\eps$-random respectively. For trees not in Case C we will have $p\oldgg \eps$.

 Let $\ell$ and $X=\{x_1,\ldots,x_\ell,y_1,\ldots,y_\ell\}\subset V(\hat{T}')$ be such that to get a copy of $T$ from $\hat{T}'$ we need to add vertex-disjointly a suitable path between $x_i$ and $y_i$, for each $i\in [\ell]$.
We would like to find these paths with interior vertices in $\bar{V}:= V(K_{2n+1})\setminus V(\hat{T}')$ so that their edges are collectively rainbow with exactly the colours in $\bar{C}:=C(K_{2n+1})\setminus C(\hat{T}')$, using that $V\subset \bar{V}$ $C\subset \bar{C}$. Unfortunately, there may be some $x\in X$ with no edges with colour in $\bar{C}$ leading to $V$. (If $C$ and $V$ were independent, then, again, this would not happen with high probability.) If this happens, then the desired paths will not exist.

Note that the analogous version of the second problem in Case A does not arise in Case B. Here,  it is likely that every colour appears on many edges within $V$, so that we can use any colour by putting an appropriate edge within $V$ in the middle of one of the missing paths.

\subsubsection*{Retaining some independence}
To avoid the problem common to Cases A and B, when proving our version of Theorem~\ref{Sketch_Near_Embedding} (that is, Theorem~\ref{nearembedagain}), we set aside small random sets $V_0$ and $C_0$ early in the embedding, before the dependence between colours and vertices arises. This gives us a version of Theorem~\ref{Sketch_Near_Embedding} with the additional property that, for some $\mu\oldll \eps$, there are additional random sets $V_0\subset V(K_{2n+1})\setminus (V(\hat{T}')\cup V)$ and $C_0\subset C(K_{2n+1})\setminus (C(\hat{T}')\cup C)$ such that the following holds in addition to \ref{woo1} and \ref{woo2}.

\begin{enumerate}[label = {\bfseries \Alph{propcounter}\arabic{enumi}}]\addtocounter{enumi}{2}
\item $V_0$ is a $\mu$-random subset of $V(K_{2n+1})$, $C_0$ is a $\mu$-random subset of $C(K_{2n+1})$, and they are independent of each other.\label{propq2}
\end{enumerate}

\noindent
Then, by this independence, with high probability, every vertex in $K_{2n+1}$ will have $\mu^2 n/2$ adjacent edges with colour in $C_0$ going into the set $V_0$ (see Lemma~\ref{Lemma_high_degree_into_random_set}).

To avoid the problem that only arises in Case A, consider the set $U\subset V(T')$ of vertices which need leaves added to them to reach $T$ from $T'$. By carefully embedding a small subtree of $T'$ containing plenty of vertices in $U$, we ensure that, with high probability, each colour appears plenty of times between the image of $U$ and $V_0$. That is, we have the following additional property for some $1/n\oldll \xi\oldll \mu$.

\begin{enumerate}[label = {\bfseries \Alph{propcounter}\arabic{enumi}}]\addtocounter{enumi}{3}
\item With high probability, if $Z$ is the copy of $U$ in $\hat{T}'$, then every colour in  $C(K_{2n+1})$, has at least $\xi n$ edges between $Z$ and $V_0$.\label{propq1}
\end{enumerate}

Of course, \ref{propq2} and \ref{propq1} do not show that our desired matching/path-collection exists, only that (with high probability) there is no single colour or vertex preventing its existence. To move from this to find the actual matching/path-collection we use \emph{distributive absorption}.

\subsubsection*{Distributive absorption}
To prove our finishing lemmas, we use an \emph{absorption strategy}. Absorption has its origins in work by Erd\H{o}s, Gy\'arf\'as and Pyber~\cite{EP} and Krivelevich~\cite{MKtri}, but was codified  by R\"odl, Ruci\'nski and Szemer\'edi~\cite{RRS} as a versatile technique
for extending approximate results into exact ones. For both Case A and Case B we use a new implementation of \emph{distributive absorption}, a method introduced by the first author in~\cite{montgomery2018spanning}.

To describe our absorption, let us concentrate on Case A. Our methods in Case B are closely related, and we comment on these afterwards. To recap, we have a random rainbow tree $\hat{T}'$ in the ND-colouring of  $K_{2n+1}$ and a set $X\subset V(\hat{T})$, so that we need to add a perfect matching from  $X$ into $\bar{V}=V(K_{2n+1})\setminus V(\hat{T}')$ to make $\hat{T}'$ into a copy of $T$. We wish to add this matching in a rainbow fashion using (exactly) the colours in $\bar{C}= C(K_{2n+1})\setminus C(\hat{T})$.

To use distributive absorption, we first show that for any set $\hat{C}\subset C(K_{2n+1})$ of at most 100 colours, we can find a set $D\subset \bar{C}\setminus C$ and sets $X'\subset X$ and $V'\subset \bar{V}$ with $|D|\leq 10^3$, $|V'|\leq 10^4$ and $|X'|=|D|+1$, so that the following holds.
\stepcounter{propcounter}
\begin{enumerate}[label = {\bfseries \Alph{propcounter}\arabic{enumi}}]
    \item Given any colour $c\in\hat{C}$, there is a perfect $(D\cup \{c\})$-rainbow matching from $X'$ to $V'$.\label{switchprop}
\end{enumerate} 

We call such a triple $(D,X',V')$ a \emph{switcher} for $\hat{C}$. AS $|\bar{C}|=|X|$, a perfect $(\bar{C}\setminus D)$-rainbow matching from $X\setminus X'$ into $V\setminus V'$ uses all but 1 colour in $\bar{C}\setminus D$. If we can find such a matching whose unused colour, $c$ say, lies in $\hat{C}$, then using \ref{switchprop}, we can find a perfect $(D\cup\{c\})$-rainbow matching from $X'$ to $V'$. Then, the two matchings combined form a perfect $\bar{C}$-rainbow matching from $X$ into $V$, as required.

The switcher outlined above only gives us a tiny local variability property, reducing finding a large perfect matching with exactly the right number of colours to finding a large perfect matching with one spare colour so that the unused colour lies in a small set (the set $\bar{C}$). However, by finding many switchers for carefully chosen sets $\hat{C}$, we can build this into a global variability property. These switchers can be found using different vertices and colours (see Section~\ref{sec:switcherspaths}), so that matchings found using the respective properties \ref{switchprop} can be combined in our embedding. 

We choose different sets $\hat{C}$ for which to find a switcher by using an auxillary graph as a template. This template is a \emph{robustly matchable bipartite graph} --- a bipartite graph, $K$ say, with vertex classes $U$ and $Y\cup Z$ (where $Y$ and $Z$ are disjoint), with the following property.

\begin{enumerate}[label = {\bfseries \Alph{propcounter}\arabic{enumi}}]\addtocounter{enumi}{1}
\item For any set $Z^\ast\subset Z$ with size $|U|-|Y|$, there is a perfect matching in $K$ between $U$ and $Y\cup Z^\ast$.
\end{enumerate}

Such bipartite graphs were shown to exist by the first author~\cite{montgomery2018spanning}, and, furthermore, for large $m$ and $\ell\leq m$, we can find such a graph with maximum degree at most 100, $|U|=3m$, $|Y|=2m$ and $|Z|=m+\ell$ (see Lemma~\ref{Lemma_H_graph}). 
To use the template, we take disjoint sets of colours, $C'=\{c_v:v\in Y\}$ and $C''=\{c_v:v\in Z\}$ in $\bar{C}$. For each $u\in U$, we find a switcher $(D_u,X_u,V_u)$ for the set of colours $\{c_v:v\in N_K(u)\}$. Furthermore, we do this so that the sets $D_u$ are disjoint and in $\bar{C}\setminus (C'\cup C'')$, and the sets $X_u$, and $V_u$, are disjoint and in $X$, and $\bar{V}$, respectively.  We can then show we have the following property.
\begin{enumerate}[label = {\bfseries \Alph{propcounter}\arabic{enumi}}]\addtocounter{enumi}{2}
\item For any set $C^*\subset C''$ of $m$ colours, there is a perfect $(C^*\cup C'\cup(\cup_{u\in U}D_u))$-rainbow matching from $\cup_{u\in U}X_u$ into $\cup_{u\in U}V_u$.\label{Kprop}
\end{enumerate}
Indeed, to see this, take any set of $C^*\subset C''$ of $m$ colours, let $Z^*=\{v:c_v\in C^*\}$ and note that $|Z^*|=m$. By \ref{Kprop}, there is a perfect matching in $K$ from $U$ into $Y\cup Z^\ast$, corresponding to the function $f:U\to Y\cup Z^*$ say. For each $u\in U$, using that $(D_u,X_u,V_u)$ is a switcher for $\{c_v:v\in N_K(u)\}$ and $uf(u)\in E(K)$, find a perfect $(D_u\cup \{c_{f(u)}\})$-rainbow matching $M_u$ from $X_u$ to $V_u$.
As the sets $D_u$, $X_u$, $V_u$, $u\in U$, are disjoint, $\cup_{u\in U}M_u$ is a perfect $(C^*\cup C'\cup(\cup_{u\in U}D_u))$-rainbow matching from $\cup_{u\in U}X_u$ into $\cup_{u\in U}V_u$, as required.

Thus, we have a set of colours $C''$ from which we are free to use any $\ell$ colours, and then use the remaining colours together with $C'\cup(\cup_{u\in U}D_u)$ to find a perfect rainbow matching from $\cup_{u\in U}X_u$ into $\cup_{u\in U}V_u$. By letting $m$ be as large as allowed by our construction methods, and $C''$ be a random set of colours, we have a useful \emph{reservoir} of colours, so that we can find a structure in $T$ using $\ell$ colours in $C''$, and then finish by attaching a matching to $\cup_{u\in U}X_u$.


We have two things to consider to fit this final step into our proof structure, which we discuss below. Firstly, we can only absorb colours in $C''$, so after we have covered most of the colours, we need to cover the unused colours outside of $C''$ (essentially achieved by \ref{propp2} below). Secondly, we find the switchers greedily in a random set. There are many more unused colours from this set than we can absorb, and the unused colours no longer have good random properties, so we also need to reduce the unused colours to a number that we can absorb (essentially achieved by \ref{propp1} below).

\subsubsection*{Creating our finishing lemmas using absorption}
To recap, we wish to find a perfect $\bar{C}$-rainbow matching from $X$ into $\bar{V}$. To do this, it is sufficient to find partitions $X=X_1\cup X_2\cup X_3$, $\bar{V}=V_1\cup V_2\cup V_3$ and $\bar{C}=C_1\cup C_2\cup C_3$ with the following properties.
\stepcounter{propcounter}
\begin{enumerate}[label = {\bfseries \Alph{propcounter}\arabic{enumi}}]
\item There is a perfect $C_1$-rainbow matching from $X_1$ into $V_1$. \label{propp1}
\item Given any set of colours $C'\subset C_1$ with $|C'|\leq |C_1|-|X_1|$, there is a perfect $(C_2\cup C')$-rainbow matching from $X_2$ into $V_2$ which uses each colour in $C'$.\label{propp2}
\item Given any set of colours $C''\subset C_2$ with size $|X_3|-|C_3|$, there is a perfect  $(C''\cup C_3)$-rainbow matching from $X_3$ into $V_3$.\label{propp3}
\end{enumerate}
Finding such a partition requires the combination of all our methods for Case A. In brief, however, we develop \ref{propp1} using a result from~\cite{montgomery2018decompositions} (see Lemma~\ref{Lemma_MPS_nearly_perfect_matching}), we develop \ref{propp2} using the condition \ref{propq1}, and we develop \ref{propp3} using the distributive absorption strategy outlined above.

If we can find such a partition, then we can easily show that the matching we want must exist. Indeed, given such a partition, then, using \ref{propp1}, let $M_1$ be a perfect $C_1$-rainbow matching from $X_1$ into $V_1$, and let $C'=C_1\setminus C(M_1)$. Using~\ref{propp2}, let $M_2$ be a perfect  $(C_2\cup C')$-rainbow matching from $X_2$ into $V_2$ which uses each colour in $C'$, and let $C''=(C_2\cup C')\setminus C(M)=C_2\setminus C(M)$. Finally, noting that $|C''|+|C_3|=|\bar{C}|-|X_1|-|X_2|=|X_3|$, using~\ref{propp3}, let $M_3$ be a perfect $(C''\cup C_3)$-rainbow matching from $X_3$ into $V_3$. Then, $M_1\cup M_2\cup M_3$ is a $\bar{C}$-rainbow matching from $X$ into $\bar{V}$.

The above outline also lies behind our embedding in Case B, where we finish instead by embedding $\ell$ paths vertex-disjointly between certain vertex pairs, for some $\ell$. Instead of the partition $X_1\cup X_2\cup X_3$ we have a partition $[\ell]=I_1\cup I_2\cup I_3$, and, instead of each matching from $X_i$ to $V_i$, $i\in [3]$, we find a set of vertex-disjoint $x_j,y_j$-paths, $j\in I_i$, with interior vertices in $V_i$ which are collectively $C_i$-rainbow. The main difference is how we construct switchers using paths instead of matchings (see Section~\ref{sec:switchers}).

\subsection{\ref{T3}: The embedding in Case C}\label{sec:T3}
After large clusters of adjacent leaves are removed from a tree in Case C, few vertices remain. We remove these large clusters, from the tree, $T$ say, to get the tree $T'$, and carefully embed $T'$ into the ND-colouring using a deterministic embedding. The image of this deterministic embedding occupies a small interval in the ordering used to create the ND-colouring. Furthermore, the embedded vertices of $T'$ which need leaves added to create a copy of $T$ are well-distributed within this interval. These properties will allow us to embed the missing leaves using the remaining colours. This is given more precisely in Section~\ref{sec:caseC}, but in order to illustrate this in the easiest case, we will give the embedding when there is exactly one vertex with high degree.

Our embedding in this case is rather simple. Removing the leaves incident to a very high degree vertex, we embed the rest of the tree into $[n]$ so that the high degree vertex is embedded to 1. The missing leaves are then embedded into $[2n+1]\setminus [n]$ using the unused colours.

\begin{theorem}[One large vertex]\label{Theorem_one_large_vertex}
Let $n\geq 10^6$. Let $K_{2n+1}$ be $ND$-coloured, and let $T$ be an $(n+1)$-vertex tree containing a vertex $v_1$ which is adjacent to $\geq 2n/3$ leaves.
Then, $K_{2n+1}$ contains a rainbow copy of $T$.
\end{theorem}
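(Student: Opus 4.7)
The plan is to embed the pruned tree $T' := T - L$, where $L$ is the set of $k \geq 2n/3$ leaves adjacent to $v_1$, into the initial segment $[n] = \{1,2,\ldots,n\}$ of $V(K_{2n+1}) = \{0,1,\ldots,2n\}$ in a rainbow fashion with $\phi(v_1)=1$, and then attach each missing leaf of $v_1$ to the vertex $1$ using an edge of an unused colour that goes to a vertex in $\{0\}\cup\{n+1,\ldots,2n\}$. Writing $m = e(T') = n-k$, the crucial quantitative fact is that $|V(T')| = m+1 \leq n/3+1$, so $T'$ is at most a third of $[n]$ in size, which leaves ample room for a greedy argument.

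For the embedding of $T'$, I would order its vertices $v_1,v_2,\ldots,v_{m+1}$ so that each $v_i$ with $i\geq 2$ has a unique parent $p(v_i) \in \{v_1,\ldots,v_{i-1}\}$ (a BFS/DFS order from $v_1$) and set $\phi(v_1) = 1$. When placing $\phi(v_i)$, observe that for any $u \in [n]$ the edge $\{u,\phi(p(v_i))\}$ lies inside $[n]$ and so receives ND-colour $|u - \phi(p(v_i))|$. Thus the forbidden choices for $\phi(v_i)$ consist of the $i-1$ already-used vertices together with the values $\phi(p(v_i)) \pm c$ for each of the $i-2$ colours $c$ used so far, giving at most $(i-1) + 2(i-2) = 3i - 5$ forbidden values. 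For $i \leq m+1 \leq n/3 + 1$ this is at most $3m - 2 < n$, so a valid choice always exists in $[n]$.

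Let $C' \subset [n]$ be the set of $m$ colours used, and $C'' = [n]\setminus C'$ the $k = n-m$ unused colours. For each $c \in C''$ I attach a leaf of $v_1$ at a specific vertex outside $[n]$: vertex $0$ if $c=1$; vertex $2n+2-c \in \{n+3,\ldots,2n\}$ if $c \in \{2,\ldots,n-1\}$; and vertex $n+1$ if $c = n$. A direct computation of the ND-colour of $\{1,j\}$ shows each such edge has exactly colour $c$, and the chosen target vertices are pairwise distinct (as they come from an injective rule on $c$) and all lie in $\{0,n+1,\ldots,2n\}$, hence are disjoint from the image of $T'$. Combining the rainbow embedding of $T'$ with these $k$ attached leaves gives a rainbow copy of $T$ using every colour in $[n]$.

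The only step requiring real care is the greedy inequality $3i - 5 < n$; this is precisely why the hypothesis $|L| \geq 2n/3$ (forcing $m \leq n/3$) is needed, and it is trivially satisfied with the bound $n \geq 10^6$. Everything else — that each unused colour $c$ corresponds to a unique unused vertex outside $[n]$, and that these vertices do not collide with the image of $T'$ — follows immediately from the explicit form of the ND-colouring restricted to edges incident to vertex $1$.
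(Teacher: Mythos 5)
Your proposal is correct and follows essentially the same route as the paper: prune the $\geq 2n/3$ leaves at $v_1$, greedily embed the remaining tree (of size $\leq n/3+1$) into $[n]$ rooted at $1$ using the fact that each used vertex and each used colour forbids $O(1)$ choices, then attach the missing leaves to $1$ via the vertices $2n+2-c$ for the unused colours $c$, which lie outside $[n]$. Your explicit case split for $c=1$ and $c=n$ and the per-step count $3i-5$ are just slightly more careful renderings of the paper's identical argument.
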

\begin{proof}
See Figure~\ref{Figure_1_vertex} for an illustration of this proof.
Let $T'$ be $T$ with the neighbours of $v_1$ removed and let $m=|T'|$. By assumption, $|T'|\leq n/3+1$. Order the vertices of $T'-v_1$ as $v_2, \dots, v_{m}$ so that $T[v_1, \dots, v_i]$ is a tree for each $i\in [m]$. Embed $v_1$ to $1$ in $K_{2n+1}$, and then greedily embed $v_2, \dots, v_m$ in turn to some vertex in $[n]$ so that the copy
of $T'$ which is formed is rainbow in $K_{2n+1}$. This is possible since at each step at most $|T'|\leq n/3$ of the vertices in $[n]$ are occupied, and at most $e(T')\leq n/3-1$ colours are used. Since the $ND$-colouring has 2 edges of each colour adjacent to each vertex,
this forbids at most $n/3+2(n/3-1)=n-2$ vertices in $[n]$. Thus, we can embed each $v_{i}$, $2\leq i\leq m$ using an unoccupied vertex in $[n]$ so that the edge from $v_i$ to $v_1, \ldots, v_{i-1}$ has a colour that we have not yet used. Let $S'$ be the resulting rainbow copy of $T'$, so that $V(S')\subset [n]$.

Let $S$ be $S'$ together with the edges between $1$ and $2n+2-c$ for every $c\in [n]\setminus C(S')$. Note that the neighbours added are all bigger than $n$, and so the resulting graph is a tree. There are exactly $n-e(T')$ edges added, so $S$ is a copy of $T$. Finally, for each $c\in [n]\setminus C(S')$, the edge from $1$ to $2n+2-c$ is colour $c$, so the resulting tree is rainbow.
\end{proof}

\begin{figure}[h]

\vspace{-0.1cm}

\begin{center}
{
\begin{tikzpicture}[scale=0.7,define rgb/.code={\definecolor{mycolor}{rgb}{#1}},
                    rgb color/.style={define rgb={#1},mycolor},reflect at xaxis/.style={xshift=1cm,xscale=-1,xshift=-1*1cm}]


\begin{scope}[reflect at xaxis,rotate=0]

\draw [thick,rgb color={1,0,0}, rotate = -4.9315068493*0] (0:4) to [in={4.9315068493*1+180},out=180] ({4.9315068493 *1}:4);
\draw [thick,rgb color={1,0.166666666666667,0}, rotate = -4.9315068493*2] (0:4) to [in={4.9315068493*2+180},out=180] ({4.9315068493 *2}:4);
\draw [thick,rgb color={1,0.333333333333333,0}, rotate = 4.9315068493*2] (0:4) to [in={4.9315068493*3+180},out=180] ({4.9315068493 *3}:4);
\draw [thick,rgb color={1,0.5,0}, rotate = 4.9315068493*1] (0:4) to [in={4.9315068493*4+180},out=180] ({4.9315068493 *4}:4);
\draw [thick,rgb color={1,0.666666666666667,0}, rotate = -4.9315068493*5] (0:4) to [in={4.9315068493*5+180},out=180] ({4.9315068493 *5}:4);
\draw [thick,rgb color={1,0.833333333333333,0}, rotate = 4.9315068493*2] (0:4) to [in={4.9315068493*6+180},out=180] ({4.9315068493 *6}:4);
\draw [thick,rgb color={1,1,0}, rotate = -4.9315068493*7] (0:4) to [in={4.9315068493*7+180},out=180] ({4.9315068493 *7}:4);
\draw [thick,rgb color={0.833333333333333,1,0}, rotate = 4.9315068493*2] (0:4) to [in={4.9315068493*8+180},out=180] ({4.9315068493 *8}:4);
\draw [thick,rgb color={0.666666666666667,1,0}, rotate = -4.9315068493*9] (0:4) to [in={4.9315068493*9+180},out=180] ({4.9315068493 *9}:4);
\draw [thick,rgb color={0.5,1,0}, rotate = -4.9315068493*10] (0:4) to [in={4.9315068493*10+180},out=180] ({4.9315068493 *10}:4);
\draw [thick,rgb color={0.333333333333333,1,0}, rotate = -4.9315068493*11] (0:4) to [in={4.9315068493*11+180},out=180] ({4.9315068493 *11}:4);
\draw [thick,rgb color={0.166666666666667,1,0}, rotate = -4.9315068493*12] (0:4) to [in={4.9315068493*12+180},out=180] ({4.9315068493 *12}:4);
\draw [thick,rgb color={0,1,0}, rotate = -4.9315068493*13] (0:4) to [in={4.9315068493*13+180},out=180] ({4.9315068493 *13}:4);
\draw [thick,rgb color={0,1,0.166666666666667}, rotate = -4.9315068493*14] (0:4) to [in={4.9315068493*14+180},out=180] ({4.9315068493 *14}:4);
\draw [thick,rgb color={0,1,0.333333333333333}, rotate = -4.9315068493*15] (0:4) to [in={4.9315068493*15+180},out=180] ({4.9315068493 *15}:4);
\draw [thick,rgb color={0,1,0.5}, rotate = -4.9315068493*16] (0:4) to [in={4.9315068493*16+180},out=180] ({4.9315068493 *16}:4);
\draw [thick,rgb color={0,1,0.666666666666667}, rotate = -4.9315068493*17] (0:4) to [in={4.9315068493*17+180},out=180] ({4.9315068493 *17}:4);
\draw [thick,rgb color={0,1,0.833333333333333}, rotate = -4.9315068493*18] (0:4) to [in={4.9315068493*18+180},out=180] ({4.9315068493 *18}:4);
\draw [thick,rgb color={0,1,1}, rotate = -4.9315068493*19] (0:4) to [in={4.9315068493*19+180},out=180] ({4.9315068493 *19}:4);
\draw [thick,rgb color={0,0.833333333333333,1}, rotate = -4.9315068493*20] (0:4) to [in={4.9315068493*20+180},out=180] ({4.9315068493 *20}:4);
\draw [thick,rgb color={0,0.666666666666667,1}, rotate = -4.9315068493*21] (0:4) to [in={4.9315068493*21+180},out=180] ({4.9315068493 *21}:4);
\draw [thick,rgb color={0,0.5,1}, rotate = -4.9315068493*22] (0:4) to [in={4.9315068493*22+180},out=180] ({4.9315068493 *22}:4);
\draw [thick,rgb color={0,0.333333333333333,1}, rotate = -4.9315068493*23] (0:4) to [in={4.9315068493*23+180},out=180] ({4.9315068493 *23}:4);
\draw [thick,rgb color={0,0.166666666666667,1}, rotate = -4.9315068493*24] (0:4) to [in={4.9315068493*24+180},out=180] ({4.9315068493 *24}:4);
\draw [thick,rgb color={0,0,1}, rotate = -4.9315068493*25] (0:4) to [in={4.9315068493*25+180},out=180] ({4.9315068493 *25}:4);
\draw [thick,rgb color={0.166666666666667,0,1}, rotate = -4.9315068493*26] (0:4) to [in={4.9315068493*26+180},out=180] ({4.9315068493 *26}:4);
\draw [thick,rgb color={0.333333333333333,0,1}, rotate = -4.9315068493*27] (0:4) to [in={4.9315068493*27+180},out=180] ({4.9315068493 *27}:4);
\draw [thick,rgb color={0.5,0,1}, rotate = -4.9315068493*28] (0:4) to [in={4.9315068493*28+180},out=180] ({4.9315068493 *28}:4);
\draw [thick,rgb color={0.666666666666667,0,1}, rotate = -4.9315068493*29] (0:4) to [in={4.9315068493*29+180},out=180] ({4.9315068493 *29}:4);
\draw [thick,rgb color={0.833333333333333,0,1}, rotate = -4.9315068493*30] (0:4) to [in={4.9315068493*30+180},out=180] ({4.9315068493 *30}:4);
\draw [thick,rgb color={1,0,1}, rotate = -4.9315068493*31] (0:4) to [in={4.9315068493*31+180},out=180] ({4.9315068493 *31}:4);
\draw [thick,rgb color={1,0,0.833333333333333}, rotate = -4.9315068493*32] (0:4) to [in={4.9315068493*32+180},out=180] ({4.9315068493 *32}:4);
\draw [thick,rgb color={1,0,0.666666666666667}, rotate = -4.9315068493*33] (0:4) to [in={4.9315068493*33+180},out=180] ({4.9315068493 *33}:4);
\draw [thick,rgb color={1,0,0.5}, rotate = -4.9315068493*34] (0:4) to [in={4.9315068493*34+180},out=180] ({4.9315068493 *34}:4);
\draw [thick,rgb color={1,0,0.333333333333333}, rotate = -4.9315068493*35] (0:4) to [in={4.9315068493*35+180},out=180] ({4.9315068493 *35}:4);
\draw [thick,rgb color={1,0,0.166666666666667}, rotate = -4.9315068493*36] (0:4) to [in={4.9315068493*36+180},out=180] ({4.9315068493 *36}:4);

\foreach \x in {0,...,72}
{
\draw [fill=black] (4.9315068493*\x:4) circle [radius=0.05cm];
}

\draw (0:4.4) node {$1$};
\draw (0:4.75) -- (0:5.25);
\draw [rotate=4.9315068493*10] (0:4.75) -- (0:5.25);
\draw (5,0) arc (0:4.9315068493*10:5);
\draw (4.9315068493*5:5.4) node {$S'$};

\end{scope}

\end{tikzpicture}\;\;\;\;\;\;
}
\end{center}

\vspace{-0.1cm}

\caption{Embedding the tree in Case C when there is  1  vertex with many leaves as neighbours.}\label{Figure_1_vertex}
\end{figure}

The above proof demonstrates the main ideas of our strategy for Case C. Notice that the above proof has two parts --- first we embed the small tree $T'$, and then we find the neighbours of the high degree vertex $v_1$. In order to ensure that the final tree is rainbow we choose the neighbours of $v_1$ in some interval $[n+1, 2n]$ which is disjoint from the copy of $T'$, and to which every colour appears from the image of $v_1$. This way, we were able to use every colour which was not present on the copy of $T'$.
When there are multiple high degree vertices $v_1, \dots, v_{\ell}$ the strategy is the same --- first we embed a small rainbow tree $T'$ containing $v_1, \dots, v_{\ell}$, then we embed the neighbours of $v_1, \dots, v_{\ell}$. This is done in Section~\ref{sec:caseC}.

\subsection{Proof of Theorem~\ref{Theorem_Ringel_proof}}\label{sec:overhead}
Here we will state our main theorems and lemmas, which are proved in later sections, and combine them to prove Theorem~\ref{Theorem_Ringel_proof}. First, we have our randomized embedding of a $(1-\epsilon)n$-vertex tree, which is proved in Section~\ref{sec:almost}. For convenience we use the following definition.

\begin{definition}
Given a vertex set $V\subset V(G)$ of a graph $G$, we say $V$ is \emph{$\ell$-replete in $G$} if $G[V]$ contains at least $\ell$ edges of every colour in $G$. Given, further, $W\subset V(G)\setminus V$, we say $(W,V)$ is \emph{$\ell$-replete in $G$} if at least $\ell$ edges of every colour in $G$ appear in $G$ between $W$ and $V$. When $G=K_{2n+1}$, we simply say that $V$ and $(W,V)$ are \emph{$\ell$-replete}.
\end{definition}

\begin{theorem}[Randomised tree embeddings]\label{nearembedagain} Let $1/n\ll  \xi\ll \mu\ll \eta\ll \eps\ll 1$ and $\xi\ll 1/k\ll \log^{-1}n$.
Let $K_{2n+1}$ be ND-coloured, let $T'$ be a $(1-\epsilon)n$-vertex forest and let $U\subset V(T')$ contain $\eps n$ vertices. Let $p$ be such that removing leaves around vertices next to $\geq k$ leaves from $T'$ gives a forest with $pn$ vertices.

Then, there is a random subgraph $\hat{T}'\subset K_{2n+1}$ and disjoint random subsets $V,V_0\subset V(K_{2n+1})\setminus V(\hat{T}')$ and $C,C_0\subset C(K_{2n+1})\setminus C(\hat{T}')$ such that the following hold.
\stepcounter{propcounter}
\begin{enumerate}[label = {\bfseries \Alph{propcounter}\arabic{enumi}}]
\item With high probability, $\hat{T}'$ is a rainbow copy of $T'$ in which, if $W$ is the copy of $U$, then $(W,V_0)$ is $(\xi n)$-replete,\label{propa1}
\item $V_0$ and $C_0$ are $\mu$-random and independent of each other, and\label{propa2}
\item $V$ is $(p+\epsilon)/6$-random and $C$ is $(1-\eta)\epsilon$-random.\label{propa3}
\end{enumerate}
\end{theorem}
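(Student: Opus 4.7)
The plan is to combine the nearly-spanning rainbow tree embedding of~\cite{MPS} with the efficient random matching embedding from~\cite{montgomery2018decompositions} (formalised later as Lemma~\ref{Lemma_MPS_nearly_perfect_matching}), while reserving the independent pair $(V_0, C_0)$ at the very start of the process, so that their joint distribution is independent by construction rather than having to be teased out of the highly coupled distribution that emerges during the embedding. Concretely, first sample $V_0\subseteq V(K_{2n+1})$ as a $\mu$-random set and, independently, $C_0\subseteq C(K_{2n+1})$ as a $\mu$-random set, and then insist that the embedding $\hat{T}'$ uses no vertex of $V_0$ and no colour of $C_0$. Since $\mu\ll\eps$ and $|T'|=(1-\eps)n$ there is ample slack for this, and the restriction immediately yields~\ref{propa2} along with $V_0\subseteq V(K_{2n+1})\setminus V(\hat{T}')$ and $C_0\subseteq C(K_{2n+1})\setminus C(\hat{T}')$ by construction.

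Next, decompose $T'$ following~\cite{MPS}: identify vertices having at least $k$ leaf neighbours, peel off the incident leaves as \emph{large stars}, and decompose the remaining $pn$-vertex subforest into further small stars and large matchings. Embed the large stars sequentially, each into its own random vertex and colour set. For each large matching of size roughly $qn$, allocate a random vertex set of size $2qn$, split uniformly as $V_1\sqcup V_2$ with $|V_1|=|V_2|=qn$, together with a random colour set of size $qn$; by Lemma~\ref{Lemma_MPS_nearly_perfect_matching} the matching can be packed inside $V_1$ alone with high probability, leaving $V_2$ entirely unused. Let $V$ be the union of the spare halves $V_2$ across all matching stages, together with the slack left by the $\eps n$ undeleted leaves. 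Because exactly $pn$ vertices are covered by matching stages, a direct accounting gives that each vertex of $V(K_{2n+1})$ lies in $V$ independently with probability $(p+\eps)/6$, and each colour lies in the leftover colour set $C$ independently with probability $(1-\eta)\eps$ (since each random colour set is nearly, but not quite, exhausted); this gives~\ref{propa3}.

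For the repleteness condition in~\ref{propa1}, embed a small initial subtree of $T'$ containing $U$ using fresh random choices, so that its image $W$, which is the copy of $U$, is independent of $V_0$. By~\ref{propa2}, for each colour $c$ the number of $c$-coloured edges of $K_{2n+1}$ with one endpoint in $W$ and the other in $V_0$ is a sum of independent indicators with mean $\Theta(\mu\eps n)\gg \xi n$; a Chernoff bound followed by a union bound over the $n$ colours yields that $(W,V_0)$ is $(\xi n)$-replete with high probability.

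The hardest part will be verifying that the \emph{marginals} of $V$ and $C$ really come out as advertised. The embedding proceeds in many correlated stages, and uniformity of $V$ across $V(K_{2n+1})$ (and of $C$ across $C(K_{2n+1})$) must be maintained by permuting vertices and colours uniformly at random between stages, and then summed across all stages via a careful accounting that matches the $pn$ matching vertices and the $\eps n$ leaf slack against the advertised factor $(p+\eps)/6$. This is a quantitative refinement of the arguments of~\cite{MPS}; once it is in place, \ref{propa1} follows routinely from \ref{propa2}.
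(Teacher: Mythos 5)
Your overall strategy is the paper's: reserve the independent pair $(V_0,C_0)$ before any vertex--colour dependence arises, decompose $T'$ into a small subtree, large stars and matchings, and recover a spare $(p+\eps)/6$-random set $V$ from the fact that Lemma~\ref{Lemma_MPS_nearly_perfect_matching} packs each matching into only half of its allocated vertex set. However, two of your steps do not hold as written. First, there is in general no small subtree of $T'$ containing all $\eps n$ vertices of $U$ (take $U$ to be the leaf set of a spider); the paper instead uses Lemma~\ref{Lemma_tree_splitting} to find a subtree of order at most $n/d$ containing merely a positive fraction of $U$, which suffices because repleteness of $(W,V_0)$ is monotone under enlarging $W$. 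Relatedly, $W$ cannot be literally independent of $V_0$, since $\hat{T}'$ must avoid $V_0$; the paper first establishes repleteness of the copy of $U$ against the complement of the embedding's host set deterministically (inside Lemma~\ref{Lemma_embedding_small_tree}) and only then intersects with an independent random set via Lemma~\ref{Lemma_inheritence_of_lower_boundedness_random}.

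Second, and more seriously, the step you yourself flag as the hardest --- that the leftover sets $V$ and $C$ have exact product ($\rho$-random) marginals --- is left unproved, and the mechanism you propose (permuting vertices and colours uniformly between stages) is not available here: the ND-colouring is a fixed, rigid colouring that is not invariant under vertex permutations, so symmetrization cannot launder the unused vertices into a $p$-random set. The paper's device is the reverse one: pre-allocate genuinely $\rho$-random vertex and colour sets to each stage \emph{before} embedding anything, so their marginals are correct by fiat, and for the star stage --- where the unused portion of the allocation must itself come out random --- embed a star forest \emph{larger} than needed and then delete a $(1-\alpha)p$-random set of leaves (Lemma~\ref{Lemma_randomized_star_forest}), so that what remains unused is exactly $p$-random by construction (albeit with $V$ and $C$ dependent on each other, which the later matching lemmas tolerate). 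Without this or an equivalent device, the marginal claims in \ref{propa3} are not established.
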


Next, we have the two finishing lemmas, which are proved in Sections~\ref{sec:finishA} and~\ref{sec:finishB} respectively.

\begin{lemma}[The finishing lemma for Case A]\label{lem:finishA} Let $1/n\ll \xi\ll \mu \ll \eta \ll\eps\ll p\leq 1$. Let $K_{2n+1}$ be 2-factorized. Suppose that $V,V_0$ are disjoint subsets of $V(K_{2n+1})$ which are $p$- and $\mu$-random respectively. Suppose that $C,C_0$ are disjoint subsets in $C(K_{2n+1})$, so that $C$ is $(1-\eta)\eps$-random, and
$C_0$ is $\mu$-random and independent of $V_0$. Then, with high probability, the following holds.

Given any disjoint sets $X,Z\subset V(K_{2n+1})\setminus (V\cup V_0)$ with $|X|=\eps n$, so that $(X,Z)$ is $(\xi n)$-replete, and any set $D\subset C(K_{2n+1})$ with $|D|=\eps n$ and $C_0\cup C\subset D$, there is a perfect $D$-rainbow matching from $X$ into $V\cup V_0\cup Z$.
\end{lemma}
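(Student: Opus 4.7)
My plan is to realize the three-piece partition strategy outlined in Section~\ref{sec:T1}: to produce partitions $X = X_1\cup X_2\cup X_3$, $V\cup V_0\cup Z = V_1\cup V_2\cup V_3$, and $D = C_1\cup C_2\cup C_3$ satisfying \ref{propp1}--\ref{propp3}, and then concatenate the matchings they produce. I would first condition on a small set of high-probability events: the standard concentration bounds giving $|V|, |V_0|, |C|, |C_0|$ within a $(1+o(1))$ factor of their expectations, and, crucially, the conclusion of Lemma~\ref{Lemma_high_degree_into_random_set}, which uses the independence guaranteed by \ref{propa2} to ensure that every vertex of $K_{2n+1}$ sends at least $\mu^2 n/2$ edges with colour in $C_0$ into $V_0$.

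Since $|D| = \eps n$ while $|C| \approx (1-\eta)\eps n$ and $|C_0|\approx \mu n$, the set $D\setminus(C\cup C_0)$ of ``adversarial'' colours has at most $2\eta\eps n$ elements. I would absorb these up-front using the repleteness of $(X,Z)$: processing the colours one at a time, pick an edge of each such colour between a fresh vertex of $X$ and a fresh vertex of $Z$. This succeeds at every step because each colour has at least $\xi n$ edges between $X$ and $Z$ and only $O(\eta\eps n)\ll \xi n$ vertices will have been used. Let $M_0$ denote the resulting rainbow matching. After this preprocessing it suffices to match $X\setminus V(M_0)$ into $V\cup V_0\cup (Z\setminus V(M_0))$ using exactly the colours in $(C\cup C_0)\setminus C(M_0)$, all of which enjoy usable random structure.

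For the main absorber I would invoke Lemma~\ref{Lemma_H_graph} to obtain a robustly matchable bipartite template $K$ with vertex classes $U$ and $Y\cup Z_K$, $\Delta(K)\le 100$, $|U|=3m$, $|Y|=2m$, $|Z_K|=m+\ell$, where $m=\Theta(\mu n)$ is chosen so that the switcher colour sets exhaust $C_0$, and $\ell = \Theta(\eta \eps n)$ is slightly bigger than the slack produced by Lemma~\ref{Lemma_MPS_nearly_perfect_matching}. Label $Y$ and $Z_K$ by disjoint colour sets $C^{\text{base}}, C^*\subset C$. For each $u\in U$ I would greedily build a switcher $(D_u, X_u, V_u)$ for $\{c_v : v\in N_K(u)\}$, with $X_u\subset X\setminus V(M_0)$, $V_u\subset V_0$, and $D_u\subset C_0$, each of constant size and disjoint across $u\in U$; the lower bound on $C_0$-coloured edges from each vertex into $V_0$ (from the conditioning) ensures this greedy construction succeeds regardless of how the adversary chose $X$. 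Setting $X_3=\bigcup_u X_u$, $V_3=\bigcup_u V_u$, $C_3=C^{\text{base}}\cup \bigcup_u D_u$, and $C_2=C^*$, the template's robust matchability delivers \ref{propp3}: for any $R^*\subset C_2$ with $|R^*|=m$, there is a perfect $(R^*\cup C_3)$-rainbow matching from $X_3$ into $V_3$.

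Finally, take $X_2\subset X\setminus(V(M_0)\cup X_3)$ of size $\ell$, $V_2\subset Z\setminus V(M_0)$ of size $\ell$, and let $X_1, V_1, C_1$ be the remaining pieces, with $V_1\subset V$ and $C_1\subset C$ by construction. Step (a): Lemma~\ref{Lemma_MPS_nearly_perfect_matching} produces a $C_1$-rainbow matching $M_1$ from $X_1$ into $V_1$, leaving a slack $C'\subset C_1$ of size at most $\ell$, so \ref{propp1} holds. Step (b): the repleteness of $(X,Z)$ supports a greedy rainbow matching $M_2$ from $X_2$ into $V_2$ that covers all of $C'$ and then fills up to $|X_2|$ edges with colours from $C_2$, giving \ref{propp2}. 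The unused part of $C_2$ is a set $R^*$ of size exactly $m$, and step (c) applies the absorber to $R^*$ to produce $M_3$, so $M_0\cup M_1\cup M_2\cup M_3$ is the desired $D$-rainbow perfect matching from $X$ into $V\cup V_0\cup Z$. The principal obstacle will be the switcher construction: the absorber must be built \emph{after} the adversarial $X$ is revealed, so we need \ref{propa2} (and the degree estimate into $V_0$ it yields) to be robust enough that the greedy construction succeeds uniformly over all admissible $X$.
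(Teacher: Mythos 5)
Your overall architecture (the three-piece partition realizing \ref{propp1}--\ref{propp3}, a robustly matchable template from Lemma~\ref{Lemma_H_graph}, switchers built from the independent pair $(V_0,C_0)$) is the right one, but two of your resource allocations fail quantitatively, and both failures are exactly where the paper has to work hardest. First, the preprocessing step: you propose to cover all of $D\setminus(C\cup C_0)$ greedily by $X$--$Z$ edges using the $(\xi n)$-repleteness, asserting that ``only $O(\eta\eps n)\ll \xi n$ vertices will have been used.'' This inequality is backwards. In the hierarchy $\xi\ll\mu\ll\eta\ll\eps$ we have $\xi n \ll \eta^2 n\leq \eta\eps n$, and $|D\setminus(C\cup C_0)|\approx \eta\eps n-\mu n$ is genuinely of order $\eta\eps n$. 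Since the colouring is locally $2$-bounded, after roughly $\xi n/4$ greedy steps a colour may have every one of its $\xi n$ guaranteed $X$--$Z$ edges blocked, so the greedy dies long before the adversarial colours are exhausted. The paper's Lemma~\ref{Lemma_saturating_matching_lemma} avoids this by matching almost all of these arbitrary colours into the large $p$-random set $V$ (via Lemma~\ref{Lemma_matching_into_random_set_using_specified_colours}, which needs no repleteness), and only the $\nu n\ll\xi n$ stragglers that survive are later covered through $(X,Z)$-repleteness in the second stage (Lemma~\ref{colourcover}).

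Second, your switchers take $V_u\subset V_0$ and justify the greedy construction solely by the degree bound ``every vertex sends $\geq\mu^2n/2$ colour-$C_0$ edges into $V_0$.'' That bound supplies the connecting colours $D_u$, but a switcher for $\{c_v:v\in N_K(u)\}$ must also contain, for each template colour $c_v$, an edge of colour $c_v$ from $X_u$ into $V_u$ -- that is how $c_v$ gets ``switched in.'' Your template colours live in $C$, and $X$ is adversarial: since only about $4\mu n$ vertices of $K_{2n+1}$ have a colour-$c$ neighbour in $V_0$, the adversary can choose $X$ of size $\eps n$ so that no vertex of $X$ has a colour-$c$ edge into $V_0$ for a given $c$. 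No switcher with target inside $V_0$ can then handle $c$. This is precisely why the paper's switchers (Lemma~\ref{absorbA}) place their target set in $V_0\cup Z$ and draw the switched-colour edges from the $(\xi n)$-repleteness of $(X,Z)$ -- which also forces the template's flexible colour class to be the small set $D_2\subset C_0$ rather than a $\Theta(\eps n)$-sized subset of $C$, since repleteness can only support constantly many switchers per $\Theta(\xi n)$ vertices consumed. You correctly flagged the switcher construction as the principal obstacle, but the resource you need there is the repleteness of $(X,Z)$, not the $C_0$-degree into $V_0$.
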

Note that in the following lemma we implicitly assume that $m$ is an integer. That is, we assume an extra condition on $n$, $k$ and $\eps$. We remark on this further in Section~\ref{sec:not}.
\begin{lemma}[The finishing lemma for Case B]\label{lem:finishB} Let $1/n\ll  1/k\ll \mu\ll \eta \ll\eps\ll p\leq 1$ be such that $k = 7\mod 12$ and $695|k$. Let $K_{2n+1}$ be 2-factorized. Suppose that $V,V_0$ are disjoint subsets of $V(K_{2n+1})$ which are $p$- and $\mu$-random respectively. Suppose that $C,C_0$ are disjoint subsets in $C(K_{2n+1})$, so that $C$ is $(1-\eta)\eps$-random, and $C_0$ is $\mu$-random and independent of $V_0$. Then, with high probability, the following holds with $m=\eps n/k$.

For any set $\{x_1,\ldots,x_m,y_1,\ldots,y_m\}\subset V(K_{2n+1})\setminus (V\cup V_0)$, and any set $D\subset C(K_{2n+1})$ with $|D|=mk$ and $C\cup C_0\subset D$, the following holds. There is a set of vertex-disjoint $x_i,y_i$-paths with length $k$, $i\in [m]$, which have interior vertices in $V\cup V_0$ and which are collectively $D$-rainbow.
\end{lemma}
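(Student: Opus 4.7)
The plan is to adapt the distributive-absorption strategy outlined for Case B in Section~\ref{sec:T1} to paths. I will prepare partitions $[m]=I_1\cup I_2\cup I_3$ and $D=C_1\cup C_2\cup C_3$ with $|C_j|=k|I_j|$, $C\subset C_1$ and $C_0\subset C_3$, and then produce three vertex-disjoint length-$k$ path-collections mirroring \ref{propp1}--\ref{propp3}: (i) a $C_1$-rainbow path-collection for $I_1$ with interior vertices in $V$; (ii) for any prescribed $C'\subset C_1$ with $|C'|\leq|C_1|-k|I_1|$, a $(C_2\cup C')$-rainbow path-collection for $I_2$ using each colour of $C'$; (iii) for any prescribed $C''\subset C_2$ of the required size, a $(C''\cup C_3)$-rainbow path-collection for $I_3$. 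Stitching the three collections together, exactly as in the matching argument following \ref{propp3}, then yields the required rainbow path-forest.

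The first step is to record the high-probability quasirandom properties of the random sets $V,V_0,C,C_0$. A Chernoff plus union-bound argument gives that $V$ is $\Omega(p^2 n)$-replete, and the independence of $V_0$ and $C_0$ yields, via Lemma~\ref{Lemma_high_degree_into_random_set}, that every vertex has $\Omega(\mu^2 n)$ adjacent edges of colour in $C_0$ reaching $V_0$. These two facts supply the raw material for the remaining steps. Stage~(i) will be handled by an approximate rainbow path-forest analogue of Lemma~\ref{Lemma_MPS_nearly_perfect_matching}, embedding almost all pairs in $I_1$ as $x_i,y_i$-paths inside $V$ using colours from $C$. Stage~(ii) is the ingredient specific to Case B and exploits repleteness directly: for each leftover colour $c\in C'$, I will pick an as-yet-unused edge of colour $c$ inside $V$ (available by repleteness) and place it as the middle edge of an $x_i,y_i$-path of length $k$, filling the remaining edges with fresh colours from $C_2$.

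Stage~(iii) is the genuine absorption step. For this I would build, for any short target set of colours $\hat{C}$, a \emph{path-switcher}: given endpoints $u,v$, a small gadget living inside $V_0$ whose edges use colours in $C_0$ together with a handful of padding colours $D'\subset D\setminus(C\cup C_0)$, such that for every $c\in\hat{C}$ there is a rainbow $u,v$-path of length exactly $k$ using exactly $D'\cup\{c\}$. These switchers can be produced greedily one at a time by exploiting the two quasirandom properties above; the arithmetic conditions $k\equiv 7\pmod{12}$ and $695\mid k$ are precisely what make the length-$k$ bookkeeping inside the gadget go through. Then, mirroring the matching argument around~\ref{Kprop}, I would assemble many switchers along a robustly matchable bipartite template from Lemma~\ref{Lemma_H_graph} to produce the desired global absorber, with the reservoir of absorbable colours living inside $C_2$. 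The principal obstacle will be designing the path-switcher gadget so that every produced path has length exactly $k$ while still accepting any prescribed absorbed colour, and so that disjoint switchers can be packed vertex-disjointly into $V_0$; once this gadget is in hand, the three-stage assembly above is a careful but standard bookkeeping argument.
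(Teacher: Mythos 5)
Your three-stage architecture (near-perfect cover, exact cover of a small leftover colour set via repleteness, distributive absorption over a robustly matchable template) is exactly the skeleton of the paper's proof, and your stage~(ii) idea of planting a colour-$c$ edge from inside $V$ as the middle of a length-$k$ path is precisely how Lemma~\ref{colourcoverB} works. But the proposal has a genuine gap at the point you yourself flag as "the principal obstacle": the path-switcher gadget. You assert that these switchers "can be produced greedily one at a time by exploiting the two quasirandom properties above," i.e.\ from repleteness of $V$ and the $C_0$--$V_0$ degree condition. That is not enough. A switcher must provide, for each $c$ in a $100$-element target set, a $u,v$-path of length \emph{exactly} $k$ whose colour set is exactly $D'\cup\{c\}$ for one \emph{fixed} set $D'$; swapping one colour for another while preserving both the length and the remaining colour multiset is a rigid constraint that greedy selection from quasirandom sets does not meet. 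The paper builds this gadget explicitly from the arithmetic of the ND-colouring: Lemma~\ref{lem-switchpath} produces two length-$7$ $x,y$-paths with identical colour sets except for $c_1$ versus $c_2$ by solving $d_1+d_2=d_3+d_4+k$ in $\mathbb{Z}_{2n+1}$ (with $c_2=c_1+2k$), Lemma~\ref{lem-absorbpath} chains $99$ of these into a length-$695$ one-in-$100$ switcher, and only then does Lemma~\ref{lem-randabsorbpath} show such gadgets survive inside random vertex/colour sets. This explicit construction is the core new content of the Case~B finishing lemma, and without it stage~(iii) does not exist; deferring it as "bookkeeping" is where the proof is missing.

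Two smaller points would also need repair. First, your budget $|C_j|=k|I_j|$ for all $j$ is inconsistent with your own stage~(ii), which requires $|C'|\le |C_1|-k|I_1|$; as in \ref{propp1}--\ref{propp3} you need $|C_1|>k|I_1|$ (the paper takes $|C_1|=m_1k+\xi n$) and correspondingly $|C_3|<k|I_3|$. Second, placing all of $C_0$ inside $C_3$ starves stages~(i) and~(ii): both need a slice of the random colour set that is independent of a random vertex slice of $V_0$ in order to attach the constructed path segments to the prescribed endpoints $x_i,y_i$ and to route the length-$3$ connectors (the paper splits $C_0$ into $D_1,D_2,D_3$, one piece per stage). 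Relatedly, a nibble-type argument in stage~(i) only covers \emph{almost all} pairs and only \emph{almost all} of $C_1$; the paper's Lemma~\ref{finishingB} completes every pair of $I_1$ using $V_0$/$D_0$-connectors and pushes only the $\xi n$ leftover \emph{colours} into stage~(ii), whereas your sketch leaves the uncovered \emph{pairs} unaccounted for.
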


Finally, the following theorem, proved in Section~\ref{sec:caseC}, will allow us to embed trees in Case C.
\begin{theorem}[Embedding trees in Case C]\label{Theorem_case_C}
Let $n\geq 10^6$. Let $K_{2n+1}$ be $ND$-coloured, and let $T$ be a tree on $n+1$ vertices with a subtree $T'$ with $\ell:=|T'|\leq n/100$ such that $T'$ has vertices $v_1, \dots, v_{\ell}$ so that adding $d_i\geq \log^4n$ leaves to each $v_i$ produces $T$.
Then, $K_{2n+1}$ contains a rainbow copy of $T$.
\end{theorem}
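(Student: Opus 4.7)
The plan generalises the proof of Theorem~\ref{Theorem_one_large_vertex} from one high-degree vertex to $\ell$ of them, in two phases. In Phase~1 I embed $T'$ deterministically as a rainbow subgraph $\hat{T}'$ into a small cyclic interval $I = [1, L] \subseteq V(K_{2n+1})$ of length $L = O(\ell)$, sending each high-degree vertex $v_i$ to some position $p_i$. This is done by first fixing the positions $p_1, \ldots, p_\ell$ inside $I$ and then greedily extending to the rest of $T'$, exactly as in Theorem~\ref{Theorem_one_large_vertex}: because $|T'| = \ell \leq n/100$, at each step only $O(\ell)$ positions and colours are forbidden, leaving ample slack to extend in a rainbow way. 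The positions $p_i$ and the extension are chosen so that (a) the $p_i$ are well-distributed inside $I$, and (b) $\hat{T}'$ uses every sufficiently small colour, so that the set $C' := [n] \setminus C(\hat{T}')$ of unused colours contains only colours $c$ large enough that from each $p_i$ at least one of $p_i \pm c \pmod{2n+1}$ lies outside $I$.

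In Phase~2 I add the missing leaves: for each $v_i$ I attach $d_i$ new leaves at distinct positions in $V(K_{2n+1}) \setminus I$, using each of the $|C'| = n+1-\ell = \sum_i d_i$ unused colours exactly once. A leaf of $v_i$ with colour $c$ must be placed at one of $p_i + c$ or $p_i - c \pmod{2n+1}$, so the task reduces to the assignment problem of finding a map $c \mapsto (i_c, s_c) \in [\ell] \times \{+,-\}$ with $|\{c : i_c = i\}| = d_i$ for each $i$ such that the positions $p_{i_c} + s_c c \pmod{2n+1}$ are all distinct. I would solve this in two steps. First, Hall's theorem yields a colour-to-vertex assignment $c \mapsto i_c$ respecting the capacities $d_i$; Hall's condition follows immediately from~(b), since every colour in $C'$ is usable from every $v_i$ in at least one direction, so the ``neighbourhood'' of every nonempty subset of $C'$ is all of $\{v_1, \ldots, v_\ell\}$ of total capacity $\sum_i d_i = |C'|$. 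Second, the sign choices $s_c$ are selected to avoid the at-most-one bad sign combination per ordered pair $(c, c')$ coming from the collision equation $s_c c - s_{c'} c' \equiv p_{i_{c'}} - p_{i_c} \pmod{2n+1}$. The slack provided by $d_i \geq \log^4 n$ and the spread of the $p_i$ supports a local swap/exchange argument that re-routes conflicting leaves (by switching colours between vertices or flipping a direction) to resolve any residual collisions.

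The main obstacle, absent from Theorem~\ref{Theorem_one_large_vertex}, is precisely this simultaneous avoidance of position collisions between leaves attached to \emph{different} high-degree vertices. In the one-vertex case the map $c \mapsto 2n+2-c$ is trivially injective, so signs never matter; with $\ell \geq 2$ vertices the positions $p_{i_c} \pm c$ can coincide across pairs $(v_i, v_{i'})$ in patterns governed by $p_{i'} - p_i$, and reconciling Hall's theorem, the sign flexibility, and the leaf quotas $d_i$ requires genuine combinatorial care. It is the well-distribution of the $p_i$ set up in Phase~1, combined with the absorption of all small colours into $\hat{T}'$ (which removes the most restrictive colours from $C'$), that makes the Hall/exchange analysis of Phase~2 succeed.
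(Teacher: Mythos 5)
Your two-phase shape (embed $T'$ deterministically into a short interval, then attach the missing leaves at $p_i\pm c$ using the unused colours) matches the paper's strategy in outline, but both of the steps that carry the actual difficulty are left unestablished, and one of them is set up in a way that likely cannot work. First, your property (b) — that the greedy embedding of $T'$ can be arranged to use \emph{every} colour below some threshold comparable to $|I|$ — is not something a greedy extension gives you: with only $\ell-1$ edges you have no control over \emph{which} $\ell-1$ colours appear, and forcing them to exhaust an initial segment of $[n]$ is essentially asking for a graceful-type labelling of $T'$, which is as hard as the problem you are trying to solve. Moreover (b) is doing all the work in your Hall step: if every unused colour is usable at every $v_i$, Hall's theorem is vacuous and any capacity-respecting assignment exists, so the entire content of Phase 2 is pushed into the collision analysis. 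The paper avoids needing anything like (b): it places $T'$ via Lemma~\ref{Lemma_small_tree} into intervals sitting around $0.7n$--$0.92n$, splits the high-degree vertices into types by cumulative degree (with a preliminary reduction to $d_i<2n/3$ via Theorem~\ref{Theorem_one_large_vertex}), and matches \emph{small} unused colours to one group and \emph{large} unused colours to another, so that every unused colour is automatically usable in the one fixed direction assigned to its vertex's type.

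Second, the collision avoidance — the step you yourself identify as the main new obstacle — is resolved only by an appeal to an unspecified ``local swap/exchange argument.'' With roughly $n$ leaves to place and $\Theta(n^2)$ ordered pairs each contributing a forbidden sign pattern, a generic local-repair argument does not obviously terminate or avoid creating new collisions, and nothing in your setup quantifies this. The paper's proof replaces this entirely by a global ordering argument: the unused colours are sorted into consecutive blocks $C_1<\cdots<C_\ell$ with $|C_i|=d_i$, block $C_i$ is attached to $u_i$ with a sign fixed by the type of $u_i$, and the key numerical fact is that images of consecutive same-type vertices are within $16\log^3 n$ of each other (a property Lemma~\ref{Lemma_small_tree} is engineered to provide) while $\min(C_{i+2})-\max(C_i)\geq d_{i+1}\geq\log^4 n$; hence the leaf sets $U_i$ occupy pairwise disjoint intervals and no collisions arise at all. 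Note also that the paper needs the $p_i$ to be \emph{close together} in a controlled order, not ``well-distributed,'' which is the opposite of what you propose. To complete your argument you would need to either prove (b) and a genuine exchange lemma, or adopt something like the paper's block-ordering mechanism.
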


We can now combine these results to prove Theorem~\ref{Theorem_Ringel_proof}. We also use a simple lemma concerning replete random sets, Lemma~\ref{Lemma_inheritence_of_lower_boundedness_random}, which is proved in Section~\ref{sec:prelim}. As used below, it implies that if $V_0$ and $V_1$, with $V_1\subset V_0\subset V(K_{2n+1})$, are $\mu$- and $\mu/2$-random respectively, then, given any randomised set $X$ such that $(X,V_0)$ is with high probability replete (for some parameter), then $(X,V_1)$ is also with high probability replete (for some suitably reduced parameter).\begin{proof}[Proof of Theorem~\ref{Theorem_Ringel_proof}]
Choose $\xi, \mu,\eta,\delta,\bar{\mu},\bar{\eta}$ and $\bar{\eps}$ such that $1/n \ll\xi \ll\mu\ll \eta\ll \delta\ll\bar{\mu}\ll\bar{\eta}\ll\bar{\eps} \ll\log^{-1} n$ and $k=\delta^{-1}$ is an integer such that $k = 7\mod 12$ and $695|k$. Let $T$ be an $(n+1)$-vertex tree and let $K_{2n+1}$ be ND-coloured.
By Lemma~\ref{Lemma_case_division}, $T$ is in Case A, B or C for this $\delta$. If $T$ is in Case C, then Theorem~\ref{Theorem_case_C} implies that $K_{2n+1}$ has a rainbow copy of $T$. Let us assume then that $T$ is not in Case C. Let $k=\delta^{-4}$, and note that, as $T$ is not in Case $C$, removing from $T$ leaves around any vertex adjacent to at least $k$ leaves gives a tree with at least $n/100$ vertices.

If $T$ is in Case A, then let $\eps=\delta^6$, let $L$ be a set of $\eps n$ non-neighbouring leaves in $T$, let $U=N_T(L)$ and let $T'=T-L$. Let $p$ be such that removing from $T'$ leaves around any vertex adjacent to at least $k$ leaves gives a tree with $pn$ vertices. Note that each leaf of $T'$ which is not a leaf of $T$ must be adjacent to a vertex in $L$ in $T$. Note further that, if a vertex in $T$ is next to fewer than $k$ leaves in $T$, but at least $k$ leaves in $T'$, then all but at most $k-1$ of those leaves in $T'$ must have a neighbour in $L$ in $T$. Therefore, $p\geq 1/100-(k+1)\eps\geq 1/200$.

By Theorem~\ref{nearembedagain}, there is a random subgraph $\hat{T}'\subset K_{2n+1}$ and random subsets $V,V_0\subset V(K_{2n+1})\setminus V(\hat{T}')$ and $C,C_0\subset C(K_{2n+1})\setminus C(\hat{T}')$ such that \ref{propa1}--\ref{propa3} hold.
Using \ref{propa2}, let $V_1,V_2\subset V_0$ be disjoint $(\mu/2)$-random subsets of $V(K_{2n+1})$.

 By Lemma~\ref{lem:finishA} (applied with $\xi'=\xi/4, \mu'=\mu/2, \eta=\eta, \epsilon=\epsilon, p'=(p+\epsilon)/6$, $V=V, C=C, V_0'=V_1,$ and $C_0'$ a $(\mu/2)$-random subset of $C_0$) 
 and \ref{propa2}--\ref{propa3}, and by \ref{propa1} and Lemma~\ref{Lemma_inheritence_of_lower_boundedness_random}, with high probability we have the following properties.
\stepcounter{propcounter}
\begin{enumerate}[label = {\bfseries \Alph{propcounter}\arabic{enumi}}]
\item Given any disjoint sets $X,Z\subset V(K_{2n+1})\setminus (V\cup V_1)$ so that $|X|=\eps n$ and $(X,Z)$ is $(\xi n/4)$-replete, and any set $D\subset C(K_{2n+1})$ with $|D|=\eps n$ and $C_0\cup C\cup D$, there is a perfect $D$-rainbow matching from $X$ into $V\cup V_1\cup Z$.\label{fine1}
\item $\hat{T}'$ is a rainbow copy of $T'$ in which, letting $W$ be the copy of $U$, $(W,V_2)$ is $(\xi n/4)$-replete.\label{fine2}
\end{enumerate}
Let $D=C(K_{2n+1})\setminus C(\hat{T}')$, so that $C_0\cup C\subset D$, and, as $\hat{T}'$ is rainbow by \ref{fine2}, $|D|=\eps n$. Let $W$ be the copy of $U$ in $\hat{T}'$. Then, using \ref{fine1} with $Z=V_2$ and \ref{fine2}, let $M$ be a perfect $D$-rainbow matching from $W$ into $V\cup V_1\cup V_2\subset V\cup V_0$. As $V\cup V_0$ is disjoint from $V(\hat{T}')$, $\hat{T}'\cup M$ is a rainbow copy of $T$. Thus, a rainbow copy of $T$ exists with high probability in the ND-colouring of $K_{2n+1}$, and hence certainly some such rainbow copy of $T$ must exist.

If $T$ is in Case B, then recall that $k=\delta^{-1}$ and let $m=\bar{\eps} n/k$. Let $P_1,\ldots,P_m$ be vertex-disjoint bare paths with length $k$ in $T$. Let $T'$ be $T$ with the interior vertices of $P_i$, $i\in [m]$, removed.
 Let $p$ be such that removing from $T'$ leaves around any vertex adjacent to at least $k$ leaves gives a forest with $pn$ vertices. Note that (reasoning similarly to as in Case A) $p\geq 1/100-2(k+1)m\geq 1/200$.

 By Theorem~\ref{nearembedagain}, there is a random subgraph $\hat{T}'\subset K_{2n+1}$ and disjoint random subsets $V,V_0\subset V(K_{2n+1})\setminus V(\hat{T}')$ and $C,C_0\subset C(K_{2n+1})\setminus C(\hat{T}')$ such that \ref{propa1}--\ref{propa3} hold with $\xi=\xi$, $\mu=\bar{\mu}$, $\eta=\bar{\eta}$ and $\eps=\bar{\eps}$. By Lemma~\ref{lem:finishB} and \ref{propa1}--\ref{propa3}, and by \ref{propa1}, with high probability we have the following properties.
\stepcounter{propcounter}
\begin{enumerate}[label = {\bfseries \Alph{propcounter}\arabic{enumi}}]
\item For any set $\{x_1,\ldots,x_m,y_1,\ldots,y_m\}\subset V(K_{2n+1})\setminus (V\cup V_0)$ and any set $D\subset C(K_{2n+1})$ with $|D|=mk$ and $C_0\cup C\subset D$, there is a set of vertex-disjoint paths $x_i,y_i$-paths with length $k$, $i\in [m]$, which have interior vertices in $V\cup V_0$ and which are collectively $D$-rainbow.\label{fine11}
\item $\hat{T}'$ is a rainbow copy of $T'$.\label{fine12}
\end{enumerate}
Let $D=C(K_{2n+1})\setminus C(\hat{T}')$, so that $C_0\cup C\subset D$, and, as $\hat{T}'$ is rainbow by \ref{fine12}, $|D|=\eps n$.
For each path $P_i$, $i\in [m]$, let $x_i$ and $y_i$ be the copy of the endvertices of $P_i$ in $\hat{T}'$. Using \ref{fine11}, let $Q_i$, $i\in [m]$, be a set of vertex-disjoint $x_i,y_i$-paths with length $k$, $i\in [m]$, which have interior vertices in $V\cup V_0$ and which are collectively $D$-rainbow. As $V\cup V_0$ is disjoint from $V(\hat{T}')$,
 $\hat{T}'\cup (\cup_{i\in [m]}Q_i)$ is a rainbow copy of $T$. Thus, a rainbow copy of $T$ exists with high probability in the ND-colouring of $K_{2n+1}$, and hence certainly some such rainbow copy of $T$ must exist.
\end{proof}



\section{Preliminary results and observations}\label{sec:prelim}

\subsection{Notation}\label{sec:not}
For a coloured graph $G$, we denote the set of vertices of $G$ by $V(G)$, the set of edges of $G$ by $E(G)$, and the set of colours of $G$ by $C(G)$.
For a coloured graph $G$, disjoint sets of vertices $A,B\subseteq V(G)$ and a set of colours $C\subseteq C(G)$ we use $G[A,B,C]$ to denote the subgraph of $G$ consisting of colour $C$ edges from $A$ to $B$, and $G[A,C]$ to be the graph of the colour $C$ edges within $A$.
For a single colour $c$, we denote the set of colour $c$ edges from $A$ to $B$ by $E_c(A,B)$.

A coloured graph is \emph{globally $k$-bounded} if every colour is on at most $k$ edges.
For a set of colours $C$, we say that a graph $H$ is ``$C$-rainbow''  if $H$ is rainbow and $C(H)\subseteq C$. We say that a collection of graphs $H_1, \dots, H_k$ is \emph{collectively} rainbow if their union is rainbow.
A star is a tree consisting of a collection of leaves joined to a single vertex (which we call the \emph{centre}).
A \emph{star forest} is a graph consisting of vertex disjoint stars.

For any reals $a,b\in \R$, we say $x=a\pm b$ if $x\in [a-b,a+b]$.

\subsubsectionme{Asymptotic notation}
For any $C\geq 1$ and $x,y\in (0,1]$, we use ``$x\oldll_C y$'' to mean ``$x\leq \frac{y^C}{C}$''. We will write ``$x\LL y$'' to mean that there is some absolute constant $C$ for which the proof works with ``$x\LL{} y$'' replaced by ``$x\oldll_{C} y$''. 
In other words the proof works if $y$ is a small but fixed power of $x$.
This notation compares to the more common notation $x\oldll y$ which means ``there is a fixed positive continuous function $f$ on $(0,1]$ for which the remainder of the proof works with ``$x\oldll y$'' replaced by ``$x\leq f(y)$''. (Equivalently, ``$x\oldll y$'' can be interpreted as ``for all $x\in (0,1]$, there is some $y\in (0,1]$ such that the remainder of the proof works with $x$ and $y$''.) The two notations ``$x\LL{} y$'' and ``$x\oldll y$'' are largely interchangeable --- most of our proofs remain correct with all instances of ``$\LL$'' replaced by ``$\oldll$''. The advantage of using ``$\LL$'' is that it proves polynomial bounds on the parameters (rather than bounds of the form ``for all $\epsilon>0$ and  sufficiently large $n$''). This is important towards the end of this paper, where the proofs need polynomial parameter bounds.

While the constants $C$ will always be implicit in each instance  of ``$x\LL{} y$'',  it is possible to work  them out explicitly. To do this one should go through the lemmas in the paper and choose the constants $C$ for a lemma after the constants have been chosen for the lemmas on which it depends. This is because an inequality $x\oldll_C y$ in a lemma  may be needed to imply an inequality $x\oldll_{C'} y$ for a lemma it depends on. Within an individual lemma we will often have several inequalities of the form $x\LL y$. There the constants $C$ need to be chosen in the reverse order of their occurrence in the text. The reason for this is the same --- as we prove a lemma we may use an inequality  $x\oldll_C y$  to imply another inequality $x\oldll_{C'} y$ (and so we should choose $C'$ before choosing $C$).

Throughout the paper, there are four operations we perform with the ``$x\LL{} y$'' notation:
\begin{enumerate}[label = (\alph{enumi})]
\item We will use $x_1\LL x_2\LL\dots\LL x_k$ to deduce finitely many inequalities of the form ``$p(x_1, \dots, x_k)\leq q(x_1, \dots, x_k)$'' where $p$ and $q$ are {monomials} with non-negative coefficients and $\min\{i: p(0, \dots, 0, x_{i+1}, \dots, x_k)=0\}< \min\{j: q(0, \dots, 0, x_{j+1}, \dots, x_k)=0\}$  e.g.\ $1000x_1\leq x_2^5x_4^2x_5^3$ is of this form.
\item We will use $x\LL y$ to deduce finitely many inequalities of the form ``$x\oldll_C y$'' for a fixed constant $C$.
\item For $x\LL y$ and  fixed constants $C_1, C_2$, we can choose a variable $z$ with $x\oldll_{C_1} z\oldll_{C_2}y$.
\item For $n^{-1}\LL 1$ and any fixed constant $C$, we can deduce $n^{-1}\oldll_C \log^{-1} n\oldll_C 1$.
\end{enumerate}
See \cite{montgomery2018decompositions} for a detailed explanation of why the above operations are valid.

\subsubsectionme{Rounding}
In several places, we will have, for example, constants $\eps$ and integers $n,k$ such that $1/n\ll \eps,1/k$ and require that $m=\eps n/k$ is an integer, or even divisible by some other small integer. Note that we can arrange this easily with a very small alteration in the value of $\eps$. For example, to apply Lemma~\ref{lem:finishB} we assume that $m$ is an integer, and therefore in the proof of Theorem~\ref{Theorem_Ringel_proof}, when we choose $\bar{\eps}$ we make sure that when this lemma is applied with $\eps=\bar{\eps}$ the corresponding value for $m$ is an integer.

\subsection{Probabilistic tools}\label{sec:prob}
For a finite set $V$, a $p$-random subset of $V$ is a set formed by choosing every element of $V$ independently  at random with probability $p$. If $V$ is not specified, then we will implicitly assume that $V$ is $V(K_{2n+1})$ or $C(K_{2n+1})$, where this will be clear from context. 
If $A,B\subseteq V$ with $A$ $p$-random and $B$ $q$-random, we say that $A$ and $B$ are \emph{disjoint} if every $v\in V$ is in $A$ with probability $p$, in $B$ with probability $q$, and outside of $A\cup B$ with probability $1-p-q$ (and this happens independently for each $v\in V$). We say that a $p$-random set $A$ is independent from a $q$-random set $B$ if the choices for $A$ and $B$ are made independently, that is, if $\P(A=A'\land B=B')=\P(A=A')\P(B=B')$ for any outcomes $A'$ and $B'$ of $A$ and $B$.

Often, we will have a $p$-random subset $X$ of $V$ and divide  it into two disjoint $(p/2)$-random subsets of $V$. This is possible by choosing which subset each element of $X$ is in independently at random with probability $1/2$ using the following simple lemma.
\begin{lemma}[Random subsets of random sets]\label{Lemma_mixture_of_p_random_sets}
Suppose that $X,Y:\Omega\to 2^V$ where $X$ is a $p$-random subset of $V$ and $Y|X$ is a $q$-random subset of $X$ (i.e. the distribution of $Y$ conditional on the event ``$X=X'$'' is that of a $q$-random subset of $X'$). Then $Y$ is a $pq$-random subset of $V$.
\end{lemma}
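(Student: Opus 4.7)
The plan is to verify the two conditions that characterise a $pq$-random subset of $V$: that each element of $V$ lies in $Y$ with probability exactly $pq$, and that these events are mutually independent across elements. Equivalently, I would show that for every $S \subseteq V$,
\[
\P(Y = S) = (pq)^{|S|}(1-pq)^{|V|-|S|}.
\]

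First I would compute $\P(v \in Y)$ for an arbitrary fixed $v \in V$ by conditioning on whether $v \in X$. Since $Y \subseteq X$ automatically, we have $\P(v \in Y \mid v \notin X) = 0$, while the hypothesis that $Y \mid X$ is a $q$-random subset of $X$ gives $\P(v \in Y \mid v \in X) = q$. The tower rule then yields $\P(v \in Y) = pq$, which gives the correct marginal.

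Next I would establish the joint distribution, which simultaneously handles independence, via a direct expansion. Writing
\[
\P(Y = S) = \sum_{T \supseteq S} \P(X = T)\,\P(Y = S \mid X = T),
\]
the hypotheses give $\P(X = T) = p^{|T|}(1-p)^{|V|-|T|}$ and $\P(Y = S \mid X = T) = q^{|S|}(1-q)^{|T|-|S|}$. Setting $n = |V|$, $s = |S|$ and reindexing via $j = |T|-s$, the sum becomes
\[
(pq)^{s}\sum_{j=0}^{n-s}\binom{n-s}{j}\bigl(p(1-q)\bigr)^{j}(1-p)^{n-s-j},
\]
and the binomial theorem collapses this to $(pq)^{s}\bigl(p(1-q)+(1-p)\bigr)^{n-s}=(pq)^{s}(1-pq)^{n-s}$, which is the required identity.

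There is no conceptual obstacle; the only mild care needed is in the combinatorial bookkeeping of the summation over $T \supseteq S$, and the algebraic identity $p(1-q)+(1-p)=1-pq$ does all the work.
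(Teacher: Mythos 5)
Your proof is correct, but it takes a genuinely different route from the paper. The paper's proof avoids computing the full distribution of $Y$: it first observes that a random set is $(pq)$-random as soon as $\P(S\subseteq Y)=(pq)^{|S|}$ for every $S\subseteq V$ (the upward inclusion probabilities determine the law, by inclusion--exclusion), and then the result follows in one line from the tower rule, since $\P(S\subseteq Y)=\P(S\subseteq Y\mid S\subseteq X)\,\P(S\subseteq X)=q^{|S|}p^{|S|}$. You instead compute the exact point mass $\P(Y=S)$ by summing over all $T\supseteq S$ and collapsing the sum with the binomial theorem via $p(1-q)+(1-p)=1-pq$; your bookkeeping there is right. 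The trade-off: your argument is self-contained and needs no appeal to the fact that inclusion probabilities characterise the distribution, at the cost of a longer computation; the paper's argument is shorter but delegates that characterisation (which itself requires an inclusion--exclusion argument) to a remark. Your opening computation of the single marginal $\P(v\in Y)=pq$ is subsumed by the joint calculation and could be omitted.
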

\begin{proof}
First notice that, to show a set $X\subseteq V$ is $(pq)$-random, it is sufficient to show that  $\P(S\subseteq X)=(pq)^{|S|}$ for all $S\subseteq V$ (for example, by using inclusion-exclusion).
Now, we prove the lemma.
Since $X$ is $p$-random we have $\P(S\subseteq X)=p^{|S|}$. Since $Y|X$ is $q$-random, we have $\P(S\subseteq Y|S\subseteq X)=q^{|S|}$. This gives $\P(S\subseteq Y)= \P(S\subseteq Y|S\subseteq X)\P(S\subseteq X)=(pq)^{|S|}$ for every set $S\subseteq V$.
\end{proof}

We will use the following standard form of Azuma's inequality and a Chernoff Bound. For a probability space $\Omega=\prod_{i=1}^n \Omega_i$, a random variable $X:\Omega\to \R$ is \emph{$k$-Lipschitz} if changing $\omega\in \Omega$ in any one coordinate changes $X(\omega)$ by at most $k$.

\begin{lemma}[Azuma's Inequality]\label{Lemma_Azuma}
Suppose that $X$ is $k$-Lipschitz and influenced by $\leq m$ coordinates in $\{1, \dots, n\}$. Then{, for any $t>0$,}
$$\P\left(|X-\E(X)|>t \right)\leq 2e^{\frac{-t^2}{mk^2}}$$.
\end{lemma}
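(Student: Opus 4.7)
The plan is to construct a Doob martingale by revealing the coordinates of $\omega\in\Omega$ one at a time, and then apply the standard Azuma--Hoeffding martingale concentration inequality. Let $I\subseteq\{1,\dots,n\}$ be the set of at most $m$ coordinates that actually influence $X$, ordered as $i_1<\dots<i_{|I|}$, and let $\FF_j$ denote the $\sigma$-algebra generated by the coordinates $\omega_{i_1},\dots,\omega_{i_j}$ for $0\leq j\leq |I|$. Define $X_j=\E[X\mid\FF_j]$, so that $X_0=\E[X]$ and $X_{|I|}=X$; the latter equality is where the ``influence'' hypothesis is used --- since $X$ does not depend on the coordinates outside $I$, it is already $\FF_{|I|}$-measurable.

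First I would verify that $(X_j)_{j=0}^{|I|}$ is a martingale with respect to $(\FF_j)$, which is immediate from the tower property of conditional expectation. The main substantive step is then to establish the bounded-differences inequality $\bigl|X_j-X_{j-1}\bigr|\leq k$ for each $j\in\{1,\dots,|I|\}$. The standard way to do this is to write
\[
X_j-X_{j-1}=\E\bigl[X-X'\bigm|\FF_j\bigr],
\]
where $X'$ is obtained from the sample $\omega$ by independently resampling the $i_j$-th coordinate conditional on $\FF_{j-1}$; the $k$-Lipschitz hypothesis gives $|X-X'|\leq k$ pointwise, so $\bigl|X_j-X_{j-1}\bigr|\leq k$ as required.

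With these ingredients in hand, applying the textbook Azuma--Hoeffding inequality to the martingale $(X_j)_{j=0}^{|I|}$ with uniform bounded differences $k$ yields
\[
\P\bigl(|X-\E(X)|>t\bigr)=\P\bigl(\bigl|X_{|I|}-X_0\bigr|>t\bigr)\leq 2\exp\!\left(-\frac{t^2}{2|I|k^2}\right)\leq 2\exp\!\left(-\frac{t^2}{2mk^2}\right),
\]
which is at least as strong as the stated bound (the constant factor of $2$ in the exponent is absorbed). The only mildly delicate point in the whole argument is making the resampling coupling rigorous, which is routine on a product probability space $\prod_{i=1}^n\Omega_i$; apart from that, this is a direct appeal to a standard concentration inequality and no real obstacle arises. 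I would expect the paper simply to cite this as a black box rather than reproduce the above derivation.
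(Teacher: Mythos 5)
The paper offers no proof of this lemma at all: it is quoted as ``the following standard form of Azuma's inequality'', so your expectation that it is used as a black box is correct. Your proof strategy (Doob martingale over the $\leq m$ influencing coordinates, bounded differences from the Lipschitz hypothesis via a resampling coupling, then a martingale concentration inequality) is the right one, and the first two steps are fine.

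However, there is a genuine error at the very end: the final comparison is backwards. You derive the bound $2\exp\bigl(-t^2/(2mk^2)\bigr)$ and assert it is ``at least as strong as the stated bound'', but since $t^2/(2mk^2) < t^2/(mk^2)$ we have $2e^{-t^2/(2mk^2)} > 2e^{-t^2/(mk^2)}$, so what you have proved is strictly \emph{weaker} than the lemma as stated. The factor of $2$ in the exponent cannot be ``absorbed''; it goes the wrong way. To actually obtain the stated exponent you need the McDiarmid-type refinement of Azuma--Hoeffding: the $k$-Lipschitz hypothesis shows not merely that $|X_j - X_{j-1}| \leq k$ (an interval of length $2k$ around $0$), but that conditionally on $\FF_{j-1}$ the increment $X_j - X_{j-1}$ lies in an interval $[A_j, A_j + k]$ of length $k$ with $A_j$ being $\FF_{j-1}$-measurable (take $A_j$ to be the infimum over the possible values of the $i_j$-th coordinate of $\E[X \mid \FF_{j-1}, \omega_{i_j}]-X_{j-1}$). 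Hoeffding's lemma then bounds the conditional moment generating function by $e^{\lambda^2 k^2/8}$ rather than $e^{\lambda^2 k^2/2}$, and the resulting tail bound is $2\exp\bigl(-2t^2/(mk^2)\bigr)$, which does imply the lemma. With that one repair your argument is complete; as written, it does not establish the claimed inequality.
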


Notice that the bound in the above inequality can be rewritten as $\P\left(X\neq \E(X)\pm t \right)\leq 2e^{\frac{-t^2}{mk^2}}$.

\begin{lemma}[Chernoff Bound] \label{Lemma_Chernoff}
Let $X$ be a binomial random variable with parameters $(n,p)$.
Then, for each $\epsilon\in (0,1)$, we have
$$\mathbb P\big(|X-pn|> \epsilon pn\big)\leq
2e^{-\frac{pn\epsilon^2}{3}}.$$
\end{lemma}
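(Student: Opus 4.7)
The plan is to apply the standard Chernoff–Cramér exponential-moment argument separately to the upper and lower tails and combine via a union bound. Writing $X = \sum_{i=1}^n X_i$ for independent Bernoulli$(p)$ random variables, Markov's inequality gives, for any $t\geq 0$,
\[
\P(X \geq (1+\eps)pn) \leq e^{-t(1+\eps)pn}\,\E[e^{tX}].
\]
By independence, $\E[e^{tX}] = (1-p+pe^t)^n \leq \exp(pn(e^t-1))$, using $1+x \leq e^x$ coordinatewise. Choosing $t = \log(1+\eps)$ then yields the textbook expression $\P(X \geq (1+\eps)pn) \leq \bigl(e^\eps(1+\eps)^{-(1+\eps)}\bigr)^{pn}$.

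The next step is the analytic fact that $e^\eps(1+\eps)^{-(1+\eps)} \leq e^{-\eps^2/3}$ for every $\eps \in (0,1)$. Taking logarithms, this is equivalent to showing $f(\eps) := (1+\eps)\log(1+\eps) - \eps - \eps^2/3 \geq 0$ on $(0,1)$. One checks $f(0)=f'(0)=0$ and then verifies that $f'(\eps) = \log(1+\eps)-2\eps/3$ stays non-negative on $[0,1]$ (it equals $\log 2 - 2/3 > 0$ at $\eps=1$ and has a single interior critical point), so $f$ is increasing and the inequality holds. Combining with the previous step yields $\P(X \geq (1+\eps)pn) \leq e^{-pn\eps^2/3}$.

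For the lower tail, the mirror argument applies: for $t\geq 0$,
\[
\P(X \leq (1-\eps)pn) \leq e^{t(1-\eps)pn}\,\E[e^{-tX}] \leq e^{t(1-\eps)pn}\exp(pn(e^{-t}-1)),
\]
and optimising in $t$ produces the even sharper Chernoff bound $\exp(-pn\eps^2/2)$, which is at most $\exp(-pn\eps^2/3)$. A union bound over the two one-sided events introduces the factor of $2$ in the stated inequality. I do not anticipate any genuine obstacle — the bound is completely standard, and in practice one would simply invoke a textbook reference such as Alon–Spencer or Janson–{\L}uczak–Ruci\'nski rather than reproduce the computation; the only mildly non-trivial ingredient is the elementary analytic inequality above, which is what makes the constant $1/3$ (as opposed to the tighter $1/2$ valid only for the lower tail, or $1/(2+\eps)$ valid for both) the right symmetric choice.
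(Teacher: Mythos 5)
Your argument is correct: the exponential-moment (Cram\'er/Chernoff) computation for each tail, the elementary verification that $(1+\eps)\log(1+\eps)-\eps\geq \eps^2/3$ on $(0,1)$ via the sign of $f'(\eps)=\log(1+\eps)-2\eps/3$, the sharper $\eps^2/2$ bound for the lower tail, and the union bound giving the factor $2$ all check out. Note, though, that the paper offers no proof of this lemma at all --- it is stated as a standard tool (alongside Azuma's inequality) and implicitly deferred to the literature, exactly as you suggest one would do in practice; so there is nothing in the paper to diverge from, and your write-up is simply the standard textbook derivation, correctly executed.
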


For an event $X$ in a probability space depending on a parameter $n$, we say ``$X$ holds with high probability'' to mean ``$X$ holds with probability $1-o(1)$'' where $o(1)$ is some function $f(n)$ with $f(n)\to 0$ as $n\to \infty$.
We will use this definition for the following operations.
\begin{itemize}
\item \textbf{Chernoff variant:} For $\epsilon \gg n^{-1}$, if $X$ is a $p$-random subset of $[n]$, then, with high probability, $|X|=(1\pm \epsilon)pn$.
\item \textbf{Azuma variant:} For $\epsilon\gg n^{-1}$ and fixed $k$, if $Y$ is a  $k$-Lipschitz random variable influenced by at most $n$ coordinates, then, with high probability, $Y=\E(Y)\pm \epsilon n$.
\item \textbf{Union bound variant:} For fixed $k$, if $X_1, \dots, X_{k}$ are events which hold with high probability then they simultaneously occur with high probability.
\end{itemize}
The first two of these follow directly from Lemmas~\ref{Lemma_Azuma} and~\ref{Lemma_Chernoff}, the latter from the union bound.

\subsection{Structure of trees}\label{Section_structure_of_trees}
Here, we gather lemmas about the structure of trees. Most of these lemmas say something about the leaves and bare paths of  a tree. It is easy to see that a tree with few leaves must have many bare paths. The most common version of this is the following well known lemma.
\begin{lemma}[\cite{montgomery2018spanning}]\label{split} For any integers $n,k>2$, a tree with~$n$ vertices either has at least $n/4k$ leaves or a collection of at least $n/4k$ vertex disjoint bare paths, each with length $k$.
\end{lemma}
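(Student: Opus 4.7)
The plan is to assume $T$ has fewer than $n/(4k)$ leaves and exhibit at least $n/(4k)$ vertex-disjoint bare paths of length $k$ in $T$. Write $\ell$ for the leaf count and $b$ for the number of branching vertices (those of degree $\geq 3$). A quick handshake argument using $\sum_v (\deg v - 2) = -2$ (only leaves and branching vertices contribute nonzero terms, each of the latter contributing at least $1$) gives $b \leq \ell - 2$. Consequently, the number $m$ of maximal bare paths --- paths whose interior vertices have degree $2$ and whose endpoints are leaves or branching vertices --- satisfies $m = \ell + b - 1 \leq 2\ell - 3$, while their lengths $j_1, \dots, j_m$ sum to $n - 1$. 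Under the assumption $\ell < n/(4k)$, this gives $m < n/(2k) - 3$.

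Next, in each maximal bare path of length $j_i$, I would pack length-$k$ bare paths of $T$ using only the $j_i - 1$ interior degree-$2$ vertices; this fits $\lfloor (j_i - 1)/(k+1) \rfloor$ vertex-disjoint sub-paths, each of which is automatically a bare path of $T$ (its middle $k-1$ vertices lie in the interior of the maximal bare path, hence have degree $2$ in $T$). Because the interiors of distinct maximal bare paths are disjoint, no conflicts arise between sub-paths from different maximal bare paths. Summing via the elementary bound $\lfloor (j-1)/(k+1) \rfloor \geq (j - k - 1)/(k+1)$ produces at least $(n - 1)/(k+1) - m$ bare paths of length $k$ in total. Substituting $m < n/(2k) - 3$ reduces the desired inequality $(n-1)/(k+1) - m \geq n/(4k)$ to the elementary $n(k-3) + 12k^2 + 8k \geq 0$, which is trivial for $k \geq 3$.

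The main strategic decision worth flagging is the restriction to interior vertices. A more natural greedy packing that is allowed to use the endpoints of each maximal bare path would fit $\lfloor (j_i + 1)/(k+1) \rfloor$ sub-paths per path, which is locally more efficient; however, those sub-paths can collide at shared branching vertices, and the resulting repairs cost on the order of $\sum_{v \in B}(\deg v - 1)$, which turns out to be too expensive to still clear the $n/(4k)$ threshold. Restricting to interiors sidesteps this bookkeeping entirely and absorbs the cost into the clean $-m$ term of the final bound. Beyond this choice, the argument is routine counting, and no deeper obstacle is involved.
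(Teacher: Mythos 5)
Your proof is correct: the bound $b\le \ell-2$ from the handshake identity, the count $m=\ell+b-1$ of maximal bare paths, the packing of $\lfloor (j_i-1)/(k+1)\rfloor$ length-$k$ segments into the interior of each, and the final arithmetic reducing to $n(k-3)+12k^2+8k\ge 0$ all check out for $k\ge 3$. The paper does not prove this lemma itself (it cites \cite{montgomery2018spanning}), but your argument is the standard one used there — decompose into maximal bare paths between vertices of degree $\ne 2$ and chop their interiors — so there is nothing further to flag.
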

As a corollary of this lemma we show that every tree either has many bare paths, many non-neighbouring leaves, or many large stars. This lemma underpins the basic case division for this paper. The rest of the proofs focus on finding rainbow copies of the three types of trees.
\begin{lemma}[Case division]\label{Lemma_case_division}
Let $1\gg \delta \gg n^{-1}$. Every $n$-vertex tree satisfies one of the following:
\begin{enumerate}[label= \Alph{enumi}]
\item There are at least $\delta n/800$ vertex-disjoint bare paths with length at least $\delta^{-1}$.
\item There are at least $\delta^6 n$ non-neighbouring leaves.
\item Removing leaves next to vertices adjacent to at least $\delta^{-4}$ leaves gives a tree with at most
$n/100$ vertices.
\end{enumerate}
\end{lemma}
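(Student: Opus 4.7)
The plan is to assume A fails and show that either B or C must hold; the two main applications of Lemma~\ref{split}, first to $T$ and then to a carefully pruned subtree, do all the work. Since A fails, $T$ has fewer than $\delta n/800 < n/(4\delta^{-1})$ vertex-disjoint bare paths of length $\delta^{-1}$, so Lemma~\ref{split} with $k=\delta^{-1}$ forces $T$ to have at least $\delta n/4$ leaves. Call a vertex \emph{big} if it is adjacent to at least $\delta^{-4}$ leaves; counting leaf-edges shows there are at most $\delta^4 n$ big vertices. Let $L_B$ denote the set of leaves whose neighbour is big, and set $T'=T-L_B$. If $|T'|\leq n/100$, then C holds, so we may assume $|T'|>n/100$.

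I would then apply Lemma~\ref{split} again to $T'$ with $k=\delta^{-1}$, producing either $\geq \delta n/400$ leaves in $T'$ or $\geq \delta n/400$ vertex-disjoint bare paths of length $\delta^{-1}$ in $T'$. The key ``pull-back'' observation is: a degree-$2$ vertex of $T'$ either has degree $2$ in $T$ or is big (since the only way to shed degree in passing from $T$ to $T'$ is to lose at least $\delta^{-4}$ leaf-children); similarly, a leaf of $T'$ either is a genuine leaf of $T$ lying in $L \setminus L_B$, or is big. Because there are only $\leq \delta^4 n$ big vertices in total, these exceptional cases are negligible compared to $\delta n/400$.

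In the bare-path alternative, at most $\delta^4 n$ of the $T'$-bare paths can contain a big internal vertex, leaving $\geq \delta n/500 > \delta n/800$ paths which are bare in $T$ as well, contradicting the failure of A. In the leaf alternative, at most $\delta^4 n$ of the $T'$-leaves are big, leaving $\geq \delta n/500$ genuine leaves of $T$ whose $T$-parents each have fewer than $\delta^{-4}$ leaf-children; these leaves are therefore distributed among at least $(\delta n/500)/\delta^{-4} = \delta^5 n/500 \geq \delta^6 n$ distinct parents (using $\delta \ll 1$), and choosing one leaf from each parent-class gives a set of non-neighbouring leaves witnessing B. The main obstacle here is not deep: it is purely bookkeeping, tracking exactly how much of the structure found in $T'$ can be lifted back to $T$, which is straightforward once one notes that the total loss is controlled by the number of big vertices ($\leq \delta^4 n$).
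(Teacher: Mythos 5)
Your proposal is correct and follows essentially the same route as the paper: prune the leaves around vertices with at least $\delta^{-4}$ leaf-neighbours, dispose of Case C if the remainder is small, and otherwise apply Lemma~\ref{split} to the pruned tree and lift the resulting bare paths or leaves back to $T$, losing only $O(\delta^4 n)$ to the ``big'' vertices. The only (harmless) differences are cosmetic: your first application of Lemma~\ref{split} to $T$ itself is not needed, and where the paper splits the leaf alternative into two sub-cases according to whether most $T'$-leaves are genuine $T$-leaves, you observe directly that the exceptional $T'$-leaves are big vertices and hence number at most $\delta^4 n$, which is a slightly cleaner way to reach the same conclusion.
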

\begin{proof}
Take $T$ and remove leaves around any vertex adjacent to at least $\delta^{-4}$ leaves, and call the resulting tree $T'$. If $T'$ has at most $n/100$ vertices then we are in Case C. Assume then, that $T'$ has at least $n/100$ vertices.

By Lemma~\ref{split} applied with $n'=|T'|$ and $k=\lceil \delta^{-1}\rceil$, the tree $T'$  either has at least $\delta n/600$ vertex disjoint bare paths with length at least $\delta^{-1}$ or at least $\delta n/600$ leaves. In the first case, as vertices were deleted next to at most $\delta^4 n$ vertices to get $T'$ from $T$, $T$ has at least $\delta n/600-\delta^4n\geq  \delta n/800$ vertex disjoint bare paths with length at least $\delta^{-1}$, so we are in Case B.

In the second case, if there are at least $\delta n/1200$ leaves of $T'$ which are also leaves of $T$, then, as there are at most $\delta^{-4}$ of these leaves around each vertex in $T'$, $T$ has at least $(\delta n/1200)/\delta^{-4}\geq \delta^{6}n$ non-neighbouring leaves, so we are in Case A. On the other hand, if there are not such a number of leaves of $T'$ which are leaves of $T$, then $T'$ must have at least $\delta n/1200$ leaves which are not leaves of $T$, and which therefore are adjacent to a leaf in $T$. Thus, $T$ has at least $\delta n/1200\geq \delta^{6}n$ non-neighbouring leaves, and we are also in Case A.
\end{proof}
We say a set of subtrees $T_1,\ldots, T_\ell\subset T$ divides a tree $T$ if $E(T_1)\cup\ldots \cup E(T_\ell)$ is a partition of $E(T)$. We use the following lemma.

\begin{lemma}[\cite{montgomery2018spanning}]\label{littletree} Let $n,m\in \mathbb N$ satisfy $1\leq m\leq n/3$. Given any tree~$T$ with~$n$ vertices and a vertex $t\in V(T)$, we can find two trees $T_1$ and $T_2$ which divide~$T$ so that $t\in V(T_1)$ and $m\leq |T_2|\leq 3m$.
\end{lemma}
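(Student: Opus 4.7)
The plan is to root $T$ at $t$ and then locate a suitable subtree by a greedy descent. For each $v\in V(T)$, write $T_v$ for the subtree of $T$ consisting of $v$ together with all its descendants. If $n\leq 3m$, then the trivial split $T_2:=T$, $T_1:=(\{t\},\emptyset)$ already satisfies the conclusion since $|T_2|=n\in[m,3m]$ and $E(T_1)\cup E(T_2)$ partitions $E(T)$, so I may assume $n>3m$.

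Starting at $v:=t$, I walk down the tree, at each step moving from the current vertex $v$ to a child $c$ of $v$ with $|T_c|$ maximum, continuing as long as $|T_v|>3m$. Since any leaf $v$ has $|T_v|=1\leq 3m$, this walk terminates at some vertex $v$ with $|T_v|\leq 3m$. If in addition $|T_v|\geq m$, I set $T_2:=T_v$ and $T_1:=T-E(T_v)$; then $T_1$ is a subtree of $T$ containing $t$ (its only vertex shared with $T_2$ is $v$), $\{T_1,T_2\}$ divides $T$, and $|T_2|\in[m,3m]$, as required.

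The remaining case is $|T_v|<m$. Since $|T_t|=n>3m$ the walk took at least one step, so $v$ has a parent $p$ with $|T_p|>3m$. Because $v$ was chosen as a largest-subtree child of $p$, every child $c$ of $p$ satisfies $|T_c|\leq|T_v|<m$. I then build $T_2$ by starting from $\{p\}$ and adding the subtrees $T_c$, for children $c$ of $p$, one at a time, stopping as soon as the total size reaches $m$. Just before the final addition the size was some $s<m$ and the added subtree had at most $m-1$ vertices, so $m\leq|T_2|\leq s+(m-1)\leq 2m-2\leq 3m$; and the process must terminate, since the child subtrees of $p$ have sizes summing to $|T_p|-1>3m-1\geq m$. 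Setting $T_1:=T-E(T_2)$ then yields the required division. I do not anticipate any genuine obstacle; the only points to verify are that in this last case $T_2$ is indeed a subtree (which it is, being $p$ together with some entire child subtrees) and that $T_1$ is a subtree of $T$ containing $t$ (which holds because $T_2$ meets the rest of $T$ only at the single vertex $p$, so $T_1$ remains connected through $p$ and still contains the $t$-to-$p$ path).
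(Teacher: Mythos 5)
The paper does not actually prove this lemma -- it is imported from \cite{montgomery2018spanning} -- but your argument is correct and is essentially the standard proof of this fact: root at $t$, descend until the current rooted subtree has at most $3m$ vertices, and if it then has fewer than $m$, regroup whole child subtrees of its parent (each of size less than $m$) until the running total first reaches $m$, which gives a subtree of size between $m$ and $2m-2$. The only degenerate point is $n=3m$, where your $T_1$ is the single vertex $t$ with no edges; this is harmless under the paper's notion of ``divide'' (disjoint edge sets with union $E(T)$) and does not arise in the paper's applications of the lemma.
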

Iterating this, we can divide a tree into small subtrees.
\begin{lemma}\label{dividetree} Let $T$ be a tree with at least $m$ vertices, where $m\geq 2$. Then, for some $s$, there is a set of subtrees $T_1,\ldots, T_s$ which divide $T$ so that $m\leq |T_i|\leq 4m$ for each $i\in [s]$.
\end{lemma}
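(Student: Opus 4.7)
The plan is to prove this by induction on $|T|$, iteratively peeling off subtrees of size between $m$ and $3m$ using Lemma~\ref{littletree}, until the remainder is small enough to become the last piece.

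For the base case, if $m \leq |T| \leq 4m$, take $s = 1$ and $T_1 = T$. For the inductive step, suppose $|T| > 4m$. Then in particular $|T| \geq 3m$, so we can apply Lemma~\ref{littletree} to $T$ with any chosen vertex $t$ and parameter $m$, obtaining subtrees $T_a, T_b$ that divide $T$ with $t \in V(T_a)$ and $m \leq |T_b| \leq 3m$. Since $T_a$ and $T_b$ share exactly one vertex (as their edge sets partition $E(T)$ and $T$ is connected), we have $|T_a| = |T| - |T_b| + 1$. From $|T_b| \leq 3m$ and $|T| > 4m$ we get $|T_a| \geq |T| - 3m + 1 \geq m + 2 \geq m$, and from $|T_b| \geq m$ we get $|T_a| \leq |T| - m + 1 < |T|$, so the induction hypothesis applies to $T_a$.

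By induction, $T_a$ admits a division into subtrees $T_1, \ldots, T_{s-1}$ with $m \leq |T_i| \leq 4m$ for each $i$. Setting $T_s := T_b$ and combining, we see that $T_1, \ldots, T_s$ divide $T$ (since $\{T_1, \ldots, T_{s-1}\}$ partitions $E(T_a)$ by edges and $\{T_a, T_b\}$ partitions $E(T)$), and every piece has size in $[m, 4m]$, as required.

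There is no real obstacle here beyond bookkeeping the size bounds: the key quantitative check is that the threshold $4m$ in the base case is large enough that, whenever we must split, the leftover piece $T_a$ is guaranteed to still have at least $m$ vertices (this is exactly the inequality $|T_a| \geq |T| - 3m + 1 > m$, which uses $|T| > 4m$). The choice of threshold $4m$ — as opposed to, say, $3m$ — is precisely what gives this slack and keeps the induction honest.
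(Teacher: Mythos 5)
Your proof is correct and follows essentially the same route as the paper: induct on $|T|$, use Lemma~\ref{littletree} to peel off a piece of size in $[m,3m]$ when $|T|>4m$, and check the remainder still has at least $m$ (and fewer than $|T|$) vertices. The only difference is that you spell out the size bookkeeping (via the shared-vertex count) more explicitly than the paper does.
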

\begin{proof}
We prove this by induction on $|T|$, noting that it is trivially true if $|T|\leq 4m$.

Suppose then $|T|>4m$ and the statement is true for all trees with fewer than $|T|$ vertices and at least $m$ vertices. By Lemma~\ref{littletree}, we can find two trees $T_1$ and $S$ which divide $T$ so that $m\leq |T_1|\leq 3m$. As $|T|>4m$, we have $m<|S|<|T|$, so there must be a set of subtrees $T_2,\ldots,T_s$, for some $s$, which divide $S$ so that $m\leq |T_i|\leq 4m$ for each $2\leq i\leq s$.
The subtrees $T_1,\ldots,T_s$ then divide $T$, with $m\leq |T_i|\leq 4m$ for each $i\in [s]$.
\end{proof}

For embedding trees in Cases A and B, we will need a finer understanding of the structure of trees. In fact, every tree can be built up from a small tree by successively adding leaves, bare paths, and stars, as follows.
 \begin{lemma}[\cite{MPS}]\label{Lemma_decomp} Given integers $d$ and $n$, $\mu>0$ and a tree $T$ with at most $n$ vertices, there are integers $\ell\leq 10^4 d\mu^{-2}$ and $j\in\{2,\ldots,\ell\}$ and a sequence of subgraphs $T_0\subset T_1\subset \ldots \subset T_\ell=T$ such that
\stepcounter{propcounter}
\begin{enumerate}[label = {\bfseries \Alph{propcounter}\arabic{enumi}}]
\item for each $i\in [\ell]\setminus \{1,j\}$, $T_{i}$ is formed from $T_{i-1}$ by adding non-neighbouring leaves,\label{cond1}
\item $T_j$ is formed from $T_{j-1}$ by adding at most $\mu n$ vertex-disjoint bare paths with length $3$,\label{cond2}
\item $T_1$ is formed from $T_0$ by adding vertex-disjoint stars with at least $d$ leaves each, and\label{cond3}
\item $|T_0|\leq 2\mu n$.\label{cond4}
\end{enumerate}
\end{lemma}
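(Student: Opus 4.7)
The plan is to construct the sequence backwards from $T = T_\ell$ by peeling leaves at most rounds, using a single bare-path round, and reserving Step~1 for attaching big stars.

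\textbf{Setup.} I first identify the set $U \subseteq V(T)$ of vertices adjacent to at least $d$ leaves of $T$, and for each $v \in U$ let $L_v$ denote those leaves. The leaves in $\bigcup_{v \in U} L_v$ will be attached in Step~1; forward, $T_1 = T_0 \cup \bigcup_{v \in U} L_v$, which requires $U \subseteq V(T_0)$. It thus suffices to decompose $T' := T \setminus \bigcup_{v \in U} L_v$ as a decreasing sequence $T' = F_0 \supseteq F_1 \supseteq \ldots \supseteq F_m = T_0$ with $|T_0| \leq 2\mu n$, $U \subseteq V(T_0)$, and each step obtained from its predecessor either by removing a set of non-neighbouring leaves (for all but one step) or by removing the interior vertices of at most $\mu n$ vertex-disjoint bare paths of length $3$ (at one step). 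Reversing this sequence and prepending the star step yields $T_0 \subset T_1 \subset \ldots \subset T_\ell = T$ with $\ell = m + 1$.

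\textbf{Constructing $T_0$ and the sequence.} A key observation (illustrated by the extremal case $T = P_n$) is that $T_0$ should not be a contiguous small subtree but rather a set of well-spaced ``anchor'' vertices throughout $T$, so that the bare-path step can bridge the gaps between them. Using Lemma~\ref{dividetree} I would partition $T'$ into $O(\mu^{-1})$ subtrees of size at most $4\mu n$, and then choose $T_0$ to consist of $U$ together with representative anchor vertices in each piece, maintaining $|T_0| \leq 2\mu n$. Between anchors, $T'$ decomposes into ``fill-in'' subtrees of bounded size. To build $T$ from $T_1 = T_0 \cup \bigcup_{v \in U} L_v$, I would iteratively apply leaf rounds that extend outward from the anchors, respecting the non-neighbouring constraint by adding at most one leaf per parent per round; at a single critical round~$j$, I would apply the bare-path step to insert the length-$3$ bare paths that are needed to connect fill-in pieces whose construction cannot be completed by pure leaf extension (as in the $P_n$ example, where chains built from alternating anchors must be bridged by a single simultaneous round of bare paths).

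\textbf{Bounding $\ell$.} The main obstacle is verifying $\ell \leq 10^4 d \mu^{-2}$. Each leaf round adds at most one leaf per existing vertex, so a non-$U$ vertex (with at most $d - 1$ leaves in $T$) contributes at most $d - 1$ rounds from its own leaves. The ``depth'' of the fill-in structure -- namely the maximum number of leaf rounds needed to build a fill-in subtree from its anchors -- is controlled by the chosen piece size and is $O(\mu^{-1})$. Combining these estimates with the fact that the bare-path step in a single round absorbs the length-$3$ gaps across all fill-in pieces yields an overall bound of order $d \cdot \mu^{-1}$ plus slack for coordinating the non-neighbouring constraint globally, which fits comfortably inside $10^4 d\mu^{-2}$. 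The principal difficulty is to carry out the coordination so that every round simultaneously respects the non-neighbouring condition, keeps $|T_0|$ within its budget, and uses the single bare-path step efficiently enough that no long unbridgeable gap remains when peeling finishes.
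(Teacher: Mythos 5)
First, a point of reference: the paper does not prove Lemma~\ref{Lemma_decomp} at all --- it is imported from \cite{MPS} --- so there is no in-paper proof to compare against, and I am judging your argument on its own terms. Your overall shape (peel $T$ backwards via matchings of leaves, seed $T_0$ with anchors spread through the tree, use the single bare-path round to bridge components, and reserve step~1 for big stars) is the right one, but two of your quantitative choices break the argument. The first is the insistence that $U\subseteq V(T_0)$, where $U$ is the set of all vertices with at least $d$ leaves. One only has $|U|\le n/d$, which exceeds the budget $|T_0|\le 2\mu n$ whenever $d<1/(2\mu)$ --- and this is exactly the regime in which the paper applies the lemma (Lemma~\ref{Lemma_tree_splitting} takes $\mu=d^{-3}$). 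The statement does not force every vertex with $\ge d$ leaves to be a star centre: a vertex with between $d$ and roughly $\ell$ leaves can have its leaves distributed one per round over the leaf-matching steps (note that ``non-neighbouring leaves'' here means the added leaves have pairwise distinct parents, so a round adds at most one leaf per vertex). Only vertices with more than about $\ell$ leaves must go into $T_0$, and there are at most $n/\ell\le 2\mu n$ of those precisely because $\ell$ may be as large as $10^4 d\mu^{-2}\ge 1/(2\mu)$; this is also what the paper's proof of Lemma~\ref{Lemma_tree_splitting} relies on when it asserts that every vertex outside $T_0'$ has at most $\ell$ leaves.

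The second, more serious, gap is your bound on $\ell$. You take one anchor in each of $O(\mu^{-1})$ pieces of size at most $4\mu n$ and assert the ``depth'' of a piece is $O(\mu^{-1})$. It is not: since each leaf round adds at most one new leaf per existing vertex, a path-shaped piece on $4\mu n$ vertices grown from a single anchor gains only two vertices per round and needs $\Theta(\mu n)$ rounds, which exceeds $10^4 d\mu^{-2}$ once $n\gg d\mu^{-3}$. The granularity must go the other way: you need $\Theta(\mu n)$ anchors so that every vertex of $T$ is reachable from $T_0$ in $O(\mu^{-1})$ growth rounds (think of $P_n$ with anchors every $1/\mu$ steps), which consumes essentially the whole $2\mu n$ budget and forces the single bare-path round to bridge up to $\mu n$ gaps simultaneously. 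That in turn requires genuine synchronization: every residual gap at round $j$ must have length exactly $3$, because a gap of length $1$ or $2$ between two already-built components can never be closed (every permitted operation only adds edges incident to newly added vertices), so the anchors and the growth schedule must be chosen with the right spacings and parities. You flag this coordination as ``the principal difficulty'' but do not carry it out; together with the two points above, the proof is incomplete as it stands.
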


The following variation will be more convenient to use here. It shows that an arbitrary tree $T$ can be built out of a preselected, small subtree $T_1$ by a sequence of operations. It is important to control the starting tree as it allows us to choose which part of the tree will form our absorbing structure.

For forests $T'\subseteq T$, we say that $T$ is obtained from $T'$ by \emph{adding a matching of leaves} if all the vertices in $V(T)\setminus V(T')$ are non-neighbouring leaves in $T$.
\begin{lemma}[Tree splitting]\label{Lemma_tree_splitting}
Let $1\geq  d^{-1} \gg n^{-1}$.
Let $T$ be a tree with $|T|=n$ and $U\subseteq V(T)$ with $|U|\geq n/d^3$.
Then, there are forests $T_1^{\mathrm{small}}\subseteq T_2^{\mathrm{stars}}\subseteq T_3^{\mathrm{match}}\subseteq T_4^{\mathrm{paths}}\subseteq T_5^{\mathrm{match}}=T$ satisfying the following.
\stepcounter{propcounter}
\begin{enumerate}[label = {\bfseries \Alph{propcounter}\arabic{enumi}}]
\item $|T_{1}^{\mathrm{small}}|\leq  n/d$ and $|U\cap T_1^{small}|\geq n/d^6$.\label{rush1}
\item $T_2^{\mathrm{stars}}$ is formed from $T_{1}^{\mathrm{small}}$ by adding vertex-disjoint stars of size at least $d$.\label{rush2}
\item $T_3^{\mathrm{match}}$ is formed from $T_2^{\mathrm{stars}}$ by adding a sequence of $d^8$ matchings of leaves.\label{rush3}
\item $T_4^{\mathrm{paths}}$ is formed from $T_3^{\mathrm{match}}$ by adding at most $n/d$ vertex-disjoint paths of length $3$.\label{rush4}
\item $T_5^{\mathrm{match}}$ is formed from $T_4^{\mathrm{paths}}$ by adding a sequence of $d^8$ matchings of leaves.\label{rush5}
\end{enumerate}
\end{lemma}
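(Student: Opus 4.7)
The plan is to apply a slight variant of Lemma~\ref{Lemma_decomp} to $T$ where we pre-specify a small subtree capturing many vertices of $U$, and then read off the desired forests $T_1^{\mathrm{small}}, T_2^{\mathrm{stars}}, T_3^{\mathrm{match}}, T_4^{\mathrm{paths}}, T_5^{\mathrm{match}}$ directly from the resulting decomposition.

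First I would apply Lemma~\ref{dividetree} to $T$ with $m := \lfloor n/(8d) \rfloor$, obtaining subtrees $S_1,\ldots,S_s$ that divide $T$, with $s \le 8d$ and $|S_i| \le 4m \le n/(2d)$. Every vertex of $U$ lies in at least one $S_i$, so pigeonhole yields $S_{i^\ast}$ with $|U \cap V(S_{i^\ast})| \ge |U|/s \ge n/(8d^4) \ge n/d^6$ (using $d \ge 3$). Then I would apply a variant of Lemma~\ref{Lemma_decomp} to $T$ with parameters $d$ and $\mu := 100\,d^{-7/2}$ in which $S_{i^\ast}$ is ``protected'' throughout the construction: vertices and edges of $S_{i^\ast}$ are never removed by the iterative peeling and never assigned to a star, path, or matching stage. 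This yields a chain $\tilde T_0 \subseteq \tilde T_1 \subseteq \ldots \subseteq \tilde T_{\tilde\ell} = T$ with $S_{i^\ast} \subseteq \tilde T_0$, $|\tilde T_0| \le |S_{i^\ast}| + 2\mu n \le n/(2d) + n/(2d) = n/d$ (using $2\mu n \le n/(2d)$ for $d$ large), $\tilde\ell \le 10^4 d\mu^{-2} \le d^8$, step $1$ adding vertex-disjoint stars of size $\ge d$, some step $j \in \{2,\ldots,\tilde\ell\}$ adding at most $\mu n \le n/d$ vertex-disjoint length-$3$ bare paths, and all other steps adding non-neighbouring leaves.

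Set $T_1^{\mathrm{small}} := \tilde T_0$, $T_2^{\mathrm{stars}} := \tilde T_1$, $T_3^{\mathrm{match}} := \tilde T_{j-1}$, $T_4^{\mathrm{paths}} := \tilde T_j$, and $T_5^{\mathrm{match}} := T$. Then $|T_1^{\mathrm{small}}| \le n/d$ and $|U \cap T_1^{\mathrm{small}}| \ge |U \cap S_{i^\ast}| \ge n/d^6$, giving \ref{rush1}. The transition $T_1^{\mathrm{small}} \to T_2^{\mathrm{stars}}$ is exactly step $1$ of the chain, so \ref{rush2} holds; $T_2^{\mathrm{stars}} \to T_3^{\mathrm{match}}$ consists of stages $2,\ldots,j-1$, each a matching of leaves, totalling at most $\tilde\ell - 2 \le d^8$, giving \ref{rush3}; $T_3^{\mathrm{match}} \to T_4^{\mathrm{paths}}$ is exactly step $j$, with at most $\mu n \le n/d$ bare paths, giving \ref{rush4}; and $T_4^{\mathrm{paths}} \to T_5^{\mathrm{match}}$ consists of stages $j+1,\ldots,\tilde\ell$, again each a matching of leaves, totalling at most $\tilde\ell - j \le d^8$, giving \ref{rush5}.

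The main obstacle is establishing the variant of Lemma~\ref{Lemma_decomp} used in the second step, which requires adapting the proof of~\cite{MPS} to protect a pre-specified small subtree throughout the peeling. I expect this to be a routine modification: since $|S_{i^\ast}| \le n/(2d)$ is comparable to the original bound $2\mu n$ on the unprotected core, the size of the protected core grows only by the $|S_{i^\ast}|$ contribution, while the rest of the construction (identifying vertex-disjoint stars of size $\ge d$ among the large leaf-clusters of $T$ outside $S_{i^\ast}$, identifying length-$3$ bare paths in the residual tree, and iteratively peeling the remaining non-neighbouring leaves) proceeds on the ambient tree with $S_{i^\ast}$ simply treated as an inviolate block of the core. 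The parameter choices $\mu = 100\,d^{-7/2}$ make the counts $\tilde\ell \le d^8$ and $\mu n \le n/d$ match \ref{rush3}--\ref{rush5} on the nose, and the star/path/matching structural guarantees are inherited verbatim from the original argument.
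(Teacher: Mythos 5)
Your first step (Lemma~\ref{dividetree} plus pigeonhole to locate a small subtree $S_{i^\ast}$ capturing at least $n/d^6$ vertices of $U$) matches the paper's opening move, and your parameter bookkeeping for \ref{rush1}--\ref{rush5} is fine. The problem is your second step. You invoke ``a variant of Lemma~\ref{Lemma_decomp} in which $S_{i^\ast}$ is protected throughout the construction'' and then declare that establishing this variant is ``a routine modification'' of the proof in~\cite{MPS}. That is exactly the nontrivial content of the lemma, and it is left unproven: Lemma~\ref{Lemma_decomp} is used here as a black box whose proof is not reproduced, and there is no argument given that its internal peeling procedure can be made to avoid a prescribed subtree while preserving all of \ref{cond1}--\ref{cond4}. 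The paper avoids this entirely with a contraction trick: it contracts $T_k=S_{i^\ast}$ to a single vertex $v_0$ and attaches $e(T_k)$ dummy leaves to $v_0$, so that the unmodified Lemma~\ref{Lemma_decomp} is forced to place $v_0$ in the core $T_0'$ (any vertex outside $T_0'$ acquires at most $\ell$ leaves over the whole process, while $v_0$ has more than $\ell$). Uncontracting and deleting the dummy leaves then recovers the protected subtree inside $T_1^{\mathrm{small}}$ with no modification to the cited lemma.

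There is a second, related issue your sketch does not address. Even granting some protected-core version of the decomposition, the stars added at the first stage that happen to be attached to vertices of $S_{i^\ast}$ need not individually have at least $d$ leaves, so \ref{rush2} can fail. In the paper's argument this shows up after uncontraction: the leaves attached to $v_0$ in $T_1'$ redistribute among the vertices of $T_k$, and any vertex of $T_k$ receiving fewer than $d$ of them would violate the ``stars of size at least $d$'' requirement. The fix is to absorb those undersized leaf-clusters into $T_1^{\mathrm{small}}$ itself, which costs at most $d|T_k|$ extra vertices and is why the bound $|T_1^{\mathrm{small}}|\leq d|T_k|+|T_0'|\leq n/d$ is computed the way it is. Your proposal takes $T_1^{\mathrm{small}}$ to be the bare core $\tilde T_0$ and does not account for this, so either \ref{rush2} or the size bound in \ref{rush1} would need repair.
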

\begin{proof}
First, we claim that there is a subtree $T_0$ of order $\leq n/2d^2$ containing at least $n/d^6$ vertices of $U$. To find this, use Lemma~\ref{dividetree} to find $s\leq 32d^2$ subtrees $T_1,\ldots, T_{s}$ which divide $T$ so that $n/16d^2\leq |T_i|\leq n/2d^2$ for each $i\in [s]$. As each vertex in $U$ must appear in some tree $T_i$,  there must be some tree $T_k$ which contains at least $|U|/32d^2\geq n/d^6$ vertices in $U$, as required.

  Let  $T'$ be the $n$-vertex tree  formed from $T$ by contracting $T_k$ into a single vertex $v_0$ and adding $e(T_k)$ new leaves at $v_0$ (called ``dummy'' leaves). 
Notice that  Lemma~\ref{Lemma_decomp} applies to  $T'$ with    $d=d, \mu= d^{-3}, n=n$ which gives a sequence of forests $T_0', \dots, T_{\ell}'$ for $\ell\leq 10^4d^7\leq d^8$.
Notice that $v_0\in T_0'$. Indeed, by construction, every vertex which is not in $T_0'$ can have in $T_{\ell}'=T'$ at most $\ell$ leaves. Since $v_0$ has $\geq  e(T_k)>\ell$ leaves in $T'$, it must be in $T'_0$.
For each $i=0, \dots, {\ell}$, let $T_i''$ be $T_i'$ with $T_k$ uncontracted and any dummy leaves of $v_0$ deleted.
Let  $T_2^{\mathrm{stars}}=T_1''$, $T_3^{\mathrm{match}}=T_{j-1}''$, $T_4^{\mathrm{paths}}=T_j''$, $T_5^{\mathrm{match}}=T_{\ell}''$.
Let $T_1^{\mathrm{small}}$ be $T_0''$ together with the $T_1''$-leaves of any $v\in T_k$ for which $|N_{T_1''}(v)\setminus N_{T_0''}(v)|<d$. We do this because when we uncontract $T_k$ the leaves which were attached to $v_0$ in $T_0'$ are now attached to vertices of $T_k$. 
If they form a star of size less than $d$ we cannot add them when we form $T_2^{\mathrm{stars}}$ without violating \ref{rush2} so we add them already when we form $T_1^{\mathrm{small}}$.
Since we are adding at most $d$ leaves for every vertex of $T_k$, we have $|T_1^{\mathrm{small}}|\leq d|T_k|+|T_0'|\leq  n/d$ so \ref{rush1} holds. This ensures that at least $d$ leaves are added to vertices of $T_1^{\mathrm{small}}$ to form  $T_2^{\mathrm{stars}}$ so \ref{rush2} holds. The remaining conditions \ref{rush3} -- \ref{rush5} are immediate from the application of Lemma~\ref{Lemma_decomp}.
\end{proof}


\subsection{Pseudorandom properties of random sets of vertices and colours}\label{Section_pseudorandomness}
Suppose that $K_{2n+1}$ is $2$-factorized. Choose a $p$-random set of vertices $V\subseteq V(K_{2n+1})$  and a $q$-random set of colours $C\subseteq C(K_{2n+1})$. What can be said about the subgraph $K_{2n+1}[V,C]$ consisting of edges within the set $V$ with colour in $C$? What ``pseudorandomness'' properties is this subgraph likely to have? In this section, we gather lemmas giving various such properties.
The setting of the lemmas is quite varied, as are the properties they give. For example, sometimes the sets $V$ and $C$ are chosen independently, while sometimes they are allowed to depend on each other arbitrarily. We split these lemmas into three groups based on the three principal settings.


\subsubsectionme{Dependent vertex/colour sets}
In this setting, our colour set $C\subseteq C(K_{2n+1})$ is $p$-random, and the vertex set is $V(K_{2n+1})$ (i.e., it is 1-random). Our pseudorandomness condition is that the number of edges between any two sizeable disjoint vertex sets is close to the expected number. Though a lemma of this kind was first proved in~\cite{alon2016random}, the precise pseudorandomness condition we will use here is in the following version from~\cite{MPS}. A colouring is \emph{locally $k$-bounded} if every vertex is adjacent to at most $k$ edges of each colour. 

\begin{lemma}[\cite{MPS}]\label{MPScolour}\label{Lemma_MPS_boundrandcolour} Let $k\in\mathbb N$ be constant and let $\epsilon, p\geq n^{-1/100}$. Let $K_n$ have a {locally}  $k$-bounded colouring and suppose $G$ is a subgraph of $K_n$ chosen by including the edges of each colour independently at random with probability $p$. Then, with  probability $1-o(n^{-1})$, for any disjoint sets $A,B\subset V(G)$, with $|A|,|B|\geq n^{3/4}$,
\[
\big|e_G(A,B)-p|A||B|\big|\leq \epsilon p|A||B|.
\]
\end{lemma}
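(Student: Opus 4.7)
The plan is to combine matrix concentration with the expander mixing lemma, which yields a uniform bound for all pairs $A,B$ in a single shot and therefore sidesteps any need for a union bound over exponentially many pairs.

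Let $A_G$ denote the adjacency matrix of $G$ and, for each colour $c$, let $M_c$ be the adjacency matrix of the colour-$c$ subgraph of $K_n$, so that $A_G = \sum_c X_c M_c$ with the $X_c$ independent Bernoulli$(p)$ indicators and $\mathbb{E}[A_G] = p(J-I)$. First I would apply the matrix Bernstein inequality to the mean-zero independent symmetric matrices $Y_c := (X_c - p) M_c$. Local $k$-boundedness gives $\|M_c\| \leq k$, and a direct row-sum estimate --- each row of $M_c^2$ counts colour-$c$ walks of length two from a fixed vertex, of which there are at most $k \cdot d_c(i)$ in row $i$ --- yields $\|\sum_c M_c^2\| \leq k(n-1)$. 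Hence the variance proxy is $\sigma^2 := \|\sum_c \mathbb{E}[Y_c^2]\| \leq pkn$, and choosing $t = K\sqrt{pn \log n}$ with $K = K(k)$ a sufficiently large constant gives $\mathbb{P}[\|A_G - p(J-I)\| \geq t] \leq 2n^{\,1 - K^2/(4k)} = o(n^{-1})$.

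Next, on the complementary event I would apply the expander mixing lemma to the decomposition $A_G = p(J-I) + N$ with $N := A_G - \mathbb{E}A_G$. This gives, simultaneously for every pair of disjoint $A, B \subseteq V(G)$,
\[
\bigl|e_G(A,B) - p|A||B|\bigr| \;=\; \bigl|\mathbf{1}_A^{\top} N\, \mathbf{1}_B\bigr| \;\leq\; \|N\|\sqrt{|A||B|} \;\leq\; K\sqrt{pn\log n}\,\sqrt{|A||B|}.
\]
For $|A|, |B| \geq n^{3/4}$ and $\epsilon, p \geq n^{-1/100}$ the target inequality $\epsilon p |A||B| \geq K\sqrt{pn\log n}\,\sqrt{|A||B|}$ rearranges to $\epsilon^2 p |A||B| \gtrsim n\log n$, and the left-hand side is at least $n^{-3/100} \cdot n^{3/2} = n^{1.47}$, so it holds with room to spare.

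The key difficulty this approach overcomes is that scalar Bernstein applied to a single pair $(A,B)$ yields only an exponent of order $\epsilon^2 p \max(|A|,|B|)/k \approx n^{0.72}$, far too small to survive a union bound over the $2^{\Theta(n)}$ admissible pairs. Passing to the matrix level --- and using the row-sum estimate for $\sum_c M_c^2$ that genuinely exploits local $k$-boundedness --- is what converts per-pair concentration into a single simultaneous statement about the operator norm, which is strong enough to control all pairs at once.
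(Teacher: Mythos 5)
Your argument is correct. Note first that the paper does not prove this lemma at all: it is imported verbatim from \cite{MPS} (and the paper remarks that a statement of this kind first appeared in \cite{alon2016random}), so you are supplying a self-contained proof where the authors supply a citation. All the individual steps check out: $A_G=\sum_c X_cM_c$ with $\mathbb{E}A_G=p(J-I)$ because every edge lies in exactly one colour class; local $k$-boundedness gives $\|M_c\|\leq k$ (max degree of a colour class) and, via the row-sum bound $\sum_j(M_c^2)_{ij}=\sum_{l\sim_c i}d_c(l)\leq k\,d_c(i)$ summed over $c$, $\big\|\sum_cM_c^2\big\|\leq k(n-1)$, hence $\sigma^2\leq pkn$; matrix Bernstein with $t=K\sqrt{pn\log n}$ then gives $\P(\|N\|\geq t)\leq 2n^{1-K^2/(4k)}=o(n^{-1})$ for $K^2\geq 12k$, where the implicit requirement $kt/3\leq\sigma^2$, i.e.\ $t\leq 3pn$, follows from $p\geq n^{-1/100}$; and for disjoint $A,B$ the identity $e_G(A,B)-p|A||B|=\mathbf{1}_A^{\top}N\mathbf{1}_B$ plus $|\mathbf{1}_A^{\top}N\mathbf{1}_B|\leq\|N\|\sqrt{|A||B|}$ closes the argument, since $\epsilon^2p|A||B|\geq n^{-3/100}\cdot n^{3/2}\gg n\log n$.

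As a comparison: the arguments in \cite{alon2016random,MPS} work at the level of the scalar random variables $e_G(A,B)=\sum_cX_c\,e_c(A,B)$, where, as you correctly observe, the per-pair Bernstein/Azuma exponent $\approx\epsilon^2p\max(|A|,|B|)/k$ is too weak to survive a union bound over all pairs, so those proofs need an additional device to recover uniformity. Your spectral route trades that combinatorial work for one application of matrix concentration: a single operator-norm bound on $A_G-\mathbb{E}A_G$ controls every bilinear form $\mathbf{1}_A^{\top}N\mathbf{1}_B$ simultaneously, and the hypothesis of local $k$-boundedness enters exactly where it should, in the variance proxy $\big\|\sum_cM_c^2\big\|\leq kn$. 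The one thing the scalar approach buys that yours does not is elementarity (no appeal to matrix Chernoff bounds); what yours buys is brevity and a clean explanation of why the size threshold $n^{3/4}$ and the bounds $\epsilon,p\geq n^{-1/100}$ are comfortable rather than tight.
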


If $V\subseteq V(K_{2n+1})$ is a $p$-random set of vertices, then the edges going from $V$ to $V(K_{2n+1})\setminus V$ are typically pseudorandomly coloured. The lemma below is a version of this. Here ``pseudorandomly coloured'' means that most colours have at most a little more than the expected number of colours leaving $V$.
\begin{lemma}[\cite{MPS}]\label{MPSvertex}\label{Corollary_MPS_randsetcor}
Let $k$ be constant and  $\epsilon,p\geq n^{-1/10^3}$. Let $K_n$ have a {locally}  $k$-bounded colouring and let $V$ be a $p$-random subset of $V(K_n)$. Then, with  probability $1-o(n^{-1})$, for each $A\subset V(K_n)\setminus V$  with $|A|\geq n^{1/4}$, for all but at most $\epsilon n$ colours there are at most $(1+\epsilon)pk|A|$ edges of that colour between $V$ and $A$.
\end{lemma}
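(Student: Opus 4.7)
The strategy is to fix a potential \emph{target set} $A^{\ast}\subseteq V(K_n)$ with $|A^{\ast}|\geq n^{1/4}$ and analyse, for each colour $c$, the edge count $e_c(V,A^{\ast})$ conditional on the event $A^{\ast}\cap V=\emptyset$. Under this conditioning, $V$ is a $p$-random subset of $V(K_n)\setminus A^{\ast}$, and
\[
e_c(V,A^{\ast})=\sum_{v\in V(K_n)\setminus A^{\ast}}\mathbf{1}[v\in V]\cdot |N_c(v)\cap A^{\ast}|
\]
is a sum of independent Bernoulli random variables with weights bounded by $k$ (using local $k$-boundedness of the colouring). Its expectation is $p\cdot |E_c(A^{\ast},V(K_n)\setminus A^{\ast})|\leq pk|A^{\ast}|$, so a multiplicative Chernoff-type bound yields
\[
\P\bigl[e_c(V,A^{\ast})>(1+\epsilon/2)pk|A^{\ast}|\bigm|A^{\ast}\cap V=\emptyset\bigr]\leq \exp\bigl(-\Omega(\epsilon^2 p|A^{\ast}|/k)\bigr)\leq \exp(-n^{\Omega(1)}),
\]
using $\epsilon,p\geq n^{-1/10^3}$ and $|A^{\ast}|\geq n^{1/4}$.

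Next I would combine this per-pair estimate with the slack that the statement permits: up to $\epsilon n$ exceptional colours \emph{per} $A$. For each fixed $A^{\ast}$ let $N(A^{\ast}):=\bigl|\{c:e_c(V,A^{\ast})>(1+\epsilon)pk|A^{\ast}|\}\bigr|$. The first step gives $\E[N(A^{\ast})]\leq |C(K_n)|\cdot \exp(-n^{\Omega(1)})\ll \epsilon n$. To bound the tail, I would apply Azuma's inequality (or a Chernoff bound on a carefully chosen surrogate, such as $\sum_c(e_c(V,A^{\ast})-(1+\epsilon)pk|A^{\ast}|)_+$) to conclude that $\P[N(A^{\ast})>\epsilon n]$ is of order $2^{-\omega(n)}$. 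A final union bound over the at most $2^n$ choices of $A^{\ast}\subseteq V(K_n)$, together with the high-probability event $|V|=(1\pm o(1))pn$, would then yield the conclusion, since every $A\subseteq V(K_n)\setminus V$ with $|A|\geq n^{1/4}$ is a valid $A^{\ast}$ that automatically satisfies the conditioning $A^{\ast}\cap V=\emptyset$.

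The principal obstacle is establishing a tail bound of the form $\P[N(A^{\ast})>\epsilon n]\leq 2^{-\omega(n)}$: the Lipschitz constant of the map $V\mapsto N(V,A^{\ast})$ can be as large as the number of distinct colours incident to a single vertex, potentially $\Theta(n)$, so a direct application of Azuma to $N(\cdot)$ is inadequate. My proposed workaround is to instead concentrate the surrogate $\sum_c(e_c(V,A^{\ast})-(1+\epsilon)pk|A^{\ast}|)_+$, whose one-coordinate changes are uniformly bounded by $k$ while its expected value is tiny, and then translate a Chernoff bound on the surrogate into the desired bound on $N(A^{\ast})$. A cleaner alternative would be a direct reduction to Lemma~\ref{Lemma_MPS_boundrandcolour} via an appropriate duality between colour-sampling and vertex-sampling; whether such a reduction can be carried out cleanly is the main technical question I would want to resolve.
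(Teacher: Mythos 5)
First, a point of context: the paper does not prove this lemma --- it is imported from~\cite{MPS} without proof --- so there is no in-paper argument to compare yours against, and I can only assess your proposal on its own terms. Your analysis of a single fixed pair $(A^{\ast},c)$ is correct: conditional on $A^{\ast}\cap V=\emptyset$ the count $e_c(V,A^{\ast})$ is a weighted sum of independent Bernoullis with weights at most $k$ and mean at most $pk|A^{\ast}|$, giving a bad-colour probability of $\exp(-\Omega(\epsilon^2p|A^{\ast}|/k))=\exp(-n^{\Omega(1)})$, and hence $\E[N(A^{\ast})]\ll\epsilon n$. You correctly identify that the whole proof then hinges on a tail bound $\P[N(A^{\ast})>\epsilon n]=o(2^{-n})$ strong enough to survive a union bound over the $2^{\Theta(n)}$ admissible sets $A^{\ast}$; this union bound cannot be avoided, since the exceptional colour set genuinely must depend on $A$ (when $pk<1$, every colour $c$ is bad for the set $A$ consisting of $n^{1/4}$ colour-$c$ neighbours of $V$).

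The gap is that neither of your proposed workarounds delivers that tail bound. The claim that the surrogate $S=\sum_c(e_c(V,A^{\ast})-(1+\epsilon)pk|A^{\ast}|)_+$ has one-coordinate changes bounded by $k$ is false: toggling a single vertex $v$ in or out of $V$ changes $e_c(V,A^{\ast})$ for every colour appearing on an edge from $v$ to $A^{\ast}$, and while each individual $e_c$ moves by at most $k$, the total movement of $S$ is up to $|N(v)\cap A^{\ast}|=|A^{\ast}|$, which may be $\Theta(n)$. Even after correcting the Lipschitz constant to $|A^{\ast}|$ and inserting the necessary threshold gap (so that each bad colour contributes at least $\tfrac{1}{2}\epsilon pk|A^{\ast}|$ to a surrogate with a slightly lower cutoff --- without this, $N(A^{\ast})>\epsilon n$ does not force $S$ to be large), Azuma yields only $\P[N(A^{\ast})>\epsilon n]\leq\exp(-\Omega(\epsilon^4p^2k^2n))=\exp(-n^{1-o(1)})$, which loses to the $2^{\Theta(n)}$ union bound. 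The same shortfall afflicts the natural alternatives (a Chernoff bound on $e_D(V,A)$ over all pairs $(A,D)$ with $|D|=\epsilon n$, or Markov applied to $\sum_ce_c(V,A)=|V||A|$ or to $\sum_ce_c(V,A)^2$). What is actually needed is something like $\P[\text{all }c\in D\text{ bad for }A]\leq\exp(-\Omega(|D|\cdot\epsilon^2p|A|/k))$, i.e., near-multiplicativity of the bad events over the $\epsilon n$ colours of $D$, which would give $\exp(-n^{5/4-o(1)})$ and comfortably beat the union bound; but since the events $\{e_c(V,A)>(1+\epsilon)pk|A|\}$ are all increasing in $V$ and hence positively correlated by Harris's inequality, independence gives a bound in the wrong direction, and establishing such multiplicativity requires a genuine argument exploiting the edge-disjointness of the colour classes. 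Your ``cleaner alternative'' of reducing to Lemma~\ref{MPScolour} is left entirely unspecified, so as it stands the proposal is not a proof.
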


A random vertex set $V$ likely has the property from Lemma~\ref{MPSvertex}. A random colour set $C$ likely has the property from Lemma~\ref{MPScolour}. If we combine these two properties, we can get a property involving $C$ and $V$ that is likely to hold. Importantly, this will be true even if $C$ and $V$ are not independent of each other. Doing this, we get the following lemma.

\begin{lemma}[Nearly-regular subgraphs]\label{Lemma_nearly_regular_subgraph}
Let $p, \gamma\gg n^{-1}$ and let  $K_{2n+1}$ be $2$-factorized.
Let $V\subseteq V(K_{2n+1})$, and $C\subseteq C(K_{2n+1})$  with $V$ $p/2$-random and $C$ $p$-random (possibly depending on each other). The following holds with  probability $1-o(n^{-1})$.

For every $U\subseteq V(K_{2n+1})\setminus V$  with $|U|= pn$, there are subsets $U'\subseteq U, V'\subseteq V, C'\subseteq C$ with $|U'|=|V'|= (1\pm\gamma)|U|$
so that $G=K_{2n+1}[U',V',C']$ is globally $(1+\gamma)p^2n$-bounded, and every vertex $v\in V(G)$ has $d_{G}(v)=(1\pm\gamma)p^2n$.
\end{lemma}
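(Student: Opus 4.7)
The plan is to combine the two pseudo-randomness results of this subsection (Lemmas~\ref{Lemma_MPS_boundrandcolour} and~\ref{Corollary_MPS_randsetcor}) with a Chernoff bound, applying each to the appropriate marginal of $V$ or $C$, and then argue deterministically. Pick an auxiliary $\epsilon$ with $n^{-1}\ll\epsilon\ll\gamma p^2$. By Lemma~\ref{Lemma_MPS_boundrandcolour} applied to the $p$-random $C$, with probability $1-o(n^{-1})$ we have $e_C(A,B)=(1\pm\epsilon)p|A||B|$ for every pair of disjoint $A,B\subseteq V(K_{2n+1})$ of size at least $n^{3/4}$. By Lemma~\ref{Corollary_MPS_randsetcor} applied to the $(p/2)$-random $V$ (noting that the $2$-factorization is locally $2$-bounded), with probability $1-o(n^{-1})$, for every $A\subseteq V(K_{2n+1})\setminus V$ of size at least $n^{1/4}$, all but at most $\epsilon n$ colours $c$ satisfy $|E_c(A,V)|\leq(1+\epsilon)p|A|$. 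Finally, a Chernoff bound gives $|V|=(1\pm\epsilon)pn$. Each of these three events is determined by a single marginal of $(V,C)$, so by a union bound they all hold simultaneously with probability $1-o(n^{-1})$, \emph{regardless of how $V$ and $C$ are coupled}.

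Condition on all three events and fix $U\subseteq V(K_{2n+1})\setminus V$ with $|U|=pn$. Taking $A=U$ in the second event yields a set $C_{\text{bad}}$ of at most $\epsilon n$ colour-bad colours; set $C'=C\setminus C_{\text{bad}}$, so every $c\in C'$ has $|E_c(U,V)|\leq(1+\epsilon)p^2n$, giving the required $(1+\gamma)p^2n$ global bound on any subgraph $K_{2n+1}[U',V',C']\subseteq K_{2n+1}[U,V,C']$. For degree regularity, let $H=K_{2n+1}[U,V,C]$ and define $U^{\mathrm{hi}}=\{u\in U:d_H(u)>(1+\gamma/3)p^2n\}$ and $U^{\mathrm{lo}}=\{u\in U:d_H(u)<(1-\gamma/3)p^2n\}$. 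If $|U^{\mathrm{hi}}|\geq n^{3/4}$ then applying the first event with $A=U^{\mathrm{hi}}$, $B=V$ gives
$$\sum_{u\in U^{\mathrm{hi}}}d_H(u)=e_C(U^{\mathrm{hi}},V)=(1\pm\epsilon)p|U^{\mathrm{hi}}||V|=(1\pm 3\epsilon)p^2n|U^{\mathrm{hi}}|,$$
contradicting the definition of $U^{\mathrm{hi}}$ since $\epsilon\ll\gamma$. The same argument bounds $|U^{\mathrm{lo}}|$ and the symmetric quantities $|V^{\mathrm{hi}}|,|V^{\mathrm{lo}}|$ by $n^{3/4}$. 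Set $U^0=U\setminus(U^{\mathrm{hi}}\cup U^{\mathrm{lo}})$, $V^0=V\setminus(V^{\mathrm{hi}}\cup V^{\mathrm{lo}})$, and choose $U'\subseteq U^0$, $V'\subseteq V^0$ with $|U'|=|V'|=\min(|U^0|,|V^0|)$. A short calculation gives $|U'|=(1\pm\gamma)pn$, and for each $u\in U'$ the degree in $K_{2n+1}[U',V',C']$ differs from $d_H(u)\in(1\pm\gamma/3)p^2n$ by at most $|V\setminus V'|+2|C\cap C_{\text{bad}}|=O(\epsilon n+n^{3/4})=o(\gamma p^2n)$, so it remains in $(1\pm\gamma)p^2n$; the symmetric computation handles $v\in V'$.

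The main difficulty is that $V$ and $C$ may be arbitrarily coupled, so for an individual $u\in U$ one cannot even write down $\E[d_H(u)]$, let alone apply a concentration inequality to a single degree. The trick is that the two marginal lemmas each depend only on one of $V,C$ and yet jointly control the \emph{bulk} quantity $e_C(A,B)$ for all polynomial-sized disjoint $A,B$: any block of $\geq n^{3/4}$ persistently high- or low-degree vertices would already violate this bulk bound, so the bad vertex sets must be tiny. The remaining care is quantitative, namely choosing $\epsilon$ small enough (but still above $n^{-1/10^3}$ as required by Lemma~\ref{Corollary_MPS_randsetcor}) that the $\epsilon n$ colour removals and $O(n^{3/4})$ vertex removals do not eat into the $\gamma p^2 n$ degree slack --- precisely what the polynomial hypothesis $p,\gamma\gg n^{-1}$ supplies.
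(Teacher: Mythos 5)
Your proposal is correct and follows essentially the same route as the paper's proof: apply Lemma~\ref{Lemma_MPS_boundrandcolour} to the marginal of $C$, Lemma~\ref{Corollary_MPS_randsetcor} to the marginal of $V$, and a Chernoff bound, take a union bound (valid under arbitrary coupling), then discard the $\leq\epsilon n$ colour-heavy colours and the $O(n^{3/4})$ vertices of atypical degree, the latter bounded via the bulk edge-count estimate. The only cosmetic difference is that the paper inserts an intermediate degree-threshold parameter $\alpha$ with $p,\gamma\gg\alpha\gg\epsilon$ where you use $\gamma/3$ directly; the substance is identical.
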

\begin{proof}
Choose $\alpha$ and $\epsilon$ so that $p, \gamma\gg \alpha\gg \epsilon\gg n^{-1}$. With high probability, by Lemma~\ref{MPScolour}, Lemma~\ref{MPSvertex} (with $k=2$, $n'=2n+1$, and $p'=p/2$) and Chernoff's bound, we can assume the following occur simultaneously.
\begin{enumerate}[label = (\roman{enumi})]
\item \label{prop1} For any disjoint $A,B\subset V(K_{2n+1})$ with $|A|,|B|\geq (2n+1)^{3/4}$, $|E_C(A,B)|=(1\pm \eps)p|A||B|$.
\item \label{prop2} For any $A\subseteq V(K_{2n+1})\setminus V$ with $|A|\geq n^{1/4}$, for all but at most $\epsilon n$ colours there are at most $(1+\epsilon)p|A|$ edges of that colour between $A$ and $V$.
\item \label{prop3} $|V|=(1\pm \eps)pn$.
\end{enumerate}

Let $U\subseteq V(K_{2n+1})\setminus V$ with $|U|= pn$ be arbitrary. Let $\hat C\subseteq C$ be the subset of colours $c\in C$ with $|E_c(U,V)|> (1+ \epsilon)p|U|$. Note that, from \ref{prop2}, we have $|\hat C|\leq \eps n$.

Let $\hat U^+\subseteq U$ and $\hat V^+\subseteq V$ be subsets of vertices $v$ with $d_{K_{2n+1}[U,V,C]}(v)> (1+ \alpha)p^2 n$, and let $\hat U^-\subseteq U$ and $\hat V^-\subseteq V$ be subsets of vertices $v$ with $d_{K_{2n+1}[U,V,C]}(v)< (1- \alpha)p^2 n$.

Now, $|E_C(U^+,V)|> |U^+|(1+\alpha)p^2 n\geq (1+\eps)p|U^+||V|$ by the definition of $U^+$ and \ref{prop3}. Therefore, as, by \ref{prop3}, $|V|\geq (1-\eps)pn>(2n+1)^{3/4}$, from \ref{prop1} we must have $|U^+|\leq (2n+1)^{3/4}\leq \eps n$. Similarly, we have $|U^-|,|V^+|,|V^-|\leq \eps n$.

Let $\hat U=U^+\cup U^-$ and Let $\hat V=V^+\cup V^-$. We have $|U\setminus \hat U|, |V\setminus \hat V| \geq (1\pm \epsilon)pn\pm 2\epsilon n$. Therefore, we can choose subsets $U'\subseteq U\setminus \hat U$ and $V'\subseteq V\setminus \hat V$ with $|U'|=|V'|= pn-3\epsilon n=(1\pm \gamma)pn$. Note that, by \ref{prop3} and as $|U|=pn$, $|U\setminus U'|,|V\setminus V'|\leq 4\epsilon n$.
Let  $C'=C\setminus \hat C$ and set $G=K_{2n+1}[U',V',C']$.
For each $v\in U'\cup V'$ we have $d_G(v)= (1\pm \alpha)p^2 n \pm 2|\hat C|\pm |U\setminus U'|\pm |V\setminus V'|=(1\pm \gamma)p^2 n$. For each $c\in C'$ we have $|E_c(U,V)|\leq (1+ \epsilon)p|U|\leq (1+ \gamma)p^2n$.
\end{proof}


\subsubsectionme{Deterministic colour sets and random vertex sets}
The following lemma  bounds the number of edges each colour typically has within a random vertex set.
\begin{lemma}[Colours inside random sets]\label{Lemma_number_of_colours_inside_random_set}
Let $p,\gamma\gg n^{-1}$  and let  $K_{2n+1}$ be $2$-factorized.
Let $V\subseteq V(K_{2n+1})$  be $p$-random. With high probability, every colour has $(1\pm \gamma)2p^2n$ edges inside $V$.
\end{lemma}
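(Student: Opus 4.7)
The plan is to apply Azuma's inequality (Lemma~\ref{Lemma_Azuma}) separately to each colour, and then take a union bound over the $n$ colours.

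First I would fix a single colour $c$ and let $X_c$ denote the number of colour-$c$ edges of $K_{2n+1}$ with both endpoints in $V$. Writing $V$ as the outcome of $2n+1$ independent Bernoulli$(p)$ trials, one per vertex, $X_c$ is a function of these $2n+1$ coordinates. Because the colouring is a $2$-factorization, each vertex is incident to exactly $2$ edges of colour $c$, so flipping one coordinate changes $X_c$ by at most $2$; hence $X_c$ is $2$-Lipschitz. Moreover, since $|E_c(K_{2n+1})|=(2n+1)\cdot 2/2=2n+1$, linearity of expectation gives $\E X_c=(2n+1)p^2$, which equals $2p^2 n\pm p^2$.

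Next I would apply Azuma's inequality with $t=\gamma p^2 n/2$ (any constant fraction of $\gamma p^2 n$ would do). This yields
\[
\P\bigl(|X_c-\E X_c|>\gamma p^2 n/2\bigr)\;\leq\; 2\exp\!\left(-\frac{(\gamma p^2 n/2)^2}{4(2n+1)}\right)\;\leq\; 2\exp\!\left(-\Omega(\gamma^2 p^4 n)\right).
\]
Since $p,\gamma\oldgg n^{-1}$ in the polynomial sense defined in Section~\ref{sec:not}, the quantity $\gamma^2 p^4 n$ is at least $n^{1-C'}$ for some constant $C'<1$, which is much larger than $\log n$. In particular, the failure probability for a single colour is $o(n^{-2})$.

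Finally, a union bound over the $n$ colours gives that, with high probability (indeed, with probability $1-o(n^{-1})$), simultaneously every colour $c$ satisfies $|X_c-\E X_c|\leq \gamma p^2 n/2$. Combining with $\E X_c=2p^2 n\pm p^2$ and using $p^2\leq \gamma p^2 n/2$ (which follows from $\gamma\oldgg n^{-1}$), we obtain $X_c=(1\pm\gamma)2p^2 n$ for every colour, as required. There is no substantive obstacle here; the only thing to verify carefully is that the polynomial lower bounds on $p$ and $\gamma$ are strong enough to beat the $\log n$ factor coming from the union bound, which they are by the conventions of Section~\ref{sec:not}.
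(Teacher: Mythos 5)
Your proof is correct and follows essentially the same route as the paper's: compute $\E|E_c(V)|=p^2(2n+1)$ by linearity of expectation, note that the $2$-factorization makes $|E_c(V)|$ $2$-Lipschitz in the $2n+1$ vertex coordinates, apply Azuma's inequality, and take a union bound over the $n$ colours. The extra care you take with the $\pm p^2$ discrepancy between $(2n+1)p^2$ and $2p^2n$ is a harmless refinement the paper elides.
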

\begin{proof}
For an edge $e\in E(K_{2n+1})$ we have $\P(e \in E(K_n[V]))=p^2$.
By linearity of expectation, for any colour $c\in C(K_{2n+1})$, we have $\E(|E_c(V)|)=p^2(2n+1)$. Note that $|E_c(V)|$ is $2$-Lipschitz and affected by $\leq 2n+1$ coordinates.
By Azuma's inequality, we have $\P(|E_c(V)|\neq (1\pm \gamma)2p^2n )\leq e^{-\gamma^2p^4n/100}= o(n^{-1})$.
The result follows by a union bound over all the colours.
\end{proof}

Recall that  for any two sets $U,V\subseteq V(G)$ inside a coloured graph $G$, we say that the pair $(U,V)$ is \emph{$k$-replete} if every colour of $G$ occurs at least $k$ times between $U$ and $V$.
We will use the following auxiliary lemma about how this property is inherited by random subsets.
\begin{lemma}[Repletion between random sets]\label{Lemma_inheritence_of_lower_boundedness_random}
Let $q,  p\gg n^{-1}$  and let  $K_{2n+1}$ be $2$-factorized.
Suppose that $A,B\subseteq V(K_{2n+1})$ are disjoint randomized sets with the pair $(A,B)$   $pn$-replete with high probability.
Let $V\subseteq V(K_{2n+1})$ be $q$-random and independent of $A,B$.
Then with high probability the pair $(A, B\cap V)$ is $(qpn/2)$-replete.
\end{lemma}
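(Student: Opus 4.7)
My plan is to condition on $A$ and $B$ and use the independence of $V$ to reduce the problem to a concentration statement about a single $q$-random vertex set, with the $2$-factorization giving the boundedness needed to apply a Chernoff/Azuma argument.

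More concretely, let $\mathcal E$ be the event that $(A,B)$ is $pn$-replete, so that $\P(\mathcal E) = 1-o(1)$ by hypothesis. I will fix an arbitrary realization $A=A_0$, $B=B_0$ on which $\mathcal E$ holds; since $V$ is independent of $(A,B)$, conditional on this event $V$ is still a $q$-random subset of $V(K_{2n+1})$. For each colour $c$, let $E_c\subseteq E(A_0,B_0)$ be the set of colour-$c$ edges between $A_0$ and $B_0$, so $|E_c|\geq pn$ by $\mathcal E$. Let $B_c\subseteq B_0$ be the set of $B_0$-endpoints of edges in $E_c$, and for each $v\in B_c$, let $d_v$ be the number of colour-$c$ edges from $v$ to $A_0$. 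Because the colouring is $2$-factorized, every vertex has exactly two edges of each colour, so $d_v\in\{1,2\}$, and $\sum_{v\in B_c} d_v = |E_c|\geq pn$.

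Now the number of colour-$c$ edges between $A_0$ and $B_0\cap V$ is exactly
\[
X_c \;=\; \sum_{v\in B_c} d_v\cdot \mathbf 1[v\in V],
\]
which is a sum of independent $\{0,d_v\}$-valued random variables with mean $\E[X_c] = q|E_c|\geq qpn$. In particular $X_c$ is $2$-Lipschitz in the at most $|B_c|\leq 2n+1$ independent coordinates that decide whether each $v\in B_c$ is in $V$. Azuma's inequality (Lemma~\ref{Lemma_Azuma}) then gives
\[
\P\bigl(X_c < qpn/2\bigr) \;\leq\; \P\bigl(X_c \leq \E[X_c] - qpn/2\bigr) \;\leq\; 2\exp\bigl(-\Omega(q^2p^2 n)\bigr).
\]
Since $p,q\gg n^{-1}$, we have $q^2p^2 n\gg \log n$, so this probability is $o(n^{-1})$. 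A union bound over all $n$ colours shows that conditional on $A=A_0,B=B_0$ (on $\mathcal E$), the pair $(A_0,B_0\cap V)$ fails to be $(qpn/2)$-replete with probability $o(1)$. Combining with $\P(\neg\mathcal E)=o(1)$ yields the conclusion.

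The only genuine subtlety, and the step where one must be careful, is that the edges of a given colour between $A$ and $B$ are not independent --- they share endpoints in $B$ --- so one cannot directly apply Chernoff to a binomial variable. The $2$-factorization hypothesis is exactly what saves the argument: it caps each $d_v$ at $2$, making $X_c$ a bounded-increment sum and keeping the Azuma exponent at $\Omega(q^2p^2 n)$ rather than something weaker; together with the independence of $V$ from $(A,B)$, this is all that is needed.
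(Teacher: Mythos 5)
Your proposal is correct and follows essentially the same route as the paper's proof: condition on a replete realization of $(A,B)$, use the independence of $V$ to compute $\E|E_c(A,B\cap V)|\geq qpn$, observe that this quantity is $2$-Lipschitz in the coordinates of $V$ thanks to the $2$-factorization, and apply Azuma's inequality followed by a union bound over colours. Your explicit decomposition $X_c=\sum_{v\in B_c}d_v\mathbf 1[v\in V]$ just makes the Lipschitz claim more transparent than the paper's one-line assertion.
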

\begin{proof}
Fix some choice $A'$ of $A$ and $B'$ of $B$ for which the pair $(A',B')$ is $pn$-replete. As $V$ is independent of $A,B$, for each edge $e$ between $A$ and $B$ we have $\P(e\cap B\cap V\neq \emptyset|A=A', B=B')=q$.
Therefore, for any colour $c$, conditional on ``$A=A', B=B'$'', we have $\E(|E_c(A,B\cap  V)|)=q|E_c(A,B)|\geq qpn$. Note that $|E_c(A,B\cap  V)|$ is $2$-Lipschitz and affected by $\leq 2n+1$ coordinates.
By Azuma's inequality, we have $\P(|E_c(A,B\cup V)|< qpn/2| A=A', B=B')\leq e^{-q^2p^2n/100}= o(1)$. Thus, with probability $1-o(1)$, conditioned on  $A=A', B=B'$ we have that $(A,B\cap V)$ is $(qn/2)$-replete.

This was all under the assumption that $A=A'$ and $B=B'$. 
Therefore using that $(A,B)$ is $pn$-replete with high probability, we have
\begin{align*}
\P(\text{$(A,B\cap V)$ is $(qn/2)$-replete})&\geq \sum_{\substack{(A',B') \\ \text{ $pn$-replete}}}\P(\text{$(A,B\cap V)$ is  $(qn/2)$-replete}|A=A', B=B')\cdot\P(A=A', B=B')\\
&\geq  \sum_{\substack{(A',B') \\ \text{ $pn$-replete}}} (1-o(1))\cdot \P(A=A', B=B')=1-o(1).\qedhere
\end{align*}
\end{proof}


\subsubsectionme{Independent vertex/colour sets}
The setting of the next three lemmas is the same: we independently choose a $p$-random set of vertices $V$ and a $q$-random set of colours $C$. For such a pair $V,C$ we expect all vertices of the vertices $v$ in $K_{2n+1}$ to have many $C$-edges going into $V$. Each of the following lemmas is a variation on this theme.
\begin{lemma}[Degrees into independent vertex/colour sets]\label{Lemma_high_degree_into_random_set}
Let $p, q\gg n^{-1}$ and let  $K_{2n+1}$ be $2$-factorized.
Let $V\subseteq V(K_{2n+1})$ be $p$-random, and let $C\subseteq C(K_{2n+1})$ be $q$-random and independent of $V$. With  probability $1-o(n^{-1})$, every vertex $v\in V(K_{2n+1})$ has $|N_C(v)\cap V|\geq pq n$.
\end{lemma}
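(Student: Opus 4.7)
The plan is a straightforward first-moment calculation followed by an Azuma concentration estimate, then a union bound over the $2n+1$ vertices.

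Fix a vertex $v \in V(K_{2n+1})$. The random variable $X_v := |N_C(v)\cap V|$ counts pairs $(u,c(uv))$ with $u \in V$, $u \neq v$, and $c(uv) \in C$. Since $V$ and $C$ are independent, and whether $u\in V$ (for $u\neq v$) is independent of whether $c(uv)\in C$, linearity of expectation combined with the fact that $v$ has exactly $2n$ neighbours in $K_{2n+1}$ gives
\[
\E[X_v] \;=\; \sum_{u\neq v}\P[u\in V]\,\P[c(uv)\in C] \;=\; 2n\cdot pq \;=\; 2pqn.
\]

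Next I would view $X_v$ as a function of at most $3n$ independent coordinates: one for each of the (up to) $2n$ vertices $u\neq v$ recording whether $u\in V$, and one for each of the $n$ colours recording whether it lies in $C$. Resampling a vertex coordinate changes $X_v$ by at most $1$; resampling a colour coordinate changes $X_v$ by at most $2$, because $K_{2n+1}$ is $2$-factorized so exactly two edges at $v$ have any given colour. Thus $X_v$ is $2$-Lipschitz in at most $3n$ coordinates, and Lemma~\ref{Lemma_Azuma} (Azuma) with $t=pqn$ yields
\[
\P[X_v < pqn] \;\leq\; \P[|X_v - \E[X_v]| \geq pqn] \;\leq\; 2\exp\!\bigl(-(pqn)^2/(12n)\bigr) \;=\; 2\exp(-p^2q^2 n/12).
\]

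Since $p,q \gg n^{-1}$, the product $p^2q^2 n$ is polynomially large in $n$, so this probability is $o(n^{-2})$. A union bound over the $2n+1$ choices of $v$ then gives failure probability $o(n^{-1})$, as required.

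The only thing to be slightly careful about is the Lipschitz constant for the colour coordinates: it relies crucially on the $2$-factorized hypothesis, which bounds by $2$ the number of edges at $v$ of any given colour. I do not anticipate any real obstacle here; the argument is essentially a one-paragraph application of Azuma's inequality.
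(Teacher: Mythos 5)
Your proposal is correct and matches the paper's proof essentially line for line: the same expectation computation $\E|N_C(v)\cap V|=2pqn$ via independence, the same observation that the variable is $2$-Lipschitz in at most $3n$ coordinates (using the $2$-factorization for the colour coordinates), the same application of Azuma with $t=pqn$, and the same union bound over vertices. No gaps.
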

\begin{proof} Let $v\in V(K_{2n+1})$.
For any vertex $x\neq v$, we have $\P(x\in N_C(v)\cap V)= pq$ and so $\E(|N_C(v)\cap V|)= 2pq n$.
Also $|N_C(v)\cap V|$ is $2$-Lipschitz and affected by $3n$ coordinates.
By Azuma's Inequality, we have that $\P(|N_C(v)\cap V|\leq pq n)\leq 2e^{-p^2q^{2}n/1000}= o(n^{-2})$. The result follows by taking a union bound over all $v\in V(K_{2n+1})$.
\end{proof}

\begin{lemma}\label{lem:goodpairs} Let $1/n\ll \eta \ll \mu$. Let $K_{2n+1}$ be 2-factorized. Suppose that $V_0\subset V(K_{2n+1})$ and $D_0\subset C(K_{2n+1})$ are $\mu$-random subsets which are independent. With high probability, for each distinct $u,v\in V(K_{2n+1})$, there are at least $\eta n$ colours $c\in D_0$ for which there are colour-$c$ neighbours of both $u$ and $v$ in $V_0$.
\end{lemma}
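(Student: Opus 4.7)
The plan is a standard ``expectation $+$ Azuma $+$ union bound'' argument, exploiting the independence of $V_0$ and $D_0$ together with the 2-factorization to show that the events we care about occur with probability $\gtrsim \mu^3$ per colour.

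Fix distinct vertices $u,v\in V(K_{2n+1})$ and let $c_0$ be the colour of the edge $uv$. For a colour $c\in C(K_{2n+1})$, let $\{u_c^1,u_c^2\}$ (resp.\ $\{v_c^1,v_c^2\}$) be the two colour-$c$ neighbours of $u$ (resp.\ $v$); these pairs are well-defined and each of size $2$ because the colouring is a 2-factorization. Let $A_c$ be the event that $c\in D_0$ \emph{and} $V_0\cap\{u_c^1,u_c^2\}\neq\emptyset$ \emph{and} $V_0\cap\{v_c^1,v_c^2\}\neq\emptyset$, and let $X_{uv}=\#\{c:A_c \text{ holds}\}$. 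The goal is to show $X_{uv}\geq\eta n$ for all pairs $u,v$ with high probability.

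The first step is a lower bound on $\mathbb{E}[X_{uv}]$. For each $c\neq c_0$, the sets $\{u_c^1,u_c^2\},\{v_c^1,v_c^2\}$ avoid $\{u,v\}$, and the events ``$V_0\cap\{u_c^1,u_c^2\}\neq\emptyset$'' and ``$V_0\cap\{v_c^1,v_c^2\}\neq\emptyset$'' are monotone increasing functions of the independent Bernoulli$(\mu)$ indicators $\{[w\in V_0]:w\in V(K_{2n+1})\}$. Each has probability $1-(1-\mu)^2\geq\mu$, so by the FKG inequality their joint probability is at least $\mu^2$. Since $D_0$ is independent of $V_0$ and $\mathbb{P}(c\in D_0)=\mu$, this gives $\mathbb{P}(A_c)\geq\mu^3$ for each $c\neq c_0$. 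Hence $\mathbb{E}[X_{uv}]\geq\mu^3(n-1)\geq\mu^3n/2$.

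Next I verify that $X_{uv}$ is well-concentrated. The underlying probability space is a product of Bernoulli coordinates, one per vertex (for $V_0$) and one per colour (for $D_0$). Flipping the coordinate ``$c\in D_0$'' changes $X_{uv}$ by at most $1$. Flipping the coordinate ``$w\in V_0$'' changes $X_{uv}$ by at most the number of colours $c$ such that $w\in\{u_c^1,u_c^2,v_c^1,v_c^2\}$; because there is exactly one colour on each of the (at most two) edges $uw$ and $vw$, this count is at most $2$. Thus $X_{uv}$ is $2$-Lipschitz in $3n+1$ coordinates. Applying Azuma's inequality (Lemma~\ref{Lemma_Azuma}) with $t=\mu^3n/4$, and using $\eta\ll\mu$ (so that $\eta\leq\mu^3/4$) together with $\mu\gg n^{-1}$ (so that $\mu^6 n$ dominates $\log n$), gives
\[
\mathbb{P}\bigl(X_{uv}<\eta n\bigr)\leq \mathbb{P}\bigl(X_{uv}<\mu^3n/4\bigr)\leq 2\exp\!\left(-\frac{\mu^6n}{200}\right)=o(n^{-2}).
\]
A union bound over the $\binom{2n+1}{2}$ pairs $(u,v)$ then gives the lemma.

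There is no serious obstacle; the only points to be careful about are (i) that the two colour-$c$ neighbours of a vertex are indeed distinct (which follows from 2-factorization), (ii) the FKG step, which avoids any worry about whether $\{u_c^1,u_c^2\}$ and $\{v_c^1,v_c^2\}$ overlap, and (iii) the Lipschitz constant in the vertex coordinates, which uses that each vertex $w$ is a colour-$c$ neighbour of $u$ for at most one colour $c$.
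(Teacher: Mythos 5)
Your proposal is correct and follows essentially the same route as the paper: define $X_{u,v}$ as the count of good colours, lower-bound $\E X_{u,v}$ by roughly $\mu^3 n$ using the independence of $V_0$ and $D_0$, observe that $X_{u,v}$ is $2$-Lipschitz in $O(n)$ coordinates, and finish with Azuma's inequality and a union bound over pairs. The only difference is that you spell out the expectation bound (via FKG) and the Lipschitz constant in more detail than the paper, which simply asserts them; both of your justifications are sound.
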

\begin{proof} Let $u,v\in V(K_{2n+1})$ be distinct, and let $X_{u,v}$ be the number of colours $c\in D_0$ for which there are colour-$c$ neighbours of both $u$ and $v$ in $V_0$. Note that $\E X_{u,v} \geq \mu^3 n$, $X_{u,v}$ is
$2$-Lipschitz and affected by $3n-1$ coordinates.
By Azuma's Inequality, we have that $\P(X_{u,v}\leq \mu^3 n/2)\leq 2e^{-\mu^6 n/1000}= o(n^{-2})$. The result follows by taking a union bound over all distinct pairs $u,v\in V(K_{2n+1})$.
\end{proof}

Lemma~\ref{Lemma_high_degree_into_random_set} says that, with high probability, every vertex $v$ has many colours $c\in C$ for which there is a $c$-edge into $V$. The following lemma is a strengthening of this. It shows that, for any set $Y$ of $100$ vertices, there are many colours $c\in C$ for which \emph{each $v\in Y$} has a $c$-edge into $V$.
\begin{lemma}[Edges into independent vertex/colour sets]\label{Lemma_edges_into_independent_vertexcolour_sets} 
Let $p\gg q\gg n^{-1}$ and  let  $K_{2n+1}$ be $2$-factorized.
Let $V\subseteq V(K_{2n+1})$ and $C\subseteq C(K_{2n+1})$ be $p$-random and independent.
Then, with high probability, for any set $Y$ of $100$ vertices, there are $qn$ colours $c\in C$ for which each $y\in Y$ has a $c$-neighbour in $V$.
\end{lemma}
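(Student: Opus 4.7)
The plan is a first-moment plus concentration argument mirroring the proofs of Lemmas~\ref{Lemma_high_degree_into_random_set} and~\ref{lem:goodpairs}. First I fix a set $Y \subset V(K_{2n+1})$ of $100$ vertices. For each colour $c \in C(K_{2n+1})$ let $X_{Y,c}$ be the indicator of the event that $c \in C$ and every $y \in Y$ has a $c$-neighbour in $V$, and set $Z_Y = \sum_{c} X_{Y,c}$. I will show $\E Z_Y \geq p^{101} n$, apply Azuma's inequality to concentrate $Z_Y$ around its mean, and finally union bound over all $\binom{2n+1}{100}$ choices of $Y$.

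For the expectation, I use the independence of $V$ and $C$ together with the fact that the $2$-factorization gives each $y \in Y$ exactly two $c$-neighbours. Fixing one such $c$-neighbour $a_y$ for each $y \in Y$ yields a set $A_c = \{a_y : y \in Y\}$ of size at most $100$, and $A_c \subseteq V$ forces every $y \in Y$ to have a $c$-neighbour in $V$. Hence
\[
\P(X_{Y,c} = 1) \geq \P(c \in C)\cdot \P(A_c \subseteq V) \geq p \cdot p^{100} = p^{101},
\]
so $\E Z_Y \geq p^{101} n$. Since $p \gg q$ polynomially, we have $p^{101} \geq 2q$ for $n$ large, giving $\E Z_Y \geq 2qn$.

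For the concentration, $Z_Y$ is determined by the $\leq 3n+1$ independent Bernoulli coordinates encoding whether each vertex lies in $V$ and each colour lies in $C$. Flipping one colour coordinate changes only $X_{Y,c}$, while flipping a vertex coordinate $v$ affects $X_{Y,c}$ only for those colours $c$ such that $v$ is a $c$-neighbour of some $y \in Y$; since each edge $vy$ has a unique colour, there are at most $|Y| = 100$ such colours. So $Z_Y$ is $100$-Lipschitz. Azuma's inequality (Lemma~\ref{Lemma_Azuma}) then yields
\[
\P\bigl(Z_Y < qn\bigr) \leq \P\bigl(Z_Y \leq \tfrac{1}{2}\E Z_Y\bigr) \leq 2\exp\!\left(-\frac{(p^{101} n/2)^2}{100^2\,(3n+1)}\right) = \exp\bigl(-\Omega(p^{202} n)\bigr).
\]
Since $p \gg n^{-1}$, the quantity $p^{202} n$ grows as a positive power of $n$, which comfortably beats the union bound factor $(2n+1)^{100}$ over choices of $Y$.

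The main thing to watch is tracking polynomial exponents in the $\LL$ notation: the implicit constant in $p \gg q$ must be large enough that $p^{101} \geq 2q$, and the constant in $p \gg n^{-1}$ must be large enough that $p^{202} n \gg \log n$. Both requirements follow automatically from the way $\LL$ is defined in Section~\ref{sec:not}, so no genuine obstacle arises; the proof is a short direct computation once these polynomial dependencies are in place.
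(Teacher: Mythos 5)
Your proposal is correct and follows essentially the same route as the paper: lower-bound the expectation by $p^{101}n\geq 2qn$ using independence of $V$ and $C$, note the count is $100$-Lipschitz in the $3n+1$ vertex/colour coordinates, apply Azuma, and union bound over the $\binom{2n+1}{100}$ choices of $Y$. The only (harmless) cosmetic differences are that the paper discards the $\leq\binom{100}{2}$ colours with an edge inside $Y$ before bounding the expectation, and takes deviation $t=qn$ rather than $t=\E Z_Y/2$ in Azuma.
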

\begin{proof} Fix $Y\subset V(K_{2n+1})$ with $|Y|=100$.
Let $C_Y=\{c\in C: \mbox{each $y\in Y$ has a $c$-neighbour in $V$}\}$.
For any colour $c$ without edges inside $Y$, we have $\P(c\in C_Y)\geq p^{101}$ and so $\E(|C_Y|)\geq p^{101}(n-\binom{|Y|}2)\geq 2qn$. Notice that $|C_Y|$ is $100$-Lipschitz and affected by $3n+1$ coordinates.
By Azuma's inequality, we have that $\P(|C_Y|\leq qn)\leq e^{-q^2n/10^6}= o(n^{-100})$. The result follows by taking a union bound over all sets $Y\subset V(K_{2n+1})$ with $|Y|=100$.
\end{proof}


\subsection{Rainbow matchings}\label{Section_matchings}
We now gather lemmas for finding large rainbow matchings in random subsets of coloured graphs, despite dependencies between the colours and the vertices that we use. Simple greedy embedding strategies are insufficient for this, and instead we will use a variant of R\"odl's Nibble proved by the authors in~\cite{montgomery2018decompositions}.

\begin{lemma}[\cite{montgomery2018decompositions}]\label{Lemma_MPS_nearly_perfect_matching}
Suppose that we have $n, \delta, \gamma, p, \ell$ with $1\geq \delta \gg  p \gg \gamma \gg n^{-1}$ and $n\gg \ell$.

Let $G$ be a locally $\ell$-bounded,  globally $(1+\gamma) \delta n$-bounded,  coloured, balanced bipartite graph with $|G|=(1\pm \gamma)2n$ and $d_G(v)=(1\pm \gamma)\delta n$ for all $v\in V(G)$. Then $G$ has  a random rainbow matching $M$ which has size $\geq (1-2p)n$ where
\begin{align}
\P(e\in E(M))&\geq(1-  9p)\frac{1}{\delta n}\hspace{0.5cm}\text{ for each $e\in E(G)$.} \label{Eq_Near_Matching_Edge_Probability_Lower_Bound}
\end{align}
\end{lemma}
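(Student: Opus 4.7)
The plan is to use a semi-random ``nibble'' argument, building $M$ as a disjoint union of $T = \Theta(\epsilon^{-1}\log(1/p))$ random partial matchings, where $\epsilon \ll p$ is a small parameter. Starting from $G_0 = G$, I inductively produce a residual bipartite coloured graph $G_t$ whose vertices are those not yet covered by the previously constructed matching and whose edges are those colour-$c$ edges for which $c$ has not yet been used. The inductive invariants to maintain are that $G_t$ is approximately $\delta_t n$-regular with $\delta_t = e^{-2t\epsilon}\delta(1\pm o(1))$, globally $(1+o(1))\delta_t n$-bounded on every surviving colour, and still locally $\ell$-bounded. At stage $t$, I select each edge of $G_t$ independently with probability $p_t := \epsilon/(\delta_t n)$; a selected edge is declared \emph{kept} if no other selected edge shares a vertex or a colour with it, and the kept edges form the nibble $M_t$.

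Next, a routine nibble computation gives $\P(e\in M_t) = (1-O(\epsilon^2))\epsilon/(\delta_t n)$ for every $e\in E(G_t)$: the edge $e$ has at most $2(1+\gamma)\delta_t n$ vertex-conflicting and $2(1+\gamma)\delta_t n$ colour-conflicting edges, each independently selected with probability $p_t$, so $e$ is kept with probability $p_t$ times roughly $e^{-2\epsilon}$. To pass to the next stage I verify that $G_{t+1}$, obtained by deleting from $G_t$ all vertices covered by $M_t$ and all colours used by $M_t$, preserves the invariants. For each surviving vertex $v$ and colour $c$, Azuma's inequality applied to the independent edge-selection coordinates shows that $d_{G_{t+1}}(v)$ and $|E_c(G_{t+1})|$ concentrate around their expectations, both of which evaluate to the desired $\delta_{t+1}n$. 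A union bound over $O(n)$ vertices and $O(n/\delta)$ colours closes the induction, and after $T$ iterations $M = \bigsqcup_t M_t$ covers all but at most $pn$ vertices of each side.

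The hard part is the uniform edge-probability bound $\P(e\in E(M)) \geq (1-9p)/(\delta n)$. Writing
\[
\P(e\in E(M)) = \sum_{t=0}^{T-1} \P(e\in E(G_t))\cdot \P(e\in M_t \mid e\in E(G_t)),
\]
I would bound the survival factor $\P(e\in E(G_t)) \approx \delta_t/\delta$ and combine with the nibble estimate on the conditional term to telescope, obtaining $\P(e\in E(M)) \geq (1 - O(\epsilon + e^{-\epsilon T}))/(\delta n)$; choosing $\epsilon \leq p$ and $T \geq 2\epsilon^{-1}\log(1/p)$ then delivers the stated bound. The main obstacle is that the survival event $\{e\in E(G_t)\}$ is weakly correlated with the stage-$t$ edge selections near $e$, so the conditional probability $\P(e\in M_t \mid e\in E(G_t))$ must be carefully controlled. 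I would handle this by conditioning on a high-probability ``good history'' event on which all pseudorandomness invariants hold, and showing that this conditioning perturbs the relevant selection probabilities by only a $1 \pm o(p)$ factor, which is absorbed into the $1-9p$ constant.
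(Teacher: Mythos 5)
This lemma is not proved in the paper at all: it is quoted verbatim from \cite{montgomery2018decompositions}, and the paper itself describes it as ``a variant of R\"odl's Nibble proved by the authors'' there. Your overall architecture --- iterated random partial matchings, pseudorandomness invariants on the residual graph, and a telescoping sum over rounds for the per-edge inclusion probability --- is therefore the right one and matches the approach of the cited source.

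There is, however, a genuine gap at the step you dismiss as routine: the claim that ``Azuma's inequality applied to the independent edge-selection coordinates'' yields concentration of $d_{G_{t+1}}(v)$. Take the coordinates to be the independent Bernoulli selections of the edges of $G_t$. The Lipschitz constant is indeed bounded (flipping one selection changes the kept rainbow matching by at most $4$ edges, hence changes $d_{G_{t+1}}(v)$ by $O(\ell)$), but the number of coordinates influencing $d_{G_{t+1}}(v)$ is enormous: whether a neighbour $u$ of $v$ remains uncovered depends on the selections of all $\Theta(\delta_t n)$ edges at $u$ and on the edges conflicting with those, so at least $\Omega(\delta_t^2n^2)$ coordinates are relevant. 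The inequality in the form of Lemma~\ref{Lemma_Azuma} then gives a failure probability of order $\exp\bigl(-\gamma^2\delta_t^2n^2/(\delta_t^2n^2\ell^2)\bigr)=\exp\bigl(-\Theta(\gamma^2/\ell^2)\bigr)$, a constant --- no concentration at all, let alone enough to union bound over all vertices, colours and rounds. This is the standard difficulty in every nibble proof and it requires a real fix (a Talagrand-type or ``typical bounded differences'' inequality, a second-moment argument exploiting the near-independence of the coverage events of distinct neighbours, or a restructured exposure in which the tracked quantity depends on few coordinates). Two smaller slips: the survival probability $\P(e\in E(G_t))$ must also account for the colour of $e$ surviving, so it is $\approx e^{-3t\epsilon}$ rather than $\delta_t/\delta\approx e^{-2t\epsilon}$ (with your value the summands $\P(e\in M_t)$ would each be $\approx\epsilon/(\delta n)$ and the sum would exceed $1/(\delta n)$, which is impossible; with the corrected exponent the telescoping closes as you intend). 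And on the complement of the ``good history'' event you obtain no lower bound whatsoever, so the invariants must fail with probability much smaller than $e^{-3T\epsilon}=p^{\Theta(1)}$ --- another reason the concentration step needs exponential rather than polynomial tail bounds.
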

We remark that in the statement of this lemma \cite{montgomery2018decompositions}, the conditions
``$|G|=(1\pm \gamma)2n$ and $d_G(v)=(1\pm \gamma)\delta n$ for all $v\in V(G)$'' are referred to collectively as ``$G$ is $(\gamma, \delta, n)$-regular''.
The following lemma is at the heart of the proofs in this paper. It shows there is typically a nearly-perfect rainbow matching using random vertex/colour sets. Moreover, it allows \emph{arbitrary} dependencies between the sets of vertices and colours. As mentioned before, when embedding high degree vertices such dependencies are unavoidable. Because of this, after we have embedded the high degree vertices, the remainder of the tree will be embedded using variants of this lemma.

\begin{lemma}[Nearly-perfect matchings]\label{Lemma_nearly_perfect_matching}
Let $p\in [0,1]$, $\beta \gg n^{-1}$, and let  $K_{2n+1}$ be $2$-factorized.
Let $V\subseteq V(K_{2n+1})$ be $p/2$-random and let $C\subseteq C(K_{2n+1})$ be $p$-random (possibly depending on each other). Then, with  probability $1-o(n^{-1})$, for every $U\subseteq V(K_{2n+1})\setminus V$  with $|U|\leq pn$, $K_{2n+1}$ has a $C$-rainbow matching of size $|U|-\beta n$ from $U$ to $V$.
\end{lemma}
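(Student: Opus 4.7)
The plan is to combine the two previous lemmas in this subsection: Lemma~\ref{Lemma_nearly_regular_subgraph} reveals, inside $K_{2n+1}[U,V,C]$, a nearly-regular, globally $(1+\gamma)p^2n$-bounded balanced bipartite subgraph, and Lemma~\ref{Lemma_MPS_nearly_perfect_matching} extracts an almost-perfect rainbow matching from such a graph. Crucially, the conclusion of Lemma~\ref{Lemma_nearly_regular_subgraph} holds with probability $1-o(n^{-1})$ \emph{uniformly} over every candidate $U$, so a single high-probability event will handle the ``for every $U$'' quantifier in the target statement. First I dispose of the regime $p\leq 10\beta$: there $|U|\leq pn\leq 10\beta n$ and the target bound $|U|-\beta n$ is at most $9\beta n$, which is handled by a trivial greedy argument (or by shrinking $\beta$ before running the main argument below). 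Hence assume $p\gg \beta\gg n^{-1}$, and choose auxiliary constants with $n^{-1}\ll \gamma\ll \alpha\ll \min(p,\beta/p)$.

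Apply Lemma~\ref{Lemma_nearly_regular_subgraph} with this $\gamma$ and condition on its $1-o(n^{-1})$ event. Given any $U\subseteq V(K_{2n+1})\setminus V$ with $|U|\leq pn$, pad it arbitrarily to a set $\tilde U\subseteq V(K_{2n+1})\setminus V$ of size exactly $pn$ (possible since $|V(K_{2n+1})\setminus V|\geq pn$ with high probability by a Chernoff bound on $|V|$). Applying the conditioned event to $\tilde U$ produces subsets $\tilde U'\subseteq \tilde U$, $V'\subseteq V$, $C'\subseteq C$ with $|\tilde U'|=|V'|=(1\pm \gamma)pn$, so that $G:=K_{2n+1}[\tilde U',V',C']$ is globally $(1+\gamma)p^2n$-bounded and has every vertex of degree $(1\pm\gamma)p^2n$. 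Since $K_{2n+1}$ is $2$-factorized, $G$ is additionally locally $2$-bounded. Apply Lemma~\ref{Lemma_MPS_nearly_perfect_matching} to $G$ with parameter choices $n_{\mathrm{lem}}=pn$, $\delta_{\mathrm{lem}}=p$, $\gamma_{\mathrm{lem}}=\gamma$, $p_{\mathrm{lem}}=\alpha$, $\ell_{\mathrm{lem}}=2$, obtaining a rainbow matching $M$ in $G$ of size at least $(1-2\alpha)pn$. Deleting those edges of $M$ whose $\tilde U'$-endpoint lies in $\tilde U\setminus U$ or in $\tilde U\setminus \tilde U'$ removes at most $(pn-|U|)+\gamma pn$ edges, leaving a $C$-rainbow matching from $U$ into $V$ of size at least
$(1-2\alpha)pn-(pn-|U|)-\gamma pn = |U|-(2\alpha+\gamma)pn\geq |U|-\beta n$,
as required.

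The main obstacle is the parameter chasing. Lemma~\ref{Lemma_MPS_nearly_perfect_matching}'s hypothesis ``$\delta\gg p_{\mathrm{lem}}$'' forces $\alpha\ll p$, while the final error accounting forces $2\alpha+\gamma\leq \beta/p$; these are simultaneously satisfiable precisely when $\beta\ll p\leq 1$ and $\beta\gg n^{-1}$, which is the non-trivial regime carved out in the first paragraph. A secondary subtlety is that Lemma~\ref{Lemma_nearly_regular_subgraph} is stated only for $|U|=pn$, so the padding-and-restriction trick is required for arbitrary $|U|\leq pn$; the losses incurred are bounded by $(pn-|U|)+\gamma pn$, which the slack $\beta n$ comfortably absorbs. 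The uniformity of the conditioned event over $U$, and the fact that the hypotheses of Lemma~\ref{Lemma_MPS_nearly_perfect_matching} are purely deterministic statements about $G$, mean we do not incur any further probabilistic loss beyond the $o(n^{-1})$ failure probability of Lemma~\ref{Lemma_nearly_regular_subgraph}.
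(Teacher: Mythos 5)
Your overall route is the same as the paper's: condition on the uniform event of Lemma~\ref{Lemma_nearly_regular_subgraph}, pad $U$ to size exactly $pn$, extract a nearly-regular globally bounded bipartite graph, and feed it to Lemma~\ref{Lemma_MPS_nearly_perfect_matching}; the error accounting and parameter choices ($\delta=p$, $p_{\mathrm{lem}}=\alpha$, $\ell=2$) match the paper's. However, there is a genuine gap at the other end of the range of $p$. Your padding step rests on the claim that $|V(K_{2n+1})\setminus V|\geq pn$ with high probability by Chernoff. Since $V$ is $(p/2)$-random in a ground set of size $2n+1$, we have $\E|V|=pn+p/2$, so $\E\,|V(K_{2n+1})\setminus V|=(2-p)n+(1-p/2)$, which exceeds $pn$ only by $2(1-p)n+O(1)$. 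When $p=1$ (which the statement allows, and which is within reach of the applications, e.g.\ inside Lemma~\ref{Lemma_sat_matching_random_embedding}), this margin is $O(1)$ and the event $|V|>n+1$ occurs with probability close to $1/2$; more generally the claim fails whenever $1-p=O(\sqrt{\log n/n})$. This is precisely why the paper splits off the case $p\geq 1-\beta$ and handles it by passing to a $(1-\beta/2)p/2$-random subset of $V$ and a $(1-\beta/2)p$-random subset of $C$, applying the $p'\leq 1-\beta'$ case to those, and absorbing the loss into $\beta$. Your proof needs this (or an equivalent) reduction to be complete.

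A secondary point: your fallback for $p\leq 10\beta$ via a ``trivial greedy argument'' does not work. Because $V$ and $C$ may depend on each other arbitrarily, a given vertex of $U$ can have \emph{no} colour-$C$ edge into $V$ at all, so maximality of a greedy matching yields nothing; avoiding exactly this failure is the reason the nibble-based Lemma~\ref{Lemma_MPS_nearly_perfect_matching} is invoked in the first place. Your parenthetical alternative — replace $\beta$ by some $\beta'$ with $n^{-1}\ll\beta'\ll p$ and prove the (stronger) statement with $\beta'$ — does rescue this case, and is essentially what the paper does by simply requiring $p\geq\beta\gg\alpha\gg\gamma\gg n^{-1}$ after discarding the vacuous case $p<\beta$; so I would drop the greedy claim and keep only that alternative.
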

\begin{proof}
The lemma is vacuous when $p<\beta$, so suppose $p\geq \beta$. We will first prove the lemma in the special case when $p\leq 1-\beta$.
 Choose $p\geq \beta\gg \alpha \gg \gamma\gg n^{-1}$.
With probability $1-o(n^{-1})$, $V$ and $C$ satisfy the conclusion of Lemma~\ref{Lemma_nearly_regular_subgraph}  with $p, \gamma, n$.
Using Chernoff's bound and $p\leq 1-\beta$, with probability $1-o(n^{-1})$ we have $|V|\leq n$. By the union bound, both of these simultaneously occur.
Notice that it is sufficient to prove the lemma for sets $U$ with $|U|=pn$ (since any smaller set $U$  is contained in a set of this size which is disjoint from $V$ as $|V| \leq n$).
From Lemma~\ref{Lemma_nearly_regular_subgraph}, we have that, for $U$ of order $pn$, there are subsets $U'\subseteq U, V'\subseteq V, C'\subseteq C$ with $|U'|=|V'|= (1\pm\gamma)pn$
so that $G=K_{2n+1}[U',V',C']$ is globally $(1+\gamma)p^2n$-bounded, and every vertex $v\in V(G)$ has $d_{G}(v)=(1\pm\gamma)p^2n$. Now $G$ satisfies the assumptions of Lemma~\ref{Lemma_MPS_nearly_perfect_matching} (with $n'=pn$, $\delta =p$, $p'= \alpha$, $\gamma'=2\gamma$ and $\ell=2$), so it has a rainbow matching of size  $(1-2\alpha)pn\geq pn-\beta n$.

Now suppose that $p\geq 1-\beta$. Choose a $(1-\beta/2)p/2$-random subset $V'\subseteq V$ and a $(1-\beta/2)p$-random subset $C'\subseteq C$. Fix $p'=(1-\beta/2)p$ and $\beta'=\beta/2$ and note that $p' \leq 1-\beta'$. By the above argument again, with high probability the conclusion of the ``$p'\leq (1-\beta')$'' version of the lemma applies to $V',C',p',\beta'$. Let $U$ be a set with $|U| \leq pn$. Choose $U'\subseteq U$ with $|U'|=|U|-\beta n/2$. Then $|U'| \leq pn - \beta n/2 \leq p'n$.
From the ``$p'\leq (1-\beta')$'' version of the lemma we get a $C'$-rainbow matching $M$ from $U'$ to $V'$ of size $|U'|-\beta n/2= |U|-\beta n$.
\end{proof}

The following variant of Lemma~\ref{Lemma_nearly_perfect_matching} finds a rainbow matching which completely covers the deterministic set $U$. To achieve this we introduce a small amount of independence between the vertices/colours which are used in the matching. 
\begin{lemma}[Perfect matchings]\label{Lemma_sat_matching_random_embedding}
Let $1\geq \gamma \gg n^{-1}$, let $p\in[0,1]$,  and let  $K_{2n+1}$ be $2$-factorized.
Suppose that we have disjoint sets $V_{dep}, V_{ind}\subseteq V(K_{2n+1})$, and $C_{dep}, C_{ind}\subseteq C(K_{2n+1})$  with $V_{dep}$ $p/2$-random, $C_{dep}$ $p$-random, and $V_{ind}, C_{ind}$ $\gamma$-random. Suppose that $V_{ind}$ and $C_{ind}$ are independent of each other. Then, the following holds with  probability $1-o(n^{-1})$.

For every $U\subseteq V(K_{2n+1})\setminus (V_{dep}\cup V_{ind})$ of order $\leq p n$, there is a perfect $(C_{dep}\cup C_{ind})$-rainbow matching from $U$ into $(V_{dep}\cup V_{ind})$.
\end{lemma}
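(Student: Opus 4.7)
The plan is to combine Lemma~\ref{Lemma_nearly_perfect_matching}, which produces a near-perfect rainbow matching using the ``dependent'' pair $(V_{dep},C_{dep})$, with a short greedy step into the ``independent'' pair $(V_{ind},C_{ind})$ that absorbs the small number of vertices left uncovered. I would choose an error parameter $\beta$ with $\gamma^{2}\gg\beta\gg n^{-1}$, so that in particular $3\beta<\gamma^{2}$.

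First I would apply Lemma~\ref{Lemma_nearly_perfect_matching} to $(V_{dep},C_{dep})$ with parameter $\beta$, which, with probability $1-o(n^{-1})$, gives the uniform guarantee that for every $U\subseteq V(K_{2n+1})\setminus V_{dep}$ with $|U|\leq pn$ there is a $C_{dep}$-rainbow matching from $U$ to $V_{dep}$ of size at least $|U|-\beta n$. In parallel, I would apply Lemma~\ref{Lemma_high_degree_into_random_set} to the independent pair $(V_{ind},C_{ind})$ with both random parameters equal to $\gamma$; this gives, also with probability $1-o(n^{-1})$, that every vertex $v\in V(K_{2n+1})$ satisfies $|N_{C_{ind}}(v)\cap V_{ind}|\geq\gamma^{2}n$. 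A union bound makes both properties simultaneously true on a single event of probability $1-o(n^{-1})$, and I fix such an outcome.

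Given any $U\subseteq V(K_{2n+1})\setminus(V_{dep}\cup V_{ind})$ with $|U|\leq pn$, I would use the first property to extract a $C_{dep}$-rainbow matching $M_1$ from $U$ into $V_{dep}$ that leaves a set $U'\subseteq U$ of at most $\beta n$ vertices uncovered, and then greedily match $U'$ into $V_{ind}$ one vertex at a time using $C_{ind}$-colours. At any stage of the greedy step, at most $\beta n$ vertices of $V_{ind}$ and at most $\beta n$ colours of $C_{ind}$ have been used; since $K_{2n+1}$ is $2$-factorized the used colours kill at most $2\beta n$ of the $\geq\gamma^{2}n$ available $C_{ind}$-edges at the next vertex, while the used vertices kill at most a further $\beta n$, leaving at least $\gamma^{2}n-3\beta n>0$ valid extensions. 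The resulting matching $M_2$, together with $M_1$, is the required perfect $(C_{dep}\cup C_{ind})$-rainbow matching from $U$ into $V_{dep}\cup V_{ind}$. The only (minor) obstacle is arranging the parameter hierarchy so that $\beta$ is simultaneously large enough to feed into Lemma~\ref{Lemma_nearly_perfect_matching} and small enough relative to $\gamma^{2}$ for the greedy count to close; the hypothesis $1\geq\gamma\gg n^{-1}$ gives plenty of room to pick such a $\beta$.
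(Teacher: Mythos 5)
Your proposal is correct and follows essentially the same route as the paper's proof: apply Lemma~\ref{Lemma_nearly_perfect_matching} to $(V_{dep},C_{dep})$ with a parameter $\beta$ satisfying $\gamma\gg\beta\gg n^{-1}$, apply Lemma~\ref{Lemma_high_degree_into_random_set} to the independent pair $(V_{ind},C_{ind})$, take a union bound, and finish the leftover $\leq\beta n$ vertices greedily into $V_{ind}$. Your greedy counting ($\gamma^{2}n-3\beta n>0$) is in fact spelled out slightly more carefully than in the paper, which simply notes $\gamma^{2}n>\beta n$.
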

\begin{proof}
Choose $\beta$ such that $1\geq \gamma \gg \beta\gg n^{-1}$.
With  probability $1-o(n^{-1})$, we can assume the conclusion of
Lemma~\ref{Lemma_nearly_perfect_matching} holds for $V=V_{dep}, C=C_{dep}$ with $p=p, \beta=\beta, n=n$, and, by Lemma~\ref{Lemma_high_degree_into_random_set} applied to $V=V_{ind}, C=C_{ind}$ with $p=q=\gamma, n=n$ that the following holds. For each $v\in V(K_{2n+1})$, we have $|N_{C_{ind}}(v) \cap V_{ind}| \geq \gamma^2n>\beta n$. We will show that the property in the lemma holds. 

Let then $U\subset V(K_{2n+1})\setminus (V_{dep}\cup V_{ind})$ have order $\leq p n$.
From the conclusion of Lemma~\ref{Lemma_nearly_perfect_matching}, there is  a $C_{dep}$-rainbow matching $M_1$ of size $|U|-\beta n$ from $U$ to $V_{dep}$. 
Since $|N_{C_{ind}}(v) \cap V_{ind}| >\beta n$ for each $v\in U\setminus V(M_1)$ we can construct a $C_{ind}$-rainbow matching $M_2$ into $V_{ind}$ covering $U\setminus V(M_1)$ (by greedily choosing this matching one edge at a time). The matching $M_1\cup M_2$ then satisfies the lemma.
\end{proof}

We will also use a lemma about matchings using an exact set of colours.
\begin{lemma}[Matchings into random sets using specified colours]\label{Lemma_matching_into_random_set_using_specified_colours}
Let $p\gg q\gg\beta \gg n^{-1}$ and let  $K_{2n+1}$ be $2$-factorized.
Let  $V\subseteq V(K_{2n+1})$  be $(p/2)$-random. With high probability, for any  $U\subseteq V(K_{2n+1})\setminus V$ with $|U|\geq pn$, and any $C\subseteq C(K_{2n+1})$ with $|C|\leq q n$, there is a $C$-rainbow matching of size $|C|-\beta n$ from $U$ to $V$.
\end{lemma}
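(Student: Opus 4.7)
The plan is to apply Lemma~\ref{Lemma_MPS_nearly_perfect_matching} to a carefully chosen balanced bipartite subgraph, using auxiliary randomness that does not alter the ambient graph $K_{2n+1}[U,V,C]$. Let $m=|C|$, and assume $m>\beta n$ (else the statement is vacuous). First I sub-sample $V$ at rate $m/(pn)$ (which is at most $1$ because $m\le qn\le pn$) to produce a random subset $V^*\subseteq V$. By Lemma~\ref{Lemma_mixture_of_p_random_sets}, $V^*$ is $(m/(2n))$-random in $V(K_{2n+1})$, so $|V^*|=(1\pm o(1))m$ with high probability. Any $C$-rainbow matching from $U$ into $V^*$ also goes from $U$ into $V$, so it suffices to find the matching using $V^*$.

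Next I choose $U'\subseteq U$ with $|U'|=m$ uniformly at random. I will show that with high probability over the joint randomness of $U'$ and $V^*$, the bipartite graph $G=K_{2n+1}[U',V^*,C]$, after discarding a small fraction of vertices on each side to rebalance, satisfies the hypotheses of Lemma~\ref{Lemma_MPS_nearly_perfect_matching} with $n'=m$, $\delta=m/n=q$, $p'=\beta/(2q)$, and $\gamma$ suitably small; the parameter chain $\delta\gg p'\gg\gamma$ reduces to $\beta\ll q^2$, which follows from $q\gg\beta$ in the $\LL$-notation. The three regularity conditions are: (a)~each $u\in U'$ has $d_G(u)=(1\pm\gamma)m^2/n$, since $d_G(u)$ is binomial with $2m$ trials of success probability $m/(2n)$; (b)~each colour $c\in C$ contributes at most $(1+\gamma)m^2/n$ edges to $G$, which follows from similar concentration once one notes $k_c(U')\le 2|U'|=2m$; and (c)~all but at most $\gamma m$ vertices $v\in V(K_{2n+1})\setminus U'$ satisfy $|N_C(v)\cap U'|=(1\pm\gamma)m^2/n$. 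Condition~(c) is the delicate one: it says that substituting a random size-$m$ subset for an adversarial $U$ equalises the $C$-cross-degrees, \emph{even when $U$ contains entire $c$-cycles for many $c\in C$}. The leverage is that $|U|\ge pn\gg m$, so the hypergeometric $\mathrm{within}_c(U')$ has expectation at most $m^2/|U|\le q^2n/p$ and concentrates sharply by Chernoff; a union bound over $v$ and over $c\in C$ then delivers~(c).

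Applying Lemma~\ref{Lemma_MPS_nearly_perfect_matching} to the resulting graph $\tilde G$ yields a rainbow matching of size at least $(1-2p')m=m-\beta n=|C|-\beta n$. Since this matching uses only colours from $C$ and goes from $U'\subseteq U$ into $V^*\subseteq V$, it is the $C$-rainbow matching sought. The hardest step will be~(c), establishing concentration of $|N_C(v)\cap U'|$ simultaneously for all $v\in V(K_{2n+1})\setminus U'$; randomising $U'$ rather than working with the adversarial $U$ directly is what destroys cycle-packing pathologies the adversary might have arranged and makes the needed concentration possible.
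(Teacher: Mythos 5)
Your reduction to Lemma~\ref{Lemma_MPS_nearly_perfect_matching} has a genuine gap at conditions (a) and (c): the near-regularity of $K_{2n+1}[U',V^*,C]$ simply need not hold, because $U$ and $C$ are \emph{adversarial and chosen after $V$ is revealed}. Your computation of the degree of $u\in U'$ as $\mathrm{Bin}(2m,m/(2n))$ treats $V^*$ as fresh $(m/(2n))$-random relative to $N_C(u)$; but conditionally on $V$ (which the adversary sees before picking $U,C$), the sub-sample $V^*$ is a random subset of the \emph{fixed} set $V$, so $d_G(u)$ concentrates around $|N_C(u)\cap V|\cdot m/(pn)$, and $|N_C(u)\cap V|$ is at the adversary's mercy — e.g.\ choosing $C$ to consist of $qn$ colours $c$ with $N_c(u)\cap V=\emptyset$ (of which there are $\geq(1-p/2)^2n\geq qn$) makes it zero for a given $u$. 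The same problem is worse for (c): the hypergeometric concentration you invoke centres $|N_C(v)\cap U'|$ on $|N_C(v)\cap U|\cdot m/|U|$, not on $m^2/n$, and $|N_C(v)\cap U|$ can be skewed away from $2m|U|/(2n)$ for a \emph{constant fraction} of $v\in V$: in the ND-colouring take $C$ to be an interval of $m$ colours, so $N_C(v)=v+([-m,m]\setminus\{0\})$, and take $U$ to be a union of intervals of length $\approx m$ avoiding, say, $v+[1,m]$ for many $v$; those $v$ then have roughly half the generic cross-degree, and sub-sampling $U'$ inherits the irregularity. Since there are exponentially many choices of $C$, no union bound over a whp event of $V$ can restore regularity, and this is exactly why the paper's Lemma~\ref{Lemma_nearly_regular_subgraph} insists that $C$ be \emph{random} (even if dependent on $V$). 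There is also a secondary parameter issue: you set $\delta=m/n=q$, but the lemma only guarantees $|C|\leq qn$, and for $\beta n<m\ll\sqrt{\beta}\,n$ the chain $\delta\gg p'\gg\gamma$ with $p'\leq\beta n/(2m)$ cannot be satisfied.

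The paper's proof avoids the nibble entirely. It takes a \emph{maximal} $C$-rainbow matching $M$ from $U$ to $V$ and double-counts: any unused colour $c\in C\setminus C(M)$ has all of its $U$--$V$ edges meeting $V(M)$, hence at most $4qn$ of them, whereas by Lemma~\ref{MPSvertex} all but $\gamma n$ colours have close to $(1+\gamma)p|U|$ edges between $U$ and $V$; comparing the two counts against $|U||V|$ forces $|C\setminus C(M)|\leq 14\gamma n/p^2\leq\beta n$. The only probabilistic input is the single whp event of Lemma~\ref{MPSvertex}, which is uniform over all $U$ and all colours and therefore survives the adversarial quantifier. If you want to keep your structure, you would need to replace the regularity claim by an argument of this maximality/counting type, or restrict to random $C$.
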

\begin{proof}
Choose $\gamma$ such that $\beta\gg\gamma \gg n^{-1}$.
By Lemma~\ref{MPSvertex} (applied with $n'=2n+1$), with high probability, we have that, for any set $A\subset V(K_{2n+1})\setminus V$ with $|A|\geq pn\geq (2n+1)^{1/4}$, for all but at most $\gamma n$ colours there are at most $(1+\gamma)p|A|$ edges of that colour between $A$ and $V$. By Chernoff's bound, with high probability $|V|=(1\pm \gamma)pn$. We will show that the property in the lemma holds.

Fix then an arbitrary pair $U$, $C$ as in the lemma. Without loss of generality, $|U|=pn$. Let $M$ be a maximal $C$-rainbow matching from $U$ to $V$. We will show that $|M|\geq |C|-\beta n$, so that the matching required in the lemma must exist (by removing edges if necessary). Now, let $C'=C\setminus C(M)$. For each $c\in C'$, any edge between $U$ and $V$ with colour $c$ must have a vertex in $V(M)$, by maximality. Therefore, there are at most $2|V(M)|\leq 4qn$ edges of colour $c$ between $U$ and $V$. From the property from Lemma~\ref{MPScolour}, there  are at most $\gamma n$ colours with more than $(1+\gamma)p|U|$ edges between $U$ and $V$. Therefore, using $|V|=(1\pm \gamma)pn$, 
\[
|U||V|\leq 4qn|C'|+(2n+1)\gamma n+(1+\gamma)p|U|(n-|C'|)\leq (4qn-p^2n)|C'|+3\gamma n^2+ (1+\gamma)(1+2\gamma)|U||V|,
\]
and hence
\[
|C'|p^2n/2\leq |C'|(p^2-4q)n\leq ((1+\gamma)(1+2\gamma)-1)|U||V|+3\gamma n^2\leq 7\gamma n^2.
\]
It follows that $|C'|\leq 14\gamma n/p^2\leq \beta n$. Thus, $|M|= |C|-|C'|\geq |C|-\beta n$, as required.
\end{proof}

\subsection{Rainbow star forests}\label{Section_stars}
Here we develop techniques for embedding the high degree vertices of trees, based on our previous methods in~\cite{MPS}. We will do this by proving lemmas about large star forests in coloured graphs. In later sections, when we find rainbow trees, we isolate a star forest of edges going through high degree vertices,  and embed them using the techniques from this section.
We start from the following lemma.

\begin{lemma}[\cite{MPS}]\label{Corollary_MPS_kdisjstars} Let $0<\epsilon<1/100$ and  $\ell\leq \epsilon^{2}n/2$. Let $G$ be an $n$-vertex graph with minimum degree at least $(1-\epsilon)n$  which contains an independent set on the distinct vertices $v_1,\ldots,v_\ell$.  Let $d_1,\ldots,d_\ell\geq 1$ be integers satisfying $\sum_{i\in [\ell]}d_i\leq (1-3\epsilon)n/k$,
and suppose $G$ has a locally $k$-bounded edge-colouring.

 Then, $G$ contains disjoint stars $S_1,\ldots,S_\ell$ so that, for each $i\in [\ell]$, $S_i$ is a star centered at $v_i$ with $d_i$ leaves, and $\cup_{i\in [\ell]}S_i$ is rainbow.
\end{lemma}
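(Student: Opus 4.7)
I would frame this as a rainbow matching problem in an auxiliary bipartite graph. Build $B$ with one side being the \emph{slots} $L = \{(i,r) : i\in[\ell], r\in[d_i]\}$ (so $|L|=N:=\sum d_i$) and the other side the \emph{leaf candidates} $R = V(G)\setminus\{v_1,\dots,v_\ell\}$, with an edge $(i,r)u$ whenever $v_iu\in E(G)$, inheriting the colour $c(v_iu)$ from $G$. Finding the star forest is exactly the same as finding a matching in $B$ saturating $L$ which is rainbow in the inherited colouring.

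For the plain (non-rainbow) perfect matching, Hall's condition is trivial: for any $S\subseteq L$ using centres $v_{i_1},\dots,v_{i_t}$, one has $N_B(S)\supseteq N_G(v_{i_1})\setminus\{v_1,\dots,v_\ell\}$, so $|N_B(S)|\geq (1-\epsilon)n-\ell\geq (1-2\epsilon)n$, which exceeds $|S|\leq N\leq (1-3\epsilon)n/k$. To turn this into a \emph{rainbow} matching I would process the slots one at a time and maintain a greedy partial assignment, exploiting the local $k$-boundedness as follows. When attempting to assign slot $(i,r)$, the edges at $v_i$ that are \emph{not} immediately usable are those going to the already-used $\leq N$ leaves or the other $\ell-1$ centres, together with edges bearing an already-used colour, of which there are at most $k$ per used colour by local $k$-boundedness, i.e.\ at most $k(N-d_i)$ such edges. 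The remaining available edges at $v_i$ have pairwise distinct colours (at most $k$ edges per colour leaves exactly one representative once the rest are blocked), so after removing the blocked edges they provide the needed $d_i$ distinct candidate colour/leaf pairs provided the slack $(1-\epsilon)n - (k+1)N - \ell \geq 0$.

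The main technical obstacle is that this naive slack is too tight when $k$ is small (for $k=1$ it is in fact negative). I would overcome this by not processing the slots in a purely greedy fashion but instead using a swapping/augmenting argument: if a slot $(i,r)$ cannot be assigned, walk an alternating path in $B$ among previously-placed slots, leaves and colours, re-routing a chain of at most $O(1)$ slots through different leaf/colour pairs to free a usable edge at $v_i$. The existence of such augmenting walks is guaranteed by the $3\epsilon$ of slack in $\sum d_i \leq (1-3\epsilon)n/k$ together with the $(1-\epsilon)n$ minimum degree, which together show the auxiliary graph $B$ with each colour class of size $\leq kN$ cannot be ``saturated'' against us. Alternatively, one can invoke a rainbow-matching extension of Hall's theorem directly to $B$, of the sort proved by Aharoni--Berger (or the rainbow matching tools later developed in Section~\ref{Section_matchings}), applied to the particular bipartite graph $B$ whose colour-class structure is exactly controlled by the local $k$-bound.
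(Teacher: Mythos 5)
This lemma is quoted from \cite{MPS}; the present paper contains no proof of it, so your argument has to stand entirely on its own. Your setup via the auxiliary bipartite graph $B$ and the plain Hall check are fine, and you correctly locate the crux: the slot-by-slot greedy needs roughly $(1-\epsilon)n > (k+1)N+\ell$ with $N=\sum_i d_i$, which fails whenever $k$ is small compared with $1/\epsilon$. Unfortunately that is exactly the regime the lemma is used in: Lemma~\ref{Lemma_star_forest} applies it with $k=2$ and $N$ close to $(1-3\epsilon)n/2$, where $(k+1)N\approx \tfrac32(1-3\epsilon)n$ far exceeds the minimum degree. So all the content of the statement lives where your greedy provably runs out of room, and everything rests on your final paragraph.

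That paragraph is a gap, not a proof. Rainbow matchings do not admit an augmenting-path characterisation the way ordinary matchings do, so ``walk an alternating path\ldots\ re-routing a chain of at most $O(1)$ slots'' is precisely the assertion that needs proving, and the $O(1)$ bound is not even heuristically clear. When a slot at $v_i$ is stuck, $v_i$ still has at least $2\epsilon n$ edges to free leaves, carrying at least $2\epsilon n/k$ distinct \emph{used} colours, so there are many candidate single swaps; but each swap displaces a slot at some other centre $v_j$, the number of used colours ($\approx n/k$) dwarfs the colour slack ($2\epsilon n/k$) available at every centre, and nothing you have said rules out that every centre reachable by a bounded-length chain is stuck in the same way. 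The fallback citations do not close this: Aharoni--Berger-type results concern properly coloured bipartite multigraphs with prescribed colour-class sizes and do not produce an $L$-saturating rainbow matching in your $B$ (whose colour classes can have size up to $kN$), and the tools of Section~\ref{Section_matchings} are designed for random vertex/colour sets and themselves rest on \cite{MPS} and \cite{montgomery2018decompositions}, so invoking them here is circular. To complete the argument you would need either the actual proof from \cite{MPS} or a fully worked global argument (for instance a two-phase assignment, first of disjoint colour sets to the centres and then of distinct endpoints to the colour--centre pairs, with the relevant Hall-type conditions verified); neither is present.
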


The following version of the above lemma will be more convenient to apply.
\begin{lemma}[Star forest]\label{Lemma_star_forest}
Let $1\gg \eta \gg\gamma\gg n^{-1}$ and let  $K_{2n+1}$ be $2$-factorized.
Let $F$ be a star forest with degrees $\geq 1$ whose set of centers is $I=\{i_1, \dots, i_{\ell}\}$ with $e(F)\leq (1-\eta)n$. Suppose we have disjoint sets $J, V\subseteq V(K_{2n+1})$ and $C\subseteq C(K_{2n+1})$ with $|V|\geq (1-\gamma)2n$, $|C|\geq (1-\gamma)n$ and  $J=\{j_1, \dots, j_{\ell}\}$.

Then, there is a $C$-rainbow copy of $F$ with $i_t$ copied to $j_t$, for each $t\in [\ell]$, whose vertices outside of $I$ are copied to vertices in $V$.
\end{lemma}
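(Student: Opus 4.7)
The plan is to apply Lemma~\ref{Corollary_MPS_kdisjstars} to a carefully constructed auxiliary graph. I would define $G$ on vertex set $J\cup V$ by taking those edges of $K_{2n+1}$ whose colour lies in $C$ and then deleting every edge inside $J$, so that $J$ becomes an independent set in $G$. The $2$-factorization of $K_{2n+1}$ makes $G$ locally $2$-bounded, and $|V(G)|=\ell+|V|$. For any $u\in V(G)$, at most $2(n-|C|)\leq 2\gamma n$ of its edges in $K_{2n+1}$ have colour outside $C$ and at most $2n+1-\ell-|V|\leq 2\gamma n$ have their other endpoint outside $J\cup V$, so $\delta(G)\geq |V|-4\gamma n$, with the minimum attained at vertices of $J$ (whose $G$-neighbourhoods lie entirely in $V$).

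When $\ell\leq \eta^2 n/100$, I would set $\epsilon=\eta/10$ and check all three hypotheses of Lemma~\ref{Corollary_MPS_kdisjstars} with $n'=\ell+|V|$ and $k=2$: the condition $\delta(G)\geq(1-\epsilon)n'$ follows from $\gamma\ll\eta$; the bound $\ell\leq\epsilon^2 n'/2$ holds since $n'\geq(1-\gamma)2n$; and $\sum_t d_t\leq e(F)\leq(1-\eta)n\leq(1-3\epsilon)n'/2$. The conclusion of Lemma~\ref{Corollary_MPS_kdisjstars} yields vertex-disjoint rainbow stars $S_t$ in $G$, each centred at $j_t$ with $d_t$ leaves; because $J$ is independent in $G$ and every colour used lies in $C$, the leaves all sit in $V$ and together form the desired rainbow copy of $F$.

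When $\ell>\eta^2 n/100$ the average star has at most $100/\eta^2$ leaves, so I would split $I=I_{\mathrm{big}}\cup I_{\mathrm{small}}$ at the threshold $d_t=100/\eta^2$, giving $|I_{\mathrm{big}}|\leq\eta^2 n/100$. After partitioning $V=V_1\cup V_2$ and $C=C_1\cup C_2$ into balanced random halves (and invoking Section~\ref{Section_pseudorandomness} to transfer the density and pseudorandomness to each half), I would embed the $I_{\mathrm{big}}$-stars via the previous paragraph using leaves in $V_1$ and colours in $C_1$. The $I_{\mathrm{small}}$-stars have bounded size, so their embedding reduces to finding a rainbow matching from the leaf-slot multiset $\{(j,k):j\in J_{\mathrm{small}},\,k\in[d_j]\}$ into the unused vertices of $V_2$ using the colours of $C_2$ together with whatever colours of $C_1$ are left over from the first phase. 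I would extract this matching by reformulating as a balanced locally-bounded bipartite graph and appealing to Lemma~\ref{Lemma_MPS_nearly_perfect_matching}.

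The main obstacle is precisely the condition $\ell\leq\epsilon^2 n'/2$ in Lemma~\ref{Corollary_MPS_kdisjstars}, which becomes binding when $\ell$ is comparable to $n$ and forces the split into high- and low-degree centres and the two-phase embedding. The delicate bookkeeping is to design the random partition of $V$ and $C$ so that both phases retain enough density and pseudorandomness — the min-degree hypothesis in phase one, and the near-regularity hypothesis of Lemma~\ref{Lemma_MPS_nearly_perfect_matching} in phase two — while guaranteeing that the residual vertices and colours from phase one combine cleanly with $V_2$ and $C_2$ when finishing phase two.
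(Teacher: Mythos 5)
Your first paragraph and your ``$\ell\leq\eta^2 n/100$'' case are essentially the paper's proof: the paper also applies Lemma~\ref{Corollary_MPS_kdisjstars} with $k=2$ to the subgraph of $C$-coloured edges touching $V$, notes that $J$ is independent there, and verifies the minimum-degree and $\sum_t d_t$ conditions exactly as you do. The point you missed is that the second case never occurs: since $J$ and $V$ are \emph{disjoint} subsets of the $(2n+1)$-element vertex set and $|V|\geq(1-\gamma)2n$, you get $\ell=|J|\leq 2n+1-|V|\leq 2\gamma n+1$ for free, and $\gamma\ll\eta$ then gives $\ell\leq\epsilon^2 n'/2$ immediately. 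This one-line observation is precisely how the paper discharges the hypothesis of Lemma~\ref{Corollary_MPS_kdisjstars} that you identified as ``the main obstacle,'' so no case split, no random partition of $V$ and $C$, and no two-phase embedding are needed.

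It is fortunate that the $\ell>\eta^2 n/100$ branch is vacuous, because as written it would not close: Lemma~\ref{Lemma_MPS_nearly_perfect_matching} produces only a \emph{nearly}-perfect rainbow matching (missing up to $2pn$ edges), so the small stars would be left with unfilled leaf-slots and you give no mechanism for completing them; moreover the near-regularity hypotheses of that lemma are asserted rather than verified for your auxiliary bipartite graph. If you delete that branch and replace the case hypothesis by the observation $\ell\leq 2\gamma n+1$, your argument becomes a correct proof coinciding with the paper's.
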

\begin{proof}
Choose $\epsilon$ such that $\eta \gg \epsilon\gg\gamma$.
Let $G$ be the subgraph of $K_n$ consisting of edges touching $V$ with colours in $C$. Notice that $\delta(G)\geq (1-4\gamma)2n\geq (1-\epsilon)(2n+1)$. Since $V$ and $J$ are disjoint, $J$ contains no edges in $G$. Notice that,  as $J$ and $V$ are disjoint, $\epsilon\gg \gamma$, and $|V|\geq (1-\gamma)2n$, we have $\ell=|J|\leq 2\gamma n+1\leq \epsilon^2 (2n+1)/2$. Let $k=2$ and let $d_1, \dots, d_{\ell}\geq 1$ be the degrees of $i_1,\ldots,i_\ell$ in $F$. Notice that $\eta \gg\epsilon$ implies $\sum_{i=1}^{\ell}d_i=e(F)\leq  (1-\eta)n\leq (1-3\epsilon)(2n+1)/k$.
Applying Lemma~\ref{Corollary_MPS_kdisjstars} to $G$ with $\{v_1, \dots, v_{\ell}\}=J$, $n'=2n+1$, $k=2$,  $\ell=\ell$, $\epsilon=\epsilon$  we find the required rainbow star forest.
\end{proof}

The above lemma can be used to find a rainbow copy of any star forest $F$ in a $2$-factorization as long as there are more than enough colours for a rainbow copy of $F$. However, we  also want this rainbow copy to be suitably randomized. This is achieved by finding a star forest larger than $F$ and then randomly deleting each edge independently.
The following lemma is how we embed rainbow star forests in this paper. It shows that we can find a rainbow copy of any star forest so that the unused vertices and colours are $p$-random sets. Crucially, and unavoidably, the sets of unused vertices/colours depend on each other. This is where the need to consider dependent sets arises.
\begin{lemma}[Randomized star forest]\label{Lemma_randomized_star_forest}
Let $1\geq p, \alpha\gg \gamma\gg d^{-1}, n^{-1}$ and $\log^{-1} n\gg d^{-1}$. Let  $K_{2n+1}$ be $2$-factorized.
Let $F$ be a star forest with degrees $\geq d$   with $e(F)= (1-p)n$ whose set of centers is $I=\{i_1, \dots, i_{\ell}\}$. Suppose we have disjoint sets $V,J\subseteq V(K_{2n+1})$ and $C\subseteq C(K_{2n+1})$ with $|V|\geq (1-\gamma)2n$, $|C|\geq (1-\gamma)n$ and  $J=\{j_1, \dots, j_{\ell}\}$

Then, there is a randomized subgraph $F'$  which is with high probability a $C$-rainbow copy of $F$,  with $i_t$ copied to $j_t$ for each $t$ and whose  vertices outside $I$ are copied to vertices in $V$. Additionally there are randomized sets $U\subseteq V\setminus V(F'), D\subseteq C\setminus C(F')$ such that $U$ is a $(1-\alpha)p$-random subset of $V$ and $D$ is a $(1-\alpha)p$-random subset of $C$ (with $U$ and $D$ allowed to depend on each other).
\end{lemma}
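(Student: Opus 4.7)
The plan is to find a star forest $F^+$ slightly larger than $F$, rainbow-embed it using Lemma~\ref{Lemma_star_forest}, and then use independent coin flips on the edges of the embedding to simultaneously define $F'$ (by ``deleting'' some edges) and the random sets $U$ and $D$ (using the leaves and colours of the deleted edges). Choose auxiliary parameters with $\alpha\gg\eta\gg\beta\gg\gamma,d^{-1}$, and set $\rho:=(1-\alpha/2)p$, so that $(1-\alpha)p/\rho<1$. Construct $F^+$ from $F$ by adding $s_t:=\lceil d_t(1+\beta)/(1-\rho)\rceil-d_t$ extra leaves to each centre $i_t$; then $e(F^+)=(1+\beta)(1-p)n/(1-\rho)+O(\ell)\leq(1-\eta)n$ (the key point being $\alpha p\gg\eta$, while the $O(\ell)$ rounding is harmless since $\ell\leq n/d$). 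Lemma~\ref{Lemma_star_forest} with slack $\eta$ then yields a $C$-rainbow copy $\hat F^+$ of $F^+$ with centres mapped to $J$ and leaves in $V$.

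Next I would introduce independent Bernoulli families on top of $\hat F^+$: $Y_e\sim\mathrm{Bern}(\rho)$ for every edge $e$ of $\hat F^+$, together with $Q_e,R_e\sim\mathrm{Bern}((1-\alpha)p/\rho)$; and $\tilde Q_v\sim\mathrm{Bern}((1-\alpha)p)$ for each $v\in V\setminus V(\hat F^+)$ and $\tilde R_c\sim\mathrm{Bern}((1-\alpha)p)$ for each $c\in C\setminus C(\hat F^+)$. Writing $e_v$ for the unique edge of $\hat F^+$ incident to a leaf $v$ and $e_c$ for the unique edge of $\hat F^+$ with colour $c$, define
\[U:=\{v\in V\cap V(\hat F^+):Y_{e_v}=1,\,Q_{e_v}=1\}\cup\{v\in V\setminus V(\hat F^+):\tilde Q_v=1\},\]
and $D$ analogously using $R$. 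Because each non-centre vertex is a leaf of exactly one edge of $\hat F^+$, and each colour labels at most one edge (as $\hat F^+$ is rainbow), the event ``$v\in U$'' depends on a set of coin flips disjoint from those deciding ``$v'\in U$'' for any other $v'\in V$. Combined with the marginals $\rho\cdot(1-\alpha)p/\rho=(1-\alpha)p$ in the first piece and $(1-\alpha)p$ in the second, this gives that $U$ is a genuine $(1-\alpha)p$-random subset of $V$, and similarly $D$ is a $(1-\alpha)p$-random subset of $C$. The two sets may be correlated through the shared $Y_e$'s, which is permitted by the lemma.

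Finally, define the ``keepable'' edges $K_t:=\{e\in E_t(\hat F^+):Y_e=0\}$ at each star $t$, and produce $F'$ by selecting any $d_t$ edges from $K_t$ for each $t$ (e.g.\ the first ones in a fixed ordering). Then $F'$ is a $C$-rainbow copy of $F$ with centres on $J$ and leaves in $V$, and since $F'$ contains no edge with $Y_e=1$, both $U\subseteq V\setminus V(F')$ and $D\subseteq C\setminus C(F')$ hold automatically. The main obstacle is showing that the selection is possible, i.e.\ that $|K_t|\geq d_t$ for all $t$ with high probability: each $|K_t|\sim\mathrm{Bin}(d_t^+,1-\rho)$ has mean $(1+\beta)d_t\geq(1+\beta)d$, so Chernoff's inequality gives $\mathbb{P}(|K_t|<d_t)\leq\exp(-\Omega(\beta^2 d))$. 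Using the hypothesis $d^{-1}\ll\log^{-1}n$ (so that $d$ is a sufficiently large power of $\log n$, where the polynomial exponent in the $\ll$ is chosen to beat $1/\beta^2$), this bound is $o(n^{-1})$, and a union bound over the $\ell\leq n$ stars completes the argument.
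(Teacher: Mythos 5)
Your proposal is correct and follows essentially the same route as the paper: enlarge each star by a factor slightly exceeding $1/(1-\rho)$, embed the enlarged forest deterministically via Lemma~\ref{Lemma_star_forest}, randomly delete edges, build $U$ and $D$ from the deleted leaves/colours together with independent top-ups outside the embedding, and verify via Chernoff plus a union bound (using $d\geq\mathrm{poly}\log n$) that enough leaves survive at every centre. The only difference is cosmetic: the paper samples $U$ directly as a $(1-\alpha)p$-random subset of $V$ and deletes exactly the leaves landing in $U$, whereas you delete at the higher rate $\rho$ and then thin both the deleted leaves and deleted colours down to $(1-\alpha)p$, which treats $U$ and $D$ more symmetrically but is the same argument.
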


\begin{proof}
Choose $\eta$ such that $p, \alpha \gg \eta\gg  \gamma$.
Let $\hat F$ be a star forest obtained from $F$ by replacing every star $S$ with a star $\hat S$ of size $e(\hat S)=e(S)(1-\eta)/(1-p)$. Notice that $e(\hat F)=(1-\eta)n$, and, for each vertex $v$ at the centre of a star, $S$ say, in $F$, we have $d_{\hat F}(v)=e(\hat S)=e(S)(1-\eta)/(1-p)=d_F(v)(1-\eta)/(1-p)$. By Lemma~\ref{Lemma_star_forest}, there  is a $C$-rainbow embedding $\hat F'$ of $\hat F$   with $i_t$ copied to $j_t$ for each $t$  and   whose  vertices outside $I$ are contained in $V$.

Let $U$ be a $(1-\alpha)p$-random subset of $V$. Let $F'=\hat F' \setminus U$.  By Chernoff's Bound and $\log^{-1}n, p, \gamma \gg d^{-1}$, we have $\P(d_{F'}(v)< (1-\gamma)(1-p+\alpha p)d_{\hat F'}(v))\leq e^{-(1-p+\alpha p)\gamma^2d/3}\leq e^{-\log^5 n}$ for each center $v$ in $F$. Note that, for each centre $v$, $(1-\gamma)(1-p+\alpha p)d_{\hat F'}(v)=(1-\gamma)(1-p+\alpha p) d_F(v)(1-\eta)/(1-p) \geq d_F(v)$, where this holds as $p,\alpha\gg \eta\gg \gamma$.  Taking a union bound over all the centers shows that, with high probability, $F'$ contains a copy of $F$.
Since $\hat F'$ was rainbow, we have that $C(\hat F')\setminus C(F')$ is a $(1-\alpha)p$-random subset of $C(\hat F')$.
Let $\hat{C}$ be a $(1-\alpha)p$-random subset of $C\setminus C(\hat F')$ and set $D=\hat{C}\cup (C(\hat F')\setminus C(F'))$. Now $D$ and $U$ are both $(1-\alpha)p$-random subsets of $C$ and $V$ respectively.
\end{proof}

\subsection{Rainbow paths}\label{Section_paths}
Here we collect lemmas for finding rainbow paths and cycles in random subgraphs of $K_{2n+1}$. First we prove two lemmas about short paths between prescribed vertices. These lemmas are later used to incorporate larger paths into a tree.

\begin{lemma}[Short paths between two vertices]\label{Lemma_short_paths_between_two_vertices}  Let $p\gg \mu \gg n^{-1}>0$ and suppose $K_{2n+1}$ is $2$-factorized. Let $V\subset V(K_{2n+1})$ and $C\subset C(K_{2n+1})$ be $p$-random and independent. Then, with high probability, for each pair of distinct vertices $u,v\in V(K_{2n+1})$ there are at least
$\mu n$ internally vertex-disjoint  $u,v$-paths with length $3$ and internal vertices in $V$ whose union is $C$-rainbow.
\end{lemma}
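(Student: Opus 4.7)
My plan is to construct, for each pair $u,v$, the required $\mu n$ paths one at a time via a greedy argument, choosing at each step the $u$-side internal vertex $x_k$ first and then the $v$-side internal vertex $y_k$. For this I will need two pseudorandomness properties of $(V,C)$, each holding with high probability: first, that every vertex $w$ has $|N_C(w)\cap V|\geq p^2n$ (which is Lemma~\ref{Lemma_high_degree_into_random_set} applied with $q=p$), and second, that for every ordered pair $(w,v)$ of distinct vertices the quantity
\[X_{w,v}=|\{y\in V\setminus\{w,v\}:c(wy),c(vy)\in C,\ c(wy)\neq c(vy)\}|\]
is close to its expectation. Since $X_{w,v}$ is $4$-Lipschitz in the $O(n)$ indicator coordinates defining $V$ and $C$, Azuma's inequality plus a union bound over the $O(n^2)$ pairs yields $X_{w,v}\geq\E[X_{w,v}]/2$ for every pair with high probability.

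Here $\E[X_{w,v}]=(2n-1-s_{w,v})p^3$, where $s_{w,v}:=|\{y\notin\{w,v\}:c(wy)=c(vy)\}|$ is a deterministic property of the $2$-factorization. The worry is that $s_{w,v}$ could in principle be linear in $n$ for adversarial pairs, making this expectation useless. To handle this, I would observe the double-counting identity
\[\sum_{x\neq v}s_{x,v}=\sum_{y\neq v}|\{x\notin\{y,v\}:c(xy)=c(vy)\}|\leq\sum_{y\neq v}|N_{c(vy)}(y)|\leq 4n,\]
which uses only that the $2$-factorization is locally $2$-bounded. Hence for each $v$ the set $B_v:=\{x:s_{x,v}>n/100\}$ satisfies $|B_v|\leq 400$, and for every $x\notin B_v$ one has $\E[X_{x,v}]\geq p^3n$, so $X_{x,v}\geq p^3n/2$ with high probability.

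With these properties in hand I would execute the greedy construction. Fix distinct $u,v$ and suppose the first $k-1\leq\mu n$ paths have been built, using an internal-vertex set $T$ of size $\leq 2\mu n$ and a colour set $D$ of size $\leq 3\mu n$. First pick $x_k\in V\setminus(T\cup B_v\cup\{u,v\})$ with $c(ux_k)\in C\setminus D$: this is possible because the lower bound $|N_C(u)\cap V|\geq p^2n$ forbids only $|T|+2|D|+|B_v|+O(1)$ candidates, which is much less than $p^2n$ since $\mu\ll p$ polynomially. Setting $c_1:=c(ux_k)$: as $x_k\notin B_v$, the set of $y$ contributing to $X_{x_k,v}$ has size $\geq p^3n/2$, and we pick such a $y_k$ additionally avoiding $T\cup\{u\}$ and with $c(x_ky_k),c(y_kv)\notin D\cup\{c_1\}$; this forbids at most $|T|+4(|D|+1)+O(1)$ further candidates, which is again much less than $p^3n/2$ because $\mu\ll p$ polynomially, so $y_k$ exists. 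A final union bound over the $O(n^2)$ ordered pairs $(u,v)$ completes the argument.

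The main obstacle is precisely the possibility that $s_{w,v}$ is linear in $n$ for adversarial $2$-factorizations: without the deterministic bound $\sum_x s_{x,v}\leq 4n$, one cannot guarantee that $X_{w,v}$ concentrates around a useful value. Once this structural observation is in place, the rest is routine concentration plus greedy construction.
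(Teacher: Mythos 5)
Your proof is correct, but it takes a genuinely different route from the paper's. The paper splits $C$ into three $p/3$-random parts $C_1,C_2,C_3$ and $V$ into two $p/2$-random parts, forms the large sets $A=N_{C_1}(v)\cap V_1$ and $B=N_{C_2}(u)\cap V_2$ (each of size $\geq p^2n/24$ by Lemma~\ref{Lemma_high_degree_into_random_set}), and then takes a \emph{maximum} $C_3$-rainbow matching between $A$ and $B$: since every colour-$C_3$ edge of $A\times B$ must meet a vertex or colour of a maximal such matching $M$, and $e_{C_3}(A,B)\geq 10^{-3}p^5n^2$ by the edge-distribution pseudorandomness of Lemma~\ref{Lemma_MPS_boundrandcolour}, one gets $e(M)\geq 10^{-4}p^5n$, each matching edge yielding a path, with a final pruning to kill colour repetitions at $u$ and $v$. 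You instead build the paths greedily one at a time, which forces you to confront directly the degenerate middle vertices $y$ with $c(x_ky)=c(vy)$; your double-counting bound $\sum_{x}s_{x,v}\leq 4n$ (using only local $2$-boundedness) and the resulting exceptional set $B_v$ of size $\leq 400$ is a clean and correct way to do this, and the rest of your bookkeeping (Lipschitz constants, forbidden-candidate counts of order $\mu n$ against availability of order $p^2n$ or $p^3n$, and the polynomial hierarchy $\mu\ll p$) checks out. The trade-off: your argument is more self-contained, avoiding the imported edge-distribution lemma entirely, whereas the paper's partition of $C$ into $C_1,C_2,C_3$ makes the three colours on each path automatically distinct and lets a single extremal (maximal-matching) argument replace your per-step concentration statement. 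Both are valid; yours is somewhat longer in the details but requires less machinery.
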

\begin{proof}
Choose $p\gg  \mu \gg n^{-1}>0$.
Randomly partition $C=C_1\cup C_2\cup C_{3}$ into three $p/3$-random sets and $V=V_1\cup V_2$ into two $p/2$-random sets. With high probability the following simultaneously hold.

$\bullet$ By Lemma~\ref{Lemma_MPS_boundrandcolour} applied to $C_3$ with $p=p/3, \epsilon=1/2, n=2n+1$,  for any disjoint vertex sets $U,V$ of order at least $p^2n/10\geq (2n+1)^{3/4}$, we have $e_{C_3}(U,V)\geq p|U||V|/6\geq 10^{-3}p^5n^2$.

$\bullet$ By Lemma~\ref{Lemma_high_degree_into_random_set} applied to $C_i, V_j$ with $p=p/2, q=p/3, n=n$ we have $|N_{C_{i}}(v)\cap V_j| \geq p^2n/6$ for every $v\in V(K_{2n+1})$, $i\in [3]$, and $j\in [2]$.

We claim the property holds. Indeed, pick an arbitrary distinct pair of vertices $u,v\in V(K_{2n+1})$. Let $M$ be a maximum $C_3$-rainbow matching between $N_{C_{1}}(v)\cap (V_1\setminus \{u\})$ and $N_{C_{2}}(u)\cap (V_2\setminus \{v\})$ (these sets have size at least $p^2n/24$). Each of the $2e(M)$ vertices in $M$ has $2n$ neighbours in $K_{2n+1}$, and each colour in $M$ is on $2n+1$ edges in $K_{2n+1}$. The number of edges of $K_{2n+1}$ sharing a vertex or colour with $M$ is thus $\leq 7ne(M)$. By maximality, and the property from Lemma~\ref{Lemma_MPS_boundrandcolour}, we have $7ne(M)\geq e_{C_3}(U,V) \geq 10^{-3}p^5n^2$, which implies that $e(M)\geq 10^{-4}p^5n \geq  4\mu n$.
For any edge $v_1v_2$ in the matching $M$, the path $uv_1v_2v$ is a rainbow path. In the union of these paths, the only colour repetitions can happen at $u$ or $v$. Since  $K_{2n+1}$ is $2$-factorized, there is a subfamily of $\mu n$ paths which are collectively rainbow.
\end{proof}

We can use this lemma to find many disjoint length $3$ connecting paths.
\begin{lemma}[Short connecting paths]\label{Lemma_few_connecting_paths}  Let  $p\gg q \gg n^{-1}>0$ and let $K_{2n+1}$ be $2$-factorized. Let $V$ be a $p$-random set of vertices, and $C$ a $p$-random set of colours independent from $V$.
Then, with high probability, for any set of $\{x_1, y_1, \dots, x_{q n}, y_{q n}\}$ of vertices,
 there is a collection $P_1, \dots, P_{q n}$ of vertex-disjoint paths with length $3$, having internal vertices in $V$, where $P_i$ is an $x_i,y_i$-path, for each $i\in [qn]$, and $P_1\cup \dots\cup P_{q n}$ is $C$-rainbow.
\end{lemma}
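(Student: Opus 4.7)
The plan is to reduce this to Lemma~\ref{Lemma_short_paths_between_two_vertices} by greedily choosing the paths $P_1,\ldots,P_{qn}$ one at a time, using the ``local variability'' (the $\mu n$ internally vertex-disjoint, collectively rainbow options between each pair) to dodge everything already used.

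First I would choose an auxiliary parameter $\mu$ with $p\gg \mu\gg q$, and apply Lemma~\ref{Lemma_short_paths_between_two_vertices} with this $\mu$. With high probability, the conclusion there holds: for every distinct pair $u,v\in V(K_{2n+1})$, there exists a family $\mathcal{P}_{u,v}$ of at least $\mu n$ internally vertex-disjoint $u,v$-paths of length $3$, each with internal vertices in $V$, whose union is $C$-rainbow. Condition on this event for the rest of the proof.

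Now fix an arbitrary sequence $\{x_1,y_1,\ldots,x_{qn},y_{qn}\}$ of (distinct) vertices and build $P_1,\ldots,P_{qn}$ inductively. Suppose $P_1,\ldots,P_{i-1}$ have already been chosen, vertex-disjointly, with their edges collectively $C$-rainbow. To choose $P_i$, I would look at the family $\mathcal{P}_{x_i,y_i}$ and eliminate any path which either contains an internal vertex from $V(P_1)\cup\ldots\cup V(P_{i-1})\cup\{x_j,y_j:j\neq i\}$, or uses a colour already appearing on $P_1\cup\ldots\cup P_{i-1}$. The key observations are: (i)~the paths in $\mathcal{P}_{x_i,y_i}$ are pairwise internally vertex-disjoint, so any single vertex appears as an internal vertex of at most one such path; and (ii)~the union of the $\mu n$ paths in $\mathcal{P}_{x_i,y_i}$ is $C$-rainbow, so any single colour appears on at most one such path. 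Consequently, the number of forbidden vertices is at most $2(i-1)+2(qn-1)\leq 4qn$ and the number of forbidden colours is at most $3(i-1)\leq 3qn$, ruling out at most $7qn$ of the $\mu n$ candidate paths. Since $\mu\gg q$, at least one path $P_i\in\mathcal{P}_{x_i,y_i}$ survives, and by construction $P_1,\ldots,P_i$ are vertex-disjoint with collectively $C$-rainbow edges, completing the induction.

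There isn't really a hard step here: the content is entirely packed into Lemma~\ref{Lemma_short_paths_between_two_vertices}. The only thing to watch is the double use of the ``collectively rainbow'' property inside $\mathcal{P}_{x_i,y_i}$, which is what ensures that each previously used colour costs us only one candidate path rather than many; without this, the greedy bookkeeping would not close.
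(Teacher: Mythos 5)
Your proposal is correct and is essentially identical to the paper's proof: the paper also applies Lemma~\ref{Lemma_short_paths_between_two_vertices} (with $\mu=10q$) and then greedily selects one path per pair, never repeating a vertex or colour. Your bookkeeping of how many candidate paths each used vertex or colour can eliminate is just a more explicit version of the same greedy argument.
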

\begin{proof}
By Lemma~\ref{Lemma_short_paths_between_two_vertices} applied to $C,V$ with $p=p, \mu=10q,  n=n$, with high probability, between any $x_i$ and $y_i$, there is a collection of $10q n$ internally vertex disjoint $x_i,y_i$-paths, which are collectively $C$-rainbow, and internally contained in $V$. By choosing such paths greedily one by one, making sure never to repeat a colour or vertex,  we can find the required collection of paths.
\end{proof}


\section{The finishing lemma in Case A}\label{sec:finishA}

The proof of our finishing lemmas uses \emph{distributive} absorption, a technique introduced by the first author in \cite{montgomery2018spanning}. For the finishing lemma in Case A, we start by constructing colour switchers for sets of $\leq 100$ colours of $C$. These are $|C|$ perfect rainbow matchings, each from the same small set $X$ into a larger set $V$ which use the same colour except for one different colour from $C$ per matching (see Lemma~\ref{absorbA}). This gives us a small amount of local variability, but we can build this into a global variability property (see Lemma~\ref{absorbAmacro}). This will allow us to choose colours to use from a large set of colours, but not all the colours, so we will need to find matchings which ensure any colours outside of this are used (see Lemma~\ref{colourcover}). To find the switchers we use a small proportion of colours in a random set. We will have to cover the colours not used in this, with no random properties for the colours remaining (see Lemma~\ref{Lemma_saturating_matching_lemma}). We put this all together to prove Lemma~\ref{lem:finishA} in Section~\ref{sec:finishAfinal}.

\subsection{Colour switching with matchings}\label{sec:switcherspaths}
We start by constructing colour switchers using matchings.
\begin{lemma}\label{absorbA} Let $1/n\ll \beta \ll\xi, \mu$. Let $K_{2n+1}$ be 2-factorized. Suppose that $V_0\subset V(K_{2n+1})$ and $D_0\subset C(K_{2n+1})$ are $\mu$-random subsets which are independent. With high probability, the following holds.

Let $C,\bar{C}\subset C(K_{2n+1})$ and $X,\bar{V},Z\subset V(K_{2n+1})$ satisfy $|\bar{{C}}|,|\bar{V}|\leq \beta n$, $|C|\leq 100$ and suppose that $(X,Z)$ is $(\xi n)$-replete.
Then, there are sets $X'\subset X\setminus \bar{V}$, $C'\subset D_0\setminus \bar{C}$ and $V'\subset (V_0\cup Z)\setminus \bar{V}$ with sizes $|C|$, $|C|-1$ and $\leq 3|C|$ respectively such that, for every $c\in C$, there is a perfect $(C'\cup \{c\})$-matching from $X'$ to $V'$.
\end{lemma}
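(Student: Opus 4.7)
The plan is to construct the switcher with the following explicit structure. Let $k = |C|$ and write $C = \{c_1, \ldots, c_k\}$. Set
\[ X' = \{x_1, \ldots, x_k\}, \quad V' = \{u_1, \ldots, u_k\} \cup \{y_j^1, y_j^2 : j \in [k-1]\}, \quad C' = \{c'_1, \ldots, c'_{k-1}\}, \]
where each $x_i u_i$ has colour $c_i$ and each pair $x_j y_j^1, x_{j+1} y_j^2$ both have colour $c'_j$. Then $|X'| = k$, $|C'| = k-1$, and $|V'| = 3k - 2 \leq 3|C|$. For each $c_i \in C$ the set
\[ M_i := \{x_i u_i\} \cup \{x_j y_j^1 : j < i\} \cup \{x_{j+1} y_j^2 : j \geq i\} \]
is a perfect $(C' \cup \{c_i\})$-rainbow matching from $X'$ to the $k$-subset $\{u_i\} \cup \{y_j^1 : j<i\} \cup \{y_j^2 : j \geq i\}$ of $V'$, which is routine to verify.

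I fix $\eta$ with $\beta, \mu^2 \ll \eta \ll \mu$ (possible since $\beta, \mu^2 \ll \mu$) and note that, with high probability in $V_0$ and $D_0$, the following hold:
\begin{itemize}
\item[(P1)] By Lemma~\ref{lem:goodpairs}, for each distinct $u,v \in V(K_{2n+1})$ there are $\geq \eta n$ colours $c \in D_0$ with $c$-neighbours of both $u$ and $v$ in $V_0$.
\item[(P2)] For each distinct $u,v \in V(K_{2n+1})$, the number of $c \in D_0$ admitting a common $c$-neighbour of $u,v$ in $V_0$ is at most $3\mu^2 n$. Indeed, each common-neighbour witness $w$ forces $c$ to be the colour of $uw$; using independence of $V_0, D_0$ the expectation is at most $(2n+1)\mu^2$, and Azuma plus a union bound over $O(n^2)$ pairs yields (P2).
\end{itemize}

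Given the input, I first execute Step~1: for $i = 1, \ldots, k$, pick $x_i \in X \setminus \bar V$ and $u_i \in Z \setminus \bar V$ greedily so that $x_i u_i$ has colour $c_i$ and all chosen vertices are distinct. The $(\xi n)$-repleteness of $(X, Z)$ provides $\geq \xi n$ candidate edges of colour $c_i$ between $X$ and $Z$, of which only $O(\beta n + k)$ touch $\bar V$ or previously chosen vertices, so $\geq \xi n/2$ remain at each step.

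Then I execute Step~2: let $U_\mathrm{used}$ denote the growing set of vertices currently in $X' \cup V'$. For $j = 1, \ldots, k-1$ in turn, choose $c'_j \in D_0 \setminus \bar C$ distinct from $c'_1, \ldots, c'_{j-1}$ and $y_j^1, y_j^2 \in V_0 \setminus (\bar V \cup U_\mathrm{used})$ with $y_j^1 \neq y_j^2$, such that $x_j y_j^1$ and $x_{j+1} y_j^2$ both have colour $c'_j$. By (P1) there are $\geq \eta n - O(\beta n + k)$ candidate colours for which both $x_j$ and $x_{j+1}$ have $V_0$-neighbours via $c'$. A candidate $c'$ is \emph{bad} precisely in one of:
\begin{itemize}
\item[(i)] every $V_0$-neighbour of $x_j$ via $c'$ lies in $\bar V \cup U_\mathrm{used}$;
\item[(ii)] the symmetric event for $x_{j+1}$;
\item[(iii)] the unique viable $V_0$-neighbour of $x_j$ outside $\bar V \cup U_\mathrm{used}$ coincides with that of $x_{j+1}$.
\end{itemize}
Each vertex in $\bar V \cup U_\mathrm{used}$ is a $c'$-neighbour of $x_j$ for exactly one colour (that of the edge $x_j v$), so cases (i) and (ii) each exclude $\leq |\bar V \cup U_\mathrm{used}| = O(\beta n + k)$ colours; case (iii) is bounded by (P2) as $\leq 3\mu^2 n + O(\beta n + k)$. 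Since $\eta \gg \mu^2, \beta$, the bad count is $\ll \eta n$, so a valid $c'_j$ exists, whereupon $y_j^1, y_j^2$ are immediate.

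The main obstacle is case~(iii) in Step~2: here the independence of $V_0$ and $D_0$ is essential to obtain the sharp $O(\mu^2 n)$ bound rather than $O(\mu n)$, which is what allows the parameter choice $\eta \ll \mu$ while keeping the bad count strictly below $\eta n$.
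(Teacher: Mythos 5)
Your construction is the same as the paper's: the same two-layer switcher (one colour-$c_i$ edge $x_iu_i$ from $X$ into $Z$ per colour of $C$, plus a chain of paired colour-$c'_j$ edges into $V_0$ linking consecutive $x_j,x_{j+1}$), the same matchings $M_i$, and the same appeal to Lemma~\ref{lem:goodpairs} followed by a greedy selection. Two small remarks. First, when choosing $c'_j$ you must also exclude the (at most $100$) colours of $C$ itself, not only $\bar C$ and the previously chosen $c'$'s: if some $c'_j$ equalled some $c_i$, then $M_i$ would contain two edges of colour $c_i$ and fail to be rainbow; this only adds $O(1)$ to your forbidden-colour count, but it is needed. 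Second, your case (iii) and the auxiliary property (P2) are superfluous: for each fixed $j$ exactly one of $x_jy_j^1$, $x_{j+1}y_j^2$ appears in any given matching $M_i$, so $y_j^1$ and $y_j^2$ are allowed to coincide (the paper explicitly permits this), and the independence-based $O(\mu^2 n)$ bound on colours with a common neighbour, together with the intermediate parameter $\eta$, can be dropped.
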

\begin{proof}
With high probability, by Lemma~\ref{lem:goodpairs} we have the following property.

\stepcounter{propcounter}
\begin{enumerate}[label = {\bfseries \Alph{propcounter}}]
\item For each distinct $u,v\in V(K_{2n+1})$, there are at least $100\beta n$ colours $c\in D_0$ for which there are colour-$c$ neighbours of both $u$ and $v$ in $V_0$.\label{prop:goodpairs}
\end{enumerate}

We will show the property in the lemma holds. Let $C,\bar{C}\subset C(K_{2n+1})$ and $X,\bar{V},Z\subset V(K_{2n+1})$ be sets with $|\bar{{C}}|,|\bar{V}|\leq \beta n$, $|C|\leq 100$ and suppose that $(X,Z)$ is $(\xi n)$-replete.
Let $\ell=|C|\leq 100$ and label $C=\{c_1,\ldots,c_\ell\}$.
For each $i\in [\ell]$, using that there are at least $\xi n$ edges with colour $C$ between $X$ and $Z$, and $\xi\gg \beta$, $\ell\leq 100$ and $|\bar{V}|\leq \beta n$, pick a vertex $x_i\in X\setminus (\bar{V}\cup \{x_1,\ldots,x_{i-1}\})$ which has a $c_i$-neighbour $y_i\in Z\setminus (\bar{V}\cup \{y_1,\ldots,y_{i-1}\})$.
For each $1\leq i\leq \ell-1$, using~\ref{prop:goodpairs}, pick a colour $d_i\in D_0\setminus (\bar{C}\cup C\cup  \{d_1,\ldots,d_{i-1}\})$
and (not necessarily distinct) vertices $z_i,z'_i\in V_0\setminus (\bar{V}\cup \{y_1,\ldots,y_{\ell},z_1,\ldots,z_{i-1},z_1',\ldots,z_{i-1}'\})$ such that $x_iz_i$ and $x_{i+1}z'_{i}$ are both colour $d_i$. Let $C'=\{d_1,\ldots,d_{\ell-1}\}$, $X'=\{x_1,\ldots,x_{\ell}\}$ and $V'= \{y_1,\ldots,y_{\ell},z_1,\ldots,z_{\ell-1},z_1',\ldots,z_{\ell-1}'\}$. See Figure~\ref{Figure_Matching_Switching} for an example of the edges that we find. 
\begin{figure}
  \centering
    \includegraphics[width=0.8\textwidth]{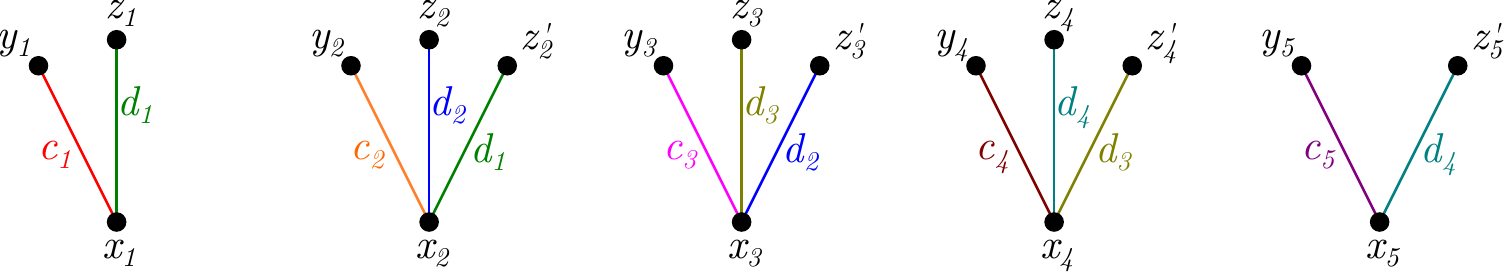}
  \caption{An example of the graph we find in Lemma~\ref{absorbA} when $\ell=5$. The key property it has is that for any colour $c_i$, there is a matching covering $x_1, \dots, x_{\ell}$ using $c_i$ and all the colours $d_{1}, \dots, d_{\ell-1}$.}
\label{Figure_Matching_Switching}
\end{figure}

Now, for each $j\in [\ell]$, let $M_j=\{x_jy_j\}\cup \{x_iz_i:i<j\}\cup \{x_{i+1}z_i':j\leq i\leq \ell-1\}$. Note that $M_j$ is a perfect $(C\cup \{c_j\})$-matching from $X'$ to $V'$, and thus $X'$, $C'$ and $V'$ have the property required.
\end{proof}

\subsection{Distributive absorption with matchings}
We now put our colour switchers together to create a global flexibility property. To do this, we find certain disjoint colour switchers, governed by a suitable \emph{robustly matchable bipartite graph}.
This is a bipartite graph which has a lot of flexibility in how one of its parts can be covered by matchings. It exists by the following lemma.
\begin{lemma}[Robustly matchable bipartite graphs, \cite{montgomery2018spanning}]\label{Lemma_H_graph}
There is a constant $h_0 \in \mathbb N$ such that, for every $h \geq h_0$, there exists
a bipartite graph $H$ with maximum degree at most $100$ and vertex classes $X$ and $Y \cup Y'$, with
$|X| = 3h$, and $|Y| = |Y'| = 2h$, so that the following is true. If $Y_0 \subseteq Y'$ and $|Y_0 | = h$, then
there is a matching between $X$ and $Y \cup Y_0$.
\end{lemma}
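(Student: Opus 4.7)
The plan is first to use Hall's theorem to reduce the ``for every choice of $Y_0$'' conclusion to a single no-blocker condition on $H$, and then to construct $H$ as a random bipartite graph and verify that condition via the probabilistic method.

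By Hall's theorem applied to the balanced bipartite graph $H[X, Y \cup Y_0]$ (both sides have size $3h$), the required perfect matching exists if and only if there is no pair $S \subset X$, $T \subset Y \cup Y_0$ with $|S| + |T| > 3h$ and no $H$-edges between $S$ and $T$. Since $Y_0 \subset Y'$ ranges over all $h$-subsets of $Y'$ and $|Y'| = 2h$, we can choose $Y_0$ to contain any prescribed $T_2 \subset Y'$ with $|T_2| \leq h$. Hence the failure of the lemma for some $Y_0$ is equivalent to the existence of a triple $(S, T_1, T_2)$ with $S \subset X$, $T_1 \subset Y$, $T_2 \subset Y'$, $|T_2| \leq h$, $|S| + |T_1| + |T_2| > 3h$, and no $H$-edges from $S$ to $T_1 \cup T_2$. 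The goal reduces to constructing $H$ that avoids every such bad triple while respecting the degree bound.

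I would take $H$ random: let $d$ be a large absolute constant (say $d=50$) and let each $x \in X$ pick $d$ neighbours uniformly and independently from $Y \cup Y'$ without replacement. A Chernoff-plus-union-bound argument shows that, with high probability, every vertex of $Y \cup Y'$ has at most $100$ neighbours, since the expected degree on the right side is $3d/4$. For the bad-triple condition, for fixed $s, t_1, t_2$ the number of candidate triples is at most $\binom{3h}{s}\binom{2h}{t_1}\binom{2h}{t_2}$, and the probability that the $ds$ random edges from $S$ all miss $T_1 \cup T_2$ is at most $\bigl(1 - (t_1+t_2)/(4h)\bigr)^{ds}$. Using $\binom{n}{k} \leq (en/k)^k$, the expected number of bad triples of shape $(s, t_1, t_2)$ is at most $\exp\bigl(s\log(3eh/s) + t_1\log(2eh/t_1) + t_2\log(2eh/t_2) - ds(t_1+t_2)/(4h)\bigr)$, and summing this over the polynomially many choices of $(s, t_1, t_2)$ yields the desired bound provided $d$ is a sufficiently large constant.

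The main obstacle is verifying that, for all feasible $(s, t_1, t_2)$, the exponent above is sufficiently negative. The constraint $s + t_1 + t_2 > 3h$ combined with $t_2 \leq h$ forces $s + t_1 > 2h$, so at least one of $s$ and $t_1 + t_2$ is a constant fraction of $h$. In the ``bulk'' range where both $s$ and $t_1 + t_2$ are $\Omega(h)$, the term $-ds(t_1+t_2)/(4h)$ comfortably dominates the entropy once $d$ is large. The delicate case is when $s$ is very small (say $O(1)$): then the constraint pins $t_1 + t_2 \geq 3h - O(1)$, which is within $O(1)$ of the maximum $4h$, so the probability factor becomes $\bigl(O(1)/h\bigr)^{ds}$, easily beating the entropic gain of $\binom{2h}{t_1}\binom{2h}{t_2} \leq 4^{2h}$. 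A symmetric analysis handles small $t_1 + t_2$. Combining these cases shows the expected number of bad triples is $o(1)$, so for $h$ beyond some absolute $h_0$ a suitable $H$ exists.
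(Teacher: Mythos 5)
Your Hall's-theorem reduction to excluding bad triples $(S,T_1,T_2)$ is correct, but the uniform random model you then propose provably fails, and the failure sits exactly in the two ``delicate cases'' you wave through. When $s=O(1)$: since $t_2\le h$, the sum $t_1+t_2$ is capped at $3h$, not $4h$, so the avoidance probability is $(1-(t_1+t_2)/(4h))^{ds}\ge 4^{-ds}$ --- a \emph{constant} for $s=O(1)$ --- and this cannot beat the entropy $\binom{2h}{t_2}\approx\binom{2h}{h}\approx 4^{h}$; your claim that the probability factor becomes $(O(1)/h)^{ds}$ is an arithmetic slip. When $t_1+t_2=1$: then $s=3h$ and the avoidance probability is $(1-1/(4h))^{3hd}\approx e^{-3d/4}$, again a constant, so the expected number of bad triples of this shape is $\Theta(h)$, not $o(1)$. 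The degree bound has the same problem: the degree of a fixed right vertex is essentially $\mathrm{Bin}(3h,d/(4h))$ with constant mean $3d/4$, so it exceeds $100$ with a small but positive constant probability, and among $4h$ right vertices some will exceed $100$ with high probability once $h$ is large.

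These are not artifacts of a lossy union bound: the construction itself fails with high probability. An admissible $H$ must satisfy, for every $S\subseteq X$, $|N(S)\cap Y|+\max(0,|N(S)\cap Y'|-h)\ge|S|$ (minimise $|N(S)\cap Y_0|$ over $Y_0$). Taking $|S|=1$ and using $\Delta(H)\le 100<h$, every $x\in X$ must have a neighbour in $Y$; taking $S=X$, every vertex of $Y\cup Y'$ must have positive degree. In your model each $x$ lands entirely inside $Y'$ with probability roughly $2^{-d}$, independently over the $3h$ vertices of $X$, so with high probability some $x$ has $N(x)\subseteq Y'$; choosing $Y_0\subseteq Y'\setminus N(x)$ (possible since $d\le h$) then leaves $x$ with no neighbour in $Y\cup Y_0$. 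Likewise some vertex of $Y$ is isolated with high probability. Any repair must build a deterministic backbone guaranteeing these minimum-degree conditions before randomising, and the entropy-versus-probability calculation must then be redone around that backbone. Note also that the present paper does not prove this lemma at all --- it imports it from \cite{montgomery2018spanning}, where the graph is constructed explicitly rather than as a plain random bipartite graph, precisely to avoid the failure modes above.
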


\begin{lemma}\label{absorbAmacro} Let $1/n\ll \beta \ll \eta\ll \xi,\mu$. Let $K_{2n+1}$ be 2-factorized. Suppose that $V_0\subset V(K_{2n+1})$ and $D_0\subset C(K_{2n+1})$ are $\mu$- and $\eta$-random respectively, and suppose that they are independent. With high probability, the following holds.

Suppose $X,Z\subset V(K_{2n+1})$ are disjoint subsets such that $(X,Z)$ is $(\xi n)$-replete, $\alpha\leq \beta$ and $C\subset C(K_{2n+1})\setminus D_0$ is a set of at most $2\beta n$ colours. Then, there is a set $X_0$ of $|D_0|+\alpha n$ vertices in $X$ such that, for every set $C'\subset C$ of $\alpha n$ colours, there is a perfect $(D_0\cup C')$-rainbow matching from $X_0$ into $V_0\cup Z$.
\end{lemma}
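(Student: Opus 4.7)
The plan is to use distributive absorption, building $X_0$ by glueing together many local switchers (from Lemma~\ref{absorbA}) according to a robustly matchable template graph (from Lemma~\ref{Lemma_H_graph}), and then covering the bulk of the colours of $D_0$ with a fixed rainbow matching. First, by applying Lemma~\ref{absorbA} with an auxiliary parameter $\beta'$ satisfying $\beta\ll\beta'\ll\xi,\mu$, I would conclude that with high probability the random pair $(V_0,D_0)$ admits the switcher property: for any colour set of size at most $100$ and any avoided sets of size at most $\beta' n$, there is a local switcher $(X',C',V')\subseteq X\times D_0\times (V_0\cup Z)$, with $|X'|\le 100$, $|C'|\le 99$, $|V'|\le 300$, giving the one-out-of-$|C|$ colour flexibility described there.

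Next, apply Lemma~\ref{Lemma_H_graph} (or its $|Y'|=|C|$, $|Y_0|=\alpha n$ generalization from the outline of the paper) with $h=\alpha n$ to obtain a bipartite template $K$ with parts $U$ and $Y\cup Y'$ of sizes $3\alpha n$, $2\alpha n$, and $|C|$ respectively, of maximum degree at most $100$. Bijectively label $Y$ with $2\alpha n$ fixed colours $D^Y\subseteq D_0$ and $Y'$ with the colours of $C$. Iterating over $u\in U$, apply the switcher property with target colour set $\{c_v:v\in N_K(u)\}\subseteq D^Y\cup C$ and with avoided sets equal to $D^Y\cup C$ together with the union of previously chosen switcher data, to obtain pairwise disjoint switchers $(D_u,X_u,V_u)$ with $D_u\subseteq D_0\setminus D^Y$, $X_u\subseteq X$, $V_u\subseteq V_0\cup Z$. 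Since $|U|=3\alpha n\leq 3\beta n$ and each switcher uses at most $300$ vertices and $99$ colours, the total sizes of avoided sets stay well below $\beta' n$ throughout, so the iteration succeeds.

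The structure constructed so far uses only $|D^Y|+\sum_u|D_u|=e(K)-\alpha n$ colours of $D_0$ in total, so to force $X_0$ to use every colour of $D_0$ I would cover the leftover colours $D_0\setminus(D^Y\cup\bigcup_u D_u)$ (of size $|D_0|+\alpha n-e(K)$) by a perfect rainbow matching $M_0$ from some $X^{fix}\subseteq X$ into the remaining vertices of $V_0$. For this I would reserve a small disjoint random subset of $V_0$ at the outset, untouched by the switcher construction, and apply a combination of Lemma~\ref{Lemma_matching_into_random_set_using_specified_colours} and Lemma~\ref{Lemma_sat_matching_random_embedding} to produce $M_0$ using exactly these leftover colours. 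Setting $X_0:=X^{fix}\cup\bigcup_u X_u$ gives $|X_0|=|D_0|+\alpha n$. Given any $C'\subseteq C$ of size $\alpha n$, the template's robust matching property yields a perfect matching $f\colon U\to Y\cup Y_0$ in $K$, where $Y_0\subseteq Y'$ corresponds to $C'$; the switchers at each $u$ (with chosen colour $c_{f(u)}$) combined with $M_0$ then produce the desired perfect $(D_0\cup C')$-rainbow matching from $X_0$ into $V_0\cup Z$. The main obstacle is coordinating the randomness: the greedy switcher construction consumes random vertices and colours in complicated, dependent ways, so one must reserve disjoint pieces of $V_0$ (and, if needed, of $D_0$) upfront to preserve enough independent randomness for $M_0$, which has to use a prescribed set of leftover colours; a secondary point is dealing cleanly with the regime $|C|>2\alpha n$, where the standard Lemma~\ref{Lemma_H_graph} does not directly apply and one uses the generalized template or reduces to a suitable subset of $C$.
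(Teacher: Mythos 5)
Your overall architecture (switchers from Lemma~\ref{absorbA} wired together via a robustly matchable template, plus a separate matching to mop up the colours of $D_0$ not consumed by the template) is the right one and matches the paper. But the mop-up step is where your argument breaks. The leftover set $D_0\setminus(D^Y\cup\bigcup_u D_u)$ has size roughly $|D_0|\approx\eta n$, and you must find a rainbow matching from $X$ using \emph{every one} of these colours. You propose to send this matching into a reserved random piece of $V_0$ via Lemma~\ref{Lemma_matching_into_random_set_using_specified_colours} combined with Lemma~\ref{Lemma_sat_matching_random_embedding}. Neither lemma delivers what you need: the first only produces a matching covering all but $\beta n$ of the prescribed colours, and the second covers all of a prescribed \emph{vertex} set, not a prescribed colour set. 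More fundamentally, no such matching into $V_0$ need exist: $X$ is adversarial and is chosen after $V_0$ is revealed, and the only repleteness hypothesis is between $X$ and $Z$. For a fixed colour $c$, roughly a $(1-2\mu)$-fraction of vertices have no colour-$c$ neighbour in $V_0$, so the adversary can choose $X$ (still $(\xi n)$-replete into some $Z$) so that some leftover colour has no edge at all from $X$ into $V_0$. The paper's proof instead covers the leftover colours by a \emph{greedy} rainbow matching from $X$ into $Z$, using precisely the $(\xi n)$-repleteness of $(X,Z)$ and the fact that only $O(\eta n)\ll\xi n$ colours need covering; this is the entire reason $Z$ appears in the statement and why the target set is $V_0\cup Z$ rather than $V_0$ alone. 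Your proof never uses $Z$ for this purpose, so the set $X_0$ you build does not have the claimed property.

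A secondary, fixable issue is your template sizing. You take $h=\alpha n$ and a part $Y'$ of size $|C|$, which requires the asymmetric generalization of Lemma~\ref{Lemma_H_graph} ($|Z|=m+\ell$) that is described in the proof outline but not stated or proved as a lemma in the paper; "reducing to a suitable subset of $C$" does not work since $C'$ ranges over all $\alpha n$-subsets of $C$. The paper avoids this by taking $h=2\beta n\geq|C|$, letting $Y$ consist of $2h$ colours of a set $D_1\subset D_0$ of size $3h-\alpha n$, and padding $Y'$ with dummy colours so that $D_1\cup C\subseteq Y\cup Y'$; then the standard symmetric Lemma~\ref{Lemma_H_graph} yields a perfect matching of $[3h]$ onto $D_1\cup C'$ for every $C'$ of size $\alpha n$. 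You should adopt that device, and also note that since $D_0$ is only $\eta$-random while $V_0$ is $\mu$-random, Lemma~\ref{absorbA} must be applied to an $\eta$-random subset of $V_0$ to match its hypotheses.
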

\begin{proof}
With high probability, by Lemmas~\ref{Lemma_Chernoff} and~\ref{absorbA} (applied with $\beta'=10^3\beta, \mu'=\eta$, $\xi'=\xi/2$, and an $\eta$-random subset of $V(K_{2n+1})$ contained in $V_0$) we have the following properties.
\stepcounter{propcounter}
\begin{enumerate}[label = {\bfseries \Alph{propcounter}\arabic{enumi}}]
\item $\eta n/2\leq |D_0|\leq 2\eta n$.\label{aa2}
\item Let $\bar{C},\hat{C}\subset C(K_{2n+1})$ and $X,\bar{V},\bar{Z}\subset V(K_{2n+1})$ satisfy $|\bar{{C}}|,|\bar{V}|\leq 10^3\beta n$, $|\hat{C}|\leq 100$ and suppose that $(X,\bar{Z})$ is $(\xi n/2)$-replete.
Then, there are sets $X'\subset Y\setminus \bar{V}$, $C'\subset D_0\setminus \bar{C}$ and $V'\subset V\cup \bar Z \setminus \bar{V}$ with sizes $|\hat{C}|$, $|\hat{C}|-1$ and $\leq 3|\hat{C}|$ such that, for every $c\in \hat{C}$, there is a perfect $(C'\cup \{c\})$-matching from $Y'$ to $V'$.\label{aa3}
\end{enumerate}

We will show that the property in the lemma holds. For this, let $X,Z\subset V(K_{2n+1})$ be disjoint subsets such that $(X,Z)$ is $(\xi n)$-replete, let $\alpha\leq \beta$ and let $C\subset C(K_{2n+1})\setminus D_0$ be a set of at most $2\beta n$ colours.
Let $h=2\beta n$, and, using \ref{aa2}, pick a set $D_1\subset D_0$ of $(3h-\alpha n)$ colours. 
Noting that $|D_1|\geq 2h$ and $|D_1\cup C|\leq 4h$, we can define sets $Y,Y'$ of order $2h$ with $Y\subseteq D_1$ and $D_1\cup C\subseteq Y\cup Y'$ (where any extra elements of $Y'$ are arbitrary dummy colours which will not be used in the arguments).
Using Lemma~\ref{Lemma_H_graph}, let $H$ be a bipartite graph with vertex classes $[3h]$ and $Y\cup Y'$, which has maximum degree at most 100, and is such that for any $\bar{Y}\subseteq Y'$ with $|\bar{Y}|=h$, there is a perfect matching between $[3h]$ and $Y\cup \bar{Y}$. In particular, since $|D_1|=3h-\alpha n$ and $Y\subseteq D_1$, we have the following.

\begin{enumerate}[label = {\bfseries \Alph{propcounter}\arabic{enumi}}]
\addtocounter{enumi}{2}
\item For each set $C'\subset C$ of $\alpha n$ colours, there is a perfect matching between $[3h]$ and $D_1\cup C'$ in $H$.  \label{Hmatch}
\end{enumerate}

For each $i\in [3h]$, let $D_i=N_H(i)\cap C(K_{2n+1})$ (i.e.\ $D_i$ is the set of non-dummy colours in $N_H(i)$). Using \ref{aa3} repeatedly, find sets $X_i\subset X\setminus (X_1\cup\ldots\cup X_{i-1})$,  $C_i\subset D_0\setminus (D_1\cup C_1\cup\ldots\cup C_{i-1})$ and $V_i\subset (V_0\cup Z)\setminus (V_1\cup \ldots\cup V_{i-1})$,
with sizes $|D_i|$, $|D_i|-1$ and at most $300$, such that the following holds. 
(To see that we can repeat this application of \ref{aa3} this many times, note that $h =2\beta n$ and $\xi \gg \beta$ and so at each application we delete $O(h)$ vertices from $X$ or $Z$,  leaving a pair which is still $(\xi n/2)$-replete.)

\begin{enumerate}[label = {\bfseries \Alph{propcounter}\arabic{enumi}}]
\addtocounter{enumi}{3}
\item For every $c\in D_i$, there is a perfect $(C_i\cup\{c\})$-rainbow matching from $X_i$ into $V_i$. 
\label{propabsorb}
\end{enumerate}

Greedily, using \ref{aa2}, that $h=2\beta n$, $\beta,\eta\ll \xi$ and $(X,Z)$ is $(\xi n)$-replete, find a $(D_0\setminus (D_1\cup C_1\cup\ldots\cup C_{3h}))$-rainbow matching $M$ with $|D_0\setminus (D_1\cup C_1\cup \ldots\cup C_{3h})|$ edges from $X\setminus (X_1\cup\ldots\cup X_{3h})$ into $Z\setminus (X_1\cup\ldots\cup X_h)$.
Let $X_0=(X\cap V(M))\cup X_1\cup \ldots\cup X_{3h}$. Note that $|X_0|=|D_0|-|D_1|-\sum_{i=1}^{3h}(|X_i|-|C_i|)=|D_0|-|D_1|+3h=|D_0|+\alpha n$.
We claim this has the property required. Indeed, suppose $C'\subset C$ is a set of $\alpha n$
colours. Using~\ref{Hmatch}, let $M'$ be a perfect matching between $[3h]$ and $D_1\cup C'$ in $H$, and label $D_1\cup C'=\{c_1,\ldots,c_{3h}\}$ so that, for each $i\in [3h]$, $c_i$ is matched to $i$ in $M'$. Note that the colours $c_i$ are not dummy colours.
By the definition of each $D_i$, $c_i\in D_i$ for each $i\in [3h]$.

By \ref{propabsorb}, for each $i\in [3h]$, there is a perfect $(C_i\cup \{c_i\})$-matching, $M_i$ say, from $X_i$ to $V_i$. Then, $M\cup M_1\cup \ldots\cup M_{3h}$ is a perfect matching from $X_0$ into $V_0\cup Z$ which is $(C(M)\cup D_1\cup C'\cup C_1\cup\ldots\cup C_{3h})$-rainbow, and hence $(D_0\cup C')$-rainbow, as required.
\end{proof}

\subsection{Covering small colour sets with matchings}
We find perfect rainbow matchings covering a small set of colours using the following lemma.
\begin{lemma}\label{colourcover} Let $1/n\ll \nu\ll \lambda\ll 1$. Let $K_{2n+1}$ be 2-factorized. Suppose that $X,V\subset V(K_{2n+1})$ are disjoint,  $(X,V)$ is $(10\nu n)$-replete and $|X|\leq \lambda n$. Suppose that $C\subset C(K_{2n+1})$ is such that every vertex $v\in V(K_{2n+1})$ has at least $3\lambda n$ colour-$C$ neighbours in $V$.
Then, given any set $C'\subset C(K_{2n+1})$ with at most $\nu n$ colours, there is a perfect $(C'\cup C)$-rainbow matching from $X$ to $V$ which uses every colour in $C'$.
\end{lemma}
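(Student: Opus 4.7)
The plan is to build the matching in two stages: first a $C'$-rainbow matching $M_1$ using every colour of $C'$, then a $C$-rainbow extension $M_2$ covering $X\setminus V(M_1)$.

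Stage one is a straight greedy. List $C'=\{c_1,\dots,c_r\}$ with $r=|C'|\leq \nu n$. At step $i$, invoke the $(10\nu n)$-repletion of $(X,V)$ to choose a colour-$c_i$ edge $e_i$ between $X$ and $V$ avoiding the endpoints of $e_1,\dots,e_{i-1}$. The $2$-factorization means each used vertex blocks at most two colour-$c_i$ edges, so at most $4r\leq 4\nu n$ of the $\geq 10\nu n$ colour-$c_i$ edges from $X$ to $V$ are blocked, leaving many valid options. This yields a $C'$-rainbow matching $M_1$ of size $r$ using exactly the colours of $C'$.

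For Stage 2, set $X''=X\setminus V(M_1)$ and $V''=V\setminus V(M_1)$, so $|X''|\leq \lambda n$ and each $x\in X''$ still has at least $3\lambda n-\nu n\geq 2\lambda n$ colour-$C$ neighbours in $V''$. A plain greedy growing a $C$-rainbow matching one edge at a time works up to a point: after $i$ edges, a new $x\in X''$ has at least $2\lambda n - i - 2i = 2\lambda n - 3i$ admissible edges (each used vertex kills at most one candidate edge at $x$, and each used colour kills at most two, by $2$-factorization), so greedy succeeds while $i<2\lambda n/3$. For the remaining $\leq \lambda n/3$ vertices the greedy estimate collapses, and I would close the gap with augmenting swaps: when some $x\in X''$ has no directly admissible edge, every one of its $\geq 2\lambda n$ colour-$C$ neighbours in $V''$ is blocked by either a used vertex or a used colour; following such a conflict to the responsible matching edge $x'v'$, the hypothesis that $v'$ itself has $\geq 3\lambda n$ colour-$C$ neighbours in $V''$ lets us reroute $x'$ through a fresh vertex and a fresh colour, thereby freeing up either $v'$ or the colour of $x'v'$ for use at $x$. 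Iterating these short alternating walks completes $M_2$, and $M_1\cup M_2$ is the desired $(C'\cup C)$-rainbow perfect matching from $X$ into $V$.

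The main obstacle is the tail of Stage 2: the hypothesis gives only a factor-$3$ slack between $3\lambda n$ and $|X|$, so naive greedy is unavoidably insufficient for the last third of $X''$ and an augmenting argument must be set up carefully. The point that makes augmentation work is that the colour-$C$ degree condition is imposed on \emph{every} vertex of $K_{2n+1}$, not merely on $X$; this ensures that when we pivot at some $x'v'\in M_2$ the endpoint $v'$ has plenty of unused colour-$C$ neighbours available in $V''$, so the alternating walks we chase close up after a bounded number of steps without running out of room.
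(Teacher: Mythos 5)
Your Stage 1 is exactly the paper's first step and is fine. The problem is Stage 2: the shortfall you identify there is an artifact of your own bookkeeping, and the augmenting-walk argument you propose to repair it is only a sketch (``I would close the gap with augmenting swaps\ldots Iterating these short alternating walks completes $M_2$''), so as written the proof is incomplete precisely where you claim the main difficulty lies. Concretely, you throw away almost a full $\lambda n$ of slack when you weaken ``at least $3\lambda n-\nu n$ colour-$C$ neighbours in $V''$'' to ``at least $2\lambda n$''. Keep the true bound: after Stage 1 and $i$ further greedy steps, the total number of matching edges placed is $|M_1|+i\leq |X|-1$, and each such edge blocks at most $3$ of the $\geq 3\lambda n$ colour-$C$ edges from $x$ into $V$ (at most $2$ via its colour, by the $2$-factorization, and at most $1$ via its endpoint in $V$). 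Hence at least $3\lambda n-3(|X|-1)\geq 3$ admissible edges remain at every step, and the plain greedy runs to completion. This is exactly the paper's argument; no augmentation is needed, and the factor-$3$ slack in the hypothesis is there precisely so that the naive count closes.

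If you do want to keep the augmenting route, you would have to actually prove that the alternating walks terminate and that the rerouting at $x'v'$ preserves rainbowness globally (a fresh colour at $v'$ must also be fresh for the whole matching, not just at $v'$); none of that is established in your sketch, and the observation that the degree hypothesis applies to every vertex of $K_{2n+1}$ does not by itself give it. Since the corrected greedy count makes all of this unnecessary, I recommend simply tightening the Stage 2 estimate as above.
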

\begin{proof} Using that $(X,V)$ is $(10\nu n)$-replete, greedily find a matching $M_1$ with $|C'|$ edges from $X$ to $V$ which is $C'$-rainbow. Then, using that every vertex $v\in V(K_{2n+1})$ has at least $3\lambda n$ colour-$C$ neighbours in $V$, greedily find a perfect $C$-rainbow matching $M_2$ from $X\setminus V(M_1)$ to $V\setminus V(M_1)$. This is possible as, when building $M_2$ greedily, each vertex in $X\setminus V(M_1)$ has at most $2|X|\leq 2\lambda n$ neighbouring edges with colour used in $C\cap C(M_1\cup M_2)$ (as the colouring is $2$-bounded) and at most $|X|\leq \lambda n$ colour-$C$ neighbours in $V(M_1)\cup V(M_2)$. 
The matching $M_1\cup M_2$ then has the required property.
\end{proof}

\subsection{Almost-covering colours with matchings}
We find rainbow matchings using almost all of a set of colours using the following lemma.
\begin{lemma}\label{Lemma_saturating_matching_lemma}
Let $1\geq p\gg q\gg \gamma,\eta \gg \nu\gg 1/n$  and let  $K_{2n+1}$ be $2$-factorized.
Suppose we have disjoint sets $V, V_0\subseteq V(K_{2n+1})$, and $D,D_0\subseteq C(K_{2n+1})$  with $V$ $p$-random, $D$ $q$-random, and $V_0, D_0$ $\gamma$-random. Furthermore, suppose that $V_0$ and $D_0$ are independent of each other. Then, the following holds with  probability $1-o(n^{-1})$.

For every $U\subseteq V(K_{2n+1})\setminus (V\cup V_0)$ with at most $(q+\gamma+\eta-\nu)n$ vertices, and any $C\subset C(K_{2n+1})$ with $D\cup D_0\subset C$ with $|C|\geq |U|+\nu n$, there is a perfect $C$-rainbow matching from $U$ into $V\cup V_0$.
\end{lemma}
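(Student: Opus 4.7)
My plan is to build the matching in four stages, each exploiting a different pair of colour/vertex resources. I would first fix an auxiliary constant $\beta$ with $\nu\gg\beta\gg n^{-1}$, split $V=V_1\cup V_2$ with $V_1\subseteq V$ a $q/2$-random subset (so $V_2=V\setminus V_1$ is approximately $p$-random), and reserve small $\beta^{1/2}$-random subsets $V_0^{\mathrm{res}}\subseteq V_0$ and $D_0^{\mathrm{res}}\subseteq D_0$ (preserving their independence). By a union bound over the high-probability conclusions of Lemmas~\ref{Lemma_nearly_perfect_matching}, \ref{Lemma_matching_into_random_set_using_specified_colours}, and~\ref{Lemma_high_degree_into_random_set}, I may assume the following: the near-perfect matching lemma applies to $(V_1,D)$ and to a suitable $\gamma/2$-random subset of $V_0\setminus V_0^{\mathrm{res}}$ paired with $D_0\setminus D_0^{\mathrm{res}}$; the arbitrary-colour matching lemma applies to $V_2$; and every vertex has at least $\beta n$ colour-$D_0^{\mathrm{res}}$ neighbours in $V_0^{\mathrm{res}}$.

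Given $U$ and $C$ satisfying the hypotheses, I set $C_{ex}=C\setminus(D\cup D_0)$ and proceed as follows. Stage~I: choose $U_D\subseteq U$ of size $\min(|U|,qn)$ and apply Lemma~\ref{Lemma_nearly_perfect_matching} to $(V_1,D)$ to obtain a $D$-rainbow matching $M_1$ of size $|U_D|-\beta n$ from $U_D$ into $V_1$. Stage~II: for $U'=U\setminus V(M_1)$, select $U_{D_0}\subseteq U'$ of size $\min(|U'|,\gamma n)$ and apply Lemma~\ref{Lemma_nearly_perfect_matching} on a $\gamma/2$-random subset of $V_0\setminus V_0^{\mathrm{res}}$ with $D_0\setminus D_0^{\mathrm{res}}$ to obtain a rainbow matching $M_2$ of size $|U_{D_0}|-\beta n$. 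A short computation shows the uncovered set $U''=U\setminus V(M_1\cup M_2)$ has size at most $\eta n+O(\beta n)$, while the number of unused colours is at least $|U''|+\nu n$ and lies mostly in $C_{ex}$. Stage~IV will handle whatever $O(\beta n)$ residual is left after Stage~III by greedy extension into $V_0^{\mathrm{res}}$ using $D_0^{\mathrm{res}}$-colours, which succeeds because the $\beta n\gg O(\beta n)$ degree bound gives ample choices at each greedy step.

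The main obstacle is Stage~III, where I must match $U''$ into $V_2\cup(V_0\setminus V_0^{\mathrm{res}})$ using colours from $C_{ex}$ together with leftovers from $D\cup D_0$. The problem is that $C_{ex}$ is adversarial, so Lemma~\ref{Lemma_nearly_perfect_matching} does not apply, while Lemma~\ref{Lemma_matching_into_random_set_using_specified_colours}---which does accommodate arbitrary colour sets---requires a source set of size at least $pn\gg|U''|$ and returns an almost-perfect matching whose vertex coverage is uncontrolled. My approach is to extend $U''$ to a set $\tilde U$ of size $pn$ by padding with arbitrary vertices from $V(K_{2n+1})\setminus(V\cup V_0\cup U)$, apply Lemma~\ref{Lemma_matching_into_random_set_using_specified_colours} to $(V_2,\tilde U)$ with a carefully-chosen subset of the available unused colours to obtain an almost-perfect rainbow matching on $\tilde U$, and then run a rotation/swap argument to redirect its edges so that they cover precisely $U''$ in place of the padding vertices. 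The swap will be powered by the pseudorandom colour-degree upper bounds from Lemma~\ref{MPSvertex}, which, via a maximality argument in the spirit of Lemma~\ref{Lemma_matching_into_random_set_using_specified_colours}, guarantee that every vertex of $U''$ admits a valid swap with a padding vertex using an unused colour and an unused vertex of $V_2$.
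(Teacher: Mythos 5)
Your decomposition has the right ingredients (a nibble matching with the random colours, the specified-colour matching lemma for the adversarial colours, and a reserved independent pair for a greedy finish), but the order in which you deploy them creates a gap in Stage~III that the padding/rotation idea does not close. After Stages~I and~II the uncovered set $U''$ has size $\approx \eta n$ and the unused colours are essentially the adversarial set $C_{ex}=C\setminus(D\cup D_0)$ plus uncontrolled leftovers. At that point you need \emph{every} vertex of $U''$ to acquire an edge of an unused colour into an unused vertex of $V_2\cup V_0$. No pseudorandomness of $V$ or $V_0$ can guarantee this: $U$ and $C$ are chosen \emph{after} $V,V_0,D,D_0$ are revealed, so the adversary may place a vertex $u$ in $U$ and choose $C_{ex}$ to consist entirely of colours $c$ for which both $c$-edges at $u$ miss $V\cup V_0$ (there are about $(1-p-2\gamma)^2n$ such colours available, far more than the $\leq \eta n$ needed). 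If such a $u$ survives to $U''$, no rotation can cover it, because any augmentation still ultimately requires \emph{some} edge at $u$ of an allowed colour into an allowed vertex. The maximality argument behind Lemma~\ref{Lemma_matching_into_random_set_using_specified_colours} (and the upper bounds of Lemma~\ref{MPSvertex}) controls which \emph{colours} a maximal matching covers --- each unused colour has $\approx p^2 n\cdot|\tilde U|/n$ edges into $V_2$, so it is hard to block them all --- but it says nothing about which \emph{vertices} are covered; a vertex has only $2|C'|$ candidate edges and these can all be blocked by the adversary's choice of $C'$. So the claimed guarantee that ``every vertex of $U''$ admits a valid swap'' is false in general.

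The fix is to reverse the order, which is what the paper does: apply Lemma~\ref{Lemma_matching_into_random_set_using_specified_colours} \emph{first}, with source set the whole of $U$ (note $|U|\geq|C|-\nu n\geq |D|-\nu n\geq qn/2$, so $U$ is large enough to serve as the source) and colour set $C\setminus(D\cup D_2)$, where $D_2$ is a small $(\nu/4)$-random piece of $D_0$ reserved for the end. This consumes all but $\beta n$ of the adversarial colours while the source is still large. The vertices of $U$ left uncovered then number at most $qn$, and these are matched into $V_2$ using the $q$-random set $D$ via Lemma~\ref{Lemma_nearly_perfect_matching}; the final $\beta n$ stragglers are absorbed greedily using the independent pair $(V_0,D_2)$ exactly as in your Stage~IV. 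In short: large source with arbitrary colours, then small source with random colours --- not the other way around.
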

\begin{proof}
Choose $\beta$ such that $\nu\gg \beta\gg n^{-1}$.
Partition $V=V_1\cup V_2$ so that $V_1$ and $V_2$ are $(p/2)$-random. Partition $D_0=D_1\cup D_2$  with $D_1$ $(\gamma-\nu/4)$-random and  $D_2$ $(\nu/4)$-random. We now find properties \ref{grape1} -- \ref{grape4}, which all hold with probability $1-o(n^{-1})$, as follows.

By Lemma~\ref{Lemma_matching_into_random_set_using_specified_colours} with $V=V_1$,   $p'=q/2$, $q=\gamma+2\eta$, $\beta=\beta$, and $n=n$, we have the following. 

\addtocounter{propcounter}{1}
\begin{enumerate}[label = {\bfseries \Alph{propcounter}\arabic{enumi}}]
\item For any $U\subset V(K_{2n+1})\setminus V_1$ with $|U|\geq qn/2$ and any $C'\subset C(K_{2n+1})$ with $|C'|\leq (\gamma +2\eta)n$, there is a $C'$-rainbow matching of size $|C'|-\beta n$ from $U$ into $V_1$. \label{grape1}
\end{enumerate}
By Lemma~\ref{Lemma_nearly_perfect_matching} with $V=V_2$, $C=D$, $p=q$, $\beta=\beta$, and $n=n$, we have the following. 
\begin{enumerate}[label = {\bfseries \Alph{propcounter}\arabic{enumi}}]\stepcounter{enumi}
\item For any $U\subset V(K_{2k+1})\setminus V_2$ with $|U|\leq q n$, there is a $D$-rainbow matching from $U$ into $V_2$ with size $|U|-\beta n$.\label{grape2b}
\end{enumerate}
By Lemma~\ref{Lemma_high_degree_into_random_set} with $V=V_0$, $C=D_2$, $p=\gamma$, $q=\nu/4$, and $n=n$ we have the following.
\begin{enumerate}[label = {\bfseries \Alph{propcounter}\arabic{enumi}}]\addtocounter{enumi}{2}
\item Every vertex $v\in V(K_{2n+1})$ has $|N_{D_2}(v)\cap V_0|\geq \gamma\nu n/4$.\label{grape3}
\end{enumerate}
By Lemma~\ref{Lemma_Chernoff}, we have the following.
\begin{enumerate}[label = {\bfseries \Alph{propcounter}\arabic{enumi}}]\addtocounter{enumi}{3}
\item $|D_1|=(\gamma-\nu/4\pm \beta)n\leq \gamma n$, $|D|=(q\pm \beta)n$ and $|D_2| \leq \nu n/3$.\label{grape4}
\end{enumerate}

We now show that the property in the lemma holds. For this, let $U\subseteq V(K_{2n+1})\setminus (V\cup V_0)$ have at most $(q+\gamma+\eta-\nu)n$ vertices, and let $C\subset C(K_{2n+1})$ satisfy   $D\cup D_0\subset C$ and $|C|\geq |U|+\nu n$.
Note that we can assume that $|C|=|U|+\nu n$, so that, using \ref{grape4}, $|C\setminus (D \cup D_2)|\leq |U|+\nu n -|D| \leq (q+\gamma+\eta-\nu)n+\nu n-(q-\beta)n= (\gamma+\eta +\beta)n\leq (\gamma+2\eta)n$. Notice that, by \ref{grape4} and as $D\subset C$, we have $|U|\geq |C|-\nu n\geq q n/2$.
Therefore, by \ref{grape1}, there is a $(C\setminus (D \cup D_2))$-rainbow matching $M_1$ of size $|C\setminus (D \cup D_2)|-\beta n$ from $U$ into $V_1$. Now, using \ref{grape4}, $|U\setminus V(M_1)|=|U|-|C\setminus (D \cup D_2)|+\beta n\leq |D|+|D_2|-\nu n+\beta n\leq (q+\beta)n+\nu n/3-\nu n+\beta n\leq q n$. By \ref{grape2b}, there is a $D$-rainbow matching $M_2$ of size $|U\setminus V(M_1)| -\beta n$ from $U\setminus V(M_1)$ into $V_2$.
Notice that  $|U\setminus (V(M_1)\cup V(M_2))|\leq \beta n$.
From \ref{grape3}, we have that $|N_{D_2}(v)\cap V_0|\geq \gamma \nu n/4> 3 \beta n \geq 3|U\setminus (V(M_1)\cup V(M_2))|$ for every vertex $v\in V(K_{2n+1})$. By greedily choosing neighbours of $u\in U\setminus (V(M_1)\cup V(M_2))$ one at a time (making sure to never repeat colours or vertices) we can find a $D_2$-rainbow matching $M_3$ into $V_0$ covering $U\setminus (V(M_1)\cup V(M_2))$. Indeed, when we seek a new colour-$D_2$ neighbour of a vertex $u\in U\setminus (V(M_1)\cup V(M_2))$ there are at most 2 neighbours of $u$ of each colour used from $D_2$ in $V_0$, ruling out at most $2|U\setminus (V(M_1)\cup V(M_2))|$ colour-$D_2$ neighbours of $u$ in $V_0$, while at most $|U\setminus (V(M_1)\cup V(M_2))|$ vertices have been used in $V_0$ in the matching.
Now the matching $M_1\cup M_2\cup M_3$ satisfies the lemma.
\end{proof}

\subsection{Proof of the finishing lemma in Case A}\label{sec:finishAfinal}
Finally, we can put all this together to prove Lemma~\ref{lem:finishA}.
\begin{proof}[Proof of Lemma~\ref{lem:finishA}]
Pick $\nu,\lambda,\beta$ and $\alpha$ so that $1/n\ll \nu \ll\lambda\ll\beta\ll \alpha\ll \xi$ and recall that $\xi \ll \mu\ll \eta\ll \eps\ll p\leq 1$. Let $V_1',V_2',V_3'$ be disjoint $(p/3)$-random subsets in $V$ and let $W_1,W_2,W_3$ be disjoint $(\mu/3)$-random subsets in $V_0$.
Let $D_1$, $D_2$, $D_3$ be disjoint $(\mu/3)$-, $\beta$- and $\alpha$-random disjoint subsets in $C_0$ respectively. 

We now find properties \ref{mouse11}--\ref{mouse44}, which collectively hold with high probability as follows.
First note that $(1-\eta)\eps+(\mu/3)+\eta-\nu \geq \eps$. Thus, by Lemma~\ref{Lemma_saturating_matching_lemma} applied with $V=V_1'$, $V_0=W_1$, $D=C$, $D_0=D_1$, $p'=p/3$, $q=(1-\eta)\epsilon$, $\gamma=\mu/3$, $\eta=\eta$ and $\nu=\nu$ we get the following.
\stepcounter{propcounter}
\begin{enumerate}[label = {\bfseries \Alph{propcounter}\arabic{enumi}}]
\item For any $U\subset V(K_{2n+1})\setminus (V'_1\cup W_1)$ with $|U|\leq  \eps n$ and any set of colours $C'\subset C(K_{2n+1})$ with $C\cup D_1\subset C'$, and $|C'|  \geq |U|+\nu n$, there is a perfect $C'$-rainbow matching from $U$ into $V_1'\cup W_1$.\label{mouse11}
\end{enumerate}
By Lemma~\ref{Lemma_high_degree_into_random_set}, we get the following.
\begin{enumerate}[label = {\bfseries \Alph{propcounter}\arabic{enumi}}]\addtocounter{enumi}{1}
\item Each $v\in V(K_{2n+1})$ has at least $\beta^2 n$ colour-$D_2$ neighbours in $W_2$.\label{mouse2}
\end{enumerate}
By Lemma~\ref{absorbAmacro} applied with $V_0=W_3$,
$D_0=D_3$, $\mu'=\mu/3$, $\xi'=\xi/3$, $\eta'=\alpha, \alpha'=\gamma$ and $\beta'=2\beta$, we have the following.
\begin{enumerate}[label = {\bfseries \Alph{propcounter}\arabic{enumi}}]\addtocounter{enumi}{2}
\item For any disjoint vertex sets $Y,Z\subset V(K_{2n+1})\setminus W_3$ for which $(Y,Z)$ is $(\xi n/3)$-replete, any $\gamma\leq 2\beta$ and any set $\hat{C}\subset C(K_{2n+1})\setminus D_3$ of at most $4\beta n$ colours, the following holds. There is a subset $Y'\subset Y$ of $|D_3|+\gamma n$ vertices so that, for any $C'\subset \hat{C}$ with $|C'|=\gamma n$, there is a perfect $(D_3\cup C')$-rainbow matching from $Y'$ into $Z\cup W_3$.\label{mouse3}
\end{enumerate}
Finally, by Lemma~\ref{Lemma_Chernoff}, the following holds.
\begin{enumerate}[label = {\bfseries \Alph{propcounter}\arabic{enumi}}]\addtocounter{enumi}{3}
\item $2\beta n\geq |D_2|\geq \beta n/2$ and $2\alpha n\geq |D_3|\geq \alpha n/2$.\label{mouse44}
\end{enumerate}

We will now show that the property in the lemma holds.
Let then $X,Z\subset V(K_{2n+1})\setminus (V\cup V_0)$ be disjoint and let $D\subset C(K_{2n+1})$, so that $|X|=|D|=\eps n$, $C_0\cup C\subset D$ and $(X,Z)$ is $(\xi n)$-replete. Let $Z_1$, $Z_2$ be disjoint $(1/2)$-random subsets of $Z$. Note that, by Lemma~\ref{Lemma_Chernoff}, $(X,Z_1)$ and $(X,Z_2)$ are with high probability $(\xi/3)$-replete. Thus, we can pick an instance of $Z_1$ and $Z_2$ for which this holds. Let $V_1=V_1'\cup W_1$, $V_2=V_2'\cup W_2\cup Z_1$ and $V_3=V_3'\cup W_3\cup Z_2$.
Set $\gamma=(|D_2|/n)-\lambda$, so that, by \ref{mouse44}, $2\beta\geq \gamma>0$. Then, by \ref{mouse44} and \ref{mouse3} applied with $\hat{C}=D_2$, $Y=X_1, Z=Z_2$ there is a set $X_3\subset X$ with size $|D_3|+\gamma n=|D_3|+|D_2|-\lambda n$ and the following property.

\begin{enumerate}[label = {\bfseries \Alph{propcounter}\arabic{enumi}}]\addtocounter{enumi}{4}
\item For any $C'\subset D_2$ with $|C'|=\gamma n=|D_2|-\lambda n$, there is a perfect $(D_3\cup C')$-rainbow matching from $X_3$ into $V_3$.\label{mouse4}
\end{enumerate}

Let $C_1=D\setminus (D_2\cup D_3)$, $C_2=D_2$ and $C_3=D_3$, and note that $C\cup D_1\subset C_1$. 
Let $X'=X\setminus X_3$, so that $|X'|=(\eps +\lambda)n-|D_3|-|D_2|=|C_1|+\lambda n$. Using \ref{mouse44}, we have $|X_3|\leq |D_2|+|D_3|\leq \xi n/10$, and hence $(X',Z_1)$ is $(\xi n/6)$-replete.
Let $X_2'\subset X_2$ be a $(\lambda n/|X'|)$-random subset of $X'$. By Lemma~\ref{Lemma_Chernoff}, $|X_2'|\leq (\lambda+\nu)n$ and $(X_2',Z_1)$ is $(10\nu n)$-replete. Choose disjoint sets $X_1$ and $X_2$ of $X'$ with size $|X'|-(\lambda+\nu)n=|C_1|-\nu n$ and $(\lambda+\nu)n$ respectively, and so that $X_2'\subset X_2$. Note that $(X_2,Z_1)$ is $(10\nu n)$-replete. Therefore, by Lemma~\ref{colourcover} (with $\nu=\nu$, $\lambda'=\lambda+\nu \ll \beta$, $X=X_2, V=V_2$, and $C=C_2=D_2$) and \ref{mouse2}, the following holds.

\begin{enumerate}[label = {\bfseries \Alph{propcounter}\arabic{enumi}}]\addtocounter{enumi}{5}
\item For any set $C'\subset C(K_n)$ of at most $\nu n$ colours, there is a perfect $(C'\cup C_2)$-rainbow matching from $X_2$ to $V_2$ which uses every colour in $C'$.\label{mouse5}
\end{enumerate}

We thus have partitions $X=X_1\cup X_2\cup X_3$, $C=C_1\cup C_2\cup C_3$ and $V=V_1\cup V_2\cup V_3$, for which, by \ref{mouse11}, \ref{mouse5} and \ref{mouse4}, we have that
\ref{propp1}--\ref{propp3} hold. Therefore, by the discussion after \ref{propp1}--\ref{propp3}, we have a perfect $C$-rainbow matching from $X$ into $V$.
\end{proof}


\section{The finishing lemma in Case B}\label{sec:finishB}

Our proof of the finishing lemma in Case B has the same structure as the finishing lemma in Case A, except we first construct the colour switchers in the ND-colouring in two steps, before showing this can be done with random vertices and colours. In overview, in this section we do the following, noting the lemmas in which the relevant result is given and the comparable lemmas in Section~\ref{sec:finishA}.
\begin{itemize}
\item Lemma~\ref{lem-switchpath}: We construct colour switchers in the ND-colouring for any pair of colours $(c_1,c_2)$ -- that is, two short rainbow paths with the same length between the same pair of vertices, whose colours are the same except that one uses $c_1$ and one uses $c_2$.
\item Lemma~\ref{lem-absorbpath}: We use Lemma~\ref{lem-switchpath} to construct a similar colour switcher in the ND-colouring that can use 1 of a set of 100 colours.
\item Lemma~\ref{lem-randabsorbpath}: We show that, given a pair of independent random vertex and colour sets, many of these switchers use only vertices and (non-switching) colours in the subsets (cf.\ Lemma~\ref{absorbA}).
\item Lemma~\ref{absorbBmacro}: We use distributive absorption to convert this into a larger scale absorption property (cf.\ Lemma~\ref{absorbAmacro}).
\item Lemma~\ref{colourcoverB}: We embed paths while ensuring that an (arbitrary) small set of colours is used (cf.\ Lemma~\ref{colourcover}).
\item Lemma~\ref{finishingB}: We embed paths using almost all of a set of colours, in such a way that this can reduce the number of `non-random' colours (cf.\ Lemma~\ref{Lemma_saturating_matching_lemma}).
\item Finally, we put this all together to prove Lemma~\ref{lem:finishB}, the finishing lemma in Case B (cf.\ the proof of Lemma~\ref{lem:finishA}).
\end{itemize}

\subsection{Colour switching with paths}\label{sec:switchers}
We start by constructing colour switchers in the ND-colouring capable of switching between 2 colours.
We use switchers consisting of two paths with length 7 between the same two vertices. By using one path or the other we can choose which of two colours $c_1$ and $c_2$ are used, as the other colours used appear on both paths.

\begin{defn}
In a complete graph $K_{2n+1}$ with vertex set $[2n+1]$, let $(x_0,a_1,\ldots,a_\ell)$ denote the path with length $\ell$ with vertices $x_0x_1\ldots x_\ell$ where, for each $i\in [\ell]$, $x_i=x_{i-1}+a_i\pmod 2n+1$.
\end{defn}

\begin{figure}[h]
\begin{center}
{
\begin{tikzpicture}[scale=0.7]
\draw [red,thick] (-1,0) -- (0,0);
\draw (-0.5,-0.3) node {$i$};
\draw [blue,thick] (4,1) -- (0,0);
\draw (2,0.75) node {$c_1$};
\draw [green,thick] (4,1) -- (6,1);
\draw (5,1.25) node {$d_1$};
\draw [magenta,thick] (13,1) -- (6,1);
\draw (9.5,1.25) node {$d_2$};
\draw [purple,thick] (13,1) -- (10,0.5);
\draw (11.5,0.45) node {$d_3$};
\draw [orange,thick] (5,0) -- (10,0.5);
\draw (7.5,0.55) node {$d_4$};
\draw [brown,thick] (5,0) -- (18,0);
\draw (15,-0.3) node {$y-i-c_1-k-x$};
\draw [green,thick] (5,0) -- (7,-0.5);
\draw (6,-0.55) node {$d_1$};
\draw [magenta,thick] (14,-1) -- (7,-0.5);
\draw (10.5,-0.45) node {$d_2$};
\draw [purple,thick] (14,-1) -- (11,-1);
\draw (12.5,-1.25) node {$d_3$};
\draw [orange,thick] (6,-1) -- (11,-1);
\draw (8.5,-1.25) node {$d_4$};
\draw [cyan,thick] (6,-1) -- (0,0);
\draw (3,-0.75) node {$c_2$};

\draw [fill=black] (-1,0) circle [radius = 0.1cm];
\draw [fill=black] (18,0) circle [radius = 0.1cm];
\draw [fill=black] (0,0) circle [radius = 0.1cm];
\draw [fill=black] (4,1) circle [radius = 0.1cm];
\draw [fill=black] (6,1) circle [radius = 0.1cm];
\draw [fill=black] (13,1) circle [radius = 0.1cm];
\draw [fill=black] (10,0.5) circle [radius = 0.1cm];
\draw [fill=black] (5,0) circle [radius = 0.1cm];
\draw [fill=black] (7,-0.5) circle [radius = 0.1cm];
\draw [fill=black] (6,-1) circle [radius = 0.1cm];
\draw [fill=black] (11,-1) circle [radius = 0.1cm];
\draw [fill=black] (14,-1) circle [radius = 0.1cm];

\draw (18.5,0) node {$y$};
\draw (-1.5,0) node {$x$};
\end{tikzpicture}
}
\end{center}
\caption{Two $x,y$-paths used to switch colours between $c_1$ and $c_2$.}\label{colourswitcher}
\end{figure}
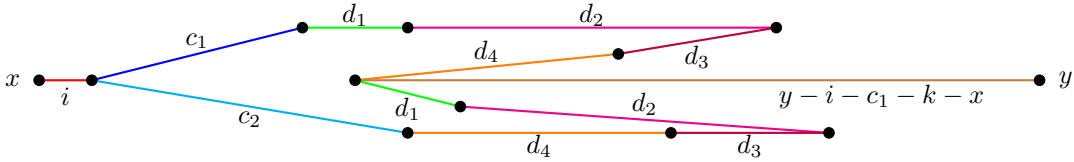

\begin{lemma}[1 in 2 colour switchers]\label{lem-switchpath}
Let $1\gg n^{-1}$ and suppose that $K_{2n+1}$ is ND-coloured. Suppose we have a pair of distinct vertices $x,y\in V(K_{2n+1})$, a pair of distinct colours $c_1,c_2\in C(K_{2n+1})$, and sets $X\subset V(K_{2n+1})$ and $C\subset C(K_{2n+1})$ with $|X|\leq n/25$ and $|C|\leq n/25$.

Then, there is some set $C'\subset C(K_{2n+1})\setminus (C\cup\{c_1,c_2\})$ with size $6$ so that, for each $i\in \{1,2\}$, there is a $(C'\cup\{c_i\})$-rainbow $x,y$-path with length 7 and interior vertices in $V(K_{2n+1})\setminus X$.
\end{lemma}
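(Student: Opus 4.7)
My plan is to explicitly construct $P_1$ and $P_2$ using the arithmetic structure of the ND-colouring. Identify $V(K_{2n+1})$ with $\mathbb{Z}_{2n+1}$, so every edge $uv$ has a well-defined colour in $[n]$ corresponding to the shortest signed representative of $u-v \pmod{2n+1}$; since $2n+1$ is odd, $2$ is invertible modulo $2n+1$, so expressions like $(c_1+c_2)/2$ are well-defined. Because $c_1 \neq c_2$ and both lie in $[n]$, $(c_1+c_2)/2 \not\equiv 0 \pmod{2n+1}$.

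I construct both paths so that they share their first and last edges. Choose a colour $i \in [n]$ and set $v_0 := x+i$ and $v_5 := v_0 + (c_1+c_2)/2$. Both paths start with the edge $xv_0$ (of colour $i$) and end with the edge $v_5 y$, whose colour $L$ is determined by $y - v_5 \equiv y - x - i - (c_1+c_2)/2 \pmod{2n+1}$ and so depends only on $i$. Between $v_0$ and $v_5$, $P_1$ takes the $5$ edges of colours $c_1, d_1, d_2, d_3, d_4$ all with $+$ sign, while $P_2$ takes $c_2, d_4, d_3, d_2, d_1$ in this order with $+$ sign on $c_2$ and $-$ signs on each $d_j$. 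The requirement that $P_2$ also ends at $v_5$ reduces to the single equation $d_1 + d_2 + d_3 + \tilde d_4 \equiv (c_2-c_1)/2 \pmod{2n+1}$, where $\tilde d_4$ is the signed displacement of the $d_4$-edge; so given $d_1, d_2, d_3$, both $\tilde d_4$ and hence the colour $d_4 = |\tilde d_4|$ (reduced into $[n]$) are forced.

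With this framework I pick $i, d_1, d_2, d_3$ greedily in that order. At each step the chosen colour must (i) avoid $C \cup \{c_1, c_2\}$ and all previously-used colours, (ii) cause the newly-determined interior vertices (some $v_j$ on $P_1$ and some $w_j$ on $P_2$) to avoid $X$ and all previously-introduced vertices, and (iii) not force a derived colour---namely $L$ right after picking $i$, or $d_4$ right after picking $d_3$---into $C$ or to coincide with an already-used colour. A short bookkeeping shows that all the dependencies are linear in the current parameter, so each forced bad outcome eliminates only $O(1)$ choices of it. The total bad set at each step therefore has size $O(|X|+|C|) \le O(n/25) \ll n$, so the greedy procedure succeeds.

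The only step requiring extra care is the last one: when picking $d_3$, both the forced colour $d_4$ and two new interior vertices ($v_4$ on $P_1$ and $w_2$ on $P_2$) simultaneously depend on $d_3$. This is the single place where a forced colour and fresh forced vertices arrive together, and so it is the main place where one must check that enough good choices remain. Non-degeneracy (namely $\tilde d_4 \ne 0$ so $v_4 \ne v_5$, and $v_5 \ne y$ so the final edge is a real edge) is handled by excluding one further bad value at each relevant step. Every other step is routine greedy accounting.
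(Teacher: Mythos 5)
Your construction is essentially the paper's proof: two $x,y$-paths of length $7$ sharing their first edge (colour $i$, a free offset) and last edge (colour forced by $i$), with the five middle displacements arranged so that swapping $c_1$ for $c_2$ is compensated by negating the $d_j$'s, which yields the single linear relation $d_1+d_2+d_3+\tilde d_4\equiv (c_2-c_1)/2$; the paper uses the sign pattern $(+,+,-,-)$ and chooses $d_1,d_2,d_3$ before $i$, but these differences are cosmetic and your greedy accounting (each constraint affine in the current parameter, so $O(|X|+|C|)\ll n$ bad values per step) goes through. One extra exclusion is needed at the $d_2$ stage with your all-plus convention: if $d_1+d_2=(c_2-c_1)/2$ then $\tilde d_4=-d_3$, so the forced colour $d_4$ equals $d_3$ for \emph{every} choice of $d_3$; you must forbid that single value of $d_2$ (the paper's mixed signs make the coefficient of $d_3$ in $d_4-d_3$ nonzero and avoid this degeneracy automatically).
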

\begin{proof}
Since $2n+1$ is odd we can relabel $c_1$ and $c_2$ such that $c_1+2k=c_2$ for some $k\in [n]$ (here and later in this proof all the additions are$\pmod{2n+1}$). We will construct a switcher as depicted in Figure~\ref{colourswitcher}.
Find distinct $d_1,d_2,d_3,d_4\in [n]\setminus (C\cup \{c_1,c_2\})$ such that $d_1+d_2=d_3+d_4+k$ and  $(1,c_1,d_1,d_2,-d_3,-d_4)$ and $(1,c_1+k,d_1,d_2,-d_3,-d_4)$ are both valid paths (that is, they have distinct vertices) starting at $1$. This is possible, as follows.

Pick $d_1,d_2,d_3\in [n]\setminus (C\cup \{c_1,c_2\})$ distinctly in turn so that $(1,c_1,d_1,d_2,-d_3)$ and $(1,c_1+k,d_1,d_2,-d_3)$ are both valid paths with the first one avoiding $1+c_1+k$ and the second one avoiding $1+c_2$. Note that, as we choose each $d_i$, we add one more vertex to each of these paths, which have at most 4 vertices already, so at most 12 colours are ruled out by the requirement these paths are valid and avoid the mentioned vertices. Furthermore, there are at most $|C|+4\leq n/20$ colours in $C\cup \{c_1,c_2\}$ or already chosen as some $d_j$, $j<i$. Thus, there are at least $n/2$ choices for each $d_i$, and therefore at least $(n/2)^3$ choices in total.

Now, let $d_4:=d_1+d_2-d_3-k$. Note that there are at most $n^2(|C|+5)\leq n^3/20$ choices for $\{d_1,d_2,d_3\}$ so that $d_4\in C\cup\{c_1,c_2,d_1,d_2,d_3\}$. Therefore, we can choose distinct $d_1,d_2,d_3\in [n]\setminus (C\cup \{c_1\cup c_2\})$ so that $d_4\notin C\cup\{c_1,c_2,d_1,d_2,d_3\}$, in addition to  $(1,c_1,d_1,d_2,-d_3)$ and $(1,c_1+k,d_1,d_2,-d_3)$ both being valid paths which avoid $1+c_1+k$ and $1+c_2$, respectively. Noting that $1+c_1+d_1+d_2-d_3-d_4=1+c_1+k$, $(1,c_1,d_1,d_2,-d_3,-d_4)$ is therefore a valid path. Noting that $1+c_1+k+d_1+d_2-d_3-d_4=1+c_1+2k=1+c_2$, $(1,c_1+k,d_1,d_2,-d_3,-d_4)$  is therefore a valid path, with endvertex $1+c_2$. Therefore, its reverse,  $(1+c_2,d_4,d_3,-d_2,-d_1,-c_1-k)$, is a valid path that ends with $1$. Moving the vertex $1$ to the start of the path, we get the valid path $(1,c_2,d_4,d_3,-d_2,-d_1)$.

Let $I$ be the set of $i\in [n]\setminus (C\cup\{c_1,c_2,d_1,d_2,d_3,d_4\})$ such that $x$ and $y$ are not on $(x+i,c_1,d_1,d_2,-d_3,-d_4)$, or $(x+i,c_2,d_4,d_3,-d_2,-d_1)$, and that $y-i-c_1-k-x\notin C\cup\{c_1,c_2,d_1,d_2,d_3,d_4\}$. Note that these conditions rule out at most 12, 12, and $|C|+6$ values of $i$ respectively, so that $|I|\geq n-2|C|-12-24\geq n/2$.

For each $i\in I$, add $x$ and $y$ as the start and end of
 $(x+i,c_1,d_1,d_2,-d_3,-d_4)$ respectively, giving, as $k=d_1+d_2-d_3-d_4$ the path
 \[
 P_i:=(x,i,c_1,d_1,d_2,-d_3,-d_4,y-i-c_1-k-x).
 \]
Then, add $x$ and $y$ as the start and end of
 $(x+i,c_2,d_4,d_3,-d_2,-d_1)$ respectively, giving, as $d_4+d_3-d_1-d_2=-k=c_1-c_2+k$ the path
 \[
 Q_i:=(x,i,c_2,d_4,d_3,-d_2,-d_1,y-i-c_1-k-x).
 \]
Let $C_i=\{i,d_1,d_2,d_3,d_4,y-i-c_1-k\}$. Note that, $P_i$ and $Q_i$ are both rainbow $x,y$-paths with length 7, with colour sets $(C_i\cup\{c_1\})$ and $(C_i\cup\{c_1\})$ respectively (indeed, when we add $-d_i, 1 \leq i \leq 4$ to get the next vertex of the path the colour of this edge is $d_i$ and ``$-$" just indicates in which direction we are moving). 

Each vertex in $X$ can appear as the interior vertex of at most 6 different paths $P_i$ and 6 different paths $Q_i$. As $|I|\geq n/2 > 12|X|$, there must be some $j\in I$ for which the interior vertices of  $P_j$ and $Q_j$ avoid $X$. Then, $C'=C_j$ is a colour set as required by the lemma, as demonstrated by the paths $P_j$ and $Q_j$.
\end{proof}

The following lemma uses this to find colour switchers for an arbitary set of 100 colours $\{c_1,\ldots,c_{100}\}$ in the ND-colouring between an arbitrary vertex pair $\{x,y\}$. A sketch of its proof is as follows. First, we select a vertex $x_1$ and colours $d_1,\ldots,d_{100}$ so that $c_i+d_i=x_1-x$ for each $i\in [100]$. By choosing the vertex between $x$ and $x_1$ appropriately, we can find a $\{c_i,d_i\}$-rainbow $x,x_1$-path with length 2 for each $i\in [100]$. This allows us to use any pairs of colours $\{c_i,d_i\}$, so we need only construct a path which can switch between using any set of 99 colours from $\{d_1,\ldots,d_{100}\}$. This we do by constructing a sequence of $(d_i,d_{i+1})$-switchers for each $i\in [99]$ and putting them together between $x_1$ and $y$.

\begin{lemma}[1 in 100 colour absorbers]\label{lem-absorbpath}
Let $1\gg n^{-1}$. Suppose $K_{2n+1}$ is ND-coloured.
Suppose we have a pair of distinct vertices $x,y\in V(K_{2n+1})$, a set $C\subset C(K_{2n+1})$ of $100$ colours, and sets $X\subset V(K_{2n+1})$ and $C'\subset C(K_{2n+1})$ with $|X|\leq n/10^3$ and $|C'|\leq n/10^3$.

Then, there is a set $\bar{C}\subset C(K_{2n+1})\setminus (C\cup C')$ of 694 colours and a set $\bar{X}\subset V(K_{2n+1})\setminus X$ of at most $1500$ vertices so that, for each $c\in C$,
there is a $(\bar{C}\cup \{c\})$-rainbow $x,y$-path with length 695 and internal vertices in $\bar{X}$.
\end{lemma}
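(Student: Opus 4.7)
The plan is to build each switching path as the concatenation of a length-$2$ prefix $x\to v_i\to x_1$ with $99$ length-$7$ two-colour switchers obtained from Lemma~\ref{lem-switchpath}, joining auxiliary vertices $x_1=z_0,z_1,\dots,z_{98},z_{99}=y$. The $j$-th switcher will have choice colours $(d_j,d_{j+1})$ and a fixed companion set $D_j$ of $6$ colours, where $d_1,\dots,d_{100}$ are $100$ new colours to be chosen. For input $c=c_i$, I will use $d_i$ in the prefix and force the $j$-th switcher to output $d_j$ when $j<i$ and $d_{j+1}$ when $j\geq i$; then the colours on the concatenated path are exactly $\{c_i\}\cup\{d_1,\dots,d_{100}\}\cup\bigcup_j D_j$, of size $1+100+6\cdot 99=695$, matching the length $2+7\cdot 99=695$. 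Setting $\bar C=\{d_1,\dots,d_{100}\}\cup\bigcup_j D_j$ gives $|\bar C|=694$.

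Step~1 is to fix a shift $s\in\mathbb Z/(2n+1)$, put $x_1=x+s$, and for each $i$ select a sign so that $v_i\in\{x+c_i,x-c_i\}$ and the second edge $v_ix_1$ has some colour $d_i\in[n]\setminus(C\cup C')$, with the $d_i$'s distinct and the $v_i$'s distinct and outside $X\cup\{x,y,x_1\}$. Each $i$ yields up to four candidate values for $d_i$ of the form $\pm s\pm c_i\pmod{2n+1}$ (reduced to $[n]$), and a routine union bound---over shifts $s$ making some $d_i$ lie in $C\cup C'$, over the $\binom{100}{2}$ coincidence conditions $d_i=d_{i'}$, and over shifts placing some $v_i$ in $X\cup\{x,y,x_1\}$---produces a valid $s$, since $|C|+|C'|+|X|\leq 2n/10^3+100\ll n$.

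Step~2 is to pick waypoints $z_1,\dots,z_{98}$ one at a time, each avoiding all previously chosen vertices and $X$, and then apply Lemma~\ref{lem-switchpath} for $j=1,\dots,99$ with endpoints $(z_{j-1},z_j)$ and colour pair $(d_j,d_{j+1})$. At the $j$-th invocation, feed the lemma as its forbidden vertex set all previously used vertices together with $X$, and as its forbidden colour set $C\cup C'\cup\{d_1,\dots,d_{100}\}$ together with the colours used by earlier switchers; the two sets have sizes at most $|X|+103+13(j-1)$ and $|C|+|C'|+100+6(j-1)$ respectively, both $\leq n/25$ for large $n$. The lemma then supplies a $6$-element colour set $D_j$ and two length-$7$ rainbow $z_{j-1},z_j$-paths with colour sets $D_j\cup\{d_j\}$ and $D_j\cup\{d_{j+1}\}$, whose interior vertices avoid everything chosen before.

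Concatenating the prefix with the appropriate switcher paths for each $c_i$ (choosing the $d_j$-path at switcher $j$ when $j<i$ and the $d_{j+1}$-path otherwise) yields the required $(\bar C\cup\{c_i\})$-rainbow $x,y$-path of length $695$. Taking $\bar X$ to be $\{x_1,v_1,\dots,v_{100},z_1,\dots,z_{98}\}$ together with the (at most $12$) interior vertices from each switcher gives $|\bar X|\leq 1+100+98+12\cdot 99=1387\leq 1500$. The main obstacle is Step~1: verifying that a single shift $s$ simultaneously admits $100$ compatible pairs $(d_i,v_i)$ meeting every disjointness constraint. Step~2 is then bookkeeping to confirm each call to Lemma~\ref{lem-switchpath} stays within its size budget of $n/25$.
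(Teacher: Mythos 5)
Your architecture is the same as the paper's: a length-$2$ prefix from $x$ to a common hub $x_1$, followed by a chain of $99$ two-colour switchers from Lemma~\ref{lem-switchpath} with colour pairs $(d_j,d_{j+1})$, and your bookkeeping in Step~2 (the growing forbidden sets staying under the $n/25$ budget, the colour count $1+100+6\cdot 99=695$, and $|\bar X|\leq 1387$) is fine.

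There is, however, a genuine gap in Step~1. You force the \emph{first} edge $xv_i$ to have colour $c_i$, which pins $v_i$ to the two-element set $\{x+c_i,x-c_i\}$. These two candidates do not depend on the shift $s$ at all: varying $s$ moves $x_1$ and the colours $d_i$, but it does not move $v_i$. So your union bound ``over shifts placing some $v_i$ in $X\cup\{x,y,x_1\}$'' has nothing to average over, and the construction fails outright whenever both $x+c_i$ and $x-c_i$ lie in $X$ for some $i$ --- which is entirely possible, since $X$ is adversarial and may contain up to $n/10^3\geq 200$ vertices, enough to block all $100$ pairs. Since $v_i$ is an internal vertex of the final path and $\bar X$ must avoid $X$, this cannot be waved away. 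The repair is to hang the colour-$c_i$ edge off the movable hub instead of off $x$: take $v_i\in\{x_1+c_i,x_1-c_i\}$, so the edge $v_ix_1$ has colour $c_i$ and the edge $xv_i$ has the new colour $d_i$. Then both the vertex constraint $v_i\notin X$ and the colour constraints on $d_i$ become conditions on $x_1$, each forbidding $O(n/10^3)$ choices out of $2n+1$, and a single union bound over $x_1$ finishes Step~1. This is exactly how the paper chooses $x_1$ and the intermediate vertices $y_i=x_1-c_i$.
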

\begin{proof}
Let $\ell=100$, $x_0=x$, and $x_\ell=y$, and label $C=\{c_1,\ldots,c_\ell\}$. Pick $x_1\in [2n+1]\setminus (X\cup\{x_0,x_\ell\})$ so that $x_1-x_0+c_i\in [n]\setminus (C\cup C')$ and $x_1-c_i\in [2n+1]\setminus X$
for each $i\in [\ell]$. Each such condition forbids at most $n/10^3+100$ points and we have at most $2\ell=200$ conditions so we can indeed find $x_1$ satisfying all of them.
For each $i\in [\ell]$, let $d_i=x_1-x_0+c_i$ and $y_i= x_1-c_i=x_0+d_i$. Then,
\begin{itemize}
\item $c_1,\ldots,c_\ell,d_1,\ldots,d_\ell$ are distinct colours in $C\cup ([n]\setminus C')$,
\item $\{x_0,x_1,x_\ell,y_1,\ldots,y_\ell\}$ are distinct vertices in $\{x,y\}\cup ([2n+1]\setminus X)$, and
\item for each $i\in [\ell]$, $x_0y_ix_1$ is a $\{c_i,d_i\}$-rainbow path.
\end{itemize}
Let $C''=\{d_1,\ldots,d_\ell\}$.
Pick distinct vertices $x_2,\ldots,x_{\ell-1}\in [2n+1]\setminus (X\cup\{x_0,x_1,x_\ell,y_1,\ldots,y_\ell\})$ and let $X'=\{x_0,x_1,\ldots,x_\ell,y_1,\ldots,y_\ell\}$.

Next, iteratively, for each $1\leq i\leq \ell-1$, using Lemma~\ref{lem-switchpath} find a set $X_i$ of at most 12 vertices in $[2n+1]\setminus (X\cup X'\cup(\cup_{j<i}X_j))$ and a set $C_i$ of 6 colours in $[n]\setminus (C\cup C'\cup C''\cup(\cup_{j<i}C_j))$ so that there is a $(C_i\cup\{d_{i}\})$-rainbow $x_i,x_{i+1}$-path with length 7 and internal vertices in $X_i$ and a $(C_i\cup\{d_{i+1}\})$-rainbow
 $x_i,x_{i+1}$-path with length 7 and internal vertices in $X_i$.

Let $\bar{C}=C''\cup(\cup_{i\in [\ell-1]}C_i)$ and $\bar{X}=X'\cup (\cup_{i\in [\ell-1]}X_i)$, and note that $|\bar{C}|=\ell+6(\ell-1)=694$ and 
$|\bar{X}|\leq 2\ell+1+12(\ell-1) \leq 1500$. We will show that $\bar{C}$ and $\bar{X}$ satisfy the condition in the lemma.

Let then $j\in [\ell]$. For each $1\leq i< j$, let $P_i$ be a $(C_i\cup\{d_{i}\})$-rainbow $x_i,x_{i+1}$-path with length 7 and internal vertices in $X_i$. For each $j\leq i\leq \ell-1$, let $P_i$ be a $(C_i\cup\{d_{i+1}\})$-rainbow $x_i,x_{i+1}$-path with length 7 and internal vertices in $X_i$. Thus, the paths $P_i$, $i\in [\ell-1]$, cover all the colours in $C''$ except for $d_j$, as well as the colours in each set $C_i$.
Therefore, as $x_0=x$ and $x_\ell=y$,
\[
x_0y_jx_1P_1x_2P_2x_3P_3\ldots P_{\ell-1}x_{\ell}
\]
is a $(\bar{C}\cup \{c_j\})$-rainbow $x,y$-path with length $2+7\times 99=695$ whose interior vertices are all in $\bar{X}$, as required.
\end{proof}

The following corollary of this will be convenient to apply.
\begin{corollary}\label{cor-absorbpath}
Let $1\gg n^{-1}$ and suppose that $K_{2n+1}$ is ND-coloured. For each pair of distinct vertices $x,y\in V(K_{2n+1})$ and each set $C\subset C(K_{2n+1})$ of 100 colours, there are $\ell=n/10^7$ disjoint vertex sets $X_1,\ldots,X_\ell\subset V(K_{2n+1})\setminus \{x,y\}$ with size at most 1500 and disjoint colour sets $C_1,\ldots,C_\ell\subset [n]\setminus C$ with size 694
such that the following holds.

For each $i\in [\ell]$ and $c\in C$, there is a $C_i\cup\{c\}$-rainbow $x,y$-path with length 695 and interior vertices in $X_i$.
\end{corollary}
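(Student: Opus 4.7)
The plan is to iteratively apply Lemma~\ref{lem-absorbpath}, building up the sets $X_i$ and $C_i$ one at a time while treating all previously chosen vertices and colours as forbidden.

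Concretely, fix the pair $x,y$ and the set $C$ of $100$ colours. Set $\ell = n/10^7$ (rounded down if necessary). We construct $(X_1,C_1),(X_2,C_2),\ldots,(X_\ell,C_\ell)$ in turn. Suppose for some $i \in [\ell]$ we have already chosen disjoint vertex sets $X_1,\ldots,X_{i-1}$ with $|X_j| \leq 1500$ and disjoint colour sets $C_1,\ldots,C_{i-1} \subset [n]\setminus C$ with $|C_j| = 694$, each satisfying the desired switching property. Let
\[
X^{\text{bad}} = \{x,y\} \cup \bigcup_{j < i} X_j \quad\text{and}\quad C^{\text{bad}} = \bigcup_{j < i} C_j.
\]
Then $|X^{\text{bad}}| \leq 2 + 1500(\ell-1) \leq 1500\ell + 2 \leq n/10^3$ and $|C^{\text{bad}}| \leq 694\ell \leq n/10^3$, using $\ell = n/10^7$.

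Now apply Lemma~\ref{lem-absorbpath} with the pair $x,y$, the colour set $C$, the forbidden vertex set $X = X^{\text{bad}}$, and the forbidden colour set $C' = C^{\text{bad}}$. The lemma's hypotheses $|X| \leq n/10^3$ and $|C'| \leq n/10^3$ are satisfied, so we obtain a colour set $\bar C \subset [n] \setminus (C \cup C^{\text{bad}})$ of size $694$ and a vertex set $\bar X \subset V(K_{2n+1}) \setminus X^{\text{bad}}$ of size at most $1500$ such that, for each $c \in C$, there is a $(\bar C \cup \{c\})$-rainbow $x,y$-path of length $695$ with interior vertices in $\bar X$. Set $C_i := \bar C$ and $X_i := \bar X$.

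By construction, $X_i$ is disjoint from $\{x,y\}$ and from all previous $X_j$, and $C_i$ is disjoint from $C$ and from all previous $C_j$, so the inductive invariants are preserved. After $\ell$ steps we obtain the desired collection. The only thing to verify is the quantitative bound: at the final step $i = \ell$ the forbidden sets still have size at most $n/10^3$, which is where the choice $\ell = n/10^7$ with the constants $1500$ and $694$ from Lemma~\ref{lem-absorbpath} is used (any $\ell \leq n/(10^3 \cdot 1500)$ would work, and $n/10^7$ is comfortably within this range). This is purely a bookkeeping step rather than an obstacle.
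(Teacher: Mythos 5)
Your proposal is correct and is essentially identical to the paper's proof, which also just iterates Lemma~\ref{lem-absorbpath} with the forbidden sets $X=X_1\cup\dots\cup X_{i-1}$ and $C'=C_1\cup\dots\cup C_{i-1}$ at the $i$th step; your version merely spells out the (correct) size bookkeeping that the paper leaves implicit.
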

\begin{proof}
Iteratively, for each $i=1, \dots, \ell$, choose sets $X_i$ and $C_i$ using Lemma~\ref{lem-absorbpath} (at the $i$th iteration letting $X=X_1\cup \dots \cup X_{i-1}$, $C'=C_1\cup \dots \cup C_{i-1}$).
\end{proof}

The following lemma finds $1$-in-$100$ colour switchers in a random set of colours and vertices.
\begin{lemma}[Colour switchers using random vertices and colours]\label{lem-randabsorbpath}
Let $p,q\gg \mu \gg n^{-1}$ and suppose that $K_{2n+1}$ is ND-coloured.  Let $X\subset V_{2k+1}$ be $p$-random and  $C\subset C(K_{2n+1})$ $q$-random, and such that $X$ and $C$ are independent. With high probability the following holds.

For every distinct $x,y\in V(K_{2n+1})$, $C'\subset C(K_{2n+1})$ with $|C'|=100$, and $X'\subset X$, $C''\subset C$ with $|X'|,|C''|\leq\mu n$,  there is a set $\bar{C}\subset C\setminus (C'\cup C'')$ of 694 colours and a set $X''\subset X\setminus X'$ of at most $1500$ vertices with the following property. For each $c\in C'$,
there is a $(\bar{C}\cup \{c\})$-rainbow $x,y$-path with length 695 and internal vertices in $X''$.
\end{lemma}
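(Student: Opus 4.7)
\textbf{Proof plan for Lemma~\ref{lem-randabsorbpath}.}

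The plan is to apply Corollary~\ref{cor-absorbpath} to obtain a large number of disjoint $1$-in-$100$ switchers in the ND-colouring, and then show by a concentration argument that enough of them survive in the random sets $X$ and $C$ to allow an arbitrary small set $X' \cup C''$ to be avoided.

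First, fix a distinct pair $x,y \in V(K_{2n+1})$ and a set $C' \subset C(K_{2n+1})$ with $|C'| = 100$. Apply Corollary~\ref{cor-absorbpath} to obtain $\ell = n/10^7$ pairwise disjoint vertex sets $X_1,\ldots,X_\ell \subset V(K_{2n+1})\setminus\{x,y\}$ with $|X_i| \leq 1500$ and pairwise disjoint colour sets $C_1,\ldots,C_\ell \subset [n]\setminus C'$ with $|C_i| = 694$, each giving a $1$-in-$100$ switcher between $x$ and $y$ for $C'$. For each $i \in [\ell]$, let $Y_i$ be the indicator of the event ``$X_i \subset X$ and $C_i \subset C$''. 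Because $X$ and $C$ are independent and the $X_i$ (resp.\ $C_i$) are pairwise disjoint, the random variables $Y_1,\ldots,Y_\ell$ are mutually independent, and $\P(Y_i=1) \geq p^{1500}q^{694}$. Hence $\E\bigl[\sum_i Y_i\bigr] \geq 10^{-7} p^{1500} q^{694}\, n$.

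Choose constants in the $\gg$-notation so that $\mu \ll p^{1500}q^{694}$ and $p^{1500}q^{694} n$ dominates any polynomial in $n$. Then by Chernoff's bound (Lemma~\ref{Lemma_Chernoff}),
\[
\P\Bigl(\textstyle\sum_i Y_i < 3\mu n\Bigr) \leq \exp\bigl(-\Omega(p^{1500}q^{694} n)\bigr),
\]
which is $o(n^{-200})$. Taking a union bound over the $O(n^{102})$ triples $(x,y,C')$ shows that with high probability, for every such triple, at least $3\mu n$ of the indicators $Y_i$ equal $1$.

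Now condition on this high-probability event, and consider any $X' \subset X$, $C'' \subset C$ with $|X'|,|C''| \leq \mu n$. Since the sets $X_1,\ldots,X_\ell$ are pairwise disjoint, at most $|X'| \leq \mu n$ of the surviving indices $i$ (with $Y_i = 1$) satisfy $X_i \cap X' \neq \emptyset$; similarly at most $\mu n$ satisfy $C_i \cap C'' \neq \emptyset$. Since $3\mu n > 2\mu n$, some surviving index $i_0$ has $X_{i_0} \cap X' = \emptyset$ and $C_{i_0} \cap C'' = \emptyset$. Setting $X'' := X_{i_0}$ and $\bar C := C_{i_0}$ yields the required switcher by the defining property of $(X_{i_0}, C_{i_0})$ from Corollary~\ref{cor-absorbpath}.

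The only place that requires care is ensuring the Chernoff tail is strong enough to survive the union bound over $(x,y,C')$; this is why we need the constants in $p,q \gg \mu \gg n^{-1}$ to be chosen so that $p^{1500}q^{694}n$ exceeds any fixed polynomial in $n$, and in particular $\mu \ll p^{1500}q^{694}$. No additional structural ideas are needed beyond Corollary~\ref{cor-absorbpath} and independence.
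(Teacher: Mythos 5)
Your proposal is correct and follows essentially the same route as the paper: fix $(x,y,C')$, invoke Corollary~\ref{cor-absorbpath} to get $\ell=n/10^7$ disjoint switchers, show by concentration (the paper uses Azuma on the $1$-Lipschitz count $|I|$ where you use Chernoff on the independent indicators, both valid) that $\Omega(\mu n)$ of them land inside $X$ and $C$, union bound over the $O(n^{102})$ triples, and then use disjointness to find a surviving switcher avoiding $X'$ and $C''$. The only quibble is the phrase ``$p^{1500}q^{694}n$ dominates any polynomial in $n$'' — what is actually needed (and what the polynomial $\gg$-notation supplies) is that this quantity is at least $n^{1-o(1)}$, so the exponential tail beats the polynomial union bound.
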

\begin{proof}
We will show that for any distinct pair $x,y\in V(K_{2n+1})$ of vertices and set $C'\subset C(K_{2n+1})$ of 100 colours the property holds with probability $1-o(n^{-102})$ so that the lemma holds by a union bound.

Fix then distinct $x,y\in V(K_{2n+1})$ and a set $C'\subset C(K_{2n+1})$ with size 100. Fix $\ell= n/10^7$ and use Corollary~\ref{cor-absorbpath} to find disjoint vertex sets $X_i\subset V(K_{2n+1})$, $i\in [\ell]$, with size at most 1500 and disjoint colours sets $C_i\subset C(K_{2n+1})$, $i\in [\ell]$, with size $694$ so that for each $c\in C'$ and $i\in [\ell]$ there is a $C_i\cup \{c\}$-rainbow $x,y$-path with length 695 and internal vertices in $X_i$. 

Let $I\subset [\ell]$ be the set of $i\in [\ell]$ for which $X_i\subset X$ and $C_i\subset C$. Note that $|I|$ is $1$-Lipschitz, and, for each $i\in [\ell]$, we have $\P(X_i\subseteq X, C_i\subseteq C)\geq p^{1500}q^{695}\gg \mu$.
By Azuma's inequality, with probability $1-o(n^{-102})$ we have $|I|\geq 10^4\mu n$.

Take then any $X'\subset X$ and $C''\subset C$ with size at most $\mu n$ each. There must be some $j\in I$ for which $X'\cap X_j=\emptyset$ and $C''\cap C_j=\emptyset$. Let $\bar{C}=C_j\subset C\setminus (C'\cup C'')$ and $X''=X_j\subset X\setminus X'$. Then, as required, for each $c\in C'$ there is a $(\bar{C}\cup\{c\})$-rainbow $x,y$-path with interior vertices in $X''$.
\end{proof}

\subsection{Distributive absorption with paths}

We now use Lemma~\ref{lem-randabsorbpath} and distributive absorption to get a larger scale absorption property, as follows.

\begin{lemma}\label{absorbBmacro} Let $1/n\ll \eta \ll \mu\ll \eps$. Let $K_{2n+1}$ be 2-factorized. Suppose that $V_0$ is an $\eps$-random subset of $V(K_{2n+1})$ and $C_0$ is an $\eps$-random subset of $C(K_{2n+1})$, which is independent of $V_0$. Suppose $\ell=\mu n/695$ and that $\ell/3\in \mathbb{N}$. With high probability, the following holds.

Suppose that $\{x_1,\ldots,x_{\ell},y_1,\ldots,y_\ell\}\subset V(K_{2n+1})$, $\alpha\leq \eta$ and $C\subset C(K_{2n+1})\setminus C_0$ is a set of at most $\eta n$ colours. Then, there is a set $\hat{C}\subset C_0$ of $695\ell-\alpha n$ colours such that, for every set $C'\subset C$ of $\alpha n$ colours, there is a set of vertex disjoint $x_i,y_i$-paths which are collectively $(\hat{C}\cup C')$-rainbow, have length 695, and internal vertices in $V_0$.
\end{lemma}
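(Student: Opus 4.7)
The plan is to mimic Lemma~\ref{absorbAmacro}, replacing its matching-based absorbers (Lemma~\ref{absorbA}) with the path-based absorbers supplied by Lemma~\ref{lem-randabsorbpath}. Pick $\mu'$ with $\mu \ll \mu' \ll \eps$. First I would condition on a high-probability event: by Chernoff, $|C_0| \geq \eps n /2$; and, by Lemma~\ref{lem-randabsorbpath} (applied with $p = q = \eps$ and its parameter set to $\mu'$), for any distinct $x, y$ and any $100$-set of colours, we can build a $695$-length $(x,y)$-path absorber using $\leq 1500$ interior vertices in $V_0$ and $694$ fixed colours in $C_0$, while avoiding any preassigned sets of size $\leq \mu' n$. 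Next, set $h = \ell/3$ and invoke Lemma~\ref{Lemma_H_graph} to produce a bipartite template $H$ of maximum degree $100$ on vertex classes $[3h] = [\ell]$ and $Y \cup Y'$ with $|Y| = |Y'| = 2h$, that robustly matches $[3h]$ into $Y \cup Y_0$ whenever $Y_0 \subseteq Y'$ has size $h$.

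Next, reserve $D_1 \subseteq C_0$ with $|D_1| = \ell - \alpha n$; identify $Y$ with a $2h$-subset of $D_1$ (possible since $2h = 2\ell/3 \leq \ell - \alpha n$, as $\alpha \leq \eta \ll \mu$). Identify $Y'$ with a $2h$-set containing $(D_1 \setminus Y) \cup C$, padding out with dummy symbols disjoint from $C(K_{2n+1})$; this fits because $|D_1 \setminus Y| + |C| \leq \ell/3 + \eta n \leq 2h$. Iteratively for $i = 1, \ldots, \ell$, pad $N_H(i)$ to a $100$-set $D_i^+$ of real colours by replacing dummies with fresh colours from $[n] \setminus (C_0 \cup C)$ (reused as needed across $i$, as they play no further role). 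Apply the high-probability property of Lemma~\ref{lem-randabsorbpath}, taking the forbidden vertex set to be $\{x_j, y_j\}_{j \in [\ell]} \cup \bigcup_{j<i} X_j$ and the forbidden colour set to be $D_1 \cup D_i^+ \cup \bigcup_{j < i} C_j$ (both of size $\leq \mu' n$), to obtain a fixed colour set $C_i \subseteq C_0$ of size $694$ and an interior vertex set $X_i \subseteq V_0$ of size $\leq 1500$, disjoint from all previous choices, such that for every $c \in D_i^+$ there is a $(C_i \cup \{c\})$-rainbow $x_i,y_i$-path of length $695$ with interior in $X_i$. Set $\hat C = D_1 \cup \bigcup_i C_i \subseteq C_0$; this is a disjoint union of size $(\ell - \alpha n) + 694 \ell = 695 \ell - \alpha n$.

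To verify absorption, given $C' \subseteq C$ with $|C'| = \alpha n$, take $Y_0 = (D_1 \setminus Y) \cup C' \subseteq Y'$ of size $h$; by the template, there is a perfect matching $\phi : [\ell] \to D_1 \cup C'$ in $H$. Each $\phi(i)$ is a non-dummy colour in $N_H(i) \cap (D_1 \cup C') \subseteq D_i^+$, so the $i$-th absorber supplies a $(C_i \cup \{\phi(i)\})$-rainbow $x_i,y_i$-path $P_i$ of length $695$ with interior in $X_i$. The $P_i$ are vertex-disjoint (their interiors lie in disjoint $X_i$, and endpoints of other pairs were excluded at construction), and their combined colour set is $\bigcup_i C_i \cup \{\phi(i)\}_i = \hat C \cup C'$, which is rainbow because the $C_i$ are pairwise disjoint and chosen disjoint from $D_1$, while $\phi$ is injective and $\{\phi(i)\} \subseteq D_1 \cup C'$ is disjoint from every $C_i$. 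The main obstacle is just the bookkeeping: certifying that the forbidden-set sizes stay below $\mu' n$ throughout the iteration (they do, since after $\ell = \mu n/695$ steps we have used at most $1500 \ell \leq 3 \mu n$ vertices and at most $|D_1| + 694 \ell + 100 \leq \mu n$ colours) and that the dummies never leak into $\hat C$ or into a switch. Once those routine size checks are done, the distributive absorption machinery from Lemma~\ref{absorbAmacro} transplants directly.
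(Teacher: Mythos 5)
Your proposal is correct and follows essentially the same route as the paper's proof: individual $1$-in-$100$ path switchers from Lemma~\ref{lem-randabsorbpath} are combined via the robustly matchable template of Lemma~\ref{Lemma_H_graph}, with a reserved block of $\ell-\alpha n$ colours from $C_0$ playing the role of the paper's $\hat C_0$, and the same bookkeeping on forbidden sets. Your explicit padding of $N_H(i)$ to exactly $100$ real colours and your exclusion of the endpoints $\{x_j,y_j\}$ from later absorber interiors are minor points of care that the paper leaves implicit, but the argument is the same.
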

\begin{proof} By Lemma~\ref{lem-randabsorbpath} (applied to $X=V_0, C=C_0$ with $p=q=\eps$ and $\mu'=3\mu$), we have the following property with high probability.
\stepcounter{propcounter}
\stepcounter{propcounter}
\begin{enumerate}[label = {\bfseries \Alph{propcounter}\arabic{enumi}}]
\item For each distinct $x,y\in V(K_{2n+1})$, and every $C'\subset C(K_{2n+1})$ with $|C'|=100$, and $X'\subset V_0$, $C''\subset C_0$ with $|X'|,|C''|\leq 3\mu n$,  there is a set $\bar{C}\subset C_0\setminus (C'\cup C'')$ of 694 colours and a set $X''\subset V_0\setminus X'$ of at most $1500$ vertices with the following property.

For each $c\in C'$,
there is a $(\bar{C}\cup \{c\})$-rainbow $x,y$-path with length 695 and internal vertices in $X''$.\label{bbb2}
\end{enumerate}

We will show that the property in the lemma holds. Suppose then that  $\{x_1,\ldots,x_{\ell},y_1,\ldots,y_\ell\}$, $\alpha\leq \eta$ and $C\subset C(K_{2n+1})$ is a set of at most $\eta n$ colors.
Let $h=\ell/3$. By Lemma~\ref{Lemma_Chernoff}, with high probability 
$|C_0|\geq \epsilon n/2> \mu n$. Therefore we can pick a set $\hat C_0 \subset C_0$ of $(3h-\alpha n)$ colours. Noting that $|\hat C_0|\geq 2h$ and $|\hat C_0\cup C|\leq 4h$.
By the same reasoning as just before \ref{Hmatch}, by Lemma~\ref{Lemma_H_graph} there is a bipartite graph, $H$ with maximum degree 100 say, with  vertex classes $[3h]$ and $\hat C_0\cup C$ such that the following holds.

\begin{enumerate}[label = {\bfseries \Alph{propcounter}\arabic{enumi}}]\addtocounter{enumi}{1}
\item For each set $C'\subset C$ of $\alpha n$ colours, there is a perfect matching between $[3h]$ and $\hat C_0\cup C'$ in $H$.\label{Hmatch2}
\end{enumerate}

Iteratively, for each $1\leq i\leq 3h$, let $D_i=N_H(i)$ and, using \ref{bbb2}, find sets $C_i\subset C_0\setminus (C_1\cup\ldots\cup C_{i-1})$ and $V_i\subset V_0\setminus (V_1\cup \ldots\cup V_{i-1})$,
with sizes $694$ and at most $1500$, such that the following holds.

\begin{enumerate}[label = {\bfseries \Alph{propcounter}\arabic{enumi}}]\addtocounter{enumi}{2}
\item For each $c\in D_i$, there is a $(C_i\cup\{c\})$-rainbow $x_i,y_i$-path with length 695 with internal vertices in $V_i$.\label{propabsorb2}
\end{enumerate}

Note that in the $i$th application of \ref{bbb2}, we have $X'=V_1\cup \ldots\cup V_{i-1}$ and $C''=C_1\cup\ldots \cup C_{i-1}$, so that $|X'|\leq 1500\ell\leq 3\mu n$ and $|C'|\leq 694\ell\leq 3\mu n$, as required.

Let $\hat{C}=\hat C_0\cup C_1\cup \ldots\cup C_{3h}$. We claim $\hat{C}$ has the property required. Indeed, suppose $C'\subset C$ is a set of $\alpha n$
colours. Using~\ref{Hmatch2}, let $M$ be a perfect matching between $[3h]$ and $\hat{C}_0\cup C'$ in $H$, and label $\hat C_0\cup C'=\{c_1,\ldots,c_{3h}\}$ so that, for each $i\in [3h]$, $c_i$ is matched to $i$ in $M$.

By \ref{propabsorb2}, for each $i\in [3h]$, there is an $x_i,y_i$-path, $P_i$ say, with length 695 which is $(C_i\cup\{c\})$-rainbow with internal vertices in $V_i$. Then, $P_1,\ldots, P_{3h}$ are the paths required.
\end{proof}

The following corollary of Lemma~\ref{absorbBmacro} will be convenient to apply.

\begin{corollary}\label{cor-absorbBmacro} Let $1/n\ll \eta \ll \mu\ll \eps$ and $1/k\ll 1$. Let $K_{2n+1}$ be 2-factorized. Suppose that $V_0$ is an $\eps$-random subset of $V(K_{2n+1})$ and $C_0$ is an $\eps$-random subset of $C(K_{2n+1})$, which is independent of $V_0$. Suppose that $\ell=\mu n/k$ and  $695|k$. With high probability, the following holds.

Suppose that $\{x_1,\ldots,x_{\ell},y_1,\ldots,y_\ell\}\subset V(K_{2n+1})$, $\alpha\leq \eta$ and $C\subset C(K_{2n+1})$ is a set of at most $\eta n$ colours. Then, there is a set $\hat{C}\subseteq C_0$ of $k\ell-\alpha n$ colours such that, for every set $C'\subset C$ of $\alpha n$ colours, there is a set of vertex disjoint $x_i,y_i$-paths which are collectively $(\hat{C}\cup C')$-rainbow, have length k, and internal vertices in $V_0$.
\end{corollary}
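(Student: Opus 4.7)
The plan is to realize each length-$k$ path as a ``long generic prefix'' of length $k-695$ concatenated with a ``short absorbing suffix'' of length $695$ produced by Lemma~\ref{absorbBmacro}. Since $695\mid k$, the integer $q=k/695$ is well defined, and the absorbing suffix plays the role of the entire path in the base case $q=1$.

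Let $\mu''=695\mu/k$ and split $V_0=V_{\mathrm{ext}}\sqcup V_{\mathrm{abs}}$ and $C_0=C_{\mathrm{ext}}\sqcup C_{\mathrm{abs}}$ into disjoint random subsets so that $V_{\mathrm{abs}}$ and $C_{\mathrm{abs}}$ are $\mu''$-random (by placing each element of $V_0$ into $V_{\mathrm{abs}}$ independently with probability $\mu''/\eps$, and similarly for $C_0$). By Lemma~\ref{Lemma_mixture_of_p_random_sets} and the independence of $V_0$ and $C_0$, both $(V_{\mathrm{ext}},C_{\mathrm{ext}})$ and $(V_{\mathrm{abs}},C_{\mathrm{abs}})$ are pairs of independent random sets, the first of density $\eps-\mu''=\Theta(\eps)$ and the second of density $\mu''$ satisfying $1/n\ll\eta\ll\mu''\ll\eps$. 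Now, for each $i\in[\ell]$ in turn I would greedily build a rainbow path $P_i$ of length $k-695$ starting at $x_i$, with all other vertices in $V_{\mathrm{ext}}\setminus\{x_j,y_j:j\in[\ell]\}$, edges coloured from $C_{\mathrm{ext}}$, and the family $\{P_i\}$ pairwise vertex-disjoint and collectively rainbow. At every extension step we have used at most $\ell k=\mu n$ vertices of $V_{\mathrm{ext}}$ and at most $\mu n$ colours of $C_{\mathrm{ext}}$, while Lemma~\ref{Lemma_high_degree_into_random_set} guarantees (with high probability) that every vertex has $\Omega(\eps^2n)\gg\mu n$ colour-$C_{\mathrm{ext}}$ neighbours in $V_{\mathrm{ext}}$, so a valid extension is always available. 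Write $x_i'$ for the final vertex of $P_i$, and $C_P=\bigcup_iC(P_i)\subset C_{\mathrm{ext}}$, a set of exactly $(k-695)\ell$ colours.

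Next, I would apply Lemma~\ref{absorbBmacro}, with the roles of $(V_0,C_0,\mu)$ there played by $(V_{\mathrm{abs}},C_{\mathrm{abs}},\mu'')$, to the pairs $(x_i',y_i)_{i\in[\ell]}$ and the given colour set $C$; this is legitimate since $\mu''n/695=\ell$ (performing the standard rounding adjustment of Section~\ref{sec:not} on $\mu$ if needed to ensure $\ell/3\in\mathbb{N}$). This produces, with high probability, a set $\hat{C}_1\subset C_{\mathrm{abs}}$ of size $695\ell-\alpha n$ such that for every $C'\subset C$ with $|C'|=\alpha n$ there are vertex-disjoint length-$695$ $x_i',y_i$-paths $Q_i$, collectively $(\hat{C}_1\cup C')$-rainbow, with interior vertices in $V_{\mathrm{abs}}$. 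Setting $\hat{C}=\hat{C}_1\cup C_P\subseteq C_0$, which has size $(695\ell-\alpha n)+(k-695)\ell=k\ell-\alpha n$, and concatenating $P_i$ with $Q_i$ produces the required length-$k$ $x_i,y_i$-paths, which are collectively $(\hat{C}\cup C')$-rainbow and automatically pairwise vertex-disjoint because $V_{\mathrm{ext}}$ and $V_{\mathrm{abs}}$ are disjoint.

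The main obstacle is the greedy construction of the prefixes $P_i$: it requires $(V_{\mathrm{ext}},C_{\mathrm{ext}})$ to have the pseudorandom degree property of Lemma~\ref{Lemma_high_degree_into_random_set} robustly enough to sustain $\ell(k-695)=O(\mu n)$ greedy extensions while respecting global vertex- and colour-disjointness. The gap $\eps\gg\mu$ provides ample slack, so this is a routine verification. Beyond that, the only bookkeeping is to ensure the parameter $\mu''$ passed to Lemma~\ref{absorbBmacro} satisfies both its density and integrality hypotheses, which costs only a small adjustment to $\mu$ as elsewhere in the paper.
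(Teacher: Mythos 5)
Your greedy construction of the length-$(k-695)$ prefixes is fine (the $\mu\ll\eps$ slack does make that routine), but the single application of Lemma~\ref{absorbBmacro} to the $\ell$ pairs $(x_i',y_i)$ with density parameter $\mu''=695\mu/k$ is where the argument breaks. That lemma requires $\eta\ll\mu''$, i.e.\ $\eta\ll 695\mu/k$, and this does \emph{not} follow from the corollary's hypotheses $\eta\ll\mu$ and $1/k\ll 1$: these place no upper bound on $k$ in terms of $\mu/\eta$, and in the actual application inside Lemma~\ref{lem:finishB} one has $1/k\ll\xi\ll\beta=\eta'$, so $k$ is polynomially large in $1/\eta$ and $\mu''\lll\eta$. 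The failure is not just formal. Your absorbing suffixes have only $695\ell=695\mu n/k$ edges in total, and they are asked to carry all $\alpha n$ colours of $C'$; since $\alpha$ may be as large as $\eta$ and $\eta n$ can exceed $695\mu n/k$, there is simply not enough room, and correspondingly your set $\hat C_1$ of size $695\ell-\alpha n$ would have negative cardinality. In short, concentrating all of the absorption into one length-$695$ segment per path caps the absorbable colour mass at $695\ell$ instead of the required $k\ell$.

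The paper's proof avoids this by making \emph{every} length-$695$ segment of every path an absorber: it inserts $k/695-1$ waypoints $z_{i,j}$ from a reserved random set $V_2\subset V_0$ into each $x_i,y_i$-path and applies Lemma~\ref{absorbBmacro} once to all $\ell\cdot k/695=\mu n/695$ resulting endpoint pairs, so the lemma is invoked with its density parameter still equal to $\mu$ (preserving $\eta\ll\mu$) and with total absorbing capacity $k\ell$. If you want to keep a prefix-plus-suffix architecture, you would need the suffixes to have aggregate length $\Theta(\mu n)$ rather than $695\ell$, which amounts to the same redistribution the paper performs.
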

\begin{proof} Let $V_1,V_2\subset V_0$ be disjoint and $(\eps/2)$-random and let $C_0'\subset C_0$ be $(\eps/2)$-random. Let $\ell'=\mu n/695=\ell \cdot k/695$. We can assume that $3|\ell'$, as discussed in Section~\ref{sec:not}.
By Lemma~\ref{absorbBmacro} (applied with $\eps'=\eps/2$, $\mu=\mu$, $\eta=\eta$, $C_0'=C_0'$ and $V_0'=V_1$) and Lemma~\ref{Lemma_Chernoff}, with high probability we have the following properties.

\stepcounter{propcounter}
\begin{enumerate}[label = {\bfseries \Alph{propcounter}\arabic{enumi}}]
\item
Let $\{x_1,\ldots,x_{\ell'},y_1,\ldots,y_{\ell'}\}\subset V(K_{2n+1})$, $\alpha'\leq \eta$ and let $C'\subset C(K_{2n+1})$ be a set of at most $\eta n$ vertices. Then, there is a set $\hat{C}$ of $695\ell'-\alpha' n$ colours such that, for every set $C''\subset C$ of $\alpha' n$ colours, there is a set of vertex disjoint $x_i,y_i$-paths which are collectively $(\hat{C}\cup C'')$-rainbow, have length 695, and internal vertices in $V_1$.\label{argh1}
\item $|V_2|\geq \mu n/4$.\label{argh2}
\end{enumerate}
We will show that the property in the lemma holds. Suppose therefore that $\{x_1,\ldots,x_{\ell},y_1,\ldots,y_\ell\}\subset V(K_{2n+1})$, $\alpha\leq \eta$ and $C\subset C(K_{2n+1})$ is a set of at most $\eta n$ colours. Let $k'=k/695\in \mathbb{N}$ and, let $\{z_{i,j}:i\in [\ell],j\in [k']\}$ be a set of vertices in $V_2$, using \ref{argh2}. Apply \ref{argh1} to the pairs $(x_i,z_{i,1})$, $(z_{i,j},z_{i,j+1})$ and $(z_{i,k'},y_i)$, $i\in [\ell]$, $1\leq j<k'$, and take their union, to get the required paths.
\end{proof}

\subsection{Covering small colour sets with paths}
We now show that paths can be found using every colour in an arbitrary small set of colours.
\begin{lemma}\label{colourcoverB} Let $1/n\ll \xi \ll\beta$, let $1/n\ll 1/k\ll 1$ with $k= 3 \mod 4$.
Let $K_{2n+1}$ be 2-factorized. Suppose that $V_0\subset V(K_{2n+1})$ and $C_0\subset C(K_n)$ are $\beta$-random and independent of each other. With high probability, the following holds with $m=5\xi n/k$.

For any set $C\subset C(K_{n+1})\setminus C_0$ of at most $\xi n$ colours, and any set $\{x_1,\ldots,x_m,y_1,\ldots,y_m\}\subset V(K_{2n+1})\setminus V_0$, there is a set of vertex-disjoint paths $x_i,y_i$-paths, $i\in [m]$, each with length $k$ and interior vertices in $V_0$, which are collectively $(C\cup C_0)$-rainbow and use all the colours in $C$.
\end{lemma}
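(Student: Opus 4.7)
The plan is to use the $|C|\le\xi n$ colours of $C$ to colour special edges inside $V_0$, and stitch these edges together into length-$k$ paths by means of short $C_0$-rainbow connectors. The arithmetic is driven by two observations: $k\equiv 3\pmod 4$ means $\ell:=(k-3)/4$ is a non-negative integer, so each path can consist of $\ell$ special edges interleaved with $\ell+1$ length-$3$ connectors, for a total length of $4\ell+3=k$; and $1/k\ll 1$ forces $k\ge 15$, hence $m\ell = 5\xi n(k-3)/(4k) \ge \xi n \ge |C|$, so there is a dedicated slot for every colour of $C$. This lets me distribute the colours of $C$ across the $m$ paths while padding the remaining slots with dummy colours drawn from $C_0$.

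First I will extract the random properties needed. Choose $\mu$ with $\xi\ll\mu\ll\beta$. By Chernoff's bound, Lemma~\ref{Lemma_number_of_colours_inside_random_set}, and Lemma~\ref{Lemma_short_paths_between_two_vertices} (both applied with $p=\beta$), with high probability we simultaneously have $|V_0|,|C_0|\ge\beta n/2$; every colour has at least $\beta^2 n$ edges inside $V_0$; and for every distinct pair $u,v\in V(K_{2n+1})$ there are at least $\mu n$ internally vertex-disjoint length-$3$ $u,v$-paths with interior in $V_0$ whose union is $C_0$-rainbow. The rest of the argument is deterministic, conditioned on these three properties.

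Given the input $C$ and $\{x_i,y_i\}_{i=1}^m$, I will first pick a set $\mathcal{D}\subseteq C_0$ with $|\mathcal{D}|=m\ell-|C|$, and then, for each colour $c\in C\cup\mathcal{D}$, greedily select a colour-$c$ edge $e_c$ having both endpoints in $V_0$, keeping all the $e_c$ pairwise vertex-disjoint. At each step at most $2m\ell\le 3\xi n$ vertices of $V_0$ are already spoken for, so by $2$-factorization at most $6\xi n$ colour-$c$ edges inside $V_0$ are blocked, which is negligible against the available $\beta^2 n$. Next I will partition the $m\ell$ chosen edges into $m$ groups of $\ell$, assigning one group to each pair $(x_i,y_i)$, so the intended $i$-th path has the skeleton $x_i \to e_{i,1} \to e_{i,2} \to \cdots \to e_{i,\ell} \to y_i$, where each arrow denotes a length-$3$ connector still to be constructed.

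Finally, I will construct the $m(\ell+1)\le 2\xi n$ connectors one at a time, using the candidate families from the third random property. When a connector is required between some specified pair $(u,v)$, the already-used interior $V_0$-vertices and already-used colours each number at most $O(\xi n)$; since the $\mu n$ candidate length-$3$ $u,v$-paths are internally vertex-disjoint and collectively $C_0$-rainbow, each forbidden vertex or colour kills at most one candidate, leaving $\mu n - O(\xi n) > 0$ admissible choices because $\mu\gg\xi$. The main obstacle is thus the bookkeeping: orchestrating the slot structure so that every $c\in C$ is placed exactly once, each connector colour lies in $C_0$ and avoids both $\mathcal{D}$ and the colours of earlier connectors, and vertex-disjointness across all $m$ paths is preserved. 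The parity condition $k\equiv 3\pmod 4$ and the lower bound $k\ge 15$ forced by $1/k\ll 1$ are precisely what make this bookkeeping consistent.
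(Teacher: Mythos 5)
Your proposal is correct and follows essentially the same strategy as the paper's proof: build each length-$k$ path from $\ell=(k-3)/4$ pre-selected coloured edges inside the random vertex set (carrying the colours of $C$ plus dummy colours from $C_0$), joined by $\ell+1$ length-$3$ connectors that are collectively $C_0$-rainbow. The only cosmetic difference is that the paper first splits $V_0$ and $C_0$ into two random halves to separate the edge-selection and connector stages, and invokes Lemma~\ref{Lemma_few_connecting_paths} for the connectors, whereas you handle the interaction by the same greedy count directly from Lemma~\ref{Lemma_short_paths_between_two_vertices}; both are valid.
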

\begin{proof} Let $V_1,V_2\subset V_0$ be disjoint and $(\beta/2)$-random. Let $C_1,C_2\subset C_0$ be disjoint and $(\beta/2)$-random.
By Lemma~\ref{Lemma_Chernoff}, and Lemma~\ref{Lemma_number_of_colours_inside_random_set}, with high probability, the following properties hold.
\stepcounter{propcounter}
\begin{enumerate}[label = {\bfseries \Alph{propcounter}\arabic{enumi}}]
\item $|C_1|\geq \beta n/3$.\label{beep1}
\item $V_1$ is $(\beta^2 n/4)$-replete. \label{beep1prime}
\end{enumerate}
By Lemma~\ref{Lemma_few_connecting_paths} applied with $p=\beta /2$, $q=4\xi$, $V=V_2$ and $C=C_2$, with high probability, we have the following property.
\begin{enumerate}[label = {\bfseries \Alph{propcounter}\arabic{enumi}}]\addtocounter{enumi}{2}
 \item For any $m'\leq 4\xi n$ and set $\{x_1, y_1, \dots, x_{m'}, y_{m'}\}\subset V(K_{2n+1})$
 there is a collection $P_1, \dots, P_{m'}$ of vertex-disjoint paths with length $3$, with internal vertices in $V_2$, where $P_i$ is an $x_i,y_i$-path, for each $i\in [m']$, and $P_1\cup \dots\cup P_{m'}$ is $C_2$-rainbow.\label{beep2}
\end{enumerate}

We will now show that the property in the lemma holds. Let then $C\subset C(K_{n+1})\setminus C_0$ have at most $\xi n$ colours and let $X:=\{x_1,\ldots,x_m,y_1,\ldots,y_m\}\subset V(K_{2n+1})\setminus V_0$. Take $\ell=(k-3)/4$, and note that this is an integer and $\ell m\geq \xi n$. Using \ref{beep1}, take an order $\ell m$ set $C'\subset C\cup C_1$ with $C\subset C'$  and label $C'=\{c_{i,j}:i\in [m],j\in [\ell]\}$.

Using \ref{beep1prime}, greedily find independent edges $s_{i,j}t_{i,j}$, $i\in [m],j\in [\ell]$, with vertices in $V_1$, so that each edge  $s_{i,j}t_{i,j}$ has colour $c_{i,j}$. Note that, when the edge $s_{i,j}t_{i,j}$ is chosen at most $2m\ell\leq 2mk\leq 10\xi n$ vertices in $V_1$ are in already chosen edges. 

Using \ref{beep2}  with $m'=m(\ell+1)$, find vertex-disjoint paths $P_{i,j}$, $i\in [m]$, $0\leq j\leq \ell$, with length 3 and internal vertices in $V_2$ so that these paths are collectively $C_2$-rainbow and the following holds for each $i\in [m]$.
\begin{itemize}
\item $P_{i,0}$ is a $x_{i}s_{i,1}$-path.
\item For each $1\leq j<\ell$, $P_{i,j}$ is a $t_{i,j},s_{i,j+1}$-path.
\item $P_{i,\ell}$ is a $t_{i,\ell},y_i$-path.
\end{itemize}
Then, the paths $P_i=\cup_{0\leq j\leq \ell}P_{i,j}$, $i\in [m]$, use each colour in $C'\subset C\cup C_1$, and hence $C$, and otherwise use colours in $C_2$, have interior vertices in $V_0$ and, for each $i\in [m]$, $P_i$ is a length $3(\ell+1)+\ell=k$ path from $x_i$ to $y_i$. That is, the paths $P_i$, $i\in[m]$, satisfy the condition in the lemma. \end{proof}

\subsection{Almost-covering colour sets with paths}
We now show that paths can be found using almost every colour in a set of mostly-random colours.
\begin{lemma}\label{finishingB}
Let $1\geq p\gg q\gg \gamma,\eta\gg  1/k \gg 1/n$ with $k=1\mod 3$,  and let $m\leq 1.01q n/k$. 
Let  $K_{2n+1}$ be $2$-factorized.
Suppose we have disjoint sets $V, V_0\subseteq V(K_{2n+1})$, and $D,D_0\subseteq C(K_{2n+1})$  with $V$ $p$-random, $D$ $q$-random, and $V_0, D_0$ $\gamma$-random. Suppose further that $V_0$ and $D_0$ are independent of each other. Then, the following holds with high probability.

For any set $C\subset C(K_{n+1})$ with $D\cup D_0\subset C$ of $mk+\eta n$ colours, and any set $X:=\{x_1,\ldots,x_m,y_1,\ldots,y_m\}\subset V(K_{2n+1})\setminus (V\cup V_0)$, there is a set of vertex-disjoint paths $x_i,y_i$-paths with length $k$, $i\in [m]$, which have interior vertices in $V\cup V_0$ and which are collectively $C$-rainbow.
\end{lemma}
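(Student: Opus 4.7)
The plan is to adapt the proof strategy of Lemma~\ref{Lemma_saturating_matching_lemma} (the matching analog) to the path setting. Since $k\equiv 1\pmod 3$, write $k=3s+1$; then each desired $x_i,y_i$-path will be built as a single ``head'' edge $x_iu_i$ followed by $s$ length-$3$ connectors $u_iz_{i,1},\,z_{i,1}z_{i,2},\,\ldots,\,z_{i,s-1}y_i$, chaining through suitable intermediate vertices $z_{i,1},\ldots,z_{i,s-1}\in V\cup V_0$. This gives a natural two-phase argument: first find the head-edge matching $M=\{x_iu_i:i\in[m]\}$ (covering some ``hard'' colours), and then fill in the $ms$ length-$3$ connectors (using the bulk of the random colours).

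To set up, I would use Lemma~\ref{Lemma_mixture_of_p_random_sets} to partition the given sets into disjoint random sub-pieces: $V=V_1\sqcup V_2$, $V_0=V_0^{(1)}\sqcup V_0^{(2)}$, $D=D_1\sqcup D_2$, and $D_0=D_0^{(1)}\sqcup D_0^{(2)}$. The sub-pieces $V_2,\,V_0^{(2)},\,D_2,\,D_0^{(2)}$ will play the ``bulk'' role in Phase~2 and are sized so that $|D_2\cup D_0^{(2)}|\approx m(k-1)+\eta n/2$, roughly matching the $m(k-1)$ connector edges up to a small slack. The remaining set $C':=C\setminus(D_2\cup D_0^{(2)})$ then contains about $m+\eta n/2$ colours, including every ``hard'' colour in $C\setminus(D\cup D_0)$ along with a thin slice of $D\cup D_0$, and serves as the colour pool for Phase~1. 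For Phase~1 I apply Lemma~\ref{Lemma_saturating_matching_lemma} to $U=\{x_1,\ldots,x_m\}$ with random sub-pieces $V_1,\,V_0^{(1)},\,D_1,\,D_0^{(1)}$ and colour set $C'$; this produces a perfect $C'$-rainbow matching $M$ covering $\{x_i\}$ with partners $u_i\in V_1\cup V_0^{(1)}$, using $m$ colours from $C'$ and leaving $\eta n/2$ of them unused. For Phase~2, I choose $s-1$ fresh intermediate vertices $z_{i,1},\ldots,z_{i,s-1}\in V_2\cup V_0^{(2)}$ per path (disjoint across $i$ and from all other used vertices) and apply (a variant of) Lemma~\ref{Lemma_few_connecting_paths} to find length-$3$ rainbow paths simultaneously for the $ms$ pairs $(u_i,z_{i,1}),\,(z_{i,1},z_{i,2}),\,\ldots,\,(z_{i,s-1},y_i)$, drawing colours from $D_2\cup D_0^{(2)}$. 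Concatenating the matching and connectors yields the desired $x_i,y_i$-paths.

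The main obstacle will be the interplay between dependence and independence of the random sets: Lemma~\ref{Lemma_few_connecting_paths} requires the random vertex set and random colour set to be \emph{independent}, whereas in our hypothesis only $V_0$ and $D_0$ are guaranteed to be so---$V$ and $D$ may depend arbitrarily on each other. To handle this, the invocation in Phase~2 should route its ``independence'' need through $V_0^{(2)},D_0^{(2)}$ (analogously to how Lemma~\ref{Lemma_nearly_perfect_matching} handles dependence via the nibble-style Lemma~\ref{Lemma_nearly_regular_subgraph}), while the potentially dependent pair $V_2/D_2$ is exploited only through its pseudorandom properties (Lemma~\ref{Lemma_nearly_regular_subgraph} and Lemma~\ref{Lemma_number_of_colours_inside_random_set}). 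A further arithmetic wrinkle is that when $m$ is near its upper bound $1.01qn/k$ the connector demand $m(k-1)$ slightly exceeds $|D|$, forcing $D_2\cup D_0^{(2)}$ to straddle both $D$ and $D_0$; the partition must then be chosen so that $D_0^{(1)}$ and $V_0^{(1)}$ remain large enough (and independent enough) to make Lemma~\ref{Lemma_saturating_matching_lemma} applicable in Phase~1, while $D_0^{(2)}$ and $V_0^{(2)}$ remain independent for Phase~2. The bookkeeping heart of the argument is to arrange these sizes so that Phase~1 leaves $\eta n/2$ colours unused in $C'$ and Phase~2 leaves $\eta n/2$ unused in $D_2\cup D_0^{(2)}$, for a combined slack of exactly $\eta n$.
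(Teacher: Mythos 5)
Your overall architecture (one head edge plus length-$3$ connectors, with the independent pair $V_0,D_0$ reserved for the steps that genuinely need independence) is sensible, but there is a gap at exactly the point where the paper's proof does something different. The set $C$ may contain up to roughly $mk+\eta n-|D|-|D_0|\approx 0.01qn+\eta n-\gamma n$ colours lying outside $D\cup D_0$; these are completely arbitrary, with no random properties. Since the $m$ paths carry $mk$ distinct colours out of the $mk+\eta n$ colours of $C$, all but $\eta n$ of these arbitrary colours must actually appear on the paths; and $0.01qn\gg \eta n$ (as $q\gg\eta$) while also $0.01qn\gg m\approx qn/k$. So you can neither leave them all unused nor place them all on the $m$ head edges: most of them must appear in the path interiors. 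Your Phase~2, however, draws its colours from $D_2\cup D_0^{(2)}\subseteq D\cup D_0$ via Lemma~\ref{Lemma_few_connecting_paths}, which only produces connectors that are rainbow \emph{within} a given random colour set and gives no way to force a prescribed non-random colour onto a connector. (Your claim that $C'$, of size $\approx m+\eta n/2$, can contain every hard colour is arithmetically impossible when $m(k-1)>|D\cup D_0|$, which happens for $m$ near $1.01qn/k$ once $k$ is large.) The paper resolves exactly this by reserving a replete random subset $V'\subseteq V$ in which every colour has $\Omega(p^2 n)$ edges, greedily pre-placing one independent edge of each required colour inside $V'$, and building roughly a third of each path out of these pre-placed edges; that is the mechanism by which $\Theta(qn)$ arbitrary colours of $C$ get used.

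There is a second, independent gap: even if $C$ contained no hard colours, your Phase~2 must use $m(k-1)$ colours out of a pool of size $m(k-1)+\eta n/2$, i.e.\ essentially all of it. Lemma~\ref{Lemma_few_connecting_paths} is a greedy argument requiring the number of connectors to be a tiny fraction of the available random vertices and colours ($q\ll p$ in its statement); it leaves almost all of its colour set unused and cannot be pushed to near-perfect colour coverage. A nibble-type statement for length-$3$ connectors between prescribed endpoints is not available in the paper, and the dependence between $V$ and $D$ that you correctly flag makes inventing one harder still. The paper's proof sidesteps this by making each near-perfectly-used unit a single matching edge between consecutive levels $V_1,\ldots,V_\ell$, so that the nibble-based Lemma~\ref{Lemma_nearly_perfect_matching} (which tolerates dependence between vertex and colour sets) applies; the length-$3$ connectors through $(V_0,D_0')$ appear only at the two ends of each path and consume a negligible fraction of those sets.
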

\begin{proof} Let $\ell=(k-4)/3$ and note that this is an integer. Let $m'=m+\eta n/6k$. Using that $p\gg q,\eta$, take in $V$ vertex disjoint sets $V',V_1,\ldots,V_{\ell}$, such that $V'$ is $(p/2)$-random and, for each $i\in [\ell]$, $V_i$ is $(m'/2n)$-random. Take an $(\eta\gamma)$-random subset $D'_0\subset D_0$. Take in $D$ vertex disjoint sets $C_1,\ldots,C_{2\ell}$ so that, and, for each $1\leq i\leq 2\ell$, $C_i$ is $(m'/n)$-random.
 Note that this later division is possible as $2\ell\cdot m'/n\leq 2\ell \cdot 1.02q n/k\leq q$.

By Lemma~\ref{Lemma_number_of_colours_inside_random_set}, with high probability the following holds.
\addtocounter{propcounter}{1}
\begin{enumerate}[label = {\bfseries \Alph{propcounter}\arabic{enumi}}]
\item $V'$ is $(p^2 n/4)$-replete.\label{mice1}
\end{enumerate}
By Lemma~\ref{Lemma_few_connecting_paths}, applied with $p'=\gamma\eta$ and $q'=2q/k$ to $D'_0$ and a $(\gamma\eta)$-random subset of $V_0$, with high probability the following holds.
\begin{enumerate}[label = {\bfseries \Alph{propcounter}\arabic{enumi}}]\addtocounter{enumi}{1}
\item For any set $\{x_1, y_1, \dots, x_{2m}, y_{2m}\}\subset V(K_{2n+1})$
 there is a collection $P_1, \dots, P_{2m}$ of vertex-disjoint paths with length $3$, having internal vertices in $V_0$, where $P_i$ is an $x_i,y_i$-path, for each $i\in [2m]$, and $P_1\cup \dots\cup P_{2m}$ is $D'_0$-rainbow.\label{mice1a}
\end{enumerate}
By Lemma~\ref{Lemma_Chernoff}, with high probability the following holds.
\begin{enumerate}[label = {\bfseries \Alph{propcounter}\arabic{enumi}}]\addtocounter{enumi}{3}
\item $|D_0'\cup C_1\cup \ldots\cup C_{2\ell}|\leq 2m'\ell +\eta n/2$. \label{mice1b}
\item For each $j\in [\ell]$, $|V_j|\leq m'+\eta^2 n/k^2$.\label{mice1bb}
\end{enumerate}
By Lemma~\ref{Lemma_nearly_perfect_matching}, applied for each $i\in [2\ell]$ and $j\in [\ell]$ with $p'=m'/n$ and $\beta=\eta^2/k^2$, with high probability the following holds.
\begin{enumerate}[label = {\bfseries \Alph{propcounter}\arabic{enumi}}]\addtocounter{enumi}{5}
\item For each $i\in [2\ell]$ and $j\in [\ell]$ and any vertex set $Y\subset V(K_{2n+1})\setminus V$ with $|Y|\leq m'$ there is a $C_i$-rainbow matching from $Y$ into $V_j$ with at least $|Y|-\eta^2 n/k^2$ edges.\label{mice2}
\end{enumerate}
We will show that the property in the lemma holds. Set then $C\subset C(K_{n+1})$ with $D\cup D_0\subset C$ so that $|C|=mk+\eta n$ and let $X:=\{x_1,\ldots,x_m,y_1,\ldots,y_m\}\subset V(K_{2n+1})\setminus (V_0\cup V)$.

Note that, using \ref{mice1b}, 
\begin{align*}
|C\setminus (D_0'\cup C_1\cup \ldots\cup C_{2\ell})|&\geq mk+\eta n-2m'\ell-\eta n/2=
(3\ell+4)m+\eta n/2-2m'\ell\geq 3\ell (m-m')+\eta n/2+m'\ell\\
&= -3\ell \eta n/6k+\eta n/2+m'\ell 
\geq m'\ell,
\end{align*}
and take disjoint sets $C'_1,\ldots,C_\ell'\subset C\setminus (D_0'\cup C_1\cup \ldots\cup C_{2\ell})$  with size $m'$.
Let $C'=C'_1\cup\ldots \cup C'_\ell$. Greedily, using~\ref{mice1}, take vertex disjoint edges $x_cy_c$, $c\in C'$, with vertices in $V'$ so that the edge $x_cy_c$ has colour $c$. Note that these edges have $2m'\ell \leq qn$ vertices in total, so that this greedy selection is possible.
Let $M$ be the matching $\{x_cy_c:c\in C'\}$.
For each $i\in [\ell]$, let $Z_i=\{x_c:c\in C'_i\}$ and $Y_i=\{y_c:c\in C'_i\}$.

For each $i\in [\ell-1]$, use~\ref{mice2} to find a $C_{i}$-rainbow matching $M_i$  with $m'-\eta^2 n/k^2$ edges from $Z_i$ into $V_i$, and a $C_{i+\ell}$-rainbow matching, $M'_i$ say, with $m'-\eta^2 n/k^2$ edges from $Y_i$ into $V_{i-1}$. Note that, by \ref{mice1bb} these matchings overlap in at least $m'-4\eta^2 n/k^2$ vertices.
Therefore, putting together $M$ with the matchings $M_i$, $M_i'$, $i\in [\ell]$ gives at least $m'-\ell \cdot 4\eta^2 n/k^2\geq m$ vertex disjoint paths with length $3\ell-2$. Furthermore, these paths are collectively rainbow with colours in $(C\setminus D'_0)$. Take $m$ such paths, $Q_i$, $i\in [m]$. Apply \ref{mice1a}, to connect one endpoint of $Q_i$ to $x_i$ and another endpoint to $y_i$ using two paths of length 3 and new vertices and colours in $V_0$ and $D'_0$ respectively to get the paths with length $k=3\ell+4$ as required.
\end{proof}

\subsection{Proof of the finishing lemma in Case B}\label{subsec:finishB}
We can now prove Lemma~\ref{lem:finishB}.

\begin{proof}[Proof of Lemma~\ref{lem:finishB}]
Pick $\xi,\beta,\lambda,\alpha$ so that $1/k\ll \xi\ll\beta\ll\lambda\ll \alpha\ll \mu\ll \eta\ll \eps\ll p\leq 1$. Let $V_1',V_2',V_3'\subset V$ be disjoint sets which are each $(p/3)$-random. Let $W_1,W_2,W_3\subset V_0$ be disjoint sets which are each $(\mu/3)$-random.
Let $D_1,D_2,D_3\subset C_0$ be disjoint and $(\mu/3)$-, $\beta$- and $\alpha$-random respectively. 

Set $m_1=(\eps-5\xi-\lambda)n/k$, $m_2=5\xi n/k$ and $m_3=\lambda n/k$. By Lemma~\ref{Lemma_Chernoff}, with high probability we have that $|D_2|\leq 2\beta n$.
By Lemma~\ref{finishingB}, applied with $p'=p/3$, $q=(1-\eta)\eps$, $\gamma=\mu/3$, $\eta'=\xi$, $n=n$, $k=k$, $m=m_1$, $V'=V_1'$, $V_0=W_1$, $D=C$, $D_0=D_1$, with high probability we have the following.
\stepcounter{propcounter}
\begin{enumerate}[label = {\bfseries \Alph{propcounter}\arabic{enumi}}]
\item For any set $\bar{C}\subset C(K_{n+1})$ with $C\cup D_1\subset \bar{C}$ of $m_1k+\xi n=(\eps-4\xi-\lambda)n$ colours, and any collection of vertices $\{x_1,\ldots,x_{m_1},y_1,\ldots,y_{m_1}\}\subset V(K_{2n+1})\setminus (V_1'\cup W_1)$, there is a set of vertex-disjoint $x_i,y_i$-paths, $i\in [m_1]$, each with length $k$, which have interior vertices in $V_1'\cup W_1$ and which are collectively $\bar{C}$-rainbow.
\label{mouseb11}
\end{enumerate}
 By Lemma~\ref{colourcoverB}, applied with $m=m_2$, $V_0\subset W_2$ a $\beta$-random subset and $C_0=D_2$, with high probability, we have the following.
\begin{enumerate}[label = {\bfseries \Alph{propcounter}\arabic{enumi}}]\addtocounter{enumi}{1}
\item For any set $\bar{C}\subset C(K_{n+1})\setminus D_2$ of at most $\xi n$ colours, and any set $\{x_1,\ldots,x_{m_2},y_1,\ldots,y_{m_2}\}\subset V(K_{2n+1})\setminus W_2$, there is a set of vertex-disjoint $x_i,y_i$-paths, $i\in [{m_2}]$, each with length $k$ and interior vertices in $W_2$, which are collectively $(\bar{C}\cup D_2)$-rainbow and use all the colours in $\bar{C}$.\label{mouseb2}
\end{enumerate}
By Corollary~\ref{cor-absorbBmacro}, applied with $\eta'=2\beta$, $\mu'=\lambda$, $\eps'=\alpha$, $n=n$, $k=k$, $\ell=m_3$, $V_0\subset W_3$ an $\alpha$-random subset and $C_0=D_3$, with high probability we have the following.
\begin{enumerate}[label = {\bfseries \Alph{propcounter}\arabic{enumi}}]\addtocounter{enumi}{2}
\item For any $\{x_1,\ldots,x_{m_3},y_1,\ldots,y_{m_3}\}$, $\bar{\beta}\leq 2\beta$ and $\bar{C}\subset C(K_{2n+1})$ with $|\bar{C}|\leq 2\beta n$, there is a set $D_3'\subset D_3$ of $m_3k-\bar{\beta}n$ colours
such that, for every set $C'\subset \bar{C}$ of $\bar{\beta} n$ colours, there is a set of vertex-disjoint $x_i,y_i$-paths, $i\in [m_3]$, which are collectively $(D_3'\cup C')$-rainbow, have length $k$, and internal vertices in $W_3$.\label{mouseb3}
\end{enumerate}
Let $m=\eps n/k$, so that $m=m_1+m_2+m_3$. We will show that the property in the lemma holds.

Suppose then that $x_1,\ldots,x_m,y_1,\ldots,y_m$ are distinct vertices in $V(K_{2n+1})\setminus (V\cup V_0)$ and $D\subset C(K_{2n+1})$ so that $|D|=\eps n$ and $C\cup C_0\subset D$. Let $V_1=V_1'\cup W_1$, $V_2=V_2'\cup W_2$ and $V_3=V_3'\cup W_3$.
Let $[m]=I_1\cup I_2\cup I_3$ be a partition with $|I_i|=m_i$ for each $i\in [3]$.
By \ref{mouseb3} (applied with $\bar C=D_2$ and $\bar{\beta}=(|D_2|/n)-4\xi$), there is a set $D_3'\subset D_3$ of $m_3k-|D_2|+4\xi n$ colours with the following property.
\begin{enumerate}[label = {\bfseries \Alph{propcounter}\arabic{enumi}}]\addtocounter{enumi}{3}
\item For every set $C'\subset D_2$ of $|D_2|-4\xi n$ colours, there is a set of vertex disjoint $x_i,y_i$-paths, $i\in I_3$, which are collectively $(D_3'\cup C')$-rainbow, have length $k$, and internal vertices in $W_3$.\label{mouseb4}
\end{enumerate}

Let $C_1=D\setminus (D_2\cup D_3')$, $C_2=D_2$ and $C_3=D_3'$. We now reason analogously to the discussion after \ref{propp1}--\ref{propp3}.
Note that $C\cup D_1\subset C_1$ and 
\begin{equation}
    |C_1|=|D|-|D_2|-(m_3k-|D_2|+4\xi n)=mk-m_3k-4\xi n=(m_1+m_2)k-4\xi n=m_1k+\xi n.
\label{coldrain}
\end{equation}
Therefore, using \ref{mouseb11}, we can find $m_1$ paths $\{P_1, \dots, P_{m_1}\}$ such that $P_i$ is an $x_i,y_i$-path with length $k$, for each $i\in I_1$, so that these paths are vertex disjoint with internal vertices in $V_1=V_1'\cup W_1$ and are collectively $C_1$-rainbow. 

Let $C'=C_1\setminus (\cup_{i\in X_1}C(P_i))$, so that, by \eqref{coldrain}, $|C'|=\xi n$.
Using \ref{mouseb2},  we can find $m_2$ paths $\{P_1, \dots, P_{m_2}\}$ such that $P_i$ is an $x_i,y_i$-path with length $k$, for each $i\in I_2$, so that these paths are vertex disjoint with internal vertices in $V_2=V_2'\cup W_2$ and are collectively $(C_2\cup C')$-rainbow and use every colour in $C'$. 

Let $C''=C_2\setminus (\cup_{i\in X_2}C(P_i))$, and note that $|C''|=|C_2\cup C'|-m_2k=|D_2|+\xi n-m_2k=|D_2|-4\xi n$.
Using \ref{mouseb4} and that $C_3=D_3'$, we can find $m_3$ paths $\{P_1, \dots, P_{m_3}\}$ such that $P_i$ is an $x_i,y_i$-path with length $k$, for each $i\in I_3$, so that the paths are vertex disjoint with internal vertices in $V_3=V_3'\cup W_3$ which are collectively $(C_3\cup C'')$-rainbow.

Then, for each $i\in [\ell]$, the path $P_i$ is an $x_i,y_i$-path with length $k$, so that all the paths are vertex disjoint with internal vertices in $V_1\cup V_2\cup V_3\subset V_0\cup V$ and which are collectively $D$-rainbow, as required.
\end{proof}


\section{Randomized tree embedding}\label{sec:almost}

In this section, we prove Theorem~\ref{nearembedagain}. We start by formalising what we mean by a random rainbow embedding of a tree.
\begin{definition}
For a probability space $\Omega$, tree $T$ and a coloured graph $G$, a randomized rainbow  embedding of $T$ into $G$ is
a triple $\phi=(V_\phi, C_\phi, T_\phi)$ consisting of a random set of vertices $V_\phi:\Omega\to V(G)$, a random set of colours $C_\phi:\Omega\to C(G)$, and a random subgraph $T_\phi:\Omega\to E(G)$ such that:
\begin{itemize}
\item $V(T_\phi)\subseteq V_\phi$ and $C(T_\phi)\subseteq C_\phi$ always hold.
\item With high probability, $T_\phi$ is a rainbow copy of $T$.
\end{itemize}
\end{definition}

We will embed a tree bit by bit, starting with a randomized rainbow embedding of a small tree and then extending it gradually. For this, we need a concept of one randomized embedding \emph{extending} another.
\begin{definition}\label{Definition_embedding_extension}
Let $G$ be a graph and $T_1\subseteq T_2$ two nested trees. Let $\phi_1=(V_{\phi_1}, C_{\phi_1}, S_{\phi_1})$ and $\phi_2=(V_{\phi_2}, C_{\phi_2}, S_{\phi_2})$ be randomized rainbow embeddings of $T_1$ and $T_2$ respectively into $G$. We say that $\phi_2$ extends $\phi_1$ if
\begin{itemize}
\item $T_{\phi_1}\subseteq T_{\phi_2}$, $C_{\phi_1}\subseteq C_{\phi_2}$, and $V_{\phi_1}\subseteq V_{\phi_2}$ always hold.
\item $V(T_{\phi_2})\setminus V(T_{\phi_1})\subset V_{\phi_2}\setminus V_{\phi_1}$ and $C(T_{\phi_2})\setminus C(T_{\phi_1})\subset C_{\phi_2}\setminus C_{\phi_1}$ always hold.
\end{itemize}
\end{definition}
The above definition implicitly assumes that the two randomized embeddings are defined on the same probability space, which is the case in our lemmas, except for Lemma~\ref{Lemma_extending_with_large_stars}. When $\phi_1$ and $\phi_2$ are defined on different probability spaces $\Omega_{\phi_1}$ and $\Omega_{\phi_2}$ respectively, we use the following definition. We say that an extension of $\phi_1$ is a  measure preserving transformation $f:\Omega_{\phi_2}\to \Omega_{\phi_1}$ (i.e.\ $\P(f^{-1}(A))=\P(A)$ for any $A\subseteq \Omega_{\phi_1}$). We say that $\phi_2$ extends $\phi_1$ if ``for every $\omega\in \Omega_{\phi_2}$ we have $T_{\phi_1}( f(\omega))\subseteq T_{\phi_2}$, $C_{\phi_1}( f(\omega))\subseteq C_{\phi_2}$,  $V_{\phi_1}( f(\omega))\subseteq V_{\phi_2}$, and $V(T_{\phi_2})\setminus V(T_{\phi_1}( f(\omega)))$ is 
contained in $V_{\phi_2}\setminus V_{\phi_1}( f(\omega))$ and $C(T_{\phi_2})\setminus C(T_{\phi_1}(f(\omega)))$ is contained in 
$C_{\phi_2}\setminus C_{\phi_1}( f(\omega))$''. Such a measure preserving transformation ensures that $\phi_1'=(T_{\phi_1}\circ f, C_{\phi_1}\circ f, V_{\phi_1}\circ f)$ is a randomized embedding of $T_1$ defined on $\Omega_{\phi_2}$ which is equivalent to $\phi_1$ (i.e.\ the probability of any outcomes  $\phi_1$ and $\phi_1'$ are the same). It also ensures that $\phi_2$ is an extension of $\phi_1'$ as in Definition~\ref{Definition_embedding_extension}.

The following lemma extends randomized embeddings of trees by adding a large star forest. Recall that a 
$p$-random subset of some finite set is formed by choosing every element of it independently 
with probability $p$.
\begin{lemma}[Extending with a large star forest]\label{Lemma_extending_with_large_stars}
Let $p\gg \beta\gg \gamma \gg d^{-1}, n^{-1}$ and $\log^{-1} n\gg d^{-1}$.
Let $T_1\subseteq T_2$ be forests such that $T_2$ is formed by adding stars with $\geq d$ leaves to vertices of $T_1$. Let  $K_{2n+1}$ be $2$-factorized and suppose that $|T_2|=(1-p)n$.
Let $\phi_1=(V_{\phi_1}, C_{\phi_1}, T_{\phi_1})$ be a randomized rainbow embedding of $T_1$ into $K_{2n+1}$ where $V_{\phi_1}$ and $C_{\phi_1}$ are both $\gamma$-random.

Then, $\phi_1$ can be extended into a randomized rainbow embedding $\phi_2=(V_{\phi_2}, C_{\phi_2}, T_{\phi_2})$ of $T_2$ so that $V_{\phi_2}$ and $C_{\phi_2}$ are $(1-p+\beta)$-random sets (with $V_{\phi_2}$ and $C_{\phi_2}$ allowed to depend on each other).

\end{lemma}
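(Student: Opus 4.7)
The plan is to condition on the outcome of $\phi_1$ and apply Lemma~\ref{Lemma_randomized_star_forest} to the star forest $F:=T_2\setminus T_1$, using the vertices and colours that $\phi_1$ has left unused. Write $I=\{i_1,\dots,i_\ell\}\subseteq V(T_1)$ for the centres of $F$, so $e(F)=|T_2|-|T_1|$. After conditioning on an outcome of $\phi_1$ for which $T_{\phi_1}$ is a rainbow copy of $T_1$, let $j_t$ be the image of $i_t$, and set $J:=\{j_1,\dots,j_\ell\}$, $V^\ast:=V(K_{2n+1})\setminus V_{\phi_1}$ and $C^\ast:=C(K_{2n+1})\setminus C_{\phi_1}$. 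Since $J\subseteq V(T_{\phi_1})\subseteq V_{\phi_1}$, the sets $V^\ast$ and $J$ are disjoint, and Chernoff's bound yields $|V^\ast|\geq(1-2\gamma)\cdot 2n$ and $|C^\ast|\geq(1-2\gamma)n$ with high probability. With a parameter $\alpha$ to be chosen, and with $p_F:=1-e(F)/n=p+|T_1|/n\in[p,1]$, Lemma~\ref{Lemma_randomized_star_forest} applied to $F$ (with its ``$p$'' taken to be $p_F$, its error parameter suitably small, and its ``$\alpha$'' taken to be $\alpha$) produces a random $C^\ast$-rainbow copy $F'$ of $F$ with $i_t\mapsto j_t$, together with sets $U\subseteq V^\ast\setminus V(F')$ and $D\subseteq C^\ast\setminus C(F')$ which are $(1-\alpha)p_F$-random subsets of $V^\ast$ and $C^\ast$ respectively.

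I then define
\[
T_{\phi_2}:=T_{\phi_1}\cup F',\qquad V_{\phi_2}:=V_{\phi_1}\cup(V^\ast\setminus U),\qquad C_{\phi_2}:=C_{\phi_1}\cup(C^\ast\setminus D).
\]
Each containment required by Definition~\ref{Definition_embedding_extension} is immediate: for instance $V(T_{\phi_2})\setminus V(T_{\phi_1})=V(F')\setminus J\subseteq V^\ast\setminus U\subseteq V_{\phi_2}\setminus V_{\phi_1}$, using that $U\subseteq V^\ast\setminus V(F')$ and $V^\ast\cap V_{\phi_1}=\emptyset$. With high probability $T_{\phi_2}$ is a rainbow copy of $T_2$, because $T_{\phi_1}$ is a rainbow copy of $T_1$ by hypothesis, $F'$ is a $C^\ast$-rainbow copy of $F$ by Lemma~\ref{Lemma_randomized_star_forest}, and their colour sets are disjoint since $C(F')\subseteq C^\ast$ while $C(T_{\phi_1})\subseteq C_{\phi_1}$.

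The only quantitative step is the choice of $\alpha$. I pick it so that $(1-\gamma)(1-\alpha)p_F=p-\beta$. The hypotheses $p\gg\beta\gg\gamma$ and $p_F\in[p,1]$ guarantee $\alpha\in(0,1)$ with $\alpha\gg\gamma$, so the preconditions of Lemma~\ref{Lemma_randomized_star_forest} hold. Then, for any $v\in V(K_{2n+1})$, using that $V_{\phi_1}$ is $\gamma$-random and that, conditional on $V_{\phi_1}$, each $v\in V^\ast$ lies outside $U$ independently with probability $1-(1-\alpha)p_F$,
\[
\P(v\in V_{\phi_2})=\gamma\cdot 1+(1-\gamma)\bigl(1-(1-\alpha)p_F\bigr)=1-p+\beta.
\]
The same two-layer independence (the coordinates of $V_{\phi_1}$ are independent, and conditional on $V_{\phi_1}$ the coordinates of $U$ are independent) yields $\P(v_1,\dots,v_k\in V_{\phi_2})=(1-p+\beta)^k$ for any distinct $v_1,\dots,v_k$, so $V_{\phi_2}$ really is $(1-p+\beta)$-random; repeating the argument with $D$ in place of $U$ handles $C_{\phi_2}$.

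The main obstacle is bookkeeping rather than conceptual: the randomness output by Lemma~\ref{Lemma_randomized_star_forest} is $(1-\alpha)p_F$-random in $V^\ast$ rather than in all of $V(K_{2n+1})$, and the lemma's ``$p$'' parameter is $p_F=p+|T_1|/n$ rather than $p$. The extra $\gamma$-mass contributed by $V_{\phi_1}$ and the $|T_1|/n$ discrepancy between $p_F$ and $p$ are precisely what the slack $\beta$ in the target density $1-p+\beta$ absorbs, via the displayed equation that defines $\alpha$.
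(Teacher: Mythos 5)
Your proposal is correct and follows essentially the same route as the paper: condition on the outcome of $\phi_1$, apply Lemma~\ref{Lemma_randomized_star_forest} to $F=T_2\setminus T_1$ with parameter $p+|T_1|/n$ in the leftover vertices and colours, and combine the two layers of randomness (as in Lemma~\ref{Lemma_mixture_of_p_random_sets}) to get the stated density. The only cosmetic differences are that you tune $\alpha$ to hit $1-p+\beta$ exactly where the paper takes $\alpha\ll\beta$ and then sub-samples $U,D$ down to $(p-\beta)$-random sets, and that for the $o(1)$-probability outcomes where $T_{\phi_1}$ fails to be a rainbow copy or $|V_{\phi_1}|,|C_{\phi_1}|$ are atypically large one must still define $U,D$ (say, as fresh independent random subsets, as the paper does) so that their unconditional distribution is exactly as claimed.
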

\begin{proof}
Choose $\alpha$ such that $\beta\gg \alpha \gg \gamma $. Let $\theta =|T_1|/n$ and $F=T_2\setminus T_1$, noting that $F$ is a star forest with degrees $\geq d$ and $e(F)=(1-p-\theta)n$. Let $I\subseteq V(T_1)$ be the vertices to which stars are added to get $T_2$ from $T_1$.
Let $\Omega$ be the probability space for $\phi_1$.
We call $\omega\in \Omega$ \emph{successful} if $T_{\phi_1}(\omega)$ is a copy of $T_1$ and if we have $|V_{\phi_1}(\omega)|, |C_{\phi_1}(\omega)|\leq 4\gamma n$.
Using Chernoff's bound and the fact that  $\phi_1$ is a randomized embedding of $T_1$ we have that with high probability a random $\omega \in \Omega$ is succesful.

For every successful $\omega$, let $J^{\omega}$ be the copy of $I$ in $T_{\phi_1}(\omega)$.
We can apply Lemma~\ref{Lemma_randomized_star_forest} with $F=F$, $J=J^{\omega}$,
$V=V(K_{2n+1})\setminus V_{\phi_1}(\omega)$, $C=C(K_{2n+1})\setminus C_{\phi_1}(\omega)$, $p'=p+\theta, \alpha=\alpha, \gamma'=4\gamma, d=d$ and $n=n$. This gives a probability space $\Omega^\omega$, and a randomized subgraph $F^{\omega}$, and randomized sets
$U^\omega\subseteq V(K_{2n+1})\setminus (V_{\phi_1}(\omega)\cup V(F^{\omega})),$ $D^{\omega}\subseteq  C(K_{2n+1})\setminus (C_{\phi_1}(\omega)\cup C(F^{\omega}))$ (so $F^{\omega}$, $U^\omega$, $D^{\omega}$ are functions from $\Omega^{\omega}$ to the families of subgraphs/subsets of vertices/sets of colours of $K_{2n+1}$ respectively). From Lemma~\ref{Lemma_randomized_star_forest} we know that, for each $\omega$,  $F^{\omega}$ is with high probability a copy of $F$, and that $U^{\omega}$ and $D^{\omega}$ are $(1-\alpha)(p+\theta)$-random subsets of $V(K_{2n+1})\setminus V_{\phi_1}(\omega)$ and $C(K_{2n+1})\setminus C_{\phi_1}(\omega)$ respectively. Setting $T^{\omega}=F^{\omega}\cup T_{\phi_1}(\omega)$ gives a subgraph which is a copy of $T_2$ with high probability (for successful $\omega$).
For every unsuccessful $\omega$, set $T^{\omega}=T_{\phi_1}(\omega)$, and choose  $U^\omega\subseteq V(K_{2n+1})\setminus V_{\phi_1}(\omega),$ $D^{\omega}\subseteq  C(K_{2n+1})\setminus C_{\phi_1}(\omega)$ to be independent $(1-\alpha)(p+\theta)$-random  subsets (in this case letting $\Omega^{\omega}$ be an arbitrary probability space on which such $U^{\omega}, D^{\omega}$ are defined).

Let $\Omega_2=\{(\omega, \omega'): \omega\in \Omega, \omega'\in \Omega^{\omega}\}$ and set $\P((\omega, \omega'))=\P(\omega)\P(\omega')$. Notice that $\Omega_2$ is a probability space.
Let $T_{\phi_2}$ be a random subgraph formed by choosing $(\omega, \omega')\in \Omega_2$, and setting $T_{\phi_2}= T^\omega(\omega')$. Similarly define $U=U^{\omega}(\omega')$ and $D=D^{\omega}(\omega')$. Notice that $V\setminus V_{\phi_1}$ is $(1-\gamma)$-random and that $U|V_{\phi_1}$ is a $(1-\alpha)(p+\theta)$-random subset of $V\setminus V_{\phi_1}$. By Lemma~\ref{Lemma_mixture_of_p_random_sets}, $U$  is a $(1-\alpha)(1-\gamma)(p+\theta)$-random subset of $V(K_{2n+1})$. Similarly, $D$ is a $(1-\alpha)(1-\gamma)(p+\theta)$-random  subset of $C(K_{2n+1})$. Since $\beta\gg \alpha,\gamma$, we have $(1-\alpha)(1-\gamma)(p+\theta)\geq p-\beta$, and therefore can choose $(p-\beta)$-random subsets $U'$ and $D'$ such that $U'\subseteq  U$ and  $D'\subseteq D$.
Now $V_{\phi_2}:=V(K_{2n+1})\setminus U'$ and $C_{\phi_2}=C(K_{2n+1})\setminus D'$ are $(1-p+\beta)$-random sets of vertices/colours.
We also have that with high probability $T_{\phi_2}$ is a copy of $T_2$ since
$$\P(\text{$T_{\phi_2}(\omega, \omega')$ is not a copy of $T_2$})\leq \P(\text{$\omega$ unsuccessful})+\P(\text{$\omega$ successful and $T_{\omega}$ not a copy of $T_2$}).$$
Thus the required extension of $\phi_1$ is  $\phi_{2}=(T_{\phi_2}, V_{\phi_2}, C_{\phi_2})$ together with the measure preserving transformation $f:\Omega_2\to \Omega$ with $f:(\omega, \omega')\to\omega$.
\end{proof}

The following lemma extends randomized embeddings of trees by adding connecting paths.
\begin{lemma}[Extending with connecting paths]\label{Lemma_extending_with_connecting_paths}
Let $p\gg q\gg n^{-1}$.
Let $T_1\subseteq T_2$ be forests such that $T_2$ is formed by adding $qn$ paths of length $3$ connecting different components of $T_1$. Let  $K_{2n+1}$ be $2$-factorized.
Let $\phi_1=(V_{\phi_1}, C_{\phi_1}, T_{\phi_1})$ be a randomized rainbow embedding of $T_1$ into $K_{2n+1}$ and  let $U\subseteq V(K_{2n+1})\setminus V_{\phi_1}$, $D\subseteq C(K_{2n+1})\setminus C_{\phi_1}$ be $p$-random, independent subsets. Let $V_{\phi_2}=V_{\phi_1}\cup U$ and $C_{\phi_2}=C_{\phi_1}\cup D$.

Then, $\phi_1$ can be extended into a randomized rainbow embedding $\phi_2=(V_{\phi_2}, C_{\phi_2}, T_{\phi_2})$ of $T_2$.
\end{lemma}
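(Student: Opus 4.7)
The plan is to condition on a typical outcome of $\phi_1$ and then apply Lemma~\ref{Lemma_few_connecting_paths} to the independent $p$-random sets $U$ and $D$ in order to insert each of the $qn$ new length-$3$ paths that form $T_2\setminus T_1$.

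First, I would condition on an outcome of $\phi_1$ for which $T_{\phi_1}$ is an actual rainbow copy of $T_1$; by definition of a randomized rainbow embedding, this event has high probability. Having fixed such an outcome, the two endpoints in $T_1$ of each of the $qn$ added paths of $T_2\setminus T_1$ become a deterministic collection of vertex pairs $\{(x_i,y_i):i\in[qn]\}$ with each $x_i,y_i\in V(T_{\phi_1})\subseteq V_{\phi_1}$.

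Second, conditional on this outcome, $U$ and $D$ remain independent $p$-random subsets of the now fixed ambient sets $V(K_{2n+1})\setminus V_{\phi_1}$ and $C(K_{2n+1})\setminus C_{\phi_1}$. I would apply Lemma~\ref{Lemma_few_connecting_paths} with $V=U$ and $C=D$ to obtain, with high probability, vertex-disjoint length-$3$ paths $P_1,\ldots,P_{qn}$, where $P_i$ is an $x_i,y_i$-path with internal vertices in $U$, such that $P_1\cup\cdots\cup P_{qn}$ is $D$-rainbow.

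Third, I would set $T_{\phi_2}=T_{\phi_1}\cup P_1\cup\cdots\cup P_{qn}$. Each $P_i$ has internal vertices in $U\subseteq V(K_{2n+1})\setminus V_{\phi_1}$, hence disjoint from $V(T_{\phi_1})\subseteq V_{\phi_1}$, and colours in $D\subseteq C(K_{2n+1})\setminus C_{\phi_1}$, hence disjoint from $C(T_{\phi_1})\subseteq C_{\phi_1}$; combined with the $P_i$ being pairwise interior-disjoint and collectively $D$-rainbow, this makes $T_{\phi_2}$ a rainbow copy of $T_2$ with high probability. The extension conditions of Definition~\ref{Definition_embedding_extension} --- namely $T_{\phi_1}\subseteq T_{\phi_2}$, $V_{\phi_1}\subseteq V_{\phi_2}=V_{\phi_1}\cup U$, $C_{\phi_1}\subseteq C_{\phi_2}=C_{\phi_1}\cup D$, and $V(T_{\phi_2})\setminus V(T_{\phi_1})\subseteq U=V_{\phi_2}\setminus V_{\phi_1}$, $C(T_{\phi_2})\setminus C(T_{\phi_1})\subseteq D=C_{\phi_2}\setminus C_{\phi_1}$ --- are then immediate from the construction.

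The main technical obstacle sits in the second step: Lemma~\ref{Lemma_few_connecting_paths} is stated for $V,C$ that are $p$-random inside the full $V(K_{2n+1}),C(K_{2n+1})$, whereas after conditioning on $\phi_1$ the ambient sets for $U,D$ are the slightly smaller $V(K_{2n+1})\setminus V_{\phi_1}$ and $C(K_{2n+1})\setminus C_{\phi_1}$. The proof of Lemma~\ref{Lemma_few_connecting_paths} rests on Lemma~\ref{Lemma_short_paths_between_two_vertices} and the pseudorandomness Lemmas~\ref{Lemma_MPS_boundrandcolour} and~\ref{Lemma_high_degree_into_random_set}; I would verify that those arguments go through verbatim when the ambient set is any linear-sized subset. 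Alternatively, one may augment $U,D$ to honest $p$-random subsets $U^+,D^+$ of the full $V(K_{2n+1}),C(K_{2n+1})$ via independent $p$-biased coins on $V_{\phi_1},C_{\phi_1}$, apply Lemma~\ref{Lemma_few_connecting_paths} to $U^+,D^+$, and exploit the surplus of $\mu n$ candidate paths per endpoint pair guaranteed by Lemma~\ref{Lemma_short_paths_between_two_vertices} to greedily refine to paths whose interiors lie in $U$ and whose colours lie in $D$.
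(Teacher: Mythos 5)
Your overall architecture---supply the $qn$ connecting paths via Lemma~\ref{Lemma_few_connecting_paths} and take the union with $T_{\phi_1}$, then check the extension conditions---is exactly the paper's, and your third step is fine. The gap is in your second step: by conditioning on the outcome of $\phi_1$ before invoking Lemma~\ref{Lemma_few_connecting_paths}, you create an obstacle that neither of your fallbacks removes. The first fallback is false as stated: the pseudorandomness estimates do not go through ``verbatim when the ambient set is any linear-sized subset.'' For instance, in the ND-colouring every edge between $[1,n/2]$ and $[n+1,3n/2]$ has colour at least $n/2+1$, so a $p$-random subset of the linear-sized colour class $[n/2]$ spans \emph{no} edges between these two linear-sized vertex sets, and the conclusion of Lemma~\ref{Lemma_MPS_boundrandcolour} fails completely. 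The second fallback fails quantitatively: in the intended application $|V_{\phi_1}|$ and $|C_{\phi_1}|$ are $\Theta(n)$, so the augmentation adds $\Theta(pn)$ forbidden vertices and colours, whereas Lemma~\ref{Lemma_short_paths_between_two_vertices} only supplies $\mu n$ internally disjoint, collectively rainbow candidate paths per pair with $\mu\oldll p$ (indeed $\mu n=O(p^5n)$); since each forbidden vertex or colour kills at least one candidate path and $p\gg q$, the greedy refinement can be left with nothing.

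The paper avoids all of this by never conditioning. The conclusion of Lemma~\ref{Lemma_few_connecting_paths} is universally quantified over the endpoint set $\{x_1,y_1,\dots,x_{qn},y_{qn}\}$, so ``the conclusion of Lemma~\ref{Lemma_few_connecting_paths} holds for $V=U$, $C=D$'' is a high-probability event determined by $(U,D)$ alone; here $U$ and $D$ are, marginally, $p$-random independent subsets of the \emph{full} vertex and colour sets (this is how the hypothesis is meant and how the lemma is applied in Theorem~\ref{nearembedagain}, where $U=V^4_{ind}$ and $D=C^4_{ind}$ are fixed in advance and the embedding is built to avoid them). One calls $\omega$ \emph{successful} if both this event and ``$T_{\phi_1}(\omega)$ is a rainbow copy of $T_1$'' hold; each holds with high probability, hence so does their intersection, and for every successful $\omega$ the ``for any endpoints'' conclusion applies to the particular pairs $x_i^{\omega},y_i^{\omega}$ determined by $T_{\phi_1}(\omega)$. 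With this reordering the rest of your argument goes through unchanged.
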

\begin{proof}
Let $\Omega$ be the probability space for $\phi_1$.
We say that $\omega\in \Omega$ is \emph{successful} if $T_{\phi_1}(\omega)$ is a copy of $T_1$ and the conclusion of Lemma~\ref{Lemma_few_connecting_paths} holds with $V=U$, $C=D$, $p=p$, $q=q$ and $n=n$. As $\phi_1$ is a randomized embedding of $T_1$, and by Lemma~\ref{Lemma_few_connecting_paths}, we have that, with high probability, $\omega$ is successful.

For each successful $\omega$, let $x_1^{\omega}, y_1^{\omega}, \dots, x_{qn}^{\omega}, y_{qn}^{\omega}$ be the vertices of $T_{\phi_1}(\omega)$ which need to be joined by paths of length $3$ to get a copy of $T_2$. From the conclusion of Lemma~\ref{Lemma_few_connecting_paths}, for each successful $\omega$, we can find paths $P_i^\omega$, $i\in [qn]$, so that the paths are vertex disjoint and collectively $D$-rainbow, and each path $P_i^\omega$ is an $x_i,y_i$-path with length 3 and internal vertices in $U$.
Letting $T_{\phi_2}=T_{\phi_1}(\omega)\cup P_1^{\omega}\cup \dots\cup P_{qn}^{\omega}$, $(V_{\phi_2}, C_{\phi_2}, T_{\phi_2})$ gives the required randomized embedding of $T_2$.
\end{proof}

The following lemma extends randomized embeddings of trees by adding a sequence of matchings of leaves.
\begin{lemma}[Extending with matchings]\label{Lemma_extending_with_matchings}
Let $\ell^{-1},p, q\gg  n^{-1}$.
Let $T_1\subseteq T_{\ell}$ be forests such that $T_{\ell}$ is formed by adding a sequence of $\ell$ matchings of leaves to $T_1$. Let  $K_{2n+1}$ be $2$-factorized.  Suppose $|T_{\ell}|-|T_1|\leq pn$.
Let $\phi_1=(V_{\phi_1}, C_{\phi_1}, T_{\phi_1})$ be a  randomized rainbow embedding of $T_1$ into $K_{2n+1}$.  Let $U_{ind}\subseteq V(K_{2n+1})\setminus V_{\phi_1}$, $D_{ind}\subseteq C(K_{2n+1})\setminus C_{\phi_1}$ be $q$-random, independent subsets.  Let $U_{dep}\subseteq V(K_{2n+1})\setminus (V_{\phi_1}\cup U_{ind})$ be $p/2$-random, and $D_{dep}\subseteq C(K_{2n+1})\setminus(C_{\phi_1}\cup D_{ind})$  $p$-random (possibly depending on each other).

Then $\phi_1$ can be extended into a randomized rainbow embedding $\phi_{\ell}=(V_{\phi_{\ell}}, C_{\phi_{\ell}}, T_{\phi_{\ell}})$ of $T_{\ell}$ into $V_{\phi_{\ell}}=V_{\phi_1}\cup U_{ind}\cup U_{dep}$ and $C_{\phi_{\ell}}=C_{\phi_1}\cup D_{ind}\cup D_{dep}$.
\end{lemma}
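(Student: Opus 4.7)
The plan is to extend $\phi_1$ to $\phi_\ell$ iteratively, adding one matching at a time by $\ell$ applications of Lemma~\ref{Lemma_sat_matching_random_embedding}. Let $F_0 = T_1 \subset F_1 \subset \cdots \subset F_\ell = T_\ell$ where $F_i$ is formed from $F_{i-1}$ by attaching a matching of $m_i$ non-neighbouring leaves to a set $X_i \subseteq V(F_{i-1})$; then $\sum_i m_i = |T_\ell|-|T_1| \leq pn$, and by discarding empty matchings we may assume $\ell \leq pn$.

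The first step is to split each of the four random sets into $\ell$ pairwise disjoint parts using Lemma~\ref{Lemma_mixture_of_p_random_sets}. Setting $r_i := (m_i + 1)/(M + \ell)$, where $M = \sum_j m_j$ (so that $r_i p n \geq m_i$ and $\sum r_i = 1$), I would independently assign each element of $U_{dep}$, $D_{dep}$, $U_{ind}$, $D_{ind}$ a label in $[\ell]$ with distribution $(r_i)_{i\in[\ell]}$, using four mutually independent labellings. Writing $U_{dep}^i, D_{dep}^i, U_{ind}^i, D_{ind}^i$ for the corresponding labelled sub-parts, these are respectively $(r_i p/2)$-, $(r_i p)$-, $(r_i q)$-, and $(r_i q)$-random subsets of $V(K_{2n+1})$ and $C(K_{2n+1})$. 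Crucially, since the labellings of $U_{ind}$ and $D_{ind}$ are drawn independently of one another while $U_{ind}$ and $D_{ind}$ are themselves independent, the pair $(U_{ind}^i, D_{ind}^i)$ remains independent at each step~$i$.

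Then, inductively, given a randomized rainbow embedding $\phi_{(i-1)}$ of $F_{i-1}$, I would construct $\phi_{(i)}$ by finding a perfect rainbow matching from $W_i := \phi_{(i-1)}(X_i)$ into $U_{dep}^i \cup U_{ind}^i$ using colours in $D_{dep}^i \cup D_{ind}^i$. Such a matching exists by Lemma~\ref{Lemma_sat_matching_random_embedding} applied with parameters $p' = r_i p$ and $\gamma' = r_i q$: the size condition $|W_i| = m_i \leq p' n$ holds by the choice of $r_i$, the rate conditions $r_i p, r_i q \gg n^{-1}$ hold because $M + \ell \leq 2 pn$ combined with $p, q, \ell^{-1} \gg n^{-1}$, and the disjointness condition $W_i \cap (U_{dep}^i \cup U_{ind}^i) = \emptyset$ follows because $V(T_{\phi_{(i-1)}})$ uses vertices only from $V_{\phi_1} \cup \bigcup_{j<i}(U_{dep}^j \cup U_{ind}^j)$, which is disjoint from the fresh parts $U_{dep}^i \cup U_{ind}^i$ by construction. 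Extending by this matching yields $\phi_{(i)}$ as an extension of $\phi_{(i-1)}$ in the sense of Definition~\ref{Definition_embedding_extension}, and at the end of the iteration we have $V_{\phi_\ell} = V_{\phi_1} \cup U_{dep} \cup U_{ind}$ and $C_{\phi_\ell} = C_{\phi_1} \cup D_{dep} \cup D_{ind}$, as required.

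The main obstacle is the bookkeeping of the splitting: choosing the mixing fractions $r_i$ so that the size constraint $m_i \leq r_i p n$ is met for every $i$ simultaneously, while keeping $\sum_i r_i \leq 1$ (which depends on the assumption $\sum m_i \leq pn$) and maintaining the required independence properties between $U_{ind}^i$ and $D_{ind}^i$. Once this bookkeeping is set up, a union bound over the $\ell \ll n$ applications of Lemma~\ref{Lemma_sat_matching_random_embedding}, each succeeding with probability $1-o(n^{-1})$, yields the desired extension $\phi_\ell := \phi_{(\ell)}$ and completes the proof.
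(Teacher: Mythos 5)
Your overall strategy is the same as the paper's: split the four random sets into $\ell$ disjoint random pieces, apply Lemma~\ref{Lemma_sat_matching_random_embedding} once per matching, and take a union bound over the $\ell$ applications. However, the bookkeeping you yourself flag as ``the main obstacle'' is where the proposal goes wrong, in two places. First, the inequality $r_i pn\geq m_i$ with $r_i=(m_i+1)/(M+\ell)$ is false in general: take $\ell=2$, $m_1=pn-1$, $m_2=1$, so $M=pn$ and $r_1pn=(pn)^2/(pn+2)<pn-1=m_1$. This is minor and fixable (e.g.\ $r_i=m_i/M$ works for the dependent sets, since Lemma~\ref{Lemma_sat_matching_random_embedding} places no lower bound on its parameter $p$).

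The second issue is genuine. Lemma~\ref{Lemma_sat_matching_random_embedding} requires its independence parameter to satisfy $\gamma\gg n^{-1}$; this is what guarantees that every vertex still has $\gamma^2 n>\beta n$ colour-$C_{ind}$ neighbours in $V_{ind}$, which is how the leftover $\beta n$ unmatched vertices get absorbed. In your scheme the independent sets at step $i$ are $(r_iq)$-random, and if the matchings have very uneven sizes (say $m_1=\dots=m_{\ell-1}=1$ and $m_\ell=pn-\ell+1$), then $r_i q=\Theta(q/(pn))=\Theta(n^{-1})$ for the small steps, so $V_{ind}^i$ has expected size $O(1)$ and the lemma simply does not apply. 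The paper avoids this by treating the two kinds of sets asymmetrically: the dependent sets $U_{dep},D_{dep}$ are split proportionally to the matching sizes (into $(p_i/2)$- and $p_i$-random parts), but the independent sets $U_{ind},D_{ind}$ are split into \emph{equal} $(q/\ell)$-random parts, which satisfies $q/\ell\gg n^{-1}$ precisely because of the hypothesis $\ell^{-1},q\gg n^{-1}$. With that one change (and the corrected $r_i$ for the dependent sets), your argument goes through.
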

\begin{proof}
Without loss of generality, suppose that  $|T_{\ell}|-|T_1|= pn$.
Define forests $T_1, \dots, T_{\ell}$, and $p_1, \dots, p_{\ell}\in [0,1]$ such that each $T_{i+1}$ is constructed from $T_i$ by adding a matching of $p_in$ leaves.
Randomly partition $U_{dep}=\{U_{dep}^1, \dots, U_{dep}^{\ell}\}$ and $D_{dep}=\{D_{dep}^1, \dots, D_{dep}^{\ell}\}$ so that each set $U_{dep}^i\subseteq V(K_{2n+1})$ is $p_i/2$-random and each set $D_{dep}^i\subseteq C(K_{2n+1})$ is $p_i$-random.
Randomly partition   $U_{ind}=\{U_{ind}^1, \dots, U_{ind}^{\ell}\}, D_{ind}=\{D_{ind}^1, \dots, D_{ind}^{\ell}\}$ so that each set $U_{ind}^i$ and $D_{ind}^i$ is $(q/\ell)$-random.

Let $\Omega$ be the probability space for $\phi_1$.
We say that $\omega\in \Omega$ is \emph{successful} if $T_{\phi_1}(\omega)$ is a copy of $T_1$ and, for each $i\in [\ell]$, the conclusion of Lemma~\ref{Lemma_sat_matching_random_embedding} holds for $U_{dep}^{i}$, $U_{ind}^{i}$, $D_{dep}^{i}$, $D_{ind}^{i}$, $p=p_i$, $\gamma = q/\ell$ and    $n=n$.
As $\phi_1$ is a randomized embedding of $T_1$, and by Lemma~\ref{Lemma_sat_matching_random_embedding}, we have that, with high probability, $\omega$ is successful. Note that, for this, we take a union bound over $\ell$ events, using that the conclusion of Lemma~\ref{Lemma_sat_matching_random_embedding} holds
with probability $1-o(n^{-1})$ in each application.

For each successful $\omega$, define a sequence of trees $T_0^{\omega},T_1^{\omega}, \dots, T_{\ell}^{\omega}$. Fix $T_0^{\omega}=T_{\phi_1}(\omega)$ and recursively construct $T_{i+1}^{\omega}$ from $T_i^{\omega}$ by adding an appropriate $(D_{dep}^i\cup D_{ind}^i)$-rainbow size $p_in$ matching into $U_{dep}^i\cup U_{ind}^i$ to make $T_i^\omega$ into a copy of $T_{i+1}$ (which exists as the conclusion of Lemma~\ref{Lemma_sat_matching_random_embedding} holds because $\omega$ is successful).
Letting $T_{\phi_{\ell}}(\omega)= T_{\ell}^{\omega}$ gives the required randomized embedding of $T_{\ell}$.
\end{proof}

The following lemma gives a randomized embedding of any small tree.
\begin{lemma} [Small trees with a replete subset]\label{Lemma_embedding_small_tree}
Let $q\gg \gamma\geq \nu \gg \xi \gg n^{-1}$.
Let $T$ be a tree with $|T|= \gamma n$ containing a set $U\subset V(T)$ with $|U|= \nu n$. Let $K_{2n+1}$ be $2$-factorized with $V_\phi, C_\phi$ $q$-random sets of vertices and colours respectively.
Then, there is a randomized rainbow embedding $\phi=(T_\phi, V_\phi, C_\phi)$ along with independent $q/2$-random sets $V_0\subseteq V_{\phi}\setminus V(T_{\phi})$,  $C_0\subseteq C_{\phi}\setminus C(T_{\phi})$    with $(V_0, U_\phi)$ $\xi n$-replete with high probability (where $U_\phi$ is the copy of $U$ in $T_\phi$).
\end{lemma}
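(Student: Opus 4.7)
I plan to build the embedding by first setting up the random sets via independent three-way coin tosses so that $V_0\perp C_0$, and then embedding $T$ by a randomized greedy procedure.

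For each $v\in V(K_{2n+1})$ independently, put $v$ into $V_0$ with probability $q/2$, into $V_\phi\setminus V_0$ with probability $q/2$, and outside $V_\phi$ with the remaining probability $1-q$. Using a completely independent collection of coin tosses, do the analogous assignment on $C(K_{2n+1})$ to obtain $C_0\subset C_\phi$. Then $V_\phi$ and $C_\phi$ are $q$-random, $V_0$ and $C_0$ are $q/2$-random, and they are independent of each other because they are defined from disjoint randomness. Set $V_1:=V_\phi\setminus V_0$ and $C_1:=C_\phi\setminus C_0$; these are $q/2$-random and independent of each other. Now embed $T$ into $V_1$ with colours from $C_1$ by a randomized greedy: process $V(T)$ in a BFS order so that each new vertex has exactly one previously embedded neighbour, and at each step pick uniformly at random among the unused vertices in $V_1$ adjacent to the parent by an unused colour in $C_1$. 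By Lemma~\ref{Lemma_high_degree_into_random_set} applied to the independent $q/2$-random sets $V_1, C_1$, with high probability every vertex of $K_{2n+1}$ has at least $q^2 n/4$ colour-$C_1$ neighbours in $V_1$. Since $\gamma\ll q^2$, after excluding the at most $\gamma n$ already used vertices and the at most $2\gamma n$ vertices blocked by already-used colours at the parent, at least $q^2 n/8$ valid choices remain at each step, so the greedy succeeds with high probability and produces a rainbow copy $T_\phi$ of $T$ with $V(T_\phi)\subset V_1$ and $C(T_\phi)\subset C_1$; in particular, $V_0\subset V_\phi\setminus V(T_\phi)$ and $C_0\subset C_\phi\setminus C(T_\phi)$ as required.

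The hard part is showing that $(V_0, U_\phi)$ is $\xi n$-replete with high probability. Condition on $V_1$, $C_1$, and the greedy randomness, so that $U_\phi$ is determined; then $V_0$ becomes a $p'$-random subset of $V(K_{2n+1})\setminus V_1$ with $p':=(q/2)/(1-q/2)\geq q/2$. For each colour $c$, writing $M_c:=2\nu n-K_c$ with $K_c:=\sum_{u\in U_\phi}|N_c(u)\cap V_1|$, the quantity $e_c(U_\phi, V_0)$ is conditionally a sum of $M_c$ Bernoulli$(p')$ indicators (with each underlying coin used at most twice), and so has conditional mean at least $M_c q/4$. The main technical step---and where the heart of the work lies---is showing that $K_c=O(\nu q n)$ uniformly in $c$ with high probability: this combines Lemma~\ref{Lemma_number_of_colours_inside_random_set}, which bounds $\sum_{v\in V_1}|N_c(v)\cap V_1|=2e_c(V_1)\leq (1+o(1))q^2 n$, with a careful analysis showing that the uniform random choices in the greedy keep the image $U_\phi$ sufficiently spread through $V_1$ (so that $\mathbb{P}(v\in U_\phi)=O(\nu/q)$ uniformly in $v$), together with an Azuma-type concentration for $K_c$ viewed as a Doob martingale in the greedy steps. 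Combined with $\xi\ll\nu q$ (which is a polynomial consequence of $q\gg\gamma\geq\nu\gg\xi$), this gives $M_c q/4\gg \xi n$, and Chernoff plus a union bound over the $n$ colours then yields $e_c(U_\phi, V_0)\geq \xi n$ for every $c$ with high probability.
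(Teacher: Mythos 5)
Your setup (splitting $V_\phi$ and $C_\phi$ into independent halves, greedily embedding $T$ into $V_1$ with colours from $C_1$, then exposing $V_0$ conditionally) is sound, and the final Chernoff/union-bound step is the same as the paper's use of Lemma~\ref{Lemma_inheritence_of_lower_boundedness_random}. But the argument has a genuine gap exactly where you say "the heart of the work lies": the claim that $K_c=\sum_{u\in U_\phi}|N_c(u)\cap V_1|$ is $O(\nu q n)$ uniformly in $c$ with high probability. Since each vertex has exactly two colour-$c$ edges, all you need is $K_c\leq 1.9\nu n$, but even that requires showing that $U_\phi$ is spread essentially \emph{uniformly} through $V_1$, i.e.\ $\P(v\in U_\phi\mid v\in V_1)=O(\nu/q)=O(|U|/|V_1|)$. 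The bound one actually gets from "at least $q^2n/8$ equally likely valid choices at each step" is only $\P(v=\phi(u))=O(1/(q^2n))$ per step, hence $\P(v\in U_\phi)=O(\nu/q^2)$ and $\E K_c=O(\nu n/q)$, which exceeds $2\nu n$ and is vacuous. To do better you must rule out the greedy systematically favouring, for some colour $c$, the positions of $V_1$ whose colour-$c$ neighbours lie back in $V_1$ --- and since the set of valid choices at each step is $N_{C_1'}(\mathrm{parent})\cap V_1'$, a structured set determined by $V_1$ and the history, this is not a symmetry argument. The proposed Doob-martingale concentration is also delicate: changing one greedy choice relocates a whole subtree, so the martingale differences are not obviously $O(1)$. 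None of this is impossible, but as written it is an assertion, not a proof, and it is the entire content of the lemma.

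The paper avoids this problem with a different device: it reserves, inside $V_\phi$, a very sparse $(100\nu/q)$-random set $V_2$ and an $\alpha$-random set $V_1$ with $\alpha\ll q$, embeds $U$ into $V_2$ and $T\setminus U$ into $V_1$, and observes (via Lemma~\ref{Lemma_number_of_colours_inside_random_set}-type estimates) that with high probability \emph{every} colour has at most $400\alpha(\nu/q)n\leq 0.1|U|$ edges from $V_2$ into $V_1\cup V_2$. This is a property of the random sets alone, so no matter how the greedy places $U_\phi$ inside $V_2$, at least $0.9\cdot 2|U_\phi|$ of the colour-$c$ edge-slots at $U_\phi$ leave $V_1\cup V_2$ for every $c$ simultaneously; repleteness into $V_0$ then follows by intersecting with a fresh $(q/2)$-random set. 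In short, the paper makes the "leakage" bound deterministic given the random sets, whereas your route requires a nontrivial anti-concentration statement about the embedding process itself. I would either adopt the separate-sparse-set trick or supply a complete proof of the spreadness of $U_\phi$; as it stands the lemma is not proved.
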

\begin{proof}
Choose $\alpha$ such that $q\gg \alpha\gg \gamma, \nu$.
Inside $V_{\phi}$ choose disjoint sets $V_0, V_1, V_2$ which are $q/2$-random, $\alpha$-random, and $100\nu/q$-random respectively.
Inside $C_{\phi}$ choose disjoint sets  $C_0, C_1, C_2$ which are $q/2$-random, $q/4$-random, and $q/4$-random respectively. Notice that the total number of edges of any particular color is $2n+1$
and the probability that any such edge is going from $V_2$ to $V_1  \cup V_2$ is 
$2(100\nu/q)(100\nu/q+\alpha)$. Therefore by Lemmas~\ref{Lemma_high_degree_into_random_set} and \ref{Lemma_Azuma} the following hold with high probability.
\begin{enumerate}[label = {\bfseries \Alph{propcounter}\arabic{enumi}}]
    \item \label{eq_repletetree_1} Every vertex $v$ has $|N_{C_1}(v)\cap V_1|\geq \alpha q n/4\geq 3|T|$ and $|N_{C_2}(v)\cap V_2|\geq (100\nu/q)\cdot qn/4\geq 4|U|$.
 \item \label{eq_repletetree_3} Every colour has at most $400\alpha (\nu/q) n\leq 0.1|U|$ edges from $V_2$ to $V_1\cup V_2$.
\end{enumerate}
Using \ref{eq_repletetree_1}, we can greedily find a rainbow copy $T_\phi$ of $T$ in $V_1\cup V_2\cup C_1\cup C_2$ with $U$ embedded into $V_2$. Indeed,  embed one vertex at a time, always embedding a vertex which has one neighbour into preceding vertices. Moreover,  put a vertex from $T\setminus U$ into $V_1$ using an edge of colour from $C_1$ and put a vertex from $U$ into $V_2$ using an edge of colour from $C_2$.
Let $U_\phi$ be the copy of $U$ in $T_\phi$. By~\ref{eq_repletetree_3}, and as $|U_\phi|=\nu n$, $(U_\phi, V(K_{2n+1})\setminus (V_1\cup V_2))$ is  $0.9\nu n$-replete. Indeed, for every color there are at least $|U_\phi|$ edges of this color incident to $U_\phi$ and at most $0.1|U_\phi|$ of them going to $V_1 \cup V_2$.

Consider a set $V_0'\subseteq V(K_{2n+1})$ which is $\big(\frac{1}{1-\alpha-100\nu/q}\big)\cdot (q/2)$-random and independent of $V_1, V_2, C_0, C_1, C_2$. Notice that $V_0'\setminus (V_1\cup V_2)$ is $q/2$-random. Thus the joint distributions of the families of random sets $\{V_0'\setminus (V_1\cup V_2),  V_1, V_2, C_0, C_1, C_2\}$ and $\{V_0,  V_1, V_2, C_0, C_1, C_2\}$ are exactly the same. By Lemma~\ref{Lemma_inheritence_of_lower_boundedness_random} applied with $A=U_{\phi}, B=V(K_{2n+1})\setminus (V_1\cup V_2), V=V_0'$, the pair $(U_{\phi}, V_0'\setminus (V_1\cup V_2))$ is $\xi$-replete with high probability. Since $(U_{\phi}, V_0'\setminus (V_1\cup V_2))$ has the same distribution as $(U_{\phi}, V_0)$, the latter pair is also  $\xi$-replete with high probability.
\end{proof}

The following is the main result of this section. It finds a rainbow embedding of every nearly-spanning tree. Given the preceding lemmas, the proof is quite simple. First, we decompose the tree into star forests, matchings of leaves, and connecting paths using  Lemma~\ref{Lemma_tree_splitting}. Then, we embed each of these parts using the preceding lemmas in this section.
\begin{proof}[Proof of Theorem~\ref{nearembedagain}]
Choose $\beta$ and $d$ such that $1\geq  \eps\gg  \eta\gg \beta \gg  \mu\gg  d^{-1}\gg  \xi$ with $k^{-1}\gg d^{-1}$.
By Lemma~\ref{Lemma_tree_splitting}, there are forests $T_1^{\mathrm{small}}\subseteq T_2^{\mathrm{stars}}\subseteq T_3^{\mathrm{match}}\subseteq T_4^{\mathrm{paths}}\subseteq T_5^{\mathrm{match}}=T'$ satisfying \ref{rush1} -- \ref{rush5} with  $d=d, n=(1-\eps)n$ and $U=U$. Fix $U'=U\cap T_1^{\mathrm{small}}$.

Fix $p'=1-|T_2^{\mathrm{stars}}|/n=(|T'|-|T_2^{\mathrm{stars}}|)/n+\eps$. We claim that $p'\geq (p+\eps)/2$. To see this, let $T''$ be the tree with $|T''|=pn$ formed by deleting leaves of vertices with $\geq k$ leaves in $T'$. Let $A\subset V(T_1^{\mathrm{small}})$ be the vertices to which leaves need added to get $T_2^{\mathrm{stars}}$ from $T_1^{\mathrm{small}}$, and, for each $v\in A$, let $d_v\geq d$ be the number of such leaves that are added to $v$. Then we have that $\sum_{v\in A} d_v=|T_2^{\mathrm{stars}}|-|T_1^{\mathrm{small}}|$.
Let $d'_v$ be the number of these leaves added to $v$ which are not themselves leaves in $T'$. 
Note that, for each $v\in A$, at least $d_v'$ neighbours of $v$ lie in $V(T')\setminus V(T_2^{\mathrm{stars}})$. Thus, we have
$\sum_{v\in A}d_v'\leq |T'|-|T_2^{\mathrm{stars}}|$. Moreover, for each $v\in A$ with $d_v-d'_v \geq k$ we deleted  $d_v-d'_v$ leaves attached to $v$ when we formed $T''$ from $T'$, otherwise we deleted nothing. In both cases we deleted at least $d_v-d'_v-k$ leaves for each $v \in A$.
Therefore, using that $|A| \leq|T_1^{\mathrm{small}}|\leq n/d$, we have
$$|T'|-|T''| \geq \sum_{v\in A}(d_v-d_v'-k)=|T_2^{\mathrm{stars}}|-|T_1^{\mathrm{small}}|-\sum_{v\in A}(d_v'+k) \geq |T_2^{\mathrm{stars}}|-(k+1)n/d-\sum_{v\in A}d_v'\,.$$
Thus,
$$
pn= |T''| \leq |T'|-|T_2^{\mathrm{stars}}|+(k+1)n/d+\sum_{v\in A}d_v'\leq 2(|T'|-|T_2^{\mathrm{stars}}|)+\eps n= 2p'n-\eps n,
$$
implying, $p'\geq (p+\eps)/2$.

For each $i\in \{2, \dots, 5\}$ (and the label $*$ meaning ``$\mathrm{small},\, \mathrm{stars},\,  \mathrm{match}$ or $\mathrm{paths}$" appropriately), let $p_i=(|T_i^*|-|T_{i-1}^*|)/n$. Set $p_6=\epsilon-\beta.$
These constants represent the proportion of colours which are used for embedding each of the subtrees (with $p_6$ representing the proportion of colours left over for the set $C$ in the statement of the lemma).
Notice that $p'=p_3+p_4+p_5+p_6+\beta$.
For each $i\in [6]$, choose disjoint, independent $2\mu$-random sets $V^{i}_{ind}, C_{ind}^i$. Do the following.

\begin{itemize}
    \item Apply Lemma~\ref{Lemma_embedding_small_tree} to $T_1^{\mathrm{small}}$, $U=U'$, $V_\phi=V_{1}^{ind}$, $C_\phi=C_{1}^{ind}$, $q=2\mu$, $\gamma=|T_1^{small}|/n$, $\nu=|U'|/n$ and $\xi=\xi$.
This gives a  randomized rainbow embedding $\phi_1=(T_{\phi_1}, V_{1}^{ind}, C_{1}^{ind})$ of $T_1^{small}$ and  $\mu$-random, independent sets $V_0\subseteq V_{1}^{ind}\setminus V(T_{\phi_1})$, $C_0\subseteq C_{1}^{ind}\setminus C(T_{\phi_1})$ with $(U'_{\phi_1},V_0)$ $\xi n$-replete with high probability (where $U'_{\phi_1}$ is the copy of $U'$ in $T_{\phi_1}$).
\end{itemize}
Notice that $\phi_1'=(V_1^{ind}\cup \dots \cup V_6^{ind}, C_1^{ind}\cup \dots\cup C_6^{ind}, T_{\phi_1} )$ is also a  randomized embedding  of $T_1^{\mathrm{small}}$. 
Then, do the following.
\begin{itemize}
\item Apply  Lemma~\ref{Lemma_extending_with_large_stars} with $T_1=T_1^{\mathrm{small}}$, $T_2=T_2^{\mathrm{stars}}$, $\phi_1=\phi_1'$, $p=p', \beta=\beta, \gamma=12\mu, d=d, n=n$. This gives a randomized embedding $\phi_{2}=(V_{\phi_2}, C_{\phi_2}, T_{\phi_2})$ of $T_2^{\mathrm{stars}}$ extending $\phi_{1}'$, with $V_{dep}=V(K_{2n+1})\setminus V_{\phi_2}$, $C_{dep}=C(K_{2n+1})\setminus C_{\phi_2}$ $(p'-\beta)$-random sets of vertices/colours.
\end{itemize} 
Notice that $\phi_2'=(V_{\phi_2}\setminus (V_3^{ind}\cup \dots \cup V_6^{ind}), C_{\phi_2}\setminus (C_3^{ind}\cup \dots \cup C_6^{ind}), T_{\phi_2})$ is also a randomized embedding  of $T_2^{\mathrm{stars}}$. Indeed, recall that by our definition of extension we embed vertices of $T_2^{\mathrm{stars}}\setminus T_1^{small}$ outside of the set $V_1^{ind}\cup \dots \cup V_6^{ind}$ using the edges whose colors are also outside of $C_1^{ind}\cup \dots\cup C_6^{ind}$. Moreover, by our constriction, $V(T_{\phi_1})$ is disjoint from
$V_2^{ind}\cup \dots \cup V_6^{ind}$ and $C(T_{\phi_1})$ is disjoint from
$C_2^{ind}\cup \dots\cup C_6^{ind}$.

Using that $p_3+p_4+p_5+p_6 = p'-\beta$ and Lemma~\ref{Lemma_mixture_of_p_random_sets}, randomly partition $C_{dep}= C_{dep}^3\cup C_{dep}^4\cup C_{dep}^5\cup C_{dep}^6$  with the sets $C_{dep}^i$ $p_i$-random subsets of $C(K_{2n+1})$.

Using that $p'-\beta= (p_3+p_4+p_5+p_6)/2 + (p'-\beta)/2\geq p_3/2+p_4/2+p_5/2+ (p+\eps)/6$, inside $V_{dep}$ we can choose disjoint sets $V_{dep}^3, V_{dep}^4, V_{dep}^5, V_{dep}^6$ which are $\rho$-random for $\rho=p_3/2, p_4/2, p_5/2, (p+\eps)/6$ respectively. We remark that some of the random sets ($V_{ind}^2, C_{ind}^2, V_{ind}^6, C_{ind}^6, V_{dep}^4,$ and $C_{dep}^4$) will not actually used in the embedding. They are only there because it is notationally convenient to allocate equal amounts of vertices/colours for the different parts of the embedding.
\begin{itemize}
\item Apply Lemma~\ref{Lemma_extending_with_matchings} with $T_{\ell}=T_3^{\mathrm{match}}, T_1=T_{2}^{\mathrm{stars}}$, $V_{dep}=V_{dep}^3$, $C_{dep}=C_{dep}^3$, $V_{ind}=V_{ind}^3$, $C_{ind}=C_{ind}^3$, $\phi_1=\phi_2'$, $\ell=d, p=p_3, q=2\mu, n=n$. This gives we get a  randomized embedding $\phi_3=(V_{\phi_3}, C_{\phi_3}, T_{\phi_3})$  of $T_3^{\mathrm{match}}$ into $V^3_{ind}\cup V^3_{dep}\cup V_{\phi_2'}$, $V^3_{dep}\cup  V^3_{ind}\cup C_{\phi_2'}$ extending  $\phi_2$.
\item Apply Lemma~\ref{Lemma_extending_with_connecting_paths} with $T_2=T_4^{\mathrm{paths}}, T_1=T_{3}^{\mathrm{match}}$, $V_{ind}=V_{ind}^4$, $C_{ind}=C_{ind}^4$, $\phi_1=\phi_3$, $p=2\mu, q=(2k)^{-1}, n=n$. This gives a  randomized embedding $\phi_4=(V_{\phi_4}, C_{\phi_4}, T_{\phi_4})$  of $T_4^{\mathrm{paths}}$ into $V^4_{ind}\cup V_{\phi_3}, C^4_{ind}\cup C_{\phi_3}$ extending  $\phi_3$.
\item By Lemma~\ref{Lemma_extending_with_matchings} with $T_{\ell}=T_5^{\mathrm{match}}, T_1=T_{4}^{\mathrm{paths}}$, $V_{dep}=V_{dep}^5$, $C_{dep}=C_{dep}^5$, $V_{ind}=V_{ind}^5$, $C_{ind}=C_{ind}^5$, $\phi_1=\phi_4$, $\ell=d, p=p_5, q=2\mu, n=n$, we get a  randomized embedding $\phi_5=(V_{\phi_5}, C_{\phi_5}, T_{\phi_5})$  of $T_5^{\mathrm{match}}$ into $V^5_{ind}\cup V^5_{dep}\cup V_{\phi_4}$, $C^5_{dep}\cup  C^5_{ind}\cup V_{\phi_4}$ extending  $\phi_4$.
\end{itemize}
Now the lemma holds with $\hat T'=T_{\phi_5}$, $V_0=V_0$, $C_0=C_0$ (recall that these sets are from the application of Lemma~\ref{Lemma_embedding_small_tree} above),  $V=V_{dep}^6$, and $C\subseteq C_{dep}^6$ a $(1-\eta)\eps$-random subset. 
\end{proof}


\section{The embedding in Case C}\label{sec:lastC}\label{sec:caseC}
For our embedding in Case C, we distinguish between those trees with one very high degree vertex, and those trees without. The first case is covered by Theorem~\ref{Theorem_one_large_vertex}. In this section, we prove Lemma~\ref{Lemma_small_tree}, which covers the second case and thus completes the proof of Theorem~\ref{Theorem_case_C}.

For trees with no very high degree vertex, we start with the following lemma which embeds a small tree in the $ND$-colouring in a controlled fashion. In particular, we wish to embed a prescribed small portion of the tree into an interval in the cyclic ordering of $K_{2n+1}$ so that the distance between any two consecutive vertices in the image of the embedding is small.

\begin{lemma}[Embedding a small tree into prescribed intervals]\label{Lemma_small_tree}
Let $n\geq 10^5$.
Let $K_{2n+1}$ be $ND$-coloured, and let $T$ be a tree with at most $n/100$ vertices whose vertices are partitioned as  $V(T)=V_0\cup V_1\cup V_2$ with $|V_1|, |V_2|\leq 2n/\log^4 n$.
Let $I_0, I_1, I_2$ be disjoint intervals in $V(K_{2n+1})$ with $|I_0|\geq 7|V_0|$, $|I_1|\geq 8|V_1|\log^3 n $ and $|I_2|\geq 8|V_2|\log^3 n $.

Then, there is a rainbow copy $S$ of $T$ in $K_{2n+1}$ such that, for each $0\leq i\leq 2$, the image of $V_i$ is contained in $I_i$, and, furthermore, any consecutive pair of vertices of $S$ in $I_1$, or in $I_2$, are within distance $16\log^3 n$ of each other.
\end{lemma}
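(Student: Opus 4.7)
The strategy is a careful greedy embedding of $T$ in a tree-extension order that respects the three-way partition. Begin by partitioning $I_1$ into $|V_1|$ consecutive subintervals $J^1_1,\ldots,J^1_{|V_1|}$ each of length at least $8\log^3 n$, and partition $I_2$ analogously into subintervals $J^2_1,\ldots,J^2_{|V_2|}$. If we succeed in placing exactly one vertex of $V_1$ into each $J^1_k$ (and similarly for $V_2$), the distance constraint is automatically satisfied, since any two consecutive embedded vertices in $I_1$ (resp.\ $I_2$) then lie either in the same subinterval or in adjacent ones, so they are within $2\cdot 8\log^3 n=16\log^3 n$ of each other.

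Next, fix any tree-extension order $v_1,v_2,\ldots,v_m$ of $V(T)$, choosing the root $v_1$ in $V_0$ when $V_0\neq\emptyset$ (otherwise in $V_1\cup V_2$). We process the vertices one at a time, embedding $v_j$ according to its class: if $v_j\in V_0$, choose a free position in $I_0$ so that the edge from $v_j$ to its already-embedded tree-neighbour $p$ uses a color not yet used; if $v_j\in V_1$ is the $k$-th $V_1$-vertex to be processed, place it inside $J^1_k$ subject to the same constraint; and analogously for $V_2$. For $V_0$-vertices the greedy step is easy: $|I_0|\ge 7|V_0|$, while the number of forbidden positions in $I_0$ at any stage (used images plus the at most $2(|T|-1)$ vertices $p\pm c$ coming from used colors $c$, using that the ND-colouring is $2$-factorised) is dominated by $|I_0|$ once one notes that $|V_1|+|V_2|\le 4n/\log^4 n$ is negligible compared to $|I_0|$ in all regimes where $|V_0|$ is not extremely small, and that when $|V_0|$ is tiny the whole tree $T$ is small too.

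The main obstacle is the greedy step for $V_1$- and $V_2$-vertices: each subinterval $J^1_k$ has only $\ge 8\log^3 n$ positions, whereas the number of color-forbidden positions of $I_1$ at a given parent $p$ can be as large as $2(|T|-1)\le n/50$, and in the worst case these can pile up inside a single $J^1_k$ (the colors $\{d(p,u):u\in J^1_k\}$ form a contiguous range in the ND-colouring, so a run of used colors in that range can block all of $J^1_k$). We resolve this by exploiting two kinds of flexibility granted by the generous bound $|I_0|\ge 7|V_0|$. First, we defer embedding the $V_0$-parent of each $V_1$- or $V_2$-vertex when possible: when it is time to place $v_j\in V_1$ we pick any free position $u\in J^1_k$ and then choose the image of the parent $p\in V_0$ among many free positions in $I_0$ so that the color $d(p,u)$ is new; the large surplus in $I_0$ makes such a simultaneous choice always possible. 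Second, one may reorder processing so that $V_1\cup V_2$ is handled early (for example, rooting at a vertex of $V_1$ when $V_1\ne\emptyset$), keeping the pool of used colors small precisely when the delicate placements take place; and when this reordering is obstructed by the tree structure, a Hall-type matching between $V_1$-vertices and subintervals $J^1_k$, in which $v$ is matched to $J$ whenever $J$ contains an unforbidden position for $v$, absorbs the remaining cases. Combining these ideas shows that the greedy embedding succeeds at every step, producing the desired rainbow copy $S$ of $T$.
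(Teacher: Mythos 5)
Your setup — partitioning $I_1,I_2$ into $|V_1|,|V_2|$ consecutive subintervals of length about $8\log^3 n$, placing one vertex per subinterval to get the distance bound, and embedding greedily in a tree-extension order — matches the paper's skeleton, and you correctly identify the crux: at the moment a $V_1$-vertex is placed, up to $2(|T|-1)=\Theta(n)$ positions of $I_1$ can be colour-forbidden from its parent, and these can all land in the one subinterval you are forced to use. But your proposed resolutions do not close this gap. Deferring the embedding of the $V_0$-parent fails because the parent of a $V_1$-vertex need not lie in $V_0$, may already be embedded, and in any case can have many children (its position can only be chosen once, so at most one child benefits); it also breaks the tree-extension order on which the greedy scheme rests. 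Reordering so that $V_1\cup V_2$ comes first is not in general compatible with the tree structure (these vertices may sit deep below $V_0$-vertices), and even when it is, colours used for $V_0$-edges later are not the issue — the issue is the colours already used when the delicate placement happens, which you have not bounded. Finally, the ``Hall-type matching absorbs the remaining cases'' step is exactly the statement to be proved: you give no verification of Hall's condition, and without additional structure it is not clear it holds, since each subinterval corresponds to a contiguous run of colours and an adversarial history can exhaust such a run. There is also a secondary gap in the $V_0$ step: in your scheme all $|T|-1$ used colours forbid positions in $I_0$, so the forbidden count $|V_0|+2(|T|-1)$ can exceed $7|V_0|$ whenever $|V_0|\lesssim 2n/\log^4 n$ while $V_1\cup V_2$ is nonempty.

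The paper's proof supplies precisely the missing mechanism: it partitions the colours by parity and by residue modulo $2k+1$ (with $k\approx 2\log n$ odd) into a class $C_0$ of odd colours reserved for $V_0$ (so only $|V_0|$, not $|T|-1$, colours ever threaten $I_0$) and $2k$ classes $C_p^i$ of even colours, each of which has at least $k$ representatives on edges from any external vertex into any single subinterval. Vertices of $V_p$ always use the lowest-indexed class with a free colour, and an induction (``if at least $|V_p|/2^s$ subintervals are still free, no colour of $C_p^s$ has yet been used'') shows the classes are consumed in a geometric cascade, so the $k$-th class is never touched and a free colour into a free subinterval always exists. Some device of this kind — reserving, for each subinterval, a pool of colours that cannot be exhausted by the rest of the embedding — is essential here, and your proposal does not contain one.
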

\begin{proof}
Let $k\in \{\lfloor 2\log n\rfloor,\lfloor 2\log n\rfloor-1\}$ be odd.
Choose   consecutive subintervals $I_1^{1}, \dots, I_1^{|V_1|}$ in $I_1$ of length $k^3$, and choose consecutive subintervals $I_2^{1}, \dots, I_2^{|V_2|}$   in $I_2$ of length $k^3$.
Partition $C(K_{2n+1})$ as $C_0\cup C_1^{1}\cup \dots\cup C_1^{k}\cup C_2^{1}\cup \dots\cup C_2^{k}$ so that, for each $m\in [n]$ and $i\in [k]$,
\[
m\in \left\{\begin{array}{ll}
C_0 & \text{ if }m\text{ is odd},\\
C_1^i & \text{ if }m\text{ is even and }m\equiv i\mod{2k+1},\\
C_2^i & \text{ if }m\text{ is even and }m\equiv i+k\mod{2k+1}.
\end{array}
\right.
\]
Note that, for every $p\in [2]$, $i\in [k]$, $j\in [|V_p|]$, and $v\notin I_p^j$, since $C_p^i$ contains only even colours and $k$ is odd, $v$ has at least $\lfloor |I_p^j|/2(2k+1)\rfloor\geq k$ colour-$C_p^i$ neighbours in $I_p^j$ (we call a vertex $u$ a colour-$C_p^i$ neighbour of $v$ if the colour of the edge $vu$ belongs to $C_p^i$).

Order $V(T)$ arbitrarily as $v_1, \dots, v_{m}$ such that $\{v_1, \dots, v_i\}$ forms a subtree of $T$ for each $i\in [m]$.
We embed the vertices $v_1, \dots, v_{m}$ one by one, in $m$ steps, so that, in step $i$, we embed $v_i$ to some $u_i\in V(K_{2n+1})$. Since $T[v_1, \dots, v_i]$ is a tree, for each $i$, there is a unique $f(i)<i$ for which $v_{f(i)}v_i\in E(T)$. We will maintain that the edges $u_iu_{f(i)}$ have different colours for each $i$.
At step $i$ (the step in which $u_1, \dots, u_{i-1}$ have already been chosen and we are choosing $u_i$) we say that a vertex $x$ is \emph{free} if $x\not\in\{u_1, \dots, u_{i-1}\}$. We say that an interval $I_s^t$ is \emph{free}, if it contains no vertices from $\{u_1, \dots, u_{i-1}\}$. We say that a colour is \emph{free} if it does not occur on $\{u_{f(j)}u_j:2\leq j\leq i-1\}$. The procedure to choose $u_i$ at step $i$ is as follows.
\begin{itemize}
\item If $v_i\in V_0$, then $u_i$ is embedded to any free vertex in $I_0$ so that, if $i\geq 2$, $u_{f(i)}u_i$ is any free colour in $C_0$.
\item For $p\in [2]$, if $v_i\in V_p$, then $u_i$ is embedded into any free interval $I_p^i$ so that, if $i\geq 2$, $u_{f(i)}u_i$ uses any free colour in $C_p^j$, for $j$ as small as possible.
\end{itemize}
The lemma follows if we can embed all the vertices in this way. Indeed, as we have exactly the right number of intervals to embed one vertex per interval, consecutive vertices in $I_1$ (or $I_2$) are in consecutive intervals $I_1^i$, and hence at most $16\log^3n$ apart. Since $C_0$ contains all the odd colours,
vertex $u_{f(i)}$ has $\geq \lfloor |I_0|/2\rfloor-1 > 3|V_0|$ colour-$C_0$ neighbours in $I_0$. At each step at most $|V_0|$ of the vertices in $I_0$ are occupied, and at most $|V_0|$ colours are used. Since the $ND$-colouring is locally $2$-bounded,
this forbids at most $3|V_0|$ vertices. Therefore, there is always room to embed each $v_i \in V_0$ into $I_0$.
To finish the proof of the lemma, it is sufficient to show that, throughout the process, for each $p\in [2]$, there will always be sets $C_p^j$ which are never used.

\begin{claim}
Let $p\in [2]$, $i\in [m]$ and $s\in [k]$.
At step $i$, if there are $\geq |V_p|/2^{s}$ free intervals $I_p^j$, then no colour from $C_p^s$ has been used up to step $i$.
\end{claim}
\begin{proof}
The proof is by induction on $i$. The initial case $i=0$ is trivial.
Suppose it is true for steps $1,\ldots,i-1$ and suppose that there are still at least $|V_p|/2^s$ free intervals $I_p^j$. Now, by the induction hypothesis for $p'=p$, $i'=i-1$ and $s'=s-1$, colours from $C_p^{s-1}$ were only used when there were fewer than $|V_p|/2^{s-1}$ free intervals $I_p^j$, and hence at most $|V_p|/2^{s-1}$ vertices were embedded using colours from $C_p^{s-1}$. In each free interval, $u_{f(i)}$ has at least $k$ colour-$C_p^{s-1}$ neighbours.
and hence at least $k|V_p|/2^s$ colour-$C_p^{s-1}$ neighbours in the free intervals in total. As the $ND$-colouring is locally 2-bounded, this gives at least
$k|V_p|/2^{s+1}> |V_p|/2^{s-1}$ colour-$C_p^{s-1}$ neighbours adjacent to $u_{f(i)}$ by edges of different colours. Therefore $u_{f(i)}$ has some colour $C_p^{s-1}$-neighbour $u_i$ in a free interval so that $u_{f(i)}u_i$ is a free colour. Thus, no colour from $C_p^s$ is used in step $i$.
\end{proof}
Taking $s=k$, we see that colours from $C_1^k$ and $C_2^k$ never get used while there is at least one free interval. As there are exactly as many intervals as vertices we seek to embed, the colours from $C_1^k$ and $C_2^k$ are never used, and hence the procedure successfully embeds $T$.
\end{proof}

\begin{figure}[h]
\begin{center}
{
\begin{tikzpicture}[scale=0.7,define rgb/.code={\definecolor{mycolor}{rgb}{#1}},
                    rgb color/.style={define rgb={#1},mycolor}]]

\draw [thick,rgb color={1,0,0}, rotate = {4.9315068493*11}] (0:4) to [in={4.9315068493*1+180},out=180] ({4.9315068493 *1}:4);
\draw [thick,rgb color={1,0.166666666666667,0}, rotate = {4.9315068493*-4}] (0:4) to [in={4.9315068493*2+180},out=180] ({4.9315068493 *2}:4);
\draw [thick,rgb color={1,0.333333333333333,0}, rotate = {4.9315068493*11}] (0:4) to [in={4.9315068493*3+180},out=180] ({4.9315068493 *3}:4);
\draw [thick,rgb color={1,0.5,0}, rotate = {4.9315068493*-4}] (0:4) to [in={4.9315068493*4+180},out=180] ({4.9315068493 *4}:4);
\draw [thick,rgb color={1,0.666666666666667,0}, rotate = {4.9315068493*4}] (0:4) to [in={4.9315068493*5+180},out=180] ({4.9315068493 *5}:4);
\draw [thick,rgb color={1,0.833333333333333,0}, rotate = {4.9315068493*3}] (0:4) to [in={4.9315068493*6+180},out=180] ({4.9315068493 *6}:4);
\draw [thick,rgb color={1,1,0}, rotate = {4.9315068493*2}] (0:4) to [in={4.9315068493*7+180},out=180] ({4.9315068493 *7}:4);
\draw [thick,rgb color={0.833333333333333,1,0}, rotate = {4.9315068493*-16}] (0:4) to [in={4.9315068493*8+180},out=180] ({4.9315068493 *8}:4);
\draw [thick,rgb color={0.666666666666667,1,0}, rotate = {4.9315068493*-17}] (0:4) to [in={4.9315068493*9+180},out=180] ({4.9315068493 *9}:4);
\draw [thick,rgb color={0.5,1,0}, rotate = {4.9315068493*-10}] (0:4) to [in={4.9315068493*10+180},out=180] ({4.9315068493 *10}:4);
\draw [thick,rgb color={0.333333333333333,1,0}, rotate = {4.9315068493*-19}] (0:4) to [in={4.9315068493*11+180},out=180] ({4.9315068493 *11}:4);
\draw [thick,rgb color={0.166666666666667,1,0}, rotate = {4.9315068493*-20}] (0:4) to [in={4.9315068493*12+180},out=180] ({4.9315068493 *12}:4);
\draw [thick,rgb color={0,1,0}, rotate = {4.9315068493*-21}] (0:4) to [in={4.9315068493*13+180},out=180] ({4.9315068493 *13}:4);
\draw [thick,rgb color={0,1,0.166666666666667}, rotate = {4.9315068493*-22}] (0:4) to [in={4.9315068493*14+180},out=180] ({4.9315068493 *14}:4);
\draw [thick,rgb color={0,1,0.333333333333333}, rotate = {4.9315068493*-4}] (0:4) to [in={4.9315068493*15+180},out=180] ({4.9315068493 *15}:4);
\draw [thick,rgb color={0,1,0.5}, rotate = {4.9315068493*-24}] (0:4) to [in={4.9315068493*16+180},out=180] ({4.9315068493 *16}:4);
\draw [thick,rgb color={0,1,0.666666666666667}, rotate = {4.9315068493*-8}] (0:4) to [in={4.9315068493*17+180},out=180] ({4.9315068493 *17}:4);
\draw [thick,rgb color={0,1,0.833333333333333}, rotate = {4.9315068493*-26}] (0:4) to [in={4.9315068493*18+180},out=180] ({4.9315068493 *18}:4);
\draw [thick,rgb color={0,1,1}, rotate = {4.9315068493*-27}] (0:4) to [in={4.9315068493*19+180},out=180] ({4.9315068493 *19}:4);
\draw [thick,rgb color={0,0.833333333333333,1}, rotate = {4.9315068493*-28}] (0:4) to [in={4.9315068493*20+180},out=180] ({4.9315068493 *20}:4);
\draw [thick,rgb color={0,0.666666666666667,1}, rotate = {4.9315068493*-29}] (0:4) to [in={4.9315068493*21+180},out=180] ({4.9315068493 *21}:4);
\draw [thick,rgb color={0,0.5,1}, rotate = {4.9315068493*-32}] (0:4) to [in={4.9315068493*22+180},out=180] ({4.9315068493 *22}:4);
\draw [thick,rgb color={0,0.333333333333333,1}, rotate = {4.9315068493*-33}] (0:4) to [in={4.9315068493*23+180},out=180] ({4.9315068493 *23}:4);
\draw [thick,rgb color={0,0.166666666666667,1}, rotate = {4.9315068493*-34}] (0:4) to [in={4.9315068493*24+180},out=180] ({4.9315068493 *24}:4);
\draw [thick,rgb color={0,0,1}, rotate = {4.9315068493*-35}] (0:4) to [in={4.9315068493*25+180},out=180] ({4.9315068493 *25}:4);
\draw [thick,rgb color={0.166666666666667,0,1}, rotate = {4.9315068493*-36}] (0:4) to [in={4.9315068493*26+180},out=180] ({4.9315068493 *26}:4);
\draw [thick,rgb color={0.333333333333333,0,1}, rotate = {4.9315068493*-37}] (0:4) to [in={4.9315068493*27+180},out=180] ({4.9315068493 *27}:4);
\draw [thick,rgb color={0.5,0,1}, rotate = {4.9315068493*-38}] (0:4) to [in={4.9315068493*28+180},out=180] ({4.9315068493 *28}:4);
\draw [thick,rgb color={0.666666666666667,0,1}, rotate = {4.9315068493*-39}] (0:4) to [in={4.9315068493*29+180},out=180] ({4.9315068493 *29}:4);
\draw [thick,rgb color={0.833333333333333,0,1}, rotate = {4.9315068493*-40}] (0:4) to [in={4.9315068493*30+180},out=180] ({4.9315068493 *30}:4);
\draw [thick,rgb color={1,0,1}, rotate = {4.9315068493*-41}] (0:4) to [in={4.9315068493*31+180},out=180] ({4.9315068493 *31}:4);
\draw [thick,rgb color={1,0,0.833333333333333}, rotate = {4.9315068493*-42}] (0:4) to [in={4.9315068493*32+180},out=180] ({4.9315068493 *32}:4);
\draw [thick,rgb color={1,0,0.666666666666667}, rotate = {4.9315068493*-43}] (0:4) to [in={4.9315068493*33+180},out=180] ({4.9315068493 *33}:4);
\draw [thick,rgb color={1,0,0.5}, rotate = {4.9315068493*-44}] (0:4) to [in={4.9315068493*34+180},out=180] ({4.9315068493 *34}:4);
\draw [thick,rgb color={1,0,0.333333333333333}, rotate = {4.9315068493*-45}] (0:4) to [in={4.9315068493*35+180},out=180] ({4.9315068493 *35}:4);
\draw [thick,rgb color={1,0,0.166666666666667}, rotate = {4.9315068493*-46}] (0:4) to [in={4.9315068493*36+180},out=180] ({4.9315068493 *36}:4);

\foreach \x in {0,...,72}
{
\draw [fill=black] (4.9315068493*\x:4) circle [radius=0.05cm];
}


\def\diamin{4.25}
\def\diamout{4.5}
\def\diammid{4.5}
\def\Trad{4.9}
\def\Idiamin{5.1}
\def\Iradout{5.6}
\def\Idiammid{5.35}
\def\Irad{5.7}
\def\Ilimit{6.25}
\def\sizer{\small}

\draw [rotate=4.9315068493*-4]  (0:\diamin) -- (0:\diamout);
\draw [rotate=4.9315068493*0] (0:\diamin) -- (0:\diamout);
\draw [rotate=4.9315068493*-4]  (4.5,0) arc (0:4.9315068493*4:4.5);
\draw (4.9315068493*-2:\Trad) node {\sizer $V_0$};

\draw [rotate=4.9315068493*-4,black]  (0:\Idiamin) -- (0:\Iradout);
\draw [rotate=4.9315068493*-4,black] (0:\Ilimit) node {\sizer $0.9n$};
\draw [rotate=4.9315068493*3,black] (0:\Idiamin) -- (0:\Iradout);
\draw [rotate=4.9315068493*3,black] (0:\Ilimit) node {\sizer $0.83n$};
\draw [rotate=4.9315068493*-4,black]  (\Idiammid,0) arc (0:4.9315068493*7:\Idiammid);
\draw [black] (4.9315068493*-0.5:\Irad) node {\sizer $I_0$};

\draw [rotate=4.9315068493*9]  (0:\diamin) -- (0:\diamout);
\draw [rotate=4.9315068493*11] (0:\diamin) -- (0:\diamout);
\draw [rotate=4.9315068493*9]  (\diammid,0) arc (0:4.9315068493*2:\diammid);
\draw (4.9315068493*10:\Trad) node {\sizer $V_1$};
\draw [rotate=4.9315068493*8,black]  (0:\Idiamin) -- (0:\Iradout);
\draw [rotate=4.9315068493*8,black] (0:\Ilimit) node {\sizer $0.71n$};
\draw [rotate=4.9315068493*12,black] (0:\Idiamin) -- (0:\Iradout);
\draw [rotate=4.9315068493*12,black] (0:{\Ilimit-0.2}) node {\sizer $0.7n$};
\draw [rotate=4.9315068493*8,black]  (\Idiammid,0) arc (0:4.9315068493*4:\Idiammid);
\draw [black] (4.9315068493*10:\Irad) node {\sizer $I_1$};

\draw [rotate=4.9315068493*4,black]  (0:\Idiamin) -- (0:\Iradout);
\draw [rotate=4.9315068493*4,black] (0:\Ilimit) node {\sizer $0.82n$};
\draw [rotate=4.9315068493*24,black] (0:\Idiamin) -- (0:\Iradout);
\draw [rotate=4.9315068493*24,black] (0:{\Ilimit-0.3}) node {\sizer $1$};
\draw [rotate=4.9315068493*4,black]  (\Idiammid,0) arc (0:4.9315068493*20:\Idiammid);

\draw [rotate=4.9315068493*-10]  (0:\diamin) -- (0:\diamout);
\draw [rotate=4.9315068493*-8] (0:\diamin) -- (0:\diamout);
\draw [rotate=4.9315068493*-10]  (\diammid,0) arc (0:4.9315068493*2:\diammid);
\draw (4.9315068493*-9:\Trad) node {\sizer $V_2$};
\draw [rotate=4.9315068493*-11,black]  (0:\Idiamin) -- (0:\Iradout);
\draw [rotate=4.9315068493*-11,black] ($(0:\Ilimit)+(0,0.4)$) node {\sizer $0.92n$};
\draw [rotate=4.9315068493*-7,black] (0:\Idiamin) -- (0:\Iradout);
\draw [rotate=4.9315068493*-7,black] (0:\Ilimit) node {\sizer $0.91n$};
\draw [rotate=4.9315068493*-11,black]  (\Idiammid,0) arc (0:4.9315068493*4:\Idiammid);
\draw [black] (4.9315068493*-9:\Irad) node {\sizer $I_2$};

\draw [rotate=4.9315068493*-45,black]  (0:\Idiamin) -- (0:\Iradout);
\draw [rotate=4.9315068493*-45,black] (0:{\Ilimit-0.1}) node {\sizer $1.91n$};
\draw [rotate=4.9315068493*-12,black] (0:\Idiamin) -- (0:\Iradout);
\draw [rotate=4.9315068493*-12,black] (0:\Ilimit) node {\sizer $0.96n$};
\draw [rotate=4.9315068493*-45,black]  (\Idiammid,0) arc (0:4.9315068493*33:\Idiammid);

\end{tikzpicture}
}
\end{center}

\vspace{-0.6cm}

\caption{Embedding the tree in Case C.}\label{Figure_many_vertex}
\end{figure}

Now we are ready to prove Case C. Letting $T'$ be a tree $T$ in Case C with the neighbouring leaves of vertices with $\geq \log^4 n$ leaves removed, we embed $T'$ using the above lemma.
Then, much like in the proof Theorem~\ref{Theorem_one_large_vertex}, leaves are added to high degree vertices one at a time. In order to make sure we use all the colours, these leaves are located in particular intervals of the ND-colouring. In the proof, for technical reasons, we have three different kinds of high degree vertices (called type 1, type 2 and type 3 vertices), with vertices of a particular type having its neighbours chosen by a particular rule. (We note that we may have no type 2 vertices.) We embed the vertices of different types into 3 disjoint intervals.

\begin{proof}[Proof of Theorem~\ref{Theorem_case_C}]
See Figure~\ref{Figure_many_vertex} for an illustration of this proof. If $d_i\geq 2n/3$ for some $i$, then the result follows from Theorem~\ref{Theorem_one_large_vertex}.
Assume then that $v_1, \dots, v_{\ell}$ are ordered so that $2n/3\geq d_1\geq \dots\geq d_{\ell}$. Note that $d_1+\dots+d_{\ell}= n+1-|T'|\geq 99n/100+1$. Choose the smallest $m$ with $0.05n< d_1+ \dots+ d_m$. From minimality, we have $d_1+\dots+ d_m\leq d_1+0.05n$. Let $V_1=\{v_1, \dots ,v_m\}$, $V_2=\{v_{m+1}, \dots, v_{\ell}\}$, and $V_0=V(T')\setminus (V_1\cup V_2)$.
Let $I_0=[0.83n, 0.9n]$, $I_1=[0.7n, 0.71n]$, and $I_2=[0.91n, 0.92n]$. Then, it is easy to check that $|I_0| \geq 7|V_0|$, $|I_1|\geq 8|V_1|\log^3 n$ and $|I_2|\geq 8|V_2|\log^3 n$.

By Lemma~\ref{Lemma_small_tree}, there is a rainbow embedding of $T'$ in $I_0\cup I_1\cup I_2$ with $V_i$ embedded to $I_i$, for each $0 \leq i\leq 2$, and consecutive embedded vertices in $I_1$, and in $I_2$, within distance $16\log^3 n$ of each other. Say the copy of $T'$ from this is $S'$.

For odd  $m$, relabel vertices in $V_1$ so that, if $u_i$ is the image of $v_i$ under the embedding, then, for each $i\in [m]$, the vertices in the image of $V_1$ appear in $I_1$ in the following order:
\[
u_1,u_3,u_5,\ldots,u_{m},u_{m-1},u_{m-3},\ldots,u_2.
\]
For even $m$, relabel vertices in $V_1$ so that, if $u_i$ is the image of $v_i$ under the embedding, then, for each $i\in [m]$, the vertices in the image of $V_1$ appear in $I_1$ in the following order:
\[
u_1,u_3,u_5,\ldots,u_{m-1},u_{m},u_{m-2},\ldots,u_2.
\]
Relabel the vertices in $V_2$ so that, if $u_i$ is the image of $v_i$ under the embedding, then for each $i\in [\ell]\setminus [m]$, the vertices in the image of $V_2$ appear in $I_2$ in the order $u_{m+1},\ldots,u_\ell$. Finally, relabel the integers $d_i$, $i\in [\ell]$, to match this relabelling of the vertices $v_i$, $i\in [\ell]$. We uses this new labelling to embed neighbours of $u_i, i \leq \ell$ into three disjoint intervals (that are also disjoint from $V(S')$): $[1,u_1)$ if $i\leq m$ is odd, $(u_2,0.82n]$ if $i\leq m$ is even and $[0.96n,1.92n]$ if $m < i \leq \ell$.

For each $1\leq i\leq m$, if $i$ is odd we say that $u_i$ is \emph{type 1}, and if $i$ is even we say that $u_i$ is \emph{type 2}. For each $m<i\leq \ell$, we say that $u_i$ is \emph{type 3}. Note that, if $m=1$, then there are no type 2 vertices.
Furthermore, note that, if $m=1$, then $V(S')\subset \{u_1\}\cup I_0\cup I_2\subset \{u_1\}\cup [0.83n,0.92n]$, while, if $m>1$, then $V(S')\subset [u_1,u_2]\cup I_0\cup I_2\subset [u_1,u_2]\cup [0.83n,0.92n]$.

Let $C(K_{2n+1})\setminus C(S')=C_1\cup \ldots \cup C_\ell$ so that, for each $i\in [\ell]$, $|C_i|=d_i$, and, if $i<j$, then all the colours in $C_i$ are smaller than all the colours in $C_j$. Note that this is possible as $|C(K_{2n+1})\setminus C(S')|=n-|E(T')|=d_1+\ldots+d_\ell$.
Futhermore, note that, if $i,j\in [\ell]$ with $i\leq j-2$, then $\max(C_i)\leq \min (C_j)-d_{i+1}\leq \min(C_j)-\log^4 n$.

For each $i\in [\ell]$, do the following.
\begin{itemize}
\item If $u_i$ is a type 1 vertex, embed the vertices in $U_i:=\{u_i-c:c\in C_{i}\}$ as neighbours of $u_i$.
\item If $u_i$ is a type 2 or 3 vertex, embed the vertices in $U_i:=\{u_i+c:c\in C_{i}\}$ as neighbours of $u_i$.
\end{itemize}

Notice that, from the definition of the ND-colouring, we use edges with colour in $C_i$ to attach the neighbours of $u_i$, for each $i\in [\ell]$. Therefore, by design, the graph formed is rainbow. We need to show that the sets $U_i$, $i\in [\ell]$, are disjoint from each each other and from $V(S')$, so that we have a copy of $T$.

\begin{claim} If $m=1$, then $\max(C_m)\leq 0.68n$, while, if $m>1$, then $\max(C_m)\leq 0.11n$.
\end{claim}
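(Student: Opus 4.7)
The plan is to chase back the definitions of $m$, of the $d_i$'s, and of the colour sets $C_i$, and observe that the bound follows from two straightforward observations about the sum $d_1+\dots+d_m$.

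By construction of the $C_i$'s, the colours are allocated in the increasing order of $i$ with $|C_i|=d_i$, and $C_1,\dots,C_\ell$ together cover all colours in $C(K_{2n+1})\setminus C(S')$ starting from colour~$1$. Hence $\max(C_m)=d_1+d_2+\dots+d_m$ (or $\max(C_m) \leq d_1 + \ldots + d_m$, ignoring already used colours which only decreases the max). Moreover, the subsequent relabellings of $V_1$ and $V_2$ only permute the $d_i$ within the index ranges $[m]$ and $[\ell]\setminus[m]$ respectively, so the sum $d_1+\dots+d_m$ is unchanged by this relabelling and equals its original value from the ``$d_1\geq d_2\geq\dots\geq d_\ell$" ordering.

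For the case $m=1$: since we have already reduced to the situation where no $d_i\geq 2n/3$, we get $\max(C_1)=d_1\leq 2n/3<0.68n$.

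For the case $m>1$: the key observation is that minimality of $m$ forces $d_1\leq 0.05n$, since otherwise $m=1$ would already satisfy $d_1>0.05n$. Minimality also gives $d_1+\dots+d_{m-1}\leq 0.05n$. Combining these with $d_m\leq d_1\leq 0.05n$ (from the original decreasing ordering) yields $d_1+\dots+d_m\leq 0.05n+0.05n=0.1n<0.11n$, as required. This is the routine part; I expect no obstacle here, since the numerical slack between $0.1n$ and $0.11n$ is comfortable.
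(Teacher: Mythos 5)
There is a genuine error in one step, although the final bounds survive once it is corrected. You assert that $\max(C_m)=d_1+\cdots+d_m$, and that ignoring the already-used colours ``only decreases the max.'' This has the direction of the effect backwards. The sets $C_1,\ldots,C_\ell$ partition $C(K_{2n+1})\setminus C(S')$, not all of $[n]$: when the colours are allocated in increasing order, the $C_i$ must \emph{skip over} the colours already used on $S'$, and every skipped colour pushes $\max(C_m)$ \emph{up}, not down. The correct statement is
\[
\max(C_m)\;\leq\; d_1+\cdots+d_m+|C(S')|\;\leq\; d_1+\cdots+d_m+0.01n,
\]
since $|C(S')|=e(T')\leq |T'|\leq n/100$. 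This is exactly the accounting the paper does.

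With this correction your numerical estimates still close the argument: for $m=1$ you get $\max(C_1)\leq 2n/3+0.01n\leq 0.68n$, and for $m>1$ you get $\max(C_m)\leq 0.1n+0.01n=0.11n$. Note, however, that in the second case the bound is then \emph{tight}: the ``comfortable slack between $0.1n$ and $0.11n$'' that you point to is not spare room but is precisely the allowance needed for the skipped colours. The remainder of your argument is fine and matches the paper: minimality of $m$ gives $d_1\leq 0.05n$ and $d_1+\cdots+d_{m-1}\leq 0.05n$, hence $d_1+\cdots+d_m\leq 0.1n$; the reduction to $d_1\leq 2n/3$ comes from the earlier appeal to Theorem~\ref{Theorem_one_large_vertex}; and the relabelling of $V_1$ and $V_2$ only permutes indices within $[m]$ and $[\ell]\setminus[m]$, so it does not change the sum $d_1+\cdots+d_m$.
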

\begin{proof}
If $m=1$, then, as at most $0.01n$ colours are used to embed $T'$ and $d_1\leq 2n/3$, we have $\max(C_m)\leq 0.67n+0.01n=0.68n$.
Suppose then that $m>1$, and hence $d_1<0.05n$. By the minimality in the choice of $m$, we have that $d_1+\ldots+d_m\leq  0.05n+0.05n=0.1n$. Therefore, $\max(C_m)\leq 0.1n+0.01n= 0.11n$.
\end{proof}

From the claim, as the vertices $u_1,\ldots,u_m$ are in $[0.7n,0.71n]$ and $\max(C_i) \leq \max (C_m)$, for each $i\in [m]$, for each type 1 vertex $u_{i}$, we have $U_{i}\subset [u_{i}-\max (C_i),u_i-\min(C_i)]\subset [0.02n,0.71n] \subset [1,0.71n]$. For each type 2 vertex $u_{i}$, we have $U_{i}\subset [u_{i}+\min (C_i),u_{i}+\max(C_i)]\subset [0.7n,0.82n]$.

Now, for each type 3 vertex $u_{i}$, we have, as $i>m$, that $\min(C_i)\geq 0.05n$. Therefore, as $\max(C_i)\leq n$, we have $U_{i}\subset [u_{i}+\min (C_i),u_{i}+\max(C_i)]\subset [0.96n,1.92n]$. Furthermore, by the ordering of $u_{m+1},\ldots,u_\ell$, for each $m+1\leq i<j\leq \ell$, we have $u_i<u_j$ and $\max(C_i)<\min(C_j)$ so that
$u_{i}+\max (C_i)< u_{j}+\min (C_j)$. Thus, the sets $U_i$, $m+1\leq i\leq \ell$ are disjoint sets in $[0.96n,1.92n]$.

\begin{claim}
If $u_{i}$ is a type 1 vertex with $i\leq m-2$, then $\min(U_i)< \max (U_{i+2})$.
If $u_{i}$ is a type 2 vertex with $i\leq m-2$, then $\max(U_i) < \min (U_{i+2})$.
\end{claim}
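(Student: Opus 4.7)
The plan is to prove each inequality by a short direct calculation from the definitions of $U_i$ and $U_{i+2}$, using two quantitative ingredients already put in place earlier in the proof.

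First, from the relabeling of $V_1$'s images, the odd-indexed $u_j$'s and the even-indexed $u_j$'s occupy two contiguous blocks in the order along $I_1$: within the odd block the indices appear in increasing order, and within the even block in decreasing order. Consequently, whenever $i\leq m-2$, the vertices $u_i$ and $u_{i+2}$ lie in the same block and are adjacent in the ordering, so the ``consecutive vertices in $I_1$ are within $16\log^3 n$'' conclusion of Lemma~\ref{Lemma_small_tree} gives $|u_{i+2}-u_i|\leq 16\log^3 n$; moreover the sign of $u_{i+2}-u_i$ is positive in the odd (type 1) block and negative in the even (type 2) block.

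Second, since $C_1,\ldots,C_\ell$ is a partition of $[n]\setminus C(S')$ into consecutive intervals of colours of sizes $d_1,\ldots,d_\ell$, and since the hypothesis of Theorem~\ref{Theorem_case_C} forces $d_{i+1}\geq \log^4 n$, we have the colour-gap estimate
\[
\min(C_{i+2})-\max(C_i)\;\geq\;|C_{i+1}|+1\;=\;d_{i+1}+1\;\geq\;\log^4 n+1.
\]

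With these two ingredients in hand, each case unfolds by substituting the appropriate formulas. In the type 2 case, $U_j=u_j+C_j$ for $j\in\{i,i+2\}$, so $\max(U_i)=u_i+\max(C_i)$ and $\min(U_{i+2})=u_{i+2}+\min(C_{i+2})$; combining the sign of $u_{i+2}-u_i$ in the even block with the colour-gap bound yields
\[
\min(U_{i+2})-\max(U_i)\;=\;(u_{i+2}-u_i)+(\min(C_{i+2})-\max(C_i))\;\geq\;-16\log^3 n+\log^4 n+1\;>\;0,
\]
which is the claimed $\max(U_i)<\min(U_{i+2})$. The type 1 case is the mirror image: now $U_j=u_j-C_j$, so the relevant endpoints are $\min(U_i)=u_i-\max(C_i)$ and $\max(U_{i+2})=u_{i+2}-\min(C_{i+2})$, and combining the same two estimates with the sign of $u_{i+2}-u_i$ appropriate to the odd block gives the required $\min(U_i)<\max(U_{i+2})$ by routine algebra.

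The main delicacy I expect is bookkeeping the signs correctly when switching between the subtractive definition $U_j=u_j-C_j$ used for type 1 and the additive definition $U_j=u_j+C_j$ used for type 2, since this flips how the colour-gap interacts with the positional gap in the two blocks; conceptually, no further input beyond the two ingredients above is needed.
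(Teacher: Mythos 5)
Your two ingredients and your type~2 computation coincide exactly with the paper's argument: consecutive placement within a block of $I_1$ gives $|u_{i+2}-u_i|\le 16\log^3 n$, the consecutive colour blocks give $\min(C_{i+2})-\max(C_i)\ge d_{i+1}\ge \log^4 n$, and adding these yields $\max(U_i)<\min(U_{i+2})$ for type~2 vertices, just as in the paper.

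The problem is the type~1 half, which you dispatch as ``routine algebra'' without carrying it out. Doing so, with $\min(U_i)=u_i-\max(C_i)$ and $\max(U_{i+2})=u_{i+2}-\min(C_{i+2})$, one gets
\[
\min(U_i)-\max(U_{i+2})=(u_i-u_{i+2})+\bigl(\min(C_{i+2})-\max(C_i)\bigr)\ \ge\ -16\log^3 n+\log^4 n\ >\ 0,
\]
i.e.\ $\min(U_i)>\max(U_{i+2})$ --- the \emph{opposite} of the inequality you claim to obtain, and this holds whatever the sign of $u_{i+2}-u_i$. The printed statement of the claim has the type~1 inequality reversed (a typo): the paper's own proof derives $\min(U_i)=u_i-\max(C_i)\ge u_i-\min(C_{i+2})+\log^4 n>u_{i+2}-\min(C_{i+2})=\max(U_{i+2})$, and it is this direction that is needed afterwards, to conclude that the sets $U_i$ with $i\le m$ odd are pairwise disjoint and lie in $[1,\max(U_1)]\subset[1,u_1)$ (each $U_{i+2}$ sits strictly below $U_i$). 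So as written your sketch asserts an inequality that is false in this setting; had you done the sign bookkeeping you yourself flag as the main delicacy, you would have found the reversed conclusion and thereby matched the paper's proof (and spotted the typo in the claim's statement).
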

\begin{proof}
If $u_i$ is a type 1 vertex with $i\leq m-2$, then $u_{i+2}$ is also a type 1 vertex, which appears consecutively with $u_i$ in the interval $I_1$ in the embedding of $T'$, and hence $u_{i+2}-u_i\leq 16 \log^3 n<\log^4 n$. Therefore,
\[
\min(U_i)=u_{i}-\max (C_i)\geq u_i-\min(C_{i+2})+\log^4 n > u_{i+2}-\min(C_{i+2})=\max(U_{i+2}).
\]
On the other hand, if $u_i$ is a type 2 vertex with $i\leq m-2$, then $u_{i+2}$ is also a type 2 vertex, which appears consecutively with $u_i$ in the interval $I_1$ in the embedding of $T'$, and hence $u_{i}-u_{i+2}\leq 16 \log^3 n <\log^4 n$. Therefore,
\[
\max(U_i)=u_{i}+\max (C_i)\leq u_i+\min(C_{i+2})-\log^4 n < u_{i+2}+\min(C_{i+2})=\min(U_{i+2}).\qedhere
\]
\end{proof}

By the claim, the sets $U_i$, $i\leq m$ and $i$ is odd, are disjoint and lie in the interval $[1,\max(U_1)]\subset [1,u_1)$. If $m\geq 2$, then, by the claim again, the sets $U_i$, $i\leq m$ and $i$ is even, are disjoint and lie in the interval $[\min(U_2),0.82n]\subset (u_2,0.82n]$.

In summary, we have shown the following.
\begin{itemize}
\item The sets $U_i$, $i\leq m$ and $i$ is odd are disjoint sets in $[1,u_1)$.
\item If $m\geq 2 $, then the sets $U_i$, $i\leq m$ and $i$ is even, are disjoint sets in $(u_2,0.82n]$.
\item The sets $U_i$, $m<i\leq \ell$, are disjoint sets in $[0.96n,1.92n]$.
\item If $m=1$, then $V(S')\subset \{u_1\}\cup [0.83n,0.92n]$, while, if $m\geq 2$, then $V(S')\subset [u_1,u_2]\cup [0.83n,0.92n]$.
\end{itemize}
Thus, both when $m=1$ and when $m\geq 2$, we have that $U_i$, $i\in [\ell]$, are disjoint sets in $[2n+1]\setminus V(S')$, as required.
\end{proof}

\section{Concluding remarks}\label{sec:conc}
Many powerful techniques have been developed to decompose graphs into bounded degree graphs~\cite{bottcher2016approximate, messuti2016packing, ferber2017packing, kim2016blow,joos2016optimal}.
On the other hand, all these techniques encounter some barrier when dealing with trees with arbitrarily large degrees.
Having overcome this ``bounded degree barrier'' for Ringel's Conjecture, we hope that our ideas might be useful for other problems as well. Here we mention two such questions.

The closest relative of Ringel's conjecture is the following conjecture on graceful labellings, which is also mentioned in the introduction. This is a very natural problem to apply our techniques to.
\begin{conjecture}[K\"otzig-Ringel-Rosa, \cite{rosa1966certain}]\label{Conjecture_Graceful}
The vertices of every $n$ vertex tree $T$ can be labelled by the numbers $1, \dots, n$, such that  the differences $|u-v|$,  $uv\in E(T)$, are distinct.
\end{conjecture}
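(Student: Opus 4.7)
\medskip

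My plan is to follow the three-case architecture the paper uses for Ringel's conjecture, but strengthened so that every vertex image lies in the ``half'' $\{1,\dots,n\}$ of the ND-coloured $K_{2n-1}$. As pointed out in the introduction, a graceful labelling of an $n$-vertex tree is exactly a rainbow copy of $T$ in the ND-colouring of $K_{2n-1}$ whose vertex set is contained in $\{0,\dots,n-1\}$, so the task is a \emph{confined} rainbow-embedding problem. I would first re-derive Lemma~\ref{Lemma_case_division} so that every $n$-vertex tree still falls into Case~A (many non-neighbouring leaves), Case~B (many vertex-disjoint bare paths of length $\delta^{-1}$), or Case~C (few vertices remain after stripping leaf clusters of vertices of degree $\geq \delta^{-4}$), and then attack each case by a confined version of the corresponding method in \ref{T1}--\ref{T3}.

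For Case~C the deterministic embedding of Section~\ref{sec:lastC} is already encouraging: Theorem~\ref{Theorem_one_large_vertex} and Theorem~\ref{Theorem_case_C} embed the ``skeleton'' $T'$ inside a sub-interval $[0.7n,0.92n]$ and then attach each high-degree cluster into an adjoining interval by choosing, for each leaf of a centre $u_i$, an endpoint $u_i \pm c$ with $c\in C_i$. Already in the existing proof the leaves of all three types of centres land inside an interval of length roughly $1.9n$; the plan would be to sharpen the choice of the intervals $I_0,I_1,I_2$ for $T'$ and the cluster intervals $[1,u_1)$, $(u_2,0.82n]$, $[0.96n,1.92n]$ so that every attached leaf satisfies $u_i\pm c\in\{1,\dots,n\}$ modulo $2n-1$. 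Because the odd/even parity of colours in $C_i$ and the exact location of the $u_i$ are essentially free parameters, I expect this can be engineered for Case~C by a more careful placement, and will yield the graceful labelling whenever $T$ is in Case~C.

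For Cases~A and~B the plan is to reprove the randomised embedding Theorem~\ref{nearembedagain} and the finishing lemmas (Lemmas~\ref{lem:finishA} and~\ref{lem:finishB}) with the extra restriction that the vertex images stay in $\{1,\dots,n\}\subset V(K_{2n-1})$. The difficulty is that the induced colouring on this half is no longer 2-factorised: colour $k$ appears on $n-k$ edges rather than $2n-1$, so small colours are over-represented and large colours under-represented. The strategy would be to view the graceful-labelling problem as a rainbow-embedding problem in an \emph{unbalanced} host colouring in which each colour $c$ has a known non-uniform multiplicity $n-c$, and to prove analogues of the key random-set tools in Section~\ref{Section_pseudorandomness} relative to this unbalanced model. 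The random sets $V,V_0\subset\{1,\dots,n\}$ and random colour sets $C,C_0$ would still give matchings and star-forests via nibble-type arguments (Lemma~\ref{Lemma_MPS_nearly_perfect_matching} generalises to any nearly-regular host), and the switchers of Sections~\ref{sec:switcherspaths} and~\ref{sec:switchers} would be built from explicit ND-paths confined to $\{1,\dots,n\}$.

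The principal obstacle is the confined version of the colour switchers. In Lemma~\ref{lem-switchpath} (the 1-in-2 path switcher) I chose $d_1,d_2,d_3,d_4$ with $d_1+d_2=d_3+d_4+k$, freely using vertices around the cyclic group. Once the whole path is forced to lie in $\{1,\dots,n\}$, the constraint ``every partial sum $1+c_1+d_1+\cdots$ is a vertex inside the half'' destroys a constant fraction of the flexibility and, worse, introduces systematic bias against large colours. Analogously, the matching switcher of Lemma~\ref{absorbA} relies on finding, for any pair $u,v$, many colours $c$ with a common $c$-neighbour of both $u$ and $v$ — inside $\{1,\dots,n\}$ this fails for colours $c>\max\{u,v\}$. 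I expect that this is where the argument genuinely breaks, and that resolving the graceful-labelling conjecture along these lines will require a new class of absorbers that exploit arithmetic structure (for example, Sidon-like sum-free colour choices) rather than uniform randomness in the ND-colouring. Producing such absorbers — and proving the analogue of \ref{propq1}/\ref{propq2} saying that each remaining colour still has many available edges inside the half — is the main open step, and is the reason the conjecture remains open despite the techniques developed here.
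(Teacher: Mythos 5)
There is nothing to compare your proposal against: the statement you were asked about is Conjecture~\ref{Conjecture_Graceful}, which this paper does not prove. It appears only in the concluding remarks, where the authors note that it is known for special classes and asymptotically for maximum degree $O(n/\log n)$ \cite{adamaszek2016almost}, but that ``solving it for general trees, even asymptotically, is already wide open.'' So the correct benchmark is that no proof exists in the paper, and your submission should be judged purely as a standalone argument.

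As such, it is a research programme, not a proof, and you say so yourself: the confined analogues of the switchers (Lemma~\ref{lem-switchpath}, Lemma~\ref{absorbA}) and of the independence/repleteness properties \ref{propq1}--\ref{propq2} are exactly the steps you leave open, and they are not minor. The gap is genuine and in fact wider than your write-up suggests. Essentially every tool in Sections~\ref{Section_pseudorandomness}--\ref{Section_paths} and both finishing lemmas is proved for a \emph{2-factorized} host, i.e.\ every vertex sees exactly two edges of each colour; once you restrict the vertex images to $\{1,\dots,n\}$, colour $c$ survives on only $n-c$ edges, vertices near the ends of the interval miss most large colours entirely, and statements such as Lemma~\ref{Lemma_high_degree_into_random_set}, Lemma~\ref{lem:goodpairs} and Lemma~\ref{Lemma_number_of_colours_inside_random_set} become false as stated (not merely harder to prove), because for a fixed vertex $u$ and a large colour $c$ there may be no $c$-edge at $u$ inside the half at all. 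Even your most optimistic claim, Case~C, is not a routine re-engineering: the paper's constructions attach leaves at $2n+2-c$ (Theorem~\ref{Theorem_one_large_vertex}) and in $[0.96n,1.92n]$ (Theorem~\ref{Theorem_case_C}), i.e.\ they crucially spend the second half of the vertex set to realise all the large unused colours at high-degree centres, and a vertex placed in $\{1,\dots,n\}$ simply cannot reach colour $c$ within the half unless it sits within distance $n-c$ of an end of the interval, which imposes strong arithmetic constraints you have not addressed. So the proposal correctly diagnoses where the difficulty lies, but it does not constitute a proof of the conjecture, and the conjecture remains open.
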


\noindent
This conjecture was proved for many isolated classes of trees like caterpillars, trees with $\leq 4$ leaves, firecrackers,  diameter $\leq 5$ trees,   symmetrical trees, trees with $\leq 35$ vertices, and olive trees (see Chapter 2 of \cite{gallian2009dynamic} and the references therein).
Conjecture~\ref{Conjecture_Graceful} is also known to hold asymptotically for trees of maximum degree at most $n/\log n$ \cite{adamaszek2016almost} but solving it for general trees, even asymptotically, is already wide open.

Another very interesting related problem is the G\'yarf\'as Tree Packing Conjecture. This also concerns decomposing a complete graph into specified trees, but the trees are allowed to be different from each other.
\begin{conjecture}[Gy\'arf\'as, \cite{gyarfas1978packing}]\label{Conjecture_Gyarfas}
Let $T_1, \dots, T_{n}$ be trees with $|T_i|=i$ for each $i\in [n]$. The edges of $K_n$ can be decomposed into $n$ trees which are isomorphic to  $T_1, \dots, T_{n}$ respectively.
\end{conjecture}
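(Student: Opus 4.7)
The plan is to adapt the framework of this paper — \emph{randomized nearly-spanning embedding} plus \emph{distributive absorption} — to pack the trees iteratively in decreasing order of size. Before embedding anything, I would set aside a small random reservoir $R \subset K_n$ of density $\delta$, chosen so that $R$ contains pre-planted absorbing structures tailored to the final few large trees. After this preparation, embed $T_n, T_{n-1}, \dots$ one at a time into the graph $G_i$ of edges of $K_n$ not yet used (for $i$ down to some threshold $(1-\delta)n$), and then pack the remaining smaller trees $T_{(1-\delta)n}, \dots, T_1$ in bulk on the leftover quasirandom graph using an approximate tree-packing result (e.g.\ the methods of \cite{allen2017packing}).

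To embed a single large tree $T_i$ into $G_i$, I would split into Cases A, B, C as in Lemma~\ref{Lemma_case_division}. For Cases A and B one can follow the template of the present paper: apply an analogue of Theorem~\ref{nearembedagain} to find a random rainbow copy $\hat T'_i$ of a large subforest $T'_i \subset T_i$, then use finishing lemmas in the spirit of Lemmas~\ref{lem:finishA} and \ref{lem:finishB} — suitably reformulated for the host graph $G_i$ rather than a 2-factorized $K_{2n+1}$ — to extend $\hat T'_i$ to a copy of $T_i$ using exactly the remaining edges allocated to it from the reservoir. Throughout the process one would track, via the concentration arguments of Section~\ref{Section_pseudorandomness}, that $G_i$ remains $(d_i/n)$-quasirandom, so that these tools continue to apply, and that after each embedding the leftover degree at each vertex is within $o(n)$ of its expectation.

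The main obstacle, and what would take most of the new work, is handling Case C. The deterministic embedding in Theorem~\ref{Theorem_case_C} relies fundamentally on the arithmetic of the ND-colouring: it places a small backbone into a short interval, then hangs the leaves of each high-degree vertex on neighbours located in a tail interval, using that each colour class is a 2-factor. None of this structure is available in the evolving graph $G_i$ on only $n$ vertices. A replacement would need to pre-plant, inside the reservoir $R$, a \emph{star absorber} for each high-degree vertex of each Case-C tree: a random vertex $v$ together with at least $d_j$ neighbours reserved for $v$'s leaves, arranged so that the identity of the high-degree tree assigned to a particular absorber is flexible. Engineering such absorbers so that they simultaneously accommodate every possible assignment of high-degree trees, and so that their edges partition cleanly with the absorbers designated for Cases A and B, is where a genuinely new idea is required. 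For this reason, a plausible intermediate goal would first be an asymptotic Gyárfás packing for trees without degree restriction — mirroring the authors' prior paper~\cite{MPS} relative to Ringel — before tackling the exact conjecture. The exactness at the very end (ensuring every one of the $\binom{n}{2}$ edges is used) would then be recovered by nesting a second layer of distributive absorption that corrects both the vertex-defect in Cases A/B and the star-defect in Case C within a single combined structure.
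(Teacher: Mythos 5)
There is no proof of this statement to compare against: Conjecture~\ref{Conjecture_Gyarfas} is not a result of this paper. It appears in Section~\ref{sec:conc} precisely as an open problem, known only for bounded-degree trees by Joos, Kim, K\"uhn and Osthus~\cite{joos2016optimal}, and the authors merely express the hope that their techniques might eventually be useful for it. So what you have written cannot be judged as an alternative proof; it is a research programme, and by your own admission ("where a genuinely new idea is required") its central steps are named rather than carried out.

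Beyond the missing Case~C absorbers that you flag yourself, there are concrete reasons the plan as stated would fail. First, exactness: since $\sum_{i\in[n]} e(T_i)=\binom{n}{2}$, the packing must use every edge of $K_n$ exactly once, but your bulk step for $T_1,\dots,T_{(1-\delta)n}$ invokes an approximate packing result (\`a la~\cite{allen2017packing}), which necessarily leaves edges uncovered, and the "second layer of distributive absorption" that is supposed to repair this is only asserted, not constructed --- in the present paper the analogous exactness is achieved through colour-switchers built from the arithmetic of the ND-colouring, which has no counterpart in an uncoloured, evolving host graph $G_i$. Second, the reduction that powers this paper is unavailable: here one rainbow copy of a single tree $T$ yields the whole decomposition by cyclic shifts, and "use exactly the unused colours" is a condition on $n$ colour classes each of size $2n+1$; in the Gy\'arf\'as setting the trees are pairwise distinct, there is no group of symmetries to exploit, and the corresponding constraint is "use exactly the remaining edges", a far more rigid requirement that Lemmas~\ref{lem:finishA} and~\ref{lem:finishB} do not address even after reformulation. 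Third, the claim that $G_i$ stays quasirandom with degrees within $o(n)$ of expectation is false for unbounded degrees: a single tree close to a star deletes essentially all edges at one vertex, so after embedding the large trees the leftover graph can be badly irregular, and the concentration machinery of Section~\ref{Section_pseudorandomness} (which is proved for 2-factorized or locally bounded colourings of a fixed complete graph) does not apply. In short, the proposal does not close any of the gaps that make the conjecture open; it restates them in the language of this paper.
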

\noindent
This conjecture has been proved for bounded degree trees by Joos, Kim, K{\"u}hn and Osthus~\cite{joos2016optimal} but in general it is wide open.
It would be interesting to see if any of our techniques can be used here to make further progress.

\vspace{0.25cm}
\noindent
{\bf Acknowledgements.} Parts of this work were carried out when the first two authors visited the Institute for Mathematical Research (FIM) of ETH Zurich. We would like to thank FIM for its hospitality and for creating a stimulating research environment.

\bibliographystyle{abbrv}
\bibliography{proof-ringel-conjecture}

\end{document}